\documentclass[11pt]{amsart}
\UseRawInputEncoding
\usepackage{amssymb,mathrsfs,graphicx,enumerate,color}
\usepackage{amsmath, amssymb, bigints}
\usepackage{amsthm,amsfonts,amssymb,epsfig,graphics,amsmath,amsbsy,enumerate}
\usepackage[hidelinks]{hyperref}
\usepackage{mathtools}  % 在导言区添加
\usepackage{colortbl}
\definecolor{black}{rgb}{0.0, 0.0, 0.0}
\definecolor{red}{rgb}{1.0, 0.5, 0.5}
\usepackage{amsmath}
\usepackage{bigints}
\title[Stability of composite waves of viscous shock and rarefaction]
{Stability of Composite Waves of a non-classical Liu Shock and a Rarefaction for  Non-Convex Viscoelastic System}

\author[Huang]{Feimin Huang}
\address[Feimin Huang]{\newline State Key Laboratory of Mathematical Sciences and Institute of Applied Mathematics, AMSS, Chinese Academy of Sciences, Beijing 100190, P. R. China
\newline
and School of Mathematical Sciences, University of Chinese Academy of Sciences,
\newline Beijing 100049, P. R. China}
\email{fhuang@amt.ac.cn}

\author[Wang]{Yi Wang}
\address[Yi Wang]{\newline State Key Laboratory of Mathematical Sciences and Institute of Applied Mathematics, AMSS, Chinese Academy of Sciences, Beijing 100190, P. R. China
\newline
and School of Mathematical Sciences, University of Chinese Academy of Sciences,
\newline Beijing 100049, P. R. China}
\email{wangyi@amss.ac.cn}
\author[Zhang]{Jian Zhang}
\address[Jian Zhang]{\newline  School of Mathematical Sciences, Chengdu University of Technology, 
\newline Chengdu 610059, P. R. China}
\email{zhangjian2020@amss.ac.cn}

\newtheorem{theorem}{Theorem}[section]
\newtheorem{lemma}{Lemma}[section]

\newtheorem{proposition}{Proposition}[section]
\newtheorem{remark}{Remark}[section]

\newcommand{\bbr}{\mathbb {R}}

\newcommand{\mb}{\mathbf}

\newcommand{\wt}{\widetilde}

\newcommand{\deltas}{\delta_S}
\newcommand{\deltar}{\delta_R}

\numberwithin{figure}{section}
\newcommand{\beq}{\begin{equation}}
\newcommand{\eeq}{\end{equation}}
\newcommand{\bsp}{\begin{split}}
\newcommand{\esp}{\end{split}}

\newcommand{\di}{\displaystyle}

\def\eps{\varepsilon }

\newcommand\adots{\mathinner{\mkern2mu\raise1pt\hbox{.}
\mkern3mu\raise4pt\hbox{.}\mkern1mu\raise7pt\hbox{.}}}

\def\charf {\mbox{{\text 1}\kern-.30em {\text l}}}
 \newcommand{\bmat}{\begin{pmatrix}} 
  \newcommand{\emat}{\end{pmatrix}}

\begin{document}
%%%%%%%%%%%%%%%%
\bibliographystyle{plain}

\date{\today}

\subjclass[2010]{}

\begin{abstract}
We investigate the time-asymptotic stability of composite waves consisting of a non-classical Liu shock and a rarefaction wave for non-convex viscoelastic system.
By choosing delicate time-dependent shift functions to the viscous shock and suitable weight functions for the relative entropy, we succeed in proving the time-asymptotic stability of the composite of a viscous Liu shock and a rarefaction wave from two different characteristic families for the one-dimensional viscoelastic system with a cubic non-convex stress-strain relation by $a$-contraction method. Here the non-classical Liu shock can be either degenerate or non-degenerate, which can be seen as the system version of the scalar Oleinik shock. Note that both the shift and the weight functions are chosen quite differently in the two cases of nearly degenerate and fully non-degenerate. The paper represents the first time-asymptotic stability result for the composite waves consisting of both non-classical Liu shock and rarefaction wave from two different families for the non-convex physical system.

\end{abstract}

\maketitle \centerline{\date}

\tableofcontents

\section{Introduction}
\setcounter{equation}{0}

Consider the one-dimensional viscoelastic system
\begin{align}\label{VES}
\begin{aligned}
\left\{ \begin{array}{ll}
        v_t - u_x = 0, & (t,x)\in \mathbb R_+ \times \mathbb R,\\[4pt]
        u_t - \sigma(v)_x = \mu u_{xx},\qquad  & %\mu > 0,
        \end{array} \right.
\end{aligned}
\end{align}
where the unknown functions \(v = v(t,x)\) and \(u = u(t,x)\) represent the strain and the velocity of the viscoelastic  materials, respectively,  and the constant \(\mu > 0\) is the viscosity coefficient. The stress function $\sigma(v) $ is given by the following typical cubic non-convex constitutive relation
\begin{equation}\label{stress}
\sigma(v) = v - v^3 .
\end{equation}
System \eqref{VES}-\eqref{stress} serves as a canonical prototype for studying wave propagation in materials with a non-convex elastic energy. 
%The general framework of nonlinear viscoelasticity can be traced back to foundational works in continuum mechanics \cite{Antman2005}. 
The specific cubic stress-strain relation \eqref{stress} provides a simple smooth function that captures a non-convex stress-strain response, and it has been adopted as a standard mathematical model in \cite{ShearerYang1995}.%Truskinovsky1993}.
\vskip0.1cm
The system \eqref{VES}-\eqref{stress} is supplemented with the initial condition
\begin{equation}\label{in-1}
(v,u)(t=0,x) = (v_0(x),u_0(x)), \qquad x \in \mathbb R,
\end{equation}
which are assumed to converge to distinct far-field states at infinity:
\begin{equation}\label{in}
(v_0(x),u_0(x)) \longrightarrow (v_{\pm},u_{\pm}) \quad \text{as } x\to \pm\infty .
\end{equation}

The large-time behavior of the solutions to \eqref{VES}-\eqref{in} is closely related to the Riemann problem for the associated inviscid system
\begin{align}\label{E}
\begin{aligned}
\left\{ \begin{array}{ll}
        v_t - u_x = 0,\\[4pt]
        u_t - \sigma(v)_x = 0,
        \end{array} \right.
\end{aligned}
\end{align}
with Riemann initial data
\begin{align}\label{Ei}
(v,u)(t=0,x)=\left\{ \begin{array}{ll}
        (v_-,u_-), & x<0,\\[4pt]
        (v_+,u_+), & x>0,
        \end{array} \right.
\end{align}
where $ (v_{\pm},u_{\pm})$ are exactly determined by the far-field states of the initial values $(v_0(x),u_0(x))$ to the viscous system \eqref{VES}.

\vskip0.1cm
System \eqref{E} is a hyperbolic conservation law with a non-convex flux \eqref{stress}, which has been extensively studied for decades. %The system \eqref{E} is a hyperbolic conservation law with non-convex flux, and the investigation of such systems has a long-standing history in the field. 
In their pioneering work, Courant and Friedrichs \cite{CourantFriedrichs1948} provided a foundational analysis of elastic-plastic materials, demonstrating how non-convex stress-strain relations give rise to the intricate wave patterns. To date, the existence and admissibility criteria for these wave patterns, including the non-classical shock waves that arise in non-convex settings, have been systematically established in %classic monographs by  
Liu \cite{LT1,LT2,LT}, Smoller \cite{Smoller1994}, and Dafermos \cite{Dafermos2010} etc.

%System \eqref{E} is a hyperbolic conservation law with a non-convex flux. The study of such systems has a long history. In particular, Courant and Friedrichs \cite{CourantFriedrichs1948} provided a foundational discussion of elastic-plastic materials, illustrating how non-convex stress-strain relations lead to complex wave patterns. The existence and selection criteria for such wave patterns, including non-classical shock waves, are well established in the monographs by Liu \cite{LT1,LT2,LT}, Smoller \cite{Smoller1994}, and Dafermos \cite{Dafermos2010}.
\vskip0.1cm
\noindent
\textbf {Riemann Problem for the Inviscid Model \eqref{E}-\eqref{stress}. } The Jacobi matrix for the inviscid hyperbolic equation \eqref{E}-\eqref{stress} is
\begin{equation}\label{Jacobi}
\left(
\begin{array}{cr}
0 & -1\\
-\sigma'(v) & 0
\end{array}
\right),
\end{equation}
whose two eigenvalues are $\lambda_\pm(v) := \pm \lambda(v)$ with $\lambda(v) = \sqrt{\sigma'(v)}$, and therefore, the system  \eqref{E}-\eqref{stress} is
 strictly hyperbolic (i.e., $\pm\lambda(v)$ are real and distinct) provided that $|v| < \sqrt{3}/3$. 
We can compute that
$$
 \nabla \lambda_\pm(v) \cdot r_{\pm}(v) = \pm \lambda'(v) = \pm\frac{\sigma''(v)}{2\sqrt{\sigma'(v)}},
$$
where $r_{\pm}(v):=(1,\mp  \lambda(v))^T$ are the corresponding right eigenvectors to $\pm\lambda(v)$. Due to the non-convexity of the stress function $\sigma(v)$ around the inflection point $v=0$,
both characteristic fields are neither genuinely nonlinear nor linearly degenerate in any neighborhood of $v=0$. This non-convexity allows admissible shock waves that cross the inflection point, satisfying both the Rankine-Hugoniot jump condition and the Liu entropy criterion. 
\vskip0.1cm
Given a right state $(v_R,u_R)$, the Rankine-Hugoniot jump condition requires the existence of a shock speed $s$ for any left state $(v_L,u_L)$ satisfying
\begin{equation}\label{RH}
\begin{aligned}
&-s(v_R - v_L) - (u_R - u_L) = 0, \\
&-s(u_R - u_L) - (\sigma(v_R) - \sigma(v_L)) = 0.
\end{aligned}
\end{equation}
From \eqref{RH}, we obtain
\[
s = \pm\sqrt{\frac{\sigma(v_R) - \sigma(v_L)}{v_R - v_L}}, \quad \text{and} \quad
(u_R -u_L)^2 = \big[\sigma(v_R) - \sigma(v_L)\big](v_R - v_L).
\]
Hence, there are two branches of Hugoniot curve:
\[
H_i(v_R,u_R) := \left\{ (v,u) \,\middle|\, u = u_R +(-1)^i \sqrt{\frac{\sigma(v_R)-\sigma(v)}{v_R-v}}(v_R-v) \right\},
\]
corresponding to the shock speed $s_i = (-1)^i\sqrt{\frac{\sigma(v_R)-\sigma(v)}{v_R-v}}, i=1,2$.
% and
%\[
%H_2(v_R,u_R) := \left\{ (v,u) \,\middle|\, u = u_R + \sqrt{\bigl(\sigma(v_R)-\sigma(v)\bigr)(v_R-v)} \right\},
%\]
%corresponding to $s_i = \sqrt{\frac{\sigma(v_R)-\sigma(v)}{v_R-v}}$.
The $i$-shock curve $S_i(v_R,u_R)$ is defined as the subset of the Hugoniot curve $H_i(v_R,u_R)$ selected by the Liu entropy condition (see \cite{LT1,LT2,LT}), which will be invoked in the sequel.
%To determine the $i$-shock curve $S_i(v_R,u_R)$, i.e., the subset of the Hugoniot curve $H_i(v_R,u_R)$ selected by the Liu entropy condition, which  will be invoked  in the sequel as discussed in \cite{LT1, LT2, LT}.

\vskip0.1cm
\noindent\textbf{Liu Entropy Condition.} 
An $i$-discontinuity between $(v_L,u_L)$ and $(v_R,u_R)$, with $(v_L,u_L) \in H_i(v_R,u_R)$, is admissible if, for all states $(v,u)$ on the Hugoniot curve $H_i(v_R,u_R)$ lying between $(v_L,u_L)$ and $(v_R,u_R)$, the following criterion holds (cf.  \cite{LT1, LT2}):
\begin{equation}\label{Liu-entropy}
s\big((v,u), (v_R,u_R)\big) \leq s\big((v_L,u_L), (v_R,u_R)\big).
\end{equation}
Note that Liu entropy condition \eqref{Liu-entropy} reduces to the Oleinik entropy condition for scalar conservation laws \cite{Oleinik}, and is equivalent to the Lax entropy condition when the corresponding characteristic field is genuinely nonlinear for the strictly convex hyperbolic conservation law systems \cite{Lax}.
%\noindent\textbf{Liu Entropy Condition.} An $i$-discontinuity between $(v_L,u_L)$ and $(v_R,u_R)$, with $(v_R,u_R) \in H_i(v_L,u_L)$, is admissible if for all states $(v,u)$ on the Hugoniot curve $H_i(v_L,u_L)$ between $(v_L,u_L)$ and $(v_R,u_R)$, the following entropy condition holds:
%\begin{equation}\label{Liu-entropy}
%s\big((v_L,u_L), (v_R,u_R)\big) \leq s\big((v_L,u_L), (v,u)\big),
%\end{equation}
%where $s\big((v_L,u_L), (v,u)\big)$  is given by the Rankine-Hugoniot condition.

Without loss of generality, we consider the 2-shock on the Hugoniot curve $H_2(v_R,u_R)$ as an example, and 1-shock case can be defined similarly. For a given right state $(v_R,u_R)$ with $v_R > 0$, the Liu entropy condition  \eqref{Liu-entropy}  implies that the left state $(v_L,u_L)$ must satisfy $-\frac{v_R}{2} \leq v_L < v_R$, where the degenerate point $v_L = -\frac{v_R}{2}$ is uniquely determined by the following relation
\begin{equation}\label{ss}
\lambda_+(v_L) = s = \sqrt{\frac{\sigma(v_R) - \sigma(v_L)}{v_R - v_L}}.
\end{equation}
Hence, the 2-shock curve $S_2(v_R,u_R)$ with $v_R > 0$ is defined as
\begin{equation}\label{S2-curve}
S_2(v_R,u_R) := \Bigg\{ (v_L, u_L) \Bigg| -\frac{v_R}{2} \leq v_L < v_R, \; u_L = u_R + \sqrt{\left[\sigma(v_R) - \sigma(v_L)\right](v_R - v_L)} \Bigg\},
\end{equation}
while the 2-shock curve $S_2(v_R,u_R)$ with $v_R < 0$ and the 1-shock curve $S_1(v_R,u_R)$ with $v_R \lessgtr 0$ can be defined similarly. The shock considered in the present paper is defined via the Liu entropy condition  \eqref{Liu-entropy}, and we call it a Liu shock, which can cross the inflection point $v=0$, thereby being a non-classical shock in contrast to the classical Lax shock \cite{Lax}.

%which can cross the inflection point  $v=0$, thereby is a non-classical shock, compared with the classical Lax shock  \cite{Lax}.

Whenever $(v_L, u_L) \in S_1(v_R, u_R) \cup S_2(v_R, u_R)$, the solution $(v^s, u^s)$ to the Riemann problem \eqref{E}-\eqref{Ei} with the end states $(v_-, u_-) = (v_L, u_L)$ and $(v_+, u_+) = (v_R, u_R)$ is a Liu shock, given by
\begin{equation} \label{BERintro}
(v^s, u^s)(t, x) = 
\begin{cases}
(v_L, u_L), & x < s t, \\
(v_R, u_R), & x > s t,
\end{cases}
\end{equation}
where $s$ denotes the shock speed given by  Rankine-Hugoniot condition and Liu entropy condition.
\vskip0.1cm
Next, we turn to another elementary wave, namely the rarefaction wave. Within a given characteristic field, we consider two types of wave patterns involving rarefaction waves. One is a single rarefaction wave, and the other is a composite wave consisting of a rarefaction wave connected to a degenerate Liu shock.
%Two types of wave patterns involving a rarefaction wave in the same characteristic field will be considered.
%Two types of wave patterns involving rarefaction waves will be considered.
%Next we turn to another elementary wave pattern, namely the rarefaction wave. There are two kinds of rarefaction waves.
%One is a single rarefaction wave associated with a characteristic field, and the other is the composite wave consisting of a rarefaction wave connected to a degenerate Liu shock in the same characteristic field. 
\vskip0.1cm
We first describe the single rarefaction wave case. Given a right state $(v_R, u_R) $, the 1-rarefaction curve $R_1(v_R, u_R)$, which is the integral curve associated with the first eigenvalue $\lambda_-(v)$ of the Jacobi matrix \eqref{Jacobi}, can be defined by
\begin{equation}\label{eq:rarefaction1}
R_1 (v_R, u_R) := \Bigg\{ (v, u) \Bigg| 0 < v < v_R \ \text{or} \ v_R < v < 0, \ u = u_R - \int_{v_R}^{v} \lambda_-(s) \, ds \Bigg\}.
\end{equation}
The 2-rarefaction curve $R_2(v_R, u_R)$ can also be defined analogously as the integral curve associated with the second eigenvalue $\lambda_+(v)$. 
%Note that the rarefaction waves can only form on the convex (or concave) part of the flux $\sigma$.
%  \vskip0.1cm
% For any Riemann initial data \eqref{Ei} with $(v_-,u_-)=(v_L,u_L)$ and $(v_+,u_+)=(v_R,u_R)$ such that $(v_L,u_L) \in R_1(v_R,u_R)$, the solution $(v^r,u^r)(t,x)$ to the Riemann problem \eqref{E}-\eqref{Ei} is a 1-rarefaction wave, which can be expressed implicitly by
% \begin{equation} \label{BESintro}
% \lambda_-(v^r(t, x)) = \begin{cases}
% \lambda_-(v_L), & x < \lambda_-(v_L)t, \\[2mm]
% \dfrac{x}{t}, & \lambda_-(v_L)t \leq x \leq \lambda_-(v_R)t, \\[2mm]
% \lambda_-(v_R), & x > \lambda_-(v_R)t.
% \end{cases}
% \end{equation}
% The velocity component $u^r(t,x)$ is then determined by the relation
% \[
% z_-(v^r(t,x),u^r(t,x))
% =z_-(v_L,u_L)=z_-(v_R,u_R),
% \]
% where
% \[
% z_-(v,u):=u+\int^v\lambda_-(s)\,ds
% \]
% is the 1-Riemann invariant.
% The case of a 2-rarefaction wave can be treated similarly.

\vskip0.1cm

We next describe the composite wave case consisting of a rarefaction wave
connected to a degenerate Liu shock in the same characteristic family. For definiteness, we consider the composite wave in the second characteristic field, and the first characteristic field can be treated similarly.  Given a right state $(v_R, u_R)$ with $v_R>0$ (the case $v_R<0$ can be treated similarly), when $v_L < -\frac{v_R}{2}<0<v_R$, there exists a unique intermediate state $(v_M, u_M)$ with $v_M = -\frac{v_R}{2}$ such that $(v_M, u_M)$ is connected to $(v_R, u_R)$ by a degenerate 2-Liu shock and $(v_L, u_L)$ is connected to $(v_M, u_M)$ by a 2-rarefaction wave. The degenerate 2-Liu shock curve is given by \eqref{S2-curve}, while the 2-rarefaction curve is defined analogously to \eqref{eq:rarefaction1}.

\vskip0.1cm
In general, the solution of the Riemann problem \eqref{E}-\eqref{Ei} consists of at most two family waves, each of which can be a rarefaction wave, a Liu shock, or a composite wave consisting of a degenerate Liu shock and a rarefaction. The generic Riemann solutions to \eqref{E}-\eqref{Ei} with \eqref{stress} are much more complicated than the strictly convex flux case, and can consist of at most four waves, namely, a degenerate 1-Liu shock, a 1-rarefaction wave, a degenerate 2-Liu shock and a 2-rarefaction wave.
% In this paper, we consider the time-asymptotic stability of the composite wave of a viscous Liu shock (including both non-degenerate and degenerate cases) and a rarefaction wave from two different characteristic families to the non-convex viscoelastic system \eqref{VES}-\eqref{stress}. 
% \vskip0.1cm
%  For the corresponding Riemann solution, we consider the case for the two states $(v_-, u_-)$ and $(v_+, u_+) \in \mathbb{R}^2$ with $v_+>0$ and for which there exists a unique intermediate state $(v_m, u_m) \in \mathbb{R}^2$ such that $(v_-, u_-) \in R_1(v_m, u_m)$ and $(v_m, u_m) \in S_2(v_+, u_+)$ with $ -\frac{v_+}{2}\leq v_m <0$.  Then the Riemann solution $(v, u)$ to \eqref{E}-\eqref{Ei} is given by the composite wave
% $$
% (v,u)(t,x)=(v^r,u^r)(t,x)+(v^s,u^s)(t,x)-(v_m,u_m),
% $$
% where $(v^r,u^r)$ is the 1-rarefaction wave connecting $(v_-,u_-)$ to $(v_m,u_m)$ defined as in \eqref{BESintro}-\eqref{rareintro}, and $(v^s,u^s)$ is the 2-Liu shock connecting $(v_m,u_m)$ to $(v_+,u_+)$ defined as in \eqref{BERintro}, which is non-degenerate when  $ -\frac{v_+}{2}< v_m <0$ and degenerate when  $v_m=-\frac{v_+}{2}$.

\vskip0.1cm
\noindent
\textbf{Previous Time-Asymptotic Stability Results for the Viscous Model \eqref{VES}.}

Roughly speaking, the time-asymptotic behavior of the solution to the viscous system \eqref{VES}-\eqref{stress} with the initial data \eqref{in-1}-\eqref{in} is determined by the Riemann solution for the corresponding inviscid model \eqref{E}-\eqref{Ei}. When the Riemann solution to \eqref{E}-\eqref{Ei} is a Liu shock, the large-time ansatz for the viscous system \eqref{VES} is a viscous  Liu shock profile, described by the traveling wave solution $(\wt{v}^S(x - s t), \wt{u}^S(x - s t))$ to \eqref{VES}.
%\begin{equation}\label{VSintro}
%\left\{
%\begin{array}{ll}
%\displaystyle -s (\wt{v}^S)' - (\wt{u}^S)' = 0, \\[3mm]
%\displaystyle -s (\wt{u}^S)' - (\sigma(\wt{v}^S))' = \mu (\wt{u}^S)'', \\[3mm]
%\displaystyle (\wt{v}^S, \wt{u}^S)(-\infty) = (v_-, u_-), \quad (\wt{v}^S, \wt{u}^S)(+\infty) = (v_+, u_+).
%\end{array}
%\right.
%\end{equation}
When the Riemann solution to \eqref{E}-\eqref{Ei} is a rarefaction wave, the large-time ansatz for the corresponding viscous system \eqref{VES} can still be the self-similar rarefaction wave fan of the inviscid system \eqref{E}.
\vskip0.1cm
For the viscous system with strictly convex flux, there are huge literatures on the time-asymptotic stability of viscous shock, rarefaction wave and even viscous contact discontinuity since the pioneer work of Il'in and Oleinik \cite{Il'in-Oleinik} on Burgers equation, and much progress has been made in this field by the direct or weighted energy methods, spectral methods, point-wise Green function methods, $L^1$-stability methods and even the combined methods mentioned above, see \cite{FS, G, HKK, HXY, HM, HLM, HL,  Liu, LX, LZ, MN-S, MN-SS, MN86, matwan, MZ, NYZ, SX, VY} and references therein. Very recently, Kang, Vasseur, and Wang successfully proved the time-asymptotic stability of the generic Riemann solutions (containing viscous shock wave, rarefaction wave, and even viscous contact discontinuity) to both barotropic compressible Navier-Stokes equations  \cite{M-V-W} and full compressible Navier-Stokes(-Fourier) equations \cite{KVW3} by using the
$a$-contraction method with a time-dependent shift for viscous shock waves
\cite{KV}.

\vskip0.1cm
The study of non-convex conservation laws dates back to the scalar case, where the stability of single non-classical shocks (i.e. Oleinik shocks) has been well understood \cite{H, H1, JGK, MK,  LHL, MN3, ME, OR, We}, and the stability of composite waves consisting of an Oleinik shock and a rarefaction wave has been proved by Huang, Wang, and Zhang \cite{H-W-Z} for the cubic non-convex Burgers equation very recently. For the non-convex viscoelasticity system \eqref{VES}, Kawashima and Matsumura \cite{MK} established the first result on the time-asymptotic stability of non-degenerate viscous shocks for system \eqref{VES}, under the non-convex constitutive assumption that $\sigma^\prime(v)>0$ for all $v\in \mathbb{R}$ and $\sigma''(v) \gtrless 0$ for $v \gtrless 0$. This condition allows the stress function $\sigma(v)$ to have exactly one inflection point at $v=0$, with the prototypical example given by $\sigma(v)=v+v^3$. Later, Nishihara \cite{KN}  extended this result to degenerate viscous shocks for system \eqref{VES} under slightly weaker assumptions than those required in \cite{MK}.
It should be noted that the st 
ress function \eqref{stress} studied in the present paper satisfies the opposite sign condition: $\sigma''(v) \lessgtr 0$ for $v \gtrless 0$, which falls outside the scope of the conditions assumed in the aforementioned studies. Moreover, Kawashima and Matsumura \cite{MK}  explicitly posed the stability of non-classical viscous shocks under this opposite sign condition as an open problem, which was resolved by Matsumura and Mei \cite{MM} for the case of non-degenerate viscous shocks. After that, Mei and Nishihara  \cite{MN}  further improved the stability results obtained in previous works, proving stability for both non-degenerate and degenerate shocks under weaker assumptions on the nonlinear stress function, initial disturbance, and weight function. Remark that all the above results for the stability of single viscous shock for system \eqref{VES} are based on the anti-derivative techniques and therefore the zero-mass conditions are essentially needed for the initial perturbations around the shock profile.
\vskip0.1cm
On the other hand, the rarefaction wave can only form on the convex (or concave) part of the stress function $\sigma(v)$, and its time-asymptotic stability for \eqref{VES} can be proved almost the same as for the classical compressible Navier-Stokes equations.
 Matsumura and Nishihara \cite{MN86} first proved the stability of rarefaction waves for isentropic compressible Navier-Stokes equations, later the result in \cite{MN86} was extended to the case that both perturbations and wave strength can be large in \cite{MN2}. Note that the stability of rarefaction wave can be proved by the direct energy techniques in the framework of original perturbations, which is quite different from the anti-derivative framework used for the stability of viscous shock profiles. Due to these incompatibilities, the time-asymptotic stability of the composite wave including viscous shock and rarefaction wave for the non-convex viscoelastic system \eqref{VES}-\eqref{stress} is still open up to now. 

Motivated by the work of Kang, Vasseur, and Wang \cite{M-V-W, KVW3} on the time-asymptotic stability of generic Riemann solutions, including viscous shock waves, rarefaction waves, and viscous contact discontinuities for the barotropic and full compressible Navier--Stokes equations via the $a$-contraction method, we investigate the time-asymptotic stability of a composite wave consisting of a viscous Liu shock and a rarefaction wave belonging to two distinct characteristic families for the viscous viscoelastic system \eqref{VES}--\eqref{stress}.

% the stability of single shocks has been extensively studied. Using the anti-derivative method with a zero-mass condition, Matsumura and Nishihara \cite{MN-S} proved viscous shock stability for compressible Navier-Stokes equations. Goodman \cite{G} obtained similar results for a general system with artificial diffusion. To remove the zero-mass condition, Liu \cite{Liu}, Szepessy and Xin \cite{SX}, and Liu and Zeng \cite{LZ} introduced constant shifts, diffusion waves, and coupled diffusion waves in transverse characteristic fields. Mascia and Zumbrun \cite{MZ} proved spectral stability for 1D compressible Navier-Stokes under a condition slightly weaker than zero-mass condition. For Navier-Stokes systems with degenerate viscosities, Matsumura and Wang \cite{matwan} studied asymptotic shock stability, later generalized to a broader viscosity class in \cite{VY} using the Bresch-Desjardins entropy \cite{BD-06}.
\vskip0.1cm

%Regarding superposition waves, Kang, Vasseur, and Wang \cite{M-V-W} proved time-asymptotic stability for composites of viscous shock and rarefaction in barotropic Navier-Stokes equations via the $a$-contraction theory, later extending it to generic Riemann solutions for compressible Navier-Stokes-Fourier systems \cite{KVW3}. For the Navier-Stokes-Fourier system, Huang and Matsumura \cite{HM} proved stability for superpositions of two shocks, and Huang and Lee \cite{HL} proved stability of composite waves of viscous shocks and a viscous contact wave. 
%However, there are no results yet on the stability of composite waves involving a Liu shock and a rarefaction wave. This paper addresses the long-time asymptotic stability of composite waves consisting of a 1-rarefaction wave and a 2-Liu shock.
\vskip0.1cm
\noindent
\textbf{Composite Waves of 1-Rarefaction and 2-Liu shock.}

We focus on composite waves consisting of a 1-rarefaction and a 2-Liu shock. Without loss of generality, we assume $(v_+, u_+) \in \mathbb{R}_+ \times \mathbb{R}$, $(v_m, u_m) \in S_2(v_+, u_+)$, and $(v_-, u_-) \in R_1(v_m, u_m)$. Several new difficulties arise in the present setting. First, owing to the non-convexity of the stress function $\sigma(v)$, the shift and weight functions introduced in \cite{M-V-W} are no longer applicable. Second, the presence of an inflection point in $\sigma(v)$ requires substantially different choices of both the shift and weight functions in the nearly degenerate and fully non-degenerate shock regimes. Finally, the possible degeneracy of the viscous Liu shock poses an additional challenge. 

 From the definitions of the 1-rarefaction curve \eqref{eq:rarefaction1} and the 2-shock curve \eqref{S2-curve}, it follows that
\begin{equation}\label{s}
-\frac{v_+}{2}\leq v_m< v_- < 0 < v_+,
\end{equation}
or 
\begin{equation}\label{sss}
0<v_-<v_m<v_+.
\end{equation}
The present paper aims to prove the asymptotic stability of the composite waves described by \eqref{s}, where the 2-Liu shock crosses the inflection point $v=0$. While the case \eqref{sss}  excludes the inflection point $v=0$ and the corresponding composite wave is a 1-rarefaction and a 2-classical shock, whose time-asymptotic stability had been proved by Kang-Vasseur-Wang in \cite{M-V-W}.
\vskip0.1cm
We initially consider a superposition wave of the form
\begin{equation}\label{SW}
\Bigl( v^r\Bigl(\frac{x}{t}\Bigr) + \wt{v}^S(x - s t) - v_m,\; u^r\Bigl(\frac{x}{t}\Bigr) + \wt{u}^S(x - s t) - u_m \Bigr),
\end{equation}
where $(v^r, u^r)(x/t)$ is the 1-rarefaction wave defined in \eqref{rare} and $(\wt{v}^S, \wt{u}^S)(\xi)$ is the 2-viscous Liu shock wave defined in \eqref{V-S}. However, the classical anti-derivative method for shock stability does not align well with the direct $L^2$ energy method typically used for rarefaction stability. To overcome this, we adopt the $a$-contraction method (see \cite{H-W-Z, KVARMA, KV, VKM,  MKA, MJ, KVW2, KVW1, M-V-W, KVW3, Koh,  LV, VWZ, WW} and references therein) with a time-dependent shift $\mb X(t)$ for viscous shock and consider the shifted superposition wave
\begin{equation}\label{ansatz}
\Bigl( v^r\Bigl(\frac{x}{t}\Bigr) + \wt{v}^S\bigl(x - s t + \mb X(t)\bigr) - v_m,\; u^r\Bigl(\frac{x}{t}\Bigr) + \wt{u}^S\bigl(x - s t + \mb X(t)\bigr) - u_m \Bigr),
\end{equation}
 where the time-dependent shift function $\mb X(t)$ is to be determined. In this framework, the time-dependent shift $\mb X(t)$ and the weight function $a$ must be carefully chosen. In particular, the weight function $a$ used for the classical shocks with convex fluxes in \cite{M-V-W} can not be applicable here, since the Liu shock under consideration crosses the inflection point $v=0$ and can be degenerate when $v_m=-\frac{v_+}{2}$. 
Partially motivated by the time-asymptotic stability of the composite wave consisting of a viscous Oleinik shock and a rarefaction wave for the scalar cubic equation as in \cite{H-W-Z}, the weighted Poincar\'e inequality is utilized to the weighted perturbation $\Psi(t,y):=a^{\mb{X}}(\wt v^S(\xi))\psi(t,\xi)$ in two different cases. In the nearly degenerate case, it is used on a suitably chosen subinterval $\xi\in [\xi_*, +\infty)$  as in \eqref{quanity}, that is, $\wt v^S \in [v_*, v_+)$, which crosses the inflection point $v=0$. In the fully non-degenerate case, it is used on the whole line $\xi\in (-\infty, +\infty)$ as in \eqref{wp}, that is, $\wt v^S \in (v_-, v_+)$. Note that both the shift functions $\mb X(t)$ and the weight functions $a$ are chosen  quite differently for these two cases, which are much more complicated than those for the scalar cubic equation %with only degenerate Oleinik shock 
in \cite{H-W-Z}.
It should be also noticed that the constructions of shift functions and weight functions here are dedicately designed for the case $\sigma(v)=v-v^3$ satisfying the sign condition $\sigma''(v)\lessgtr 0$ for $v\gtrless 0$, and can not be applicable to the opposite case $\sigma(v)=v+v^3$ as in \cite{MK}, whose time-asymptotic stability of composite wave including both Liu shock and rarefaction wave remains an interesting open problem for the future study.

\vskip0.1cm
\noindent
%\textbf{Main difficulty and key ingredients‌}
% Motivated by Kang, Vasseur, and Wang \cite{M-V-W, KVW3} for the  time-asymptotic stability of the generic Riemann solutions including viscous shock wave, rarefaction wave, and even viscous contact discontinuity to barotropic/full compressible Navier-Stokes equations by using $a$-contraction method,
%we consider the time-asymptotic stability of the composite wave consisting of a viscous Liu shock and a rarefaction wave from two different characteristic families to the viscous viscoelastic system \eqref{VES}-\eqref{stress} in the present paper. Several new difficulties will occur. First, the non-convexity of the stress function $\sigma(v)$ makes the shift function and weight function used in \cite{M-V-W} can not be used now. Second, the inflection point of the stress function $\sigma(v)$ makes that both the shift and the weight functions are chosen quite differently in the two cases of nearly degenerate and fully non-degenerate shocks. Finally, the possible degeneracy of the viscous Liu shock will be another difficulty. We also remark that our shift functions and weighted functions is only valid to the non-convex stress function $\sigma(v)=v-v^3$, and can not be applied to the opposite case $\sigma(v)=v+v^3$.
%\textbf{Main difficulties and key ingredients.} 

\vskip0.3cm
\subsubsection*{Notations:}
\begin{itemize}
    \item $C$ denotes a generic positive constant, independent of time $T$ and the shift $\mb X(t)$, whose value may change from line to line.
    \item $L^p(\Omega)$ $(1 \le p \le \infty)$ and $H^1(\omega)$ denote the usual Lebesgue and Sobolev spaces on $\Omega\subset \mathbb{R}$, with norms
          \[
          \|f\|_{L^p} := \left( \int_{\Omega} |f|^p \, d\xi \right)^{1/p}, \quad
          \|f\| := \|f\|_{L^2}, \quad
          \|f\|_{H^1} := \left( \|f\|^2 + \|f_\xi\|^2 \right)^{1/2}.
          \]
    \item For a function $f=f(t,x): \mathbb{R}^+ \times \mathbb{R} \to \mathbb{R}$ and a time-dependent shift $\mb X(t)$,
          \[
          f^{\pm \mb X}(t, \xi) := f(t, \xi \pm \mb X(t)).
          \]
          In particular, for the weight function $a(\widetilde{v}^S(\xi))$ defined in \eqref{w.deg} or \eqref{w.nondeg},
          \[
          a^{\mb X}(\xi) := a(\widetilde{v}^S(\xi + \mb X(t))).
          \]
   %  \item We always denote $ \mathcal{E}(T):= \displaystyle \sup_{0\leq t \leq T} \|(\phi,\psi)(t)\|_{H^1(\bbr)}$.
\end{itemize}

\vskip0.3cm

The remainder of the paper is organized as follows. Section~\ref{sec-preli} is concerned with the preliminaries for some properties of the self-similar rarefaction wave and the viscous Liu shock. In addition, this section also states the main result of this paper. Section~\ref{sec-thm} is devoted to the proof of 
main result Theorem~\ref{thm:main} including the construction of the approximate rarefaction wave, the reformulation of the problem, the constructions of both shift functions and weight functions and the proof of Theorem~\ref{thm:main} via a continuity argument based on the local existence of solutions and uniform-in-time a priori estimates. Then Section~\ref{sec-acontraction} is focused on the uniform-in-time a priori estimates in Proposition~\ref{prop:apriori} via the $a$-contraction method. Finally, the proof of the identity \eqref{4.33} is shown in the Appendix.

\section{Preliminaries and Main Result} \label{sec-preli}
\setcounter{equation}{0}
This section collects some properties of the self-similar rarefaction wave and the viscous Liu shock, and then we state main result.

\subsection{The 1-Rarefaction Wave}
We begin with the self-similar 1-rarefaction wave for the state $(v_-, u_-) \in R_1(v_m, u_m)$ as defined in \eqref{eq:rarefaction1}. Let $w_- := \lambda_-(v_-)$ and $w_m := \lambda_-(v_m)$, and consider the Riemann problem for the inviscid Burgers equation:
\begin{equation} \label{BE}
\begin{cases}
\displaystyle w_t + w w_x = 0, \\[4mm]
\displaystyle w(0, x) = w_0^r(x) := \begin{cases}
w_-, & x < 0,\\
w_m, & x > 0.
\end{cases}
\end{cases}
\end{equation}
If $w_- < w_m$, that is, $0<v_-<v_m$\ \ or\ \ $v_m < v_- < 0$, then the unique entropy solution to \eqref{BE} is the self-similar rarefaction fan $w^r(t, x) = w^r(x/t)$, given explicitly by
\begin{equation} \label{BES}
w^r(t, x) = w^r\Bigl(\frac{x}{t}\Bigr) := \begin{cases}
w_-, & x < w_- t, \\[1mm]
\displaystyle \frac{x}{t}, & w_- t \leq x \leq w_m t, \\[2mm]
w_m, & x > w_m t.
\end{cases}
\end{equation}

The corresponding 1-rarefaction wave $(v^r, u^r)(t, x) = (v^r, u^r)(x/t)$  to the original Riemann problem \eqref{E}-\eqref{Ei} with $(v_+, u_+)$ replaced by $(v_m, u_m)$  can be constructed via the relation
\begin{equation}\label{rare}
\begin{aligned}
&\lambda_-(v_-) = w_-,\quad \lambda_-(v_m) = w_m, \\
&\lambda_-\bigl(v^r(\tfrac{x}{t})\bigr) = w^r(\tfrac{x}{t}), \\
&z_-\bigl(v^r(\tfrac{x}{t}), u^r(\tfrac{x}{t})\bigr) = z_-(v_-, u_-) = z_-(v_m, u_m),
\end{aligned}
\end{equation}
where the 1-Riemann invariant $\di z_-(v,u):=u+\int^v \lambda_-(s)\,ds$ is constant along 1-rarefaction wave.
% $z_-(v^r(t,x), u^r(t,x)) = z_-(v_-, u_-) = z_-(v_m,u_m).$ is the 1-Riemann invariant.
The self-similar rarefaction wave $(v^r, u^r)(t,x)$ is Lipschitz continuous for $t > 0$ and satisfies the inviscid system \eqref{E} almost everywhere in $(t,x)\in\mathbb{R}_+\times\mathbb{R}$.
%\begin{align}\label{E-ra}
%\begin{cases}
%v^r_t - u^r_x = 0, \\[1mm]
%u^r_t - \sigma(v^r)_x = 0.
%\end{cases}
%\end{align}
%The strength of the rarefaction wave is denoted by $\delta_R := |v_m - v_-|$. Note that $\delta_R$ is of the same order as the jump in velocity, i.e., $\delta_R \sim |u_m - u_-|$, which follows from the second relation in \eqref{rare}.
\subsection{The 2-Viscous Liu Shock Wave}
\label{subsec:viscous_shock}

We now consider the 2-viscous Liu shock wave $(\wt{v}^S, \wt{u}^S)$ connecting the states $(v_m, u_m)$ and $(v_+, u_+)$ with $(v_m, u_m) \in S_2(v_+, u_+)$ as defined in \eqref{S2-curve}. 
%The states satisfy the Rankine-Hugoniot condition \eqref{RH} and the Liu entropy condition \eqref{Liu-entropy}. Consequently, 
The Riemann problem \eqref{E}-\eqref{Ei} with $(v_-, u_-)$ replaced by $(v_m, u_m)$  admits an entropic 2-shock solution:
\begin{equation} \label{Shock}
(v^s, u^s)(t, x) = 
\begin{cases}
(v_m, u_m), & x < s t, \\
(v_+, u_+), & x > s t,
\end{cases}
\end{equation}
where the shock speed $s$ is determined by the Rankine-Hugoniot condition
\begin{equation} \label{speed}
s=\sqrt{\frac{\sigma(v_m) - \sigma(v_+)}{v_m - v_+}}=\sqrt{1-(v_m^2+v_mv_++v_+^2)},
\end{equation}
and the Liu entropy condition \eqref{Liu-entropy}.

For the viscous viscoelastic system \eqref{VES}, the corresponding large-time ansatz is governed by the viscous shock profile $(\wt{v}^S, \wt{u}^S)(\xi) ~(\xi = x - st)$ satisfying the ODE system:
\begin{equation} \label{V-S}
\left\{
\begin{aligned}
&-s (\wt{v}^S)' - (\wt{u}^S)' = 0, \quad ' = \frac{d}{d\xi}, \\
&-s (\wt{u}^S)' - (\sigma(\wt{v}^S))' = \mu (\wt{u}^S)'', \\
&(\wt{v}^S, \wt{u}^S)(-\infty) = (v_m, u_m), \quad (\wt{v}^S, \wt{u}^S)(+\infty) = (v_+, u_+).
\end{aligned}
\right.
\end{equation}
Integrating \eqref{V-S} over $[\xi, \pm \infty)$, we have the identity $-s(\wt{v}^S-v_+)=\wt{u}^S-u_+,$ or $-s(\wt{v}^S-v_m)=\wt{u}^S-u_m,$  and the profile $\wt v^S(\xi)$ can be reduced to the first-order scalar ODE:
\begin{equation} \label{VS_scalar}
\left\{
\begin{aligned}
&\mu s (\wt{v}^S)' = (\wt{v}^S - v_m)(v_+ - \wt{v}^S)(\wt{v}^S + v_+ + v_m), \\
&\wt{v}^S(-\infty) = v_m, \quad \wt{v}^S(+\infty) = v_+.
\end{aligned}
\right.
\end{equation}
%It follows from \eqref{VS_scalar} that the viscous shock profiles exhibit different decay rates in the degenerate case $v_m = -v_+/2$ and the non-degenerate case $v_m \in (-v_+/2, 0)$.
%From \eqref{VS_scalar}, one can see that the viscous shock profiles exhibit different decay rates in the degenerate case $v_m = -v_+/2$ and the non-degenerate case $v_m \in (-v_+/2, 0)$. Let $\delta_S := |v_+ - v_m| \sim |u_+ - u_m|$ denote the strength of the shock. The following lemma summarizes the key properties of the 2-viscous Liu shock profile (see \cite{H-W-Z, MN3, ME}). 

The properties of the viscous Liu shock profile $(\wt{v}^S, \wt{u}^S)(\xi) ~(\xi = x - st)$ can be summarized as follows. 

\begin{lemma} \label{lemma:shock_properties}
For $(v_m, u_m) \in S_2(v_+, u_+)$, there exists a unique (up to the constant translation) solution $(\wt{v}^S, \wt{u}^S)(\xi)$ to ODE system \eqref{V-S}. Moreover, there exists a constant $C > 0$, independent of the shock wave strength $\delta_S:=|v_+-v_m|\sim | u_+-u_m|\sim v_+$, such that the following estimates hold
%\footnote{The case $v_m = -v_+/2$ corresponds to the degenerate Liu shock, while the case $-\frac{v_+}{2} < v_m < v_+$ corresponds to the non-degenerate Liu shock.}:
\begin{align*}
    &\wt{u}^S_\xi < 0, \quad \wt{v}^S_\xi > 0, \\[2mm]
    & |\wt{v}^S(\xi) - v_m| \leq 
    \begin{cases}
        C\delta_S e^{-C\delta_S |\xi|}, & \text{if} \ \ -\dfrac{v_+}{2} < v_m < 0, \\[4mm]
        \dfrac{C\delta_S}{1 + C\delta_S^2 |\xi|}, &  \text{if } \ \  v_m = -\dfrac{v_+}{2},
    \end{cases} \quad \forall \xi < 0, \\[2mm]
    & |\wt{v}^S(\xi) - v_+| \leq C\delta_S e^{-C\delta_S |\xi|}, \quad \forall \xi > 0, \\[2mm]
    & |(\wt{v}^S_\xi, \wt{u}^S_\xi)| \leq 
    \begin{cases}
        C\delta_S^2 e^{-C\delta_S |\xi|}, &  \text{if } \ \  -\dfrac{v_+}{2} < v_m < 0, \\[4mm]
        \dfrac{C\delta_S^3}{(1 + C\delta_S^2 |\xi|)^2}, & \text{if } \ \  v_m = -\dfrac{v_+}{2},
    \end{cases} \quad \forall \xi < 0, \\[2mm]
    & |(\wt{v}^S_\xi, \wt{u}^S_\xi)| \leq C\delta_S^2 e^{-C\delta_S |\xi|}, \quad \forall \xi > 0, \\[2mm]
    & |(\wt{v}^S_{\xi\xi}, \wt{u}^S_{\xi\xi})| \leq C\delta_S^2 |(\wt{v}^S_\xi, \wt{u}^S_\xi)|, \quad \forall \xi \in \mathbb{R}.
\end{align*}
\end{lemma}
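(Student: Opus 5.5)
The plan is to reduce everything to the scalar autonomous ODE \eqref{VS_scalar} and analyze its phase line directly. First I will justify the reduction. Integrating the first equation of \eqref{V-S} from $\xi$ to $+\infty$ gives $\wt u^S-u_+=-s(\wt v^S-v_+)$. Substituting this into the integrated second equation gives $-\mu s(\wt v^S)'=\sigma(\wt v^S)-\sigma(v_+)-s^2(\wt v^S-v_+)$, up to sign bookkeeping. By \eqref{speed} the right-hand side vanishes at $v_m$ and at $v_+$. Since $\sigma$ is cubic, it factors as $(\wt v^S-v_m)(v_+-\wt v^S)(\wt v^S+v_++v_m)$, which recovers \eqref{VS_scalar}.

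Set $F(v):=(v-v_m)(v_+-v)(v+v_++v_m)/(\mu s)$. By \eqref{s} we have $v_m\ge -v_+/2$, hence $v_m+v_++v\ge 2v_m+v_+\ge0$ for $v\ge v_m$. The inequality is strict for $v>v_m$, and it is strict at $v=v_m$ precisely in the non-degenerate case. Note also $s>0$. So $F>0$ on $(v_m,v_+)$ and vanishes at both endpoints. The standard phase-line argument then gives existence of a monotone heteroclinic orbit. Uniqueness up to translation follows from the separable form $\xi=\int^{\wt v^S}dv/F(v)$. We get $\wt v^S_\xi=F>0$, and $\wt u^S_\xi=-s\wt v^S_\xi<0$. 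The same relation gives $|\wt u^S_\xi|\sim|\wt v^S_\xi|$ and $|\wt u^S_{\xi\xi}|\sim|\wt v^S_{\xi\xi}|$, so it suffices to estimate $\wt v^S$.

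For the decay, normalize the translation so that $\wt v^S(0)=(v_m+v_+)/2$, and note $\delta_S\sim v_+$ because $v_m\in[-v_+/2,0)$. Near $v_+$ we have $F(v)\approx c(v_+-v)$ with $c=(v_+-v_m)(2v_++v_m)/(\mu s)\sim\delta_S^2$. Comparison gives $v_+-\wt v^S\le C\delta_S e^{-C\delta_S^2\xi}$. In the non-degenerate case near $v_m$, $F\approx (2v_m+v_+)(v_+-v_m)(v-v_m)/(\mu s)$, with rate $\kappa=(2v_m+v_+)\delta_S/(\mu s)$.

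In the degenerate case $v_m=-v_+/2$, $F=(v-v_m)^2(v_+-v)/(\mu s)$ behaves like $\delta_S(v-v_m)^2$. Integrating $w'\ge c\delta_S w^2$ for $w=\wt v^S-v_m$ backward from $\xi=0$, where $w\sim\delta_S$, gives $w\le C\delta_S/(1+C\delta_S^2|\xi|)$. Then $\wt v^S_\xi=F\lesssim \delta_S w^2$ yields the $\delta_S^3(1+C\delta_S^2|\xi|)^{-2}$ bound. Similarly, $\wt v^S_\xi=F\lesssim\delta_S^2\min(w,v_+-\wt v^S)$ gives the exponential derivative bounds. For the second derivative, $\wt v^S_{\xi\xi}=F'(\wt v^S)\wt v^S_\xi$, and $|F'|\le C\delta_S^2$ on $[v_m,v_+]$, since $F'$ is a quadratic in $v$ with coefficients of size $\delta_S^2/(\mu s)$.

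The main obstacle is the uniformity of constants in the nearly degenerate regime. When $2v_m+v_+$ is small but positive, the linear rate $\kappa$ at $v_m$ degenerates, so an estimate of the form $e^{-C\delta_S|\xi|}$ with $C$ independent of how close $v_m$ is to $-v_+/2$ is not available from linearization alone. Note that the displayed rate $e^{-C\delta_S|\xi|}$ is weaker than the natural $\delta_S^2$ rate when $\delta_S$ is small; I read it as permitted because $\delta_S\sim v_+$ is bounded. I would handle this by allowing the non-degenerate constant to depend on $(2v_m+v_+)/\delta_S$, or, if uniformity is needed, by proving the combined bound $w\le C\delta_S\min\{e^{-\kappa|\xi|},(1+C\delta_S^2|\xi|)^{-1}\}$. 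This bound follows from the differential inequality $w'\ge \kappa w/2+c\delta_S w^2$, and it interpolates between the two cases. The other estimates are routine.
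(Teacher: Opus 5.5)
Your phase-line analysis of \eqref{VS_scalar} is the standard argument. The paper gives no proof of this lemma and simply states it as known. Your reduction, existence and uniqueness up to translation, the signs of $\wt v^S_\xi$ and $\wt u^S_\xi$, the algebraic bounds in the degenerate case, and the bound $|\wt v^S_{\xi\xi}|\le C\delta_S^2|\wt v^S_\xi|$ are all fine. One small correction: $F'$ is not a quadratic with $O(\delta_S^2)$ coefficients, since its leading coefficient is $-3/(\mu s)$. The bound $|F'|\le C\delta_S^2$ on $[v_m,v_+]$ still holds, because $F'$ is a sum of products of two of the factors $v-v_m$, $v_+-v$, $v+v_++v_m$, each of which is $O(\delta_S)$ there.

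\textbf{The gap: the rate comparison is reversed.} For $0<\delta_S<1$ we have $\delta_S^2<\delta_S$, so $e^{-C\delta_S|\xi|}$ decays \emph{faster} than $e^{-c\delta_S^2|\xi|}$. The displayed bound is therefore stronger than what your comparison gives, not weaker, and it does not follow from it. That $\delta_S$ is bounded above is irrelevant; the problem is that $\delta_S\to0$ is allowed.

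In fact the displayed bound is false with a $\delta_S$-independent $C$:
\begin{itemize}
\item On $[v_m,v_+]$ one has $v-v_m\le\delta_S$ and $v+v_++v_m\le 2v_+\le 2\delta_S$, so $F(v)\le 2\delta_S^2(v_+-v)/(\mu s)$.
\item Hence $z:=v_+-\wt v^S$ satisfies $z'\ge -2\delta_S^2z/(\mu s)$, and with your normalization $z(\xi)\ge \frac{\delta_S}{2}e^{-2\delta_S^2\xi/(\mu s)}$ for $\xi>0$.
\item Requiring $z(\xi)\le C\delta_S e^{-C\delta_S\xi}$ for all $\xi>0$ forces $C\delta_S\le 2\delta_S^2/(\mu s)$, which fails for small $\delta_S$.
\end{itemize}
The same lower-bound argument rules out the other $e^{-C\delta_S|\xi|}$ bounds. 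For $\xi<0$, use $F\le C\delta_S^2(\wt v^S-v_m)$. For the derivative with $\xi>0$, use $\wt v^S_\xi=F\ge c\delta_S^2 z$.

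So your argument proves, and what is actually true is, the lemma with $e^{-C\delta_S|\xi|}$ replaced by $e^{-C\delta_S^2|\xi|}$ throughout. This is consistent with the degenerate bound $C\delta_S/(1+C\delta_S^2|\xi|)$ and with $|\wt v^S_{\xi\xi}|\le C\delta_S^2|\wt v^S_\xi|$, both of which encode the length scale $\delta_S^{-2}$. You should state this as a correction of the statement rather than claim the displayed version follows.

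\textbf{Non-uniformity near degeneracy.} Your second observation is correct, and it is a defect of the statement itself, not merely of linearization. The rate at $v_m$ is $\kappa=(2v_m+v_+)\delta_S/(\mu s)$, which tends to $0$ as $v_m\downarrow -v_+/2$, and the profiles converge to the algebraically decaying degenerate one. So no exponential bound at $-\infty$ can be uniform over $-v_+/2<v_m<0$.

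Your interpolated bound is the right uniform replacement. For $\xi<0$ one has $v_+-\wt v^S\ge\delta_S/2$, which gives $w'\ge \frac{\kappa}{2}w+c\,\delta_S w^2$ and hence
$w\le \frac{\delta_S}{2}\min\{e^{-\kappa|\xi|/2},(1+c\,\delta_S^2|\xi|)^{-1}\}$.
Note the factor $\kappa/2$, not $\kappa$. In Case 2, where $2v_m+v_+\ge 2\theta_*v_+$, this yields $e^{-c\,\theta_*\delta_S^2|\xi|}$ with $c$ independent of $\delta_S$.
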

Roughly speaking, the profile  $(\wt{v}^S, \wt{u}^S)(\xi)$ always exhibits the exponentially decay at the far-field state  $(v_+, u_+)$, and has the  exponentially decay for the  non-degenerate Liu shock when $v_m \in (-v_+/2, 0)$ and the algebraically decay  for the degenerate Liu shock when $v_m=-v_+/2$ at the far-field state  $(v_m, u_m)$.

Now we can state the main result in the present paper as follows.

\begin{theorem}\label{thm:main}
For a given far-field state $(v_+, u_+) \in \mathbb{R}_+ \times \mathbb{R}$, there exists a constant $\delta_* > 0$ such that if $|v_+ |< \delta_*$, then the following holds true. 

For any $(v_m, u_m) \in S_2(v_+, u_+)$ and $(v_-, u_-) \in R_1(v_m, u_m)$ satisfying \eqref{s}, there exist constants $\delta^* > 0$ and $\varepsilon^* > 0$ such that if the rarefaction wave strength $\delta_R := |v_m - v_-| < \delta^*$ and the initial data $(v_0, u_0)$ satisfy
\begin{align}\label{i_p}
\begin{aligned}
& \big\|(v_0 - \wt{v}^S(\cdot+x_0), u_0 - \wt{u}^S(\cdot+x_0))\big\|_{L^2(\mathbb{R}_+)} \\
&+ \big\|(v_0 - (\wt{v}^S(\cdot+x_0) + v_- - v_m), u_0 - (\wt{u}^S(\cdot+x_0) + u_- - u_m))\big\|_{L^2(\mathbb{R}_-)} \\
& + \big\|(v_{0x} - \wt{v}^S_x(\cdot+x_0), u_{0x} - \wt{u}^S_x(\cdot+x_0))\big\|_{L^2(\mathbb{R})} < \varepsilon^*,
\end{aligned}
\end{align}
 where $x_0\in \mathbb{R}$ is the arbitrary initial shock location and $\mathbb{R}_+=-\mathbb{R}_- := (0, +\infty)$, then the Cauchy problem \eqref{VES}-\eqref{in} admits a unique global-in-time solution $(v, u)$. Moreover, there exists an absolutely continuous shift function $\mb X(t)$ with $\mb X(0)=x_0$ such that
\begin{align}\label{ext-main}
\begin{aligned}
& v(t, x) - \Bigl( v^r\Bigl(\frac{x}{t}\Bigr) + \wt{v}^S\bigl(x - s t + \mb X(t)\bigr) - v_m \Bigr) \in C\bigl(0, +\infty; H^1(\mathbb{R})\bigr), \\
& u(t, x) - \Bigl( u^r\Bigl(\frac{x}{t}\Bigr) + \wt{u}^S\bigl(x - s t + \mb X(t)\bigr) - u_m \Bigr) \in C\bigl(0, +\infty; H^1(\mathbb{R})\bigr), \\
& u_{xx}(t, x) - \wt{u}^S_{xx}\bigl(x - s t + \mb X(t)\bigr) \in L^2\bigl(0, +\infty; L^2(\mathbb{R})\bigr).
\end{aligned}
\end{align}
Furthermore, as $t \to +\infty$,
\begin{equation}\label{con}
\sup_{x \in \mathbb{R}} \Bigl| (v, u)(t, x) - \Bigl( v^r\Bigl(\frac{x}{t}\Bigr) + \wt{v}^S\bigl(x - s t + \mb X(t)\bigr) - v_m,\; u^r\Bigl(\frac{x}{t}\Bigr) + \wt{u}^S\bigl(x - s t + \mb X(t)\bigr) - u_m \Bigr) \Bigr| \to 0,
\end{equation}
and
\begin{equation}\label{as}
\lim_{t \to +\infty} | \dot{\mb X}(t) | = 0.
\end{equation}
\end{theorem}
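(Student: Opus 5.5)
We follow the $a$-contraction framework of Kang--Vasseur--Wang \cite{M-V-W}, adapted to the non-convex flux as in \cite{H-W-Z}. First we replace the self-similar fan by a smooth approximate rarefaction $(\bar v^r,\bar u^r)(t,x)$, built from the smooth solution of Burgers' equation $w_t+ww_x=0$ with initial data $w_0(x)=\frac{w_-+w_m}{2}+\frac{w_m-w_-}{2}\tanh x$ through \eqref{rare}. It has the standard properties: $\bar v^r_x>0$, $\|\bar v^r_x\|_{L^p}\le C\min(\delta_R,\delta_R^{1/p}t^{-1+1/p})$, and $\|\bar v^r-v^r(x/t)\|_{L^\infty}\to0$. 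The ansatz is then
\[
(\bar v,\bar u)(t,x)=\bigl(\bar v^r+\wt v^S(x-st+\mb X(t))-v_m,\ \bar u^r+\wt u^S(x-st+\mb X(t))-u_m\bigr).
\]
Writing $(\phi,\psi)=(v-\bar v,u-\bar u)$, we obtain the perturbation system with interaction error terms. These errors are bounded by $C\delta_R\delta_S e^{-C\delta_S t}$ in the non-degenerate case. In the degenerate case the bound is algebraic, using the separation of the wave speeds: $\lambda_-(v)<0<s$, with $s$ bounded below since $v_+$ is small.

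Theorem~\ref{thm:main} follows by a continuity argument from local existence together with a uniform a priori estimate, stated as a proposition. If $\sup_{[0,T]}\|(\phi,\psi)\|_{H^1}\le\varepsilon_1$, then
\[
\sup_t\|(\phi,\psi)\|_{H^1}^2+\int_0^T\bigl(\delta_S|\dot{\mb X}|^2+G^S+G^R+D\bigr)\,dt\le C\bigl(\|(\phi,\psi)(0)\|_{H^1}^2+\delta_R^{1/3}\bigr).
\]
Here $G^S=\int|\wt v^S_\xi||\phi|^2$-type terms, $G^R=\int\bar u^r_x|\phi|^2$, and $D=\|\psi_x\|^2$. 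The convergence \eqref{con} then follows in the standard way. We show $\int_0^\infty|\tfrac{d}{dt}\|(\phi_x,\psi_x)\|^2|\,dt<\infty$ and $\int_0^\infty\|(\phi_x,\psi_x)\|^2dt<\infty$, which gives $\|(\phi_x,\psi_x)\|\to0$. Gagliardo--Nirenberg then gives the $L^\infty$ decay, and \eqref{as} follows from $|\dot{\mb X}|\le C\|(\phi,\psi)\|_{L^\infty}$ plus the decay of the interaction terms.

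The core step is the weighted $L^2$ relative entropy estimate. It is convenient to use the variables $(v,h)$ with $h=u-\mu(\ldots)$ so that dissipation appears in $v$. We estimate $\frac{d}{dt}\int a^{\mb X}(\xi)\,\eta(U|\bar U)\,dx$, where $a=a(\wt v^S)$ is monotone with $|a-1|\lesssim\sqrt{\delta_S}$ (or $\delta_S$), and the shift is defined by
\[
\dot{\mb X}=-\frac{M}{\delta_S}\int a^{\mb X}\bigl(\wt u^S_x\psi-\ldots\bigr)\,dx,
\]
chosen to cancel the bad term $\dot{\mb X}Y$ in the usual way. Here the two regimes must be treated separately, which we expect to be the main obstacle. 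The hyperbolic bad term involves $\int\wt v^S_\xi\,\sigma''(\wt v^S)\phi^2\,(\ldots)$, and $\sigma''$ changes sign at $\wt v^S=0$, so the weight must compensate the sign change. We first rescale $y=\frac{\wt v^S-v_m}{v_+-v_m}$ (in the nearly degenerate case we use only $y$ on $[\xi_*,\infty)$, i.e.\ $\wt v^S\ge v_*$), set $\Psi=a\psi$, and apply the weighted Poincaré inequality
\[
\int_0^1|\Psi-\bar\Psi|^2dy\le\frac12\int_0^1 y(1-y)|\Psi_y|^2dy.
\]
The aim is to show that the resulting quadratic form, which is the good diffusion term minus the bad terms, is nonnegative up to small errors, after choosing the coefficients in $a$ and $M$ appropriately. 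In the fully non-degenerate case we apply this on all of $\mathbb R$, and the cubic structure \eqref{VS_scalar} factor $(\wt v^S+v_++v_m)$ gives the needed positivity. In the nearly degenerate case that factor degenerates at $\xi\to-\infty$, so instead we use the region $\xi<\xi_*$, where $\sigma''>0$ on the convex side and the term has a good sign there. The shift is then defined using only the localized integral, which loses a factor in $\delta_S$ that must be absorbed by the algebraic decay of Lemma~\ref{lemma:shock_properties}.

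The rarefaction contributes a good term $\int a\,\bar u^r_x\,p(v|\bar v)\ge0$, since $v_-<0$ places the rarefaction in the region where $\lambda_-$ is monotone. Wave interactions are absorbed by smallness of $\delta_R$ and of $\varepsilon_1$. Finally, the $H^1$ estimates for $\phi_x$ and $\psi_x$ are obtained by standard energy methods using $\|\psi_x\|^2$ dissipation, with the higher derivative terms controlled via Lemma~\ref{lemma:shock_properties}.
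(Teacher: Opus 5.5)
The outer framework is right: the Burgers-based smooth rarefaction, the shifted ansatz, exponential versus algebraic interaction bounds, the $\delta_R^{1/3}$ error, the continuation argument, the $H^1$ estimates, and convergence via $\int_0^\infty\|(\phi_\xi,\psi_\xi)\|^2$ and $|\dot{\mb X}|\le C\|(\phi,\psi)\|_{L^\infty}$. The gap is in the core coercivity step.

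\textbf{Why your weight fails.} A weight with $|a-1|\lesssim\sqrt{\delta_S}$ (Kang--Vasseur--Wang type, $a'\sim\lambda/\delta_S$ with $\lambda\ll1$) cannot make the shock part of the energy nonnegative for a Liu shock that straddles the inflection point. Take $a''=0$. The shock terms (the paper's $\mathbf{G^{S,X}}$ before the shift) are
\[
\wt v^S_\xi\Big[\tfrac s2\big(a'\sigma'+a\sigma''\big)\phi^2+a'\sigma'\phi\psi+\tfrac{a'}{2s}\big(2s^2-\sigma'\big)\psi^2\Big]+\mu a\psi_\xi^2,\qquad \sigma'=\sigma'(\wt v^S),\ \sigma''=-6\wt v^S .
\]
\begin{itemize}
\item Since $\sigma'-s^2=O(\delta_S^2)$, the $a'$-part is $\frac{sa'}{2}(s\phi+\psi)^2$ to leading order. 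This degenerates along $\psi=-s\phi$, where the bracket is exactly $\frac s2\big[2a'(s^2-\sigma')-6a\wt v^S\big]\phi^2$.
\item For your weight the $a'$ term is of lower order. On the concave side you therefore face a loss of size up to $3sav_+$.
\item The only compensation is the Poincaré gain. In the variable $y$ linear in $\wt v^S$ with $\Psi=a\psi$ (the one matched to the mean that $\dot{\mb X}\mb Y_1$ controls), the cost is $\frac{\mu s}{\delta_S}\int\frac{|(a\psi)_\xi|^2}{\wt v^S+v_++v_m}$.
\item With $a\approx1$, fitting this cost under $\mu\int a\psi_\xi^2$ is limited by the left end, where $\wt v^S+v_++v_m=v_++2v_m$ is smallest. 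The gain is then at most about $\frac{2a}{s}(v_++2v_m)\wt v^S_\xi\psi^2$.
\item Near $\wt v^S=v_+$ on $\psi=-s\phi$, the total is about $s\big[a'\delta_S(2v_++v_m)-a(v_+-4v_m)\big]\wt v^S_\xi\phi^2$. For $a'\delta_S/a\ll1$ this is $\approx -sa(v_+-4v_m)<0$.
\end{itemize}
In \cite{M-V-W} the factor $\wt v^S+v_++v_m$ is essentially constant across a weak shock far from any inflection point. Here it varies by an $O(1)$ factor and vanishes at $\wt v^S=v_m$ in the degenerate limit. So your claim that the cubic factor ``gives the needed positivity'' does not survive the count. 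Passing to $(v,h)$ does not change this. Note also that in $(v,h)$ the dissipation falls on $\phi$, so $\Psi=a\psi$ is not the right object there.

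\textbf{What the paper does instead.} The paper's weights are of size $\delta_S^2$ with $O(1)$ relative oscillation, and they are built on that factor.
\begin{itemize}
\item In Case~2, $a=\frac{\alpha\delta_S}{s}(\wt v^S+v_++v_m)$ makes $\frac{a^2}{\wt v^S+v_++v_m}\propto a$. The Poincaré cost is then exactly $\alpha\mu\int a\psi_\xi^2$ with $\alpha<1$, and the gain $\frac{2}{\delta_S}\int a^2\wt v^S_\xi\psi^2$ is weighted toward $v_+$, where the loss sits.
\item The $a'(2s^2-\sigma')$, $a\sigma''$ and Poincaré contributions are then all $O(\delta_S^3)$. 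For $\alpha=\frac23+\frac1{6\theta_*+3}$ they combine into $h\ge\frac{\alpha\delta_S(3+2\theta_*)\theta_*v_+^2}{4s^2}$.
\item This bound degenerates as $v_++2v_m\to0$, which is why Case~1 is treated separately. There the paper uses:
\begin{itemize}
\item the Poincaré inequality only on $[\xi_*,\infty)$;
\item a $C^2$ piecewise weight: linear on $(0,v_+)$, a quartic transition on $(v_*,0]$, and a linear piece of relative slope $\frac14$ on $(v_m,v_*]$;
\item explicit positivity of $L(\wt v^S)$ on $(-\infty,\xi_*)$, where $\sigma''>0$;
\item H\"older's inequality for the part of $\mb Y_1$ over $(-\infty,\xi_*)$, using the explicit size of $\int_{-\infty}^{\xi_*}a^2\wt v^S_\xi\,d\xi$.
\end{itemize}
\end{itemize}
The Case~1 shift still integrates over all of $\mathbb R$. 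The algebraic decay of the degenerate profile plays no role in this static coercivity; it enters only in the interaction estimates of Lemma~\ref{L4.8}. So your localized shift, with a loss ``absorbed by algebraic decay,'' is not a working mechanism as stated.
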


\begin{remark}
Theorem~\ref{thm:main} states that if the far-field states $(v_\pm, u_\pm)$ in \eqref{in} are connected by a superposition of a Liu shock and a rarefaction wave, then the solution to \eqref{VES} converges to the composite wave consisting of a self-similar rarefaction and a shifted  viscous shock.
\end{remark}

\begin{remark}
The shift function $\mb X(t)$ satisfies the asymptotic behavior \eqref{as}, implying
\[
\lim_{t \to +\infty} \frac{\mb X(t)}{t} = 0.
\]
Thus, $\mb X(t)$ grows at most sublinearly in $t$, and the shifted viscous shock retains its original traveling wave profile asymptotically in time.
%$\mb X(0)=x_0$ denotes the initial position of the viscous shock profile and it can be chosen arbitrarily.
\end{remark}

\begin{remark}
Theorem~\ref{thm:main} also covers the case of a single viscous shock ($\delta_R \equiv 0$), providing an alternative proof for the stability of a single  viscous Liu shock (covering both non-degenerate and degenerate cases). 
\end{remark}

\section{Set-up of the Problem and Proof of Theorem \ref{thm:main}}\label{sec-thm}
\setcounter{equation}{0}

\subsection{Construction of the Approximate 1-Rarefaction Wave}
\label{subsec:approx_rare}
The 1-rarefaction wave $(v^r, u^r)(t,x)$ defined in \eqref{rare} is only Lipschitz continuous for $t > 0$. To prove its time-asymptotic stability for \eqref{VES}, we construct a smooth approximate 1-rarefaction wave through the following Burgers equation (cf. \cite{MN86}):
\begin{equation} \label{ABE}
\begin{cases}
\displaystyle w_t + w w_x = 0, \\[4mm]
\displaystyle w(0, x) = w_0(x) := \frac{w_m + w_-}{2} + \frac{w_m - w_-}{2} \tanh x.
\end{cases}
\end{equation}
Since  $w_0^\prime(x)>0$ for $w_-< w_m$, the Burgers equation admits a unique global smooth solution $w(t, x)$ satisfying the following properties.

\begin{lemma} \label{lemma2.1}   (\cite{MN86})
Suppose $w_m > w_-$ and set $\bar{w} := w_m - w_-$. Then the problem \eqref{ABE} admits a unique global smooth solution $w(t, x)$ satisfying:
\begin{enumerate}
    \item $w_- < w(t, x) < w_m$ and $w_x(t, x) > 0$ for all $x \in \mathbb{R}$ and $t \geq 0$.
    \item For all $t > 0$ and $p \in [1, \infty]$, the following estimates hold:
    \begin{align*}
       & \|w_x(t, \cdot)\|_{L^p} \leq C \min\left\{|\bar{w}|, \, |\bar{w}|^{1/p} t^{-1 + 1/p}\right\}, \\
        &\|w_{xx}(t, \cdot)\|_{L^p} \leq C \min\left\{|\bar{w}|, \, t^{-1}\right\}.
    \end{align*}
    \item For all $t \geq 0$,
    \begin{align*}
        &|w(t, x) - w_m| \leq \bar{w} \, e^{-2|x - w_m t|}, \quad x \geq w_m t, \\
        &|w(t, x) - w_-| \leq \bar{w} \, e^{-2|x - w_- t|}, \quad x \leq w_- t.
    \end{align*}
    \item $\displaystyle \lim_{t \to +\infty} \sup_{x \in \mathbb{R}} \left| w(t, x) - w^r\left( \frac{x}{t} \right) \right| = 0$.
\end{enumerate}
\end{lemma}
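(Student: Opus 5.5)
The statement is Lemma~\ref{lemma2.1}, the classical result of Matsumura--Nishihara on the smooth approximate rarefaction wave. My plan is to use the method of characteristics for the inviscid Burgers equation \eqref{ABE} and derive each property from explicit formulas for $w$ and $w_x$.

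\textbf{Step 1: characteristics and global smoothness.} Since $w$ is constant along characteristics, the solution is given implicitly by
\[
w(t,x)=w_0(x_0(t,x)), \qquad x=x_0+w_0(x_0)\,t .
\]
The map $x_0\mapsto x_0+w_0(x_0)t$ has derivative $1+w_0'(x_0)t\ge 1$, because $w_0'=\frac{\bar w}{2}\,\mathrm{sech}^2>0$. Hence it is a smooth diffeomorphism of $\mathbb R$ for every $t\ge0$. This gives existence, uniqueness and smoothness. Property (1) follows from $w_-<w_0<w_m$ together with
\[
w_x=\frac{w_0'(x_0)}{1+t\,w_0'(x_0)}>0 .
\]

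\textbf{Step 2: the $L^p$ bounds in (2).} From the formula for $w_x$, both $w_x\le \|w_0'\|_\infty\le C\bar w$ and $w_x\le 1/t$ hold. The total mass is $\int w_x\,dx=\bar w$. Interpolating gives
\[
\|w_x\|_{L^p}\le \|w_x\|_\infty^{1-1/p}\,\bar w^{1/p}\le C\min\{\bar w,\ \bar w^{1/p}t^{-1+1/p}\}.
\]
For $w_{xx}$, differentiate once more, using $\partial_x x_0=(1+tw_0')^{-1}$:
\[
w_{xx}=\frac{w_0''(x_0)}{(1+tw_0'(x_0))^3}.
\]
Because $|w_0''|\le 2w_0'$ for $\tanh$, this is bounded by $Cw_0'/(1+tw_0')^3$. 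That gives the $L^\infty$ bounds $C\bar w$ and $C/t$. The $L^p$ bounds follow by changing variables $dx=(1+tw_0')\,dx_0$. I expect this to be the most delicate step. The point is that the Jacobian power keeps the integral bounded by $\min\{\bar w, t^{-1}\}$ uniformly in $p$, which uses $\int w_0'\,dx_0=\bar w$ and $w_0'/(1+tw_0')\le 1/t$.

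\textbf{Step 3: tail estimates (3).} Consider $x\ge w_m t$. Then $x_0=x-w_0(x_0)t\ge x-w_mt$, since $w_0<w_m$. It follows that
\[
|w-w_m|=w_m-w_0(x_0)=\frac{\bar w}{2}\bigl(1-\tanh x_0\bigr)\le \bar w\,e^{-2x_0}\le \bar w\,e^{-2(x-w_mt)}.
\]
The case $x\le w_-t$ is symmetric.

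\textbf{Step 4: convergence to the fan (4).} Outside the fan I use the tail bounds of Step~3. Inside, I set $x_0=x-w_0(x_0)t$ and use
\[
\Bigl|w-\frac{x}{t}\Bigr|=\frac{|x_0|}{t}.
\]
Suppose $|w(t,x)-x/t|\ge\epsilon$ at some point. Then $|x_0|\ge\epsilon t$, so $w_0(x_0)$ is within $\bar w e^{-2\epsilon t}$ of $w_\pm$. In that case $w$ is close to the corresponding constant state, and $x/t$ lies on the same side of the fan, beyond it up to an error of order $e^{-2\epsilon t}$. Comparing with the definition \eqref{BES} of $w^r$ then yields
\[
\sup_x\Bigl|w-w^r\Bigl(\frac{x}{t}\Bigr)\Bigr|\le \epsilon+o(1)\quad\text{as } t\to\infty .
\]
Since $\epsilon$ is arbitrary, (4) follows.
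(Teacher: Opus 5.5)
Your route (characteristics plus the explicit formulas $w_x=w_0'/(1+tw_0')$ and $w_{xx}=w_0''/(1+tw_0')^3$) is the standard Matsumura--Nishihara argument. The paper gives no proof of its own and simply cites \cite{MN86}. Steps 1 and 3, and the $w_x$ part of Step 2, are correct as written. Two steps, however, do not close with the justification you give.

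\textbf{The $t^{-1}$ rate for $\|w_{xx}\|_{L^1}$.} This also governs $L^p$ for $p<\infty$. After $|w_0''|\le 2w_0'$ you need $\int_{\mathbb R} w_0'(1+tw_0')^{-2}\,dx_0\le C\min\{\bar w,t^{-1}\}$. The two facts you invoke only give $\int_{\mathbb R}\min\{w_0',t^{-1}\}\,dx_0$, and for the $\mathrm{sech}^2$ profile this is of size $t^{-1}\log(\bar w t)$. The simplest repair is not to discard the sign of $w_0''$. Since $w_0''=-2\tanh(x_0)\,w_0'$ changes sign only at $x_0=0$, and $x_0\mapsto x$ is increasing, $w_x(t,\cdot)$ is unimodal and vanishes at $\pm\infty$. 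Hence
\[
\|w_{xx}(t)\|_{L^1}=2\max_x w_x(t,x)=\frac{2w_0'(0)}{1+tw_0'(0)}=\frac{\bar w}{1+t\bar w/2}\le\min\{\bar w,2t^{-1}\}.
\]
Interpolating with your $L^\infty$ bound then gives all $p$. Alternatively, substitute $u=\tanh x_0$ and compute the integral exactly.

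\textbf{Step 4 near the edges of the fan.} The tail bounds of Step 3 are not uniformly small there: at $x=w_mt$ they only give $|w-w_m|\le\bar w$. So they cannot handle the region outside the fan by themselves. Your dichotomy on $x_0$ does the whole job if you apply it at every $x$, not just inside the fan. Note that $w^r(x/t)$ in \eqref{BES} is the projection of $x/t$ onto $[w_-,w_m]$, which is $1$-Lipschitz and fixes $w(t,x)\in(w_-,w_m)$.
\begin{itemize}
\item If $|x_0|<\epsilon t$, then $|w-w^r(x/t)|\le|w-x/t|=|x_0|/t<\epsilon$.
\item If $x_0\ge\epsilon t$, then $0<w_m-w\le\bar we^{-2\epsilon t}$. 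Also $x/t=w+x_0/t\ge w_m$ as soon as $\bar we^{-2\epsilon t}\le\epsilon$, so $w^r(x/t)=w_m$.
\item The case $x_0\le-\epsilon t$ is symmetric.
\end{itemize}
Hence $\sup_x|w-w^r(x/t)|\le\max\{\epsilon,\bar we^{-2\epsilon t}\}$ for all large $t$, which gives (4).
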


Correspondingly, define the smooth approximate 1-rarefaction wave $(\widetilde{v}^R, \widetilde{u}^R)(t, x)$ via the relations:
\begin{align} \label{AR}
  \begin{aligned}
   % &\lambda_-(v_-) = w_-,\quad \lambda_-(v_m) = w_m, \\
    &\lambda_-(\widetilde{v}^R(t, x)) = w(t, x), \\
    &z_-(\widetilde{v}^R(t, x), \widetilde{u}^R(t, x)) = z_-(v_-, u_-) = z_-(v_m, u_m).
  \end{aligned}
\end{align}
%where $w(t,x)$ is the smooth solution to \eqref{ABE}. 
Then $(\widetilde{v}^R, \widetilde{u}^R)(t,x)$ satisfies the inviscid Euler system:
\begin{equation} \label{ARW}
  \begin{cases}
    \widetilde{v}^R_t - \widetilde{u}^R_x = 0, \\[2mm]
    \widetilde{u}^R_t - \sigma(\widetilde{v}^R)_x = 0.
  \end{cases}
\end{equation}

The properties of $(\widetilde{v}^R, \widetilde{u}^R)$ are inherited from Lemma \ref{lemma2.1} (see \cite{MN86, MY}).

\begin{lemma} \label{lemma1.2}
Let $\delta_R := |v_m - v_-| \sim |u_m - u_-|$ denote the strength of the rarefaction wave. The smooth approximate wave $(\widetilde{v}^R, \widetilde{u}^R)(t,x)$ defined by \eqref{AR} satisfies:
\begin{enumerate}
    \item $\widetilde{v}^R_x = -\dfrac{\lambda_-(\widetilde{v}^R)}{3 \widetilde{v}^R} w_x < 0$ and $\widetilde{u}^R_x = -\lambda_-(\widetilde{v}^R) \widetilde{v}^R_x < 0$ for all $x \in \mathbb{R}$ and $t \geq 0$.
    \item For all $t \geq 0$ and $p \in [1, \infty]$,
    \begin{align*}
        &\|(\widetilde{v}^R_x, \widetilde{u}^R_x)\|_{L^p} \leq C \min\left\{\delta_R, \, \delta_R^{1/p} t^{-1 + 1/p}\right\}, \\
        &\|(\widetilde{v}^R_{xx}, \widetilde{u}^R_{xx})\|_{L^p} \leq C \min\left\{\delta_R, \, t^{-1}\right\}, \\
        &|\widetilde{u}^R_{xx}| \leq C |\widetilde{u}^R_{x}|, \quad \forall x \in \mathbb{R}.
    \end{align*}
    \item For all $t \geq 0$,
    \begin{align*}
        &|(\widetilde{v}^R, \widetilde{u}^R)(t, x) - (v_m, u_m)| \leq C \delta_R \, e^{-2|x - \lambda_-(v_m) t|}, \quad x \geq \lambda_-(v_m) t, \\
        &|(\widetilde{v}^R, \widetilde{u}^R)(t, x) - (v_-, u_-)| \leq C \delta_R \, e^{-2|x - \lambda_-(v_-) t|}, \quad x \leq \lambda_-(v_-) t.
    \end{align*}
    \item $\displaystyle \lim_{t \to +\infty} \sup_{x \in \mathbb{R}} \left| (\widetilde{v}^R, \widetilde{u}^R)(t, x) - (v^r, u^r)\left( \frac{x}{t} \right) \right| = 0$.
\end{enumerate}
\end{lemma}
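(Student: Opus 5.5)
The properties listed in Lemma \ref{lemma1.2} are obtained from Lemma \ref{lemma2.1} by composing with the inverse of $\lambda_-$ on the branch $v<0$, where the rarefaction lives by \eqref{s}. Since $\lambda_-(v)=-\sqrt{1-3v^2}$, we have
\[
\lambda_-'(v)=\frac{3v}{\sqrt{1-3v^2}}=-\frac{3v}{\lambda_-(v)},
\]
which is nonzero for $v\in[v_m,v_-]$ because $v_m<v_-<0$. Hence $\lambda_-$ is a smooth diffeomorphism from a neighborhood of $[v_m,v_-]$ onto a neighborhood of $[w_-,w_m]$, and $\wt v^R=\lambda_-^{-1}(w)$ is smooth. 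Two points need care: the sign ordering $w_-<w_m$, and the fact that the constants in Lemma \ref{lemma1.2} must not blow up. For the second, note that $|v_-|$ is bounded below by $|v_m|-\delta_R$; alternatively, all constants are allowed to depend on the fixed states $v_\pm,v_m$.

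\textbf{Step 1 (derivatives).} Differentiating $\lambda_-(\wt v^R)=w$ gives
\[
\wt v^R_x=\frac{w_x}{\lambda_-'(\wt v^R)}=-\frac{\lambda_-(\wt v^R)}{3\wt v^R}\,w_x .
\]
Since $\lambda_-<0$, $\wt v^R<0$ and $w_x>0$, this yields $\wt v^R_x<0$. Next, $z_-$ is constant, so $\wt u^R_x=-\lambda_-(\wt v^R)\wt v^R_x$, which is also negative. The system \eqref{ARW} then follows from the Burgers equation: $\wt v^R_t=w_t/\lambda_-'$ and $w_t=-ww_x$ give $\wt v^R_t=-\lambda_-(\wt v^R)\wt v^R_x=\wt u^R_x$. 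Similarly, $\wt u^R_t=-\lambda_-\wt v^R_t$ and $\lambda_-^2=\sigma'$ give $\wt u^R_t=\sigma(\wt v^R)_x$.

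\textbf{Step 2 ($L^p$ bounds and the pointwise bound on $\wt u^R_{xx}$).} On the compact range of $\wt v^R$, the factor $\lambda_-(\wt v^R)/(3\wt v^R)$ and its derivatives are bounded. Therefore
\[
|\wt v^R_x|+|\wt u^R_x|\le C w_x,\qquad |\wt v^R_{xx}|+|\wt u^R_{xx}|\le C\bigl(|w_{xx}|+w_x^2\bigr).
\]
The first-order bounds in part (2) follow directly from Lemma \ref{lemma2.1}(2), and $\delta_R\sim\bar w$ by the mean value theorem applied to $\lambda_-$. For the second-order bound I use $\|w_x^2\|_{L^p}\le\|w_x\|_{L^\infty}\|w_x\|_{L^p}\le C\min\{\delta_R,t^{-1}\}$.

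The pointwise estimate $|\wt u^R_{xx}|\le C|\wt u^R_x|$ is the main obstacle, since it needs $|w_{xx}|\le Cw_x$ pointwise. I would prove this by the characteristic formula. Writing $x=x_0+w_0(x_0)t$, one has
\[
w_x=\frac{w_0'}{1+tw_0'},\qquad w_{xx}=\frac{w_0''}{(1+tw_0')^3}.
\]
For the $\tanh$ profile, $|w_0''|=2|\tanh x_0|\,w_0'\le 2w_0'$. Hence
\[
|w_{xx}|\le \frac{2w_0'}{(1+tw_0')^3}\le 2w_x,
\]
and combining with $w_x^2\le Cw_x$ gives the claim.

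\textbf{Step 3 (tail estimates and convergence).} Part (3) follows from Lemma \ref{lemma2.1}(3) by the Lipschitz continuity of $\lambda_-^{-1}$ and of $v\mapsto u$ along the Riemann-invariant curve $z_-=\text{const}$, which give
\[
|\wt v^R-v_m|+|\wt u^R-u_m|\le C|w-w_m|,
\]
and similarly at $(v_-,u_-)$. Part (4) follows from Lemma \ref{lemma2.1}(4) by the same Lipschitz argument, because $(v^r,u^r)$ is defined from $w^r$ through the identical relations \eqref{rare}.
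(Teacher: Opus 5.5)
Your proposal is correct and takes the same route as the paper, which gives no argument beyond asserting that these properties are inherited from Lemma \ref{lemma2.1} through the relations \eqref{AR} (citing Matsumura--Nishihara); your change of variables through the negative branch of $\lambda_-^{-1}$ is exactly that transfer. The one property not already contained in Lemma \ref{lemma2.1} is the pointwise bound $|\wt u^R_{xx}|\le C|\wt u^R_x|$. Your characteristic computation $|w_{xx}|\le 2w_x$ supplies it correctly, though you should also state the trivial lower bound $|\wt u^R_x|=\frac{1-3(\wt v^R)^2}{3|\wt v^R|}\,w_x\ge c\,w_x$ that the last step needs. You are also right that the constants must depend on the fixed states, for instance through $1/|v_-|$.
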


\subsection{Reformulation of the Problem}
\label{subsec:reformulation}

Our goal is to establish the time-asymptotic stability of the ansatz \eqref{ansatz} as stated in Theorem \ref{thm:main}.
%\begin{equation*}
%(\bar{v}, \bar{u})(t,x) := \left( v^r\left(\frac{x}{t}\right) + \widetilde{v}^S(x - s t + \mb X(t)) - v_m,\; u^r\left(\frac{x}{t}\right) + \widetilde{u}^S(x - s t + \mb X(t)) - u_m \right),
%\end{equation*}
%where $\mb X(t)$ is a time-dependent shift to be determined. 
Since the self-similar rarefaction wave $(v^r(x/t), u^r(x/t))$ is equivalent to its smooth approximation $(\widetilde{v}^R, \widetilde{u}^R)(1+t, x)$ by Lemma \ref{lemma1.2} (4),
we work on the  time-asymptotic stability of  the following ansatz:
\begin{equation}\label{ansatz_smooth}
(\widetilde{v}, \widetilde{u})(t,x) := \left( \widetilde{v}^R(1+t, x) + \widetilde{v}^S(x - s t + \mb X(t)) - v_m,\; \widetilde{u}^R(1+t, x) + \widetilde{u}^S(x - s t + \mb X(t)) - u_m \right).
\end{equation}

For convenience, we rewrite the viscoelastic system \eqref{VES} in the moving coordinate $\xi = x - s t$. Let $(v, u)(t, \xi) := (v, u)(t, \xi + s t)$, we obtain
\begin{align} \label{VES_moving}
\begin{cases}
        v_t - s v_\xi - u_\xi = 0, \\
        u_t - s u_\xi - \sigma(v)_\xi - \mu u_{\xi\xi} = 0.
\end{cases}
\end{align}
Similarly, the ansatz $(\widetilde{v}, \widetilde{u})(t, \xi) := (\widetilde{v}, \widetilde{u})(t, \xi + s t)$ satisfies
\begin{align} \label{ansatz_eq}
\begin{cases}
       \widetilde{v}_t - s \widetilde{v}_\xi-\dot{\mb X}(t) (\widetilde{v}^S)^\mb X_\xi - \widetilde{u}_\xi = 0, \\
       \widetilde{u}_t - s \widetilde{u}_\xi - \dot{\mb X}(t) (\widetilde{u}^S)^{\mb X}_\xi- \sigma(\widetilde{v})_\xi - \mu \widetilde{u}_{\xi\xi} = F,
\end{cases}
\end{align}
where % we denote $((\widetilde{v}^S)^{\mb X}, (\widetilde{u}^S)^{\mb X}) := (\widetilde{v}^S, \widetilde{u}^S)(\xi +\mb  X(t))$, and 
the error term $F$ is given by
\begin{equation}\label{F}
\begin{aligned}
F &= \big[ -\sigma(\widetilde{v}) + \sigma(\widetilde{v}^R) + \sigma((\widetilde{v}^S)^{\mb X}) \big]_\xi - \mu (\widetilde{u}^R)_{\xi\xi} \\
  &= -\Big[ \big( \sigma'(\widetilde{v}) - \sigma'((\widetilde{v}^S)^{\mb X}) \big) (\widetilde{v}^S)^{\mb X}_\xi + \big( \sigma'(\widetilde{v}) - \sigma'(\widetilde{v}^R) \big) \widetilde{v}^R_\xi \Big] - \mu (\widetilde{u}^R)_{\xi\xi} \\
  &:= F_1 + F_2.
\end{aligned}
\end{equation}
Here, $F_1:=-\Big[ \big( \sigma'(\widetilde{v}) - \sigma'((\widetilde{v}^S)^{\mb X}) \big) (\widetilde{v}^S)^{\mb X}_\xi + \big( \sigma'(\widetilde{v}) - \sigma'(\widetilde{v}^R) \big) \widetilde{v}^R_\xi \Big]$ captures the interaction between the Liu shock and the rarefaction wave, while $F_2:= - \mu (\widetilde{u}^R)_{\xi\xi} $ corresponds to the error term due to the inviscid rarefaction profile.

Now, define the perturbation
\begin{equation} \label{Per v}
(\phi, \psi)(t, \xi) := (v, u)(t, \xi) - (\widetilde{v}, \widetilde{u})(t, \xi).
\end{equation}
Subtracting \eqref{ansatz_eq} from \eqref{VES_moving}, we have that the perturbation $(\phi, \psi)$ satisfies the system
\begin{align} \label{Per}
\begin{cases}
\phi_t - s \phi_\xi + \dot{\mb X}(t) (\widetilde{v}^S)^{\mb X}_\xi - \psi_\xi = 0, \\
\psi_t - s \psi_\xi + \dot{\mb X}(t) (\widetilde{u}^S)^{\mb X}_\xi - [\sigma(\phi + \widetilde{v}) - \sigma(\widetilde{v})]_\xi  - \mu \psi_{\xi\xi} = -F, \\
(\phi, \psi)(0, \xi) = (\phi_0, \psi_0)(\xi) := (v_0, u_0)(\xi) - (\widetilde{v}_0, \widetilde{u}_0)(\xi).
\end{cases}
\end{align}
%with $F$ defined in \eqref{F}.

\subsection{Construction of the Weight Functions}
\label{subsec:weight}
Due to the possible degeneracy of Liu-shock, we need to construct different weight functions according to the following two cases:
\begin{itemize}
    \item \textbf{Case 1 (Nearly degenerate Liu shock):} $v_m \in [-v_+/2, (-1/2 + \theta_*) v_+)$.
    
    \item \textbf{Case 2 (Fully non-degenerate Liu shock):} $v_m \in [(-1/2 + \theta_*) v_+, 0)$,
\end{itemize}
where $\theta_* > 0$ is a uniformly small positive constant, independent of $v_+, v_m$ and $v_-$.

%The weight function $a(\wt{v}^S(\xi))$ is defined on a case-by-case basis.

\vskip0.3cm
\textbf{\underline{The weight function $a(\wt{v}^S(\xi))$ for Case 1}:}
% $v_m \in [-v_+/2, (-1/2 + \theta_*) v_+)$}

Since the shock profile $\wt{v}^S(\xi)$ is strictly increasing (Lemma \ref{lemma:shock_properties}), there exist two unique points $\xi_0$ and $\xi_*$ such that $\wt{v}^S(\xi_0) = 0$ and $\wt{v}^S(\xi_*) = v_* := v_m/2$, respectively. We define the weight function $a(\wt{v}^S(\xi))$ for Case 1 as follows:
\begin{equation}\label{w.deg}
a(\wt{v}^S(\xi)) :=
\begin{cases}
\displaystyle \frac{v_+ - v_*}{s} (\wt{v}^S + v_+ + v_m), \hspace{3.5cm} \xi \in (\xi_0, +\infty) \; \Leftrightarrow \; \wt{v}^S \in (0, v_+), \\[4mm]
\displaystyle \frac{v_+ - v_*}{s} \left[ v_+ + v_m + \wt{v}^S - \frac{3}{v_m^2}(\wt{v}^S)^3 + \frac{3}{v_m^3}(\wt{v}^S)^4 \right], \  \xi \in (\xi_*, \xi_0] \; \Leftrightarrow \; \wt{v}^S \in (v_*, 0], \\[4mm]
\displaystyle \frac{v_+ - v_*}{s} \left(\frac{\wt{v}^S}{4} +v_+ + \frac{19}{16}v_m\right), \hspace{2.75cm} \xi \in (-\infty, \xi_*) \; \Leftrightarrow \; \wt{v}^S \in (v_m, v_*].
\end{cases}
\end{equation}
One can verify that the above weight function $a \in C^2(\mathbb{R})$ and has the following bounds
\begin{align}
    0 < \frac{v_+ - v_*}{s} \left( \frac{23}{16}v_m + v_+ \right) <a< \frac{v_+ - v_*}{s} (2v_+ + v_m), \label{w.bounds2} \\
    \frac{v_+ - v_*}{4s} \leq a' := \frac{d a}{d\wt{v}^S} \leq \frac{v_+ - v_*}{s}, \quad \text{and} \quad a'' \geq 0. \label{w.deriv2}
\end{align}

\vskip0.3cm
\textbf{\underline{The weight function $a(\wt{v}^S(\xi))$ for Case 2}:}
% $v_m \in [(-1/2 + \theta_*) v_+, 0)$}

We define the weight function $a(\wt{v}^S(\xi))$ for Case 2 as follows:
\begin{equation}\label{w.nondeg}
a(\wt{v}^S) := \frac{\alpha \delta_S}{s} (\wt{v}^S + v_+ + v_m),
\end{equation}
where $\alpha := \frac{2}{3} + \frac{1}{6\theta_* + 3} > 0$, and $s$ is the shock speed given in \eqref{speed}. It follows directly that
\begin{align}
    0 < \frac{\alpha \delta_S}{s}(v_+ + 2v_m) < a < \frac{\alpha \delta_S}{s}(2v_+ + v_m), \label{w.bounds1} \\
    a^\prime:= \frac{d a}{d\wt{v}^S} = \frac{\alpha \delta_S}{s} > 0, \quad \text{and} \quad a'' \equiv 0. \label{w.deriv1}
\end{align}
The weight functions $a(\wt{v}^S(\xi))$ in \eqref{w.deg} and \eqref{w.nondeg} play a crucial role in our stability analysis by the $a$-contraction method.

\subsection{Construction of the Shift Functions}
\label{subsec:shift}

Correspondingly, the time-dependent shift functions $\mb X(t)$ are constructed as follows. %Denote $\big((\wt{v}^S)^{\mb X}(\xi), (\wt{u}^S)^{\mb X}(\xi) \big) := \big(\wt{v}^S(\xi + \mb X(t)), \wt{u}^S(\xi + \mb X(t))\big)$.
\vskip0.3cm
\textbf{\underline{Shift function $\mb X(t)$ for Case 1}:}
%$v_m \in [-v_+/2, (-1/2 + \theta_*) v_+)$}
%Define $\mb X(t)$ as the solution to the ODE
\begin{equation}\label{ode.shift2}
\begin{cases}
\displaystyle \dot{\mb X}(t) = \frac{24}{(v_+ - v_*)^2} \left[ \int_{\mathbb{R}} a((\wt{v}^S)^{\mb X}) \, \sigma'((\wt{v}^S)^{\mb X}) (\wt{v}^S)^{\mb X}_\xi \, \phi \, d\xi
+ \int_{\mathbb{R}} a((\wt{v}^S)^{\mb X}) (\wt{u}^S)^{\mb X}_\xi \, \psi \, d\xi \right], \\[4mm]
\mb X(0) = x_0,
\end{cases}
\end{equation}
\vskip0.3cm
\textbf{\underline{Shift function $\mb X(t)$ for Case 2}:}
% $v_m \in [(-1/2 + \theta_*) v_+, 0)$}
%Define $\mb X(t)$ as the solution to the ODE
\begin{equation}\label{ode.shift1}
\begin{cases}
\displaystyle \dot{\mb X}(t) = \frac{16}{\delta_S^2} \left[ \int_{\mathbb{R}} a((\wt{v}^S)^{\mb X}) \, \sigma'((\wt{v}^S)^{\mb X}) (\wt{v}^S)^{\mb X}_\xi \, \phi \, d\xi
+ \int_{\mathbb{R}} a((\wt{v}^S)^{\mb X}) (\wt{u}^S)^{\mb X}_\xi \, \psi \, d\xi \right], \\[4mm]
\mb X(0) = x_0,
\end{cases}
\end{equation}
where the initial shock location $x_0\in \mathbb{R}$ in both \eqref{ode.shift2} and \eqref{ode.shift1} can be chosen arbitrarily, with or without zero mass for the initial perturbation around the shock profile, and even with infinite mass. 

%where $x_0$ is any fixed constant,  and without loss of generality we set $x_0 = 0$.

Once the ODE  in \eqref{ode.shift2} or \eqref{ode.shift1}  for the shift function $\mb X(t)$ is defined, the shift  function $\mb X(t)$ is uniquely determined by the classical Cauchy-Lipschitz theorem  as in \cite{KMY}. To apply this lemma, denote the right-hand side of the ODE  \eqref{ode.shift2} or \eqref{ode.shift1} by $Q(t, \mb X)$. Uniform bounds can be established as follows:
\begin{align}
    &\sup_{\mb X} |Q(t,\mb X)| \leq \frac{C\|a\|_{L^\infty(\mathbb{R})} }{v_+^2} \| (\phi, \psi) \|_{L^\infty(\mathbb{R})} \int_{\mathbb{R}} \left( |(\wt{u}^S)^{\mb X}_\xi| + |(\wt{v}^S)^{\mb X}_\xi| \right) d\xi \leq Cv_+\| (\phi, \psi) \|_{L^\infty(\mathbb{R})}, \label{bound.M} \\
    &\qquad\qquad\qquad\qquad\sup_{\mb X} |\partial_{\mb X} Q(t, \mb X)| \leq  Cv_+\big[\| (\phi_\xi, \psi_\xi) \|_{L^\infty(\mathbb{R})}+\delta_S^2\big]\label{bound.dM}
\end{align}
for some constant $C > 0$, which are uniformly with respect to $t$ and $\mb X$. Consequently, both \eqref{ode.shift2}) and  \eqref{ode.shift1}  admit a unique absolutely continuous solution $\mb X(t)$ on any interval $t\in[0, T]$. Furthermore, since $|\dot{\mb X}(t)| \leq C$ by \eqref{bound.M} , we have the uniform estimate
\begin{equation}\label{growth.shift}
|\mb X(t)| \leq C t, \quad \forall t \in [0, T].
\end{equation}

\subsection{Proof of Theorem \ref{thm:main}.}
\label{subsec:proof_framework}

We want to prove Theorem \ref{thm:main} by the standard continuity argument, based on the local-in-time existence of the solution and the uniform-in-time a priori estimates.

For the local-in-time existence of the solution to the viscoelastic system \eqref{VES}, the shift function $\mb X(t)$ to the shock profile in the ansatz \eqref{ansatz_smooth} is not necessary. Therefore, we consider the background profile
$$
(\hat v, \hat u)(t,x) := \left( \widetilde{v}^S(x - s t+x_0) + \widetilde{v}^R(1+t, x) - v_m,\; \widetilde{u}^S(x - s t+x_0) + \widetilde{u}^R(1+t, x) - u_m \right).
$$
In the moving coordinate $\xi = x - s t$, the above profile $(\hat v, \hat u)(t, \xi) := (\hat v, \hat u)(t, \xi + s t)$ satisfies the system
\begin{align} \label{profile_eq}
\begin{cases}
       \hat v_t - s \hat v_\xi - \hat u_\xi = 0, \\
      \hat u_t - s \hat u_\xi - \sigma(\hat v)_\xi - \mu \hat u_{\xi\xi} = \hat F,
\end{cases}
\end{align}
where
\begin{equation} \label{profile_error}
\hat F := -\Big[ \big( \sigma'(\hat v) - \sigma'(\widetilde{v}^S) \big) \widetilde{v}^S_\xi + \big( \sigma'(\hat v) - \sigma'(\widetilde{v}^R) \big) \widetilde{v}^R_\xi \Big] - \mu \widetilde{u}^R_{\xi\xi}.
\end{equation}
The perturbation $(\hat \phi, \hat \psi)(t,\xi) := (v - \hat v, u - \hat u)(t,\xi)$ satisfies, for any initial time $\tau \ge 0$,
\begin{align} \label{pert_eq_unshifted}
\begin{cases}
\hat \phi_t - s \hat \phi_\xi - \hat \psi_\xi = 0,\qquad t>\tau, x\in\mathbb{R},\\
\hat \psi_t - s \hat \psi_\xi - [\sigma(\hat \phi + \hat v) - \sigma(\hat v)]_\xi - \mu \hat \psi_{\xi\xi} = -\hat F,\\
(\hat \phi, \hat \psi)(\tau, \xi) = (\hat \phi_\tau, \hat \psi_\tau)(\xi).
\end{cases}
\end{align}

Then we can obtain the local-in-time existence of the solution to the Cauchy problem \eqref{pert_eq_unshifted}, whose proof is standard and we omit the detailed proof for simplicity.

\begin{proposition}[Local existence of the solution]\label{prop:local_existence}
For any $M > 0$, there exists a time $t_0 = t_0(M) > 0$, independent of the initial time $\tau$, such that if $(\hat \phi_\tau, \hat \psi_\tau) \in H^1(\mathbb{R})$ with $\|(\hat \phi_\tau, \hat \psi_\tau)\|_{H^1(\mathbb{R})} \leq M$, then the Cauchy problem \eqref{pert_eq_unshifted} admits a unique solution $(\hat \phi, \hat \psi)$ on the interval $[\tau, \tau + t_0]$. Moreover, this solution satisfies
\begin{equation}\label{local_soln_est}
\begin{cases}
\hat \phi \in C([\tau, \tau + t_0]; H^1(\mathbb{R})), \\
\hat \psi \in C([\tau, \tau + t_0]; H^1(\mathbb{R})) \cap L^2(\tau, \tau + t_0; H^2(\mathbb{R})), \\
\displaystyle\sup_{t \in [\tau, \tau + t_0]} \| (\hat \phi, \hat \psi)(t) \|_{H^1(\mathbb{R})} \leq 2M.
\end{cases}
\end{equation}
\end{proposition}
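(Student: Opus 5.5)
The plan is a standard contraction-mapping argument for the hyperbolic--parabolic system \eqref{pert_eq_unshifted}, treating $\hat\phi$ by integrating along the characteristics of the transport equation and $\hat\psi$ through the heat semigroup $e^{\mu(t-\tau)\partial_\xi^2}$ (in the moving frame, the drift $-s\partial_\xi$ only translates the kernel). The first step is to record the properties of the background profile that the estimates will use. By Lemma \ref{lemma:shock_properties} and Lemma \ref{lemma1.2}, $(\hat v,\hat u)$ is smooth and bounded with $\|(\hat v_\xi,\hat u_\xi,\hat v_{\xi\xi},\hat u_{\xi\xi})\|_{L^\infty}\le C$ uniformly in $t\ge 0$. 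The source satisfies
\[
\|\hat F(t)\|_{H^1}\le C\big(\|\wt v^S_\xi\,\wt v^R_\xi\|_{H^1}+\|\wt u^R_{\xi\xi}\|_{H^1}\big)\le C,
\]
also uniformly in $t$, because $\sigma'(\hat v)-\sigma'(\wt v^S)=O(|\wt v^R-v_m|)$ and $\sigma'(\hat v)-\sigma'(\wt v^R)=O(|\wt v^S-v_m|)$. The resulting terms are products of a bounded factor with an $L^2$ function, namely $\wt v^S_\xi$ or $\wt v^R_\xi$. These bounds do not depend on $\tau$, and this is what will make $t_0$ independent of $\tau$. Since $\hat v$ stays in a small neighbourhood of $0$ (as $|v_\pm|,|v_m|$ are small), $\sigma$ is smooth and $\sigma'(\hat v)\ge c>0$ there.

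Next, fix $M$ and the space
\[
X_{t_0}=\Big\{(\hat\phi,\hat\psi):\ \sup_{[\tau,\tau+t_0]}\|(\hat\phi,\hat\psi)\|_{H^1}\le 2M,\ \hat\psi_\xi\in L^2(\tau,\tau+t_0;H^1)\Big\}.
\]
Given $(\bar\phi,\bar\psi)\in X_{t_0}$, define $(\hat\phi,\hat\psi)$ by solving two linear problems. The first is the transport equation $\hat\phi_t-s\hat\phi_\xi=\bar\psi_\xi$, whose solution is explicit: $\hat\phi(t,\xi)=\hat\phi_\tau(\xi+s(t-\tau))+\int_\tau^t\bar\psi_\xi(r,\xi+s(t-r))\,dr$. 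The second is the heat equation
\[
\hat\psi_t-s\hat\psi_\xi-\mu\hat\psi_{\xi\xi}=[\sigma(\bar\phi+\hat v)-\sigma(\hat v)]_\xi-\hat F.
\]
Because $H^1(\mathbb R)\hookrightarrow L^\infty$, the nonlinearity $\sigma(\bar\phi+\hat v)-\sigma(\hat v)$ is bounded in $H^1$ by $C(M)$, and $[\cdot]_\xi$ is bounded in $L^2$ by $C(M)$.

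Parabolic energy estimates give
\[
\sup\|\hat\psi\|_{H^1}^2+\mu\int\|\hat\psi_\xi\|_{H^1}^2\le\|\hat\psi_\tau\|_{H^1}^2+C(M)t_0 .
\]
The transport formula gives $\|\hat\phi(t)\|_{H^1}\le\|\hat\phi_\tau\|_{H^1}+\sqrt{t_0}\,\|\bar\psi_\xi\|_{L^2_tH^1}$. Taking $t_0$ small, depending only on $M$ and the uniform profile bounds, the map sends $X_{t_0}$ into itself.

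The main obstacle is the derivative loss in the $\hat\phi$ equation. The transport equation has no smoothing, so $\hat\phi_\xi$ requires $\bar\psi_{\xi\xi}\in L^2_tL^2$, and this is available only from the parabolic gain. The parabolic estimate for $\hat\psi_{\xi\xi}$ in turn needs $\bar\phi_\xi$ through $[\sigma(\bar\phi+\hat v)]_\xi$. The loop closes at exactly the $H^1$ level, but the contraction has to be proved in a weaker norm. I would therefore estimate differences in $C([\tau,\tau+t_0];L^2)$ for $\hat\phi$ and in $C([\tau,\tau+t_0];L^2)\cap L^2(\tau,\tau+t_0;H^1)$ for $\hat\psi$, where the factor $\sqrt{t_0}$ yields a contraction. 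The bounded set of $X_{t_0}$ is closed in this weaker topology by weak-$*$ lower semicontinuity, so the fixed point exists and inherits the $H^1$ bounds. An alternative route is to use the standard theory for symmetric hyperbolic--parabolic systems (Kawashima), symmetrizing with weight $\sigma'(\hat v)$; the energy estimate then absorbs $\hat\phi_\xi$ through the cross term $\int\sigma'(\hat v)\hat\phi_\xi\hat\psi_{\xi\xi}$ combined with the transport structure.

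Finally, time continuity with values in $H^1$ follows from the energy identities, which hold as equalities for the linear problems. Uniqueness follows from the same $L^2$ difference estimate and Gronwall's inequality. Since every constant depends only on $M$ and on the $\tau$-uniform bounds for $(\hat v,\hat u,\hat F)$, the time $t_0=t_0(M)$ is independent of $\tau$, which gives \eqref{local_soln_est}.
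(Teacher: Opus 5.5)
The paper gives no proof of this proposition; it calls the result standard and omits it, so there is no argument to compare yours against. Your scheme is a correct version of that standard argument, but one step does not go through as written:
\begin{itemize}
\item the characteristic formula for $\hat\phi$;
\item parabolic energy estimates for $\hat\psi$ with the nonlinearity frozen at $\bar\phi$;
\item a fixed point;
\item $C([\tau,\tau+t_0];H^1)$-regularity read off from the two linear problems;
\item $\tau$-independence from the $t$-uniform bounds on $(\hat v,\hat u,\hat F)$.
\end{itemize}

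\textbf{The step to repair.} Your set $X_{t_0}$ asks only that $\hat\psi_\xi\in L^2(\tau,\tau+t_0;H^1)$, with no quantitative bound. Yet the self-map step uses
\[
\|\hat\phi(t)\|_{H^1}\le\|\hat\phi_\tau\|_{H^1}+\sqrt{t_0}\,\|\bar\psi_\xi\|_{L^2_tH^1}.
\]
Over $X_{t_0}$ as defined the last factor is unbounded, so $t_0$ cannot be chosen depending only on $M$. Add a constraint such as $\mu\int_\tau^{\tau+t_0}\|\bar\psi_\xi\|_{H^1}^2\,dt\le 2M^2$ to $X_{t_0}$. Your own parabolic estimate, $\mu\int\|\hat\psi_\xi\|_{H^1}^2\le\|\hat\psi_\tau\|_{H^1}^2+C(M)t_0$, shows this bound is reproduced for small $t_0$. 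The same bound is also what your weak-$*$ closedness argument needs.

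\textbf{Smaller points.}
\begin{itemize}
\item The display bounding $\hat F$ by $\|\wt v^S_\xi\,\wt v^R_\xi\|_{H^1}$ is not right. The interaction terms are $(\wt v^R-v_m)\wt v^S_\xi$ and $(\wt v^S-v_m)\wt v^R_\xi$, as your next sentence correctly says. You also only need $\sup_t\|\hat F(t)\|_{L^2}\le C$, because multiplying by $-\hat\psi_{\xi\xi}$ handles an $L^2$ source. An $H^1$ bound would involve third derivatives of $\wt u^R$, which Lemma~\ref{lemma1.2} does not record.
\item The weaker-norm contraction is not actually forced. The only nonlinearity sits in the parabolic equation and depends linearly on $\bar\phi_\xi$, so differences satisfy
\[
\|\delta\hat\phi\|_{L^\infty_tH^1}\le\sqrt{t_0}\,\|\delta\bar\psi_\xi\|_{L^2_tH^1}
\]
and
\[
\|\delta\hat\psi\|_{L^\infty_tH^1}+\sqrt{\mu}\,\|\delta\hat\psi_\xi\|_{L^2_tH^1}\le C(M)\sqrt{t_0}\,\|\delta\bar\phi\|_{L^\infty_tH^1}.
\]
You can therefore contract directly on a closed ball of $C([\tau,\tau+t_0];H^1)\times\bigl(C([\tau,\tau+t_0];H^1)\cap L^2(\tau,\tau+t_0;H^2)\bigr)$ and skip the lower-semicontinuity step.
\item If you instead cite Kawashima's local theorem verbatim, it presupposes symmetrizability on the state domain, that is $\sigma'(v)=1-3v^2>0$. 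That restricts $M$, whereas the proposition is stated for every $M>0$. Your main route never uses the sign of $\sigma'$, which is exactly why it covers arbitrary $M$.
\end{itemize}
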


The following proposition provides the key uniform-in-time a priori estimates for the original perturbation $(\phi, \psi)$ around the shifted ansatz \eqref{ansatz_smooth}, whose proof will be shown in Section \ref{sec-acontraction}.

\begin{proposition}[Uniform-in-time a priori estimates]\label{prop:apriori}
For a given far-field state $(v_+,u_+)\in\mathbb R_+\times\mathbb R$, there exists a constant $\delta_*>0$ such that if $|v_+|<\delta_*$, the following holds true. 

Assume that $(v_m,u_m)\in S_2(v_+,u_+)$ and
$(v_-,u_-)\in R_1(v_m,u_m)$ satisfy \eqref{s}. Let $\mb X(t)$ be the absolutely continuous shift function determined by \eqref{ode.shift1} (or \eqref{ode.shift2}) with the corresponding weight function $a$ given in \eqref{w.nondeg} (or \eqref{w.deg}). Let $(\phi,\psi)$ be a solution of the perturbation system \eqref{Per} on $[0,T]$ for some $T>0$, satisfying
\[
\phi \in C([0,T]; H^1(\mathbb{R})), \qquad 
\psi \in C([0,T]; H^1(\mathbb{R})) \cap L^2(0,T; H^2(\mathbb{R})).
\]
Then there exist constants $\delta_0, \eps_0, C_0 >0$ such that if $\delta_R < \delta_0$ and $\di \sup_{t\in[0,T]} \|(\phi,\psi)(t)\|_{H^1(\mathbb{R})}\le \varepsilon_0$, then the
following estimate holds for any $t\in[0,T]$:
\begin{align}\label{priori_main}
\begin{aligned}
& \|(\phi,\psi)(t)\|^2_{H^1(\mathbb{R})}
+ \int_0^t \left|\dot{\mb X}(\tau)\right|^2 \, d\tau
+\delta_S \int_0^t \int_{\mathbb{R}} \left|(\widetilde{v}^S)^{\mb X}_\xi\right| (\phi^2 + \psi^2)   d\xi d\tau \\
&\quad \quad+ |v_-|\int_0^t \int_{\mathbb{R}} \left |\widetilde{u}^R_\xi\right|\phi^2 \, d\xi d\tau
+ \int_0^t \|\phi_\xi(\tau)\|^2 \, d\tau
+ \int_0^t \|\psi_\xi(\tau)\|^2_{H^1(\mathbb{R})} \, d\tau \\
&\leq C_0 \left( \|(\phi_0, \psi_0)\|^2_{H^1(\mathbb{R})} + \delta_R^{1/3} \right),
\end{aligned}
\end{align}
and 
\begin{equation}\label{bound_Xdot}
|\dot{\mb X}(t)| \leq C \|(\phi, \psi)(t, \cdot)\|_{L^\infty(\mathbb{R})}, \qquad 0 \le t \le T.
\end{equation}
\end{proposition}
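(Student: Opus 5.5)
The proof of Proposition~\ref{prop:apriori} will follow the $a$-contraction scheme of Kang--Vasseur--Wang \cite{M-V-W}, adapted to the weights \eqref{w.deg}/\eqref{w.nondeg}.

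\textbf{Shift bound.} The bound \eqref{bound_Xdot} is immediate from \eqref{bound.M}. Since $\|a\|_{L^\infty}\lesssim \delta_S^2$ and $\int|(\wt v^S_\xi,\wt u^S_\xi)|\,d\xi\lesssim\delta_S$, the prefactor $\delta_S^{-2}$ (or $(v_+-v_*)^{-2}$) is compensated.

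\textbf{Step 1: weighted relative-entropy identity.} With $\eta(U|\wt U)=\tfrac12\psi^2+Q(v|\wt v)$, where $Q(v|\wt v)=\int_{\wt v}^v(\sigma(\wt v)-\sigma(z))\,dz$, I will compute $\frac{d}{dt}\int a^{\mb X}\eta\,d\xi$ from \eqref{Per}. The result will be written as $\dot{\mb X}\,\mathcal Y(U)+\mathcal B(U)-\mathcal G(U)$. Here $\mathcal Y$ is exactly the linear functional appearing in \eqref{ode.shift2}/\eqref{ode.shift1} plus quadratic terms. The choice of shift then makes $\dot{\mb X}\mathcal Y=-\tfrac{c}{\delta_S^2}|\mathcal Y_{lin}|^2+\dots$, which gives a good term $-c\delta_S^2|\dot{\mb X}|^2$.

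The remaining terms are:
\begin{itemize}
\item a hyperbolic bad term $\sim\int a'\wt v^S_\xi\,\psi\,(\sigma(v)-\sigma(\wt v))$;
\item the good terms from $-\int a\,\sigma''(\wt v)\wt v_\xi Q$ and $\mu\int a\psi_\xi^2$;
\item the rarefaction good term $\int a\,|\wt u^R_\xi|\phi^2$, which is coercive because $\sigma''(\wt v^R)>0$ on $v_-<0$;
\item interaction and error terms from $F$.
\end{itemize}

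\textbf{Step 2: reduce to a weighted Poincar\'e problem.} Following \cite{H-W-Z}, I change variables $y=(\wt v^S-v_m)/\delta_S$ (or to $[v_*,v_+)$ in Case~1). Using $\psi\approx -s\phi+$(parabolic part) to leading order, the bad terms become $\int_0^1|\Psi|^2$-type integrals with $\Psi=a\psi$, dominated by the diffusion $\int y(1-y)|\Psi_y|^2$ through the sharp Poincar\'e inequality $\int_0^1|f-\bar f|^2\le\frac12\int_0^1y(1-y)|f'|^2$. The mean $\bar f$ is absorbed by $\delta_S^2|\dot{\mb X}|^2$ via the shift definition.

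\textbf{Main obstacle.} The hardest step is making the constants close, because $\sigma''$ changes sign across $v=0$ inside the shock. In Case~2 the profile ODE \eqref{VS_scalar} is cubic with a non-degenerate third factor. The linear weight with slope $\alpha\delta_S/s$, $\alpha>2/3$ tuned to $\theta_*$, should make the $a'$ good term dominate the region $\wt v^S<0$ where $\sigma''>0$ gives the wrong sign.

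In Case~1 the left tail decays only algebraically, so Poincar\'e is used only on $[\xi_*,\infty)$. On $(-\infty,\xi_*]$, where $\wt v^S\le v_m/2<0$, the term $-\sigma''\wt v^S_\xi Q$ has a good sign, and the piecewise polynomial weight with $a''\ge0$ helps. I would first verify the algebraic inequality symbolically in $y$ after inserting \eqref{VS_scalar}, expecting the specific constants $24$ and $\tfrac{3}{v_m^2}$ to be exactly what make the quadratic form nonnegative.

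\textbf{Step 3: error terms and higher-order estimates.} I bound $F_1$ using the separation of the waves: $\wt v^R_\xi\,\wt v^S_\xi$ is small because the rarefaction travels with negative speed and the shock with positive speed. This gives $\int_0^t\|F_1\|\lesssim\delta_R\delta_S\int e^{-c\delta_S\tau}$ in Case~2, and an algebraic analogue in Case~1 using $(1+\delta_S^2|\xi|)^{-2}$ tails. For $F_2$ I use $\|\wt u^R_{\xi\xi}\|_{L^1}\lesssim\min(\delta_R,t^{-1})$ with Gagliardo--Nirenberg, which yields the $\delta_R^{1/3}$ term.

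I then do the standard $H^1$ estimates. The $\phi_\xi$ dissipation comes from testing the momentum equation with $-\phi_\xi$ via the effective velocity, and the $\psi_{\xi\xi}$ bound from testing with $-\psi_{\xi\xi}$. Smallness of $\varepsilon_0$, $\delta_S$ and $\delta_R$ absorbs the cubic remainders. Summing all the estimates gives \eqref{priori_main}.
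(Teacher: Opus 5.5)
Your overall architecture matches the paper's: relative entropy weighted by the prescribed $a$, the shift supplying $|\dot{\mb X}|^2$ plus a mean-square term, weighted Poincar\'e on $[\xi_*,\infty)$ in Case~1 and on $\mathbb R$ in Case~2, interaction estimates giving $\delta_R^{1/3}$, and $H^1$ bounds by testing with $-\phi_\xi$ and $-\psi_{\xi\xi}$. The coercivity step is the whole difficulty, and it is misdiagnosed in Case~2 and incomplete in Case~1.

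\textbf{The sign picture is reversed.} On the left of the energy identity the hyperbolic term is $\frac s2\int a\,\sigma''(\wt v^S)\wt v^S_\xi\phi^2=-3s\int a\,\wt v^S\wt v^S_\xi\phi^2$. This is \emph{favorable} where $\wt v^S<0$, as you yourself use on $(-\infty,\xi_*]$ in Case~1, and unfavorable where $\wt v^S>0$. Your Case~2 sentence has this backwards.

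After completing the square in $\phi$ and applying Poincar\'e, the $\wt v^S_\xi\psi^2$-coefficient in Case~2 is, up to the factor $\frac{\alpha\delta_S}{2s^2}(1+O(v_+^2))$, equal to $(6\alpha-6)(v_++v_m)\wt v^S+(12\alpha-2)v_+v_m+(6\alpha-2)(v_+^2+v_m^2)+O(v_+^4)$. You need $\alpha<1$ to absorb the Poincar\'e cost $\mu\alpha\int a\psi_\xi^2$ into the viscosity. Hence this coefficient is smallest as $\wt v^S\to v_+$, and there it equals $\frac92(\alpha-1)v_+^2<0$ when $v_m=-v_+/2$. That, not the negative side, is what rules out the linear weight near degeneracy.

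\textbf{A nonlocal step is missing in Case~1.} The shift \eqref{ode.shift2} controls a full-line functional, whereas Poincar\'e on $[\xi_*,\infty)$ needs the half-line mean $\int_{\xi_*}^{\infty}a\,\wt v^S_\xi\psi\,d\xi$. Splitting $\mathbf Y_1$ leaves the negative term $-\frac{6}{5(v_+-v_*)^2}\bigl|\int_{-\infty}^{\xi_*}a\,m(\wt v^S)\wt v^S_\xi\psi\,d\xi\bigr|^2$, with $m\to2$. This must be beaten by the local good term $\frac{7v_+^2}{64s^2}(v_+-v_*)\int_{-\infty}^{\xi_*}\wt v^S_\xi\psi^2\,d\xi$, using H\"older and the explicit smallness of $\int_{-\infty}^{\xi_*}a^2\wt v^S_\xi\,d\xi$. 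The margin is only about $\frac{3}{160}v_+^3/s^2$, so ``the mean is absorbed by the shift'' is not automatic.

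The pointwise lower bounds themselves are announced rather than proved: $H_1+\dots+H_4$ on $(v_*,v_+)$, $L(\wt v^S)$ on $(v_m,v_*]$, and $h$ in Case~2. Your $Q(v|\wt v)$ also has the wrong sign; with $\sigma'>0$ the relative entropy is $\int_{\wt v}^{v}(\sigma(z)-\sigma(\wt v))\,dz$.

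\textbf{The interaction step fails as sketched in the degenerate case.} The rarefaction sees $\wt v^S-v_m$, whose tail for $v_m=-v_+/2$ is only $(1+\delta_S^2|\xi|)^{-1}$, not the $(1+\delta_S^2|\xi|)^{-2}$ tail of $\wt v^S_\xi$ that you cite. Hence $\|(\wt v^S-v_m)\wt v^R_\xi\|_{L^1}$ decays only like $\delta_R\delta_S(1+\delta_S^2t)^{-1}$. This is sharp, since the rarefaction's mass $\delta_R$ sits a distance $O(t)$ to the left of the shock. An $L^1_t$ ``algebraic analogue'' of your Case~2 bound therefore grows like $\log t$.

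The same defect affects the interaction terms inside the quadratic form, which your list omits, e.g.\ $3\int a\,\wt u^R_\xi[(\wt v^S)^{\mb X}-v_m]\phi^2$ in $\mathbf J$. The paper handles this in two ways:
\begin{itemize}
\item It applies the Gagliardo--Nirenberg $4/3$-device to all of $F$, not just $F_2$. Then only $\int_0^\infty\|F\|_{L^1}^{4/3}d\tau$ is needed, and $(1+\delta_S^2t)^{-4/3}$ is integrable.
\item In $\mathbf J_1$ it uses Young, $3|(\wt v^S-v_m)\phi^2|\le(\wt v^S-v_m)^2+C\phi^4$. The source becomes $\|(\wt v^S-v_m)^2\wt v^R_\xi\|_{L^1}\lesssim\delta_R\delta_S^2[(1+\delta_S^2t)^{-2}+e^{-ct}]$, and $\int a|\wt u^R_\xi|\phi^4$ is absorbed by $\mathbf G^R$, which requires $\eps_0^2\ll|v_-|$.
\end{itemize}
An $L^2$ pairing exploiting $\|\wt v^R_\xi\|\lesssim\delta_R^{1/2}t^{-1/2}$ would also work, but some such device is necessary.
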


\vskip0.3cm
Based the local-in-time existence result Propositions \ref{prop:local_existence} and the uniform-in-time a priori estimates \ref{prop:apriori}, we can prove our main result Theorem \ref{thm:main}  as follows.

\textbf{Global-in-time existence and uniform-in-time estimates.} We proceed by a continuation argument. Define
\[
\varepsilon_* := \min\left\{ \frac{\varepsilon_0}{4},\; \sqrt{ \frac{\varepsilon_0^2}{64 C_0} - \delta_R^{1/3} } \right\}, \qquad M := \frac{\varepsilon_0}{4},
\]
where $\varepsilon_0$ and $C_0$ are the positive constants from Proposition \ref{prop:apriori}. Choose $\delta_0$ suitably small, we can ensure $\frac{\varepsilon_0^2}{64 C_0} - \delta_R^{1/3} > 0$, hence $\varepsilon_* > 0$.

If $\|(\hat\phi_0, \hat\psi_0)\|_{H^1(\mathbb{R})} < \varepsilon_* \le M$,  then by Proposition \ref{prop:local_existence}, there exists a time $T_1 = T_1(M) > 0$ such that a unique solution $(\hat\phi, \hat\psi)$ of \eqref{pert_eq_unshifted} exists on $[0, T_1]$ and satisfies
\[
\sup_{t \in [0, T_1]} \|(\hat\phi, \hat\psi)(t)\|_{H^1(\mathbb{R})} \leq 2 \|(\hat\phi_0, \hat\psi_0)\|_{H^1(\mathbb{R})} \leq \frac{\varepsilon_0}{2}.
\]
By Sobolev embedding, $\|(\hat\phi, \hat\psi)\|_{L^{\infty}(\mathbb{R})} \le \sqrt{2} \|(\hat\phi, \hat\psi)\|_{H^1(\mathbb{R})} \le C \varepsilon_0$, we have $\|(v, u)\|_{L^{\infty}(\mathbb{R})} \le C$ on $[0, T_1]$. Consequently, the bounds \eqref{bound.M} and \eqref{bound.dM} hold for $t \in [0, T_1]$, which guarantee the existence of the shift $\mb X(t)$ on this interval. Moreover, by choosing $T_1$ possibly smaller, we can ensure
\[
\|(\widetilde{v}^S - (\widetilde{v}^S)^{\mb X}, \widetilde{u}^S - (\widetilde{u}^S)^{\mb X})(t)\|_{H^1(\mathbb{R})} \leq C t \|(\widetilde{v}^S_\xi, \widetilde{u}^S_\xi)\|_{H^1(\mathbb{R})} \leq \frac{\varepsilon_0}{8}, \quad \forall t \in [0, T_1].
\]
Therefore, for $t \in [0, T_1]$,
\begin{align*}
\|(\phi, \psi)(t)\|_{H^1(\mathbb{R})}
&\le \|(\widetilde{v}^S - (\widetilde{v}^S)^{\mb X}, \widetilde{u}^S - (\widetilde{u}^S)^{\mb X})(t)\|_{H^1(\mathbb{R})} + \|(\hat \phi, \hat \psi)(t)\|_{H^1(\mathbb{R})} \\
&\le \frac{\varepsilon_0}{8} + \frac{\varepsilon_0}{2} < \varepsilon_0.
\end{align*}
Since $\mb X(t)$ is absolutely continuous and $(\hat\phi, \hat\psi) \in C([0,T_1];H^1(\mathbb{R}))$, we also have $(\phi, \psi) \in C([0,T_1];H^1(\mathbb{R}))$.

Define the maximal time of existence:
\[
T_{\max} := \sup \left\{ t > 0 \;\Big|\; \sup_{\tau \in [0, t]} \|(\phi, \psi)(\tau)\|_{H^1(\mathbb{R})} \le \varepsilon_0 \right\}.
\]
If $T_{\max} < \infty$, then the continuity implies $\displaystyle\sup_{\tau \in [0, T_{\max}]} \|(\phi, \psi)(\tau)\|_{H^1(\mathbb{R})} = \varepsilon_0$. However, applying Proposition \ref{prop:apriori} on $[0, T_{\max}]$ and using the fact that $\|(\phi_0, \psi_0)\|_{H^1(\mathbb{R})} = \|(\hat\phi_0, \hat\psi_0)\|_{H^1(\mathbb{R})}<\varepsilon_*$  yields
\[
\sup_{\tau \in [0, T_{\max}]} \|(\phi, \psi)(\tau)\|_{H^1(\mathbb{R})}
\leq \sqrt{ C_0 \left( \|(\phi_0, \psi_0)\|_{H^1(\mathbb{R})}^2 + \delta_R^{1/3} \right) }
\leq \sqrt{ C_0 \left( \varepsilon_*^2 + \delta_R^{1/3} \right) }
\leq \frac{\varepsilon_0}{8},
\]
which contradicts the fact $\displaystyle\sup_{\tau \in [0, T_{\max}]} \|(\phi, \psi)(\tau)\|_{H^1(\mathbb{R})} = \varepsilon_0$. Hence $T_{\max} = \infty$, and the solution exists globally in time.

Taking $T \to \infty$ in \eqref{priori_main} gives the global uniform-in-time estimates:
\begin{equation}\label{global_est}
\begin{aligned}
&\sup_{t \ge 0} \|(\phi,\psi)(t)\|^2_{H^1(\mathbb{R})}
+ \int_0^{\infty} \left|\dot{\mb X}(\tau)\right|^2 \, d\tau
+ {\delta_S}\int_0^{\infty} \int_{\mathbb{R}} \left |(\widetilde{v}^S)^{\mb X}_\xi\right|(\phi^2 + \psi^2) \, d\xi d\tau \\
&\qquad +|v_-| \int_0^{\infty} \int_{\mathbb{R}}\left |\widetilde{u}^R_\xi\right|  \phi^2\, d\xi d\tau
+ \int_0^{\infty} \|\phi_\xi(\tau)\|^2 \, d\tau
+ \int_0^{\infty} \|\psi_\xi(\tau)\|^2_{H^1(\mathbb{R})} \, d\tau \\
&\leq C_0 \left( \|(\phi_0, \psi_0)\|^2_{H^1(\mathbb{R})} + \delta_R^{1/3} \right),
\end{aligned}
\end{equation}
and from \eqref{bound_Xdot},
\begin{equation}\label{global_Xdot_bound}
|\dot{\mb X}(t)| \leq C \|(\phi, \psi)(t, \cdot)\|_{L^\infty(\mathbb{R})}, \qquad \forall t > 0.
\end{equation}

Since the self-similar rarefaction wave $(v^r, u^r)(x/t)$ is Lipschitz continuous in $x$ for $t>0$, and $(\widetilde{v}^R, \widetilde{u}^R)$ approximates it smoothly, we have
\[
v(t,x) - \left( v^r(x/t) + \widetilde{v}^S(x - s t + \mb X(t)) - v_m \right) \in C(0,\infty; H^1(\mathbb{R})),
\]
and
\[
u(t,x) - \left( u^r(x/t) + \widetilde{u}^S(x - s t + \mb X(t)) - u_m \right) \in C(0,\infty; H^1(\mathbb{R})).
\]
Moreover, \eqref{global_est} shows $\psi_{\xi\xi} \in L^2(0,\infty; L^2(\mathbb{R}))$, and Lemma \ref{lemma1.2} gives $\widetilde{u}^R_{\xi\xi} \in L^2(0,\infty; L^2(\mathbb{R}))$. Consequently, we have
\[
u_{xx}(t,x) - \widetilde{u}^S_{xx}(x - s t + \mb X(t)) \in L^2(0,\infty; L^2(\mathbb{R})),
\]
which completes the proof of \eqref{ext-main}.

Next, we choose $\varepsilon^*$ and $\delta^*$ in Theorem \ref{thm:main}.
It remains to ensure that the initial perturbation $\|(\phi_0, \psi_0)\|_{H^1(\mathbb{R})}$ is smaller than $\varepsilon_*$. From Lemma \ref{lemma1.2}, we have the bounds
\begin{align}
\|(\widetilde{v}^R(1,\cdot) - v_m, \widetilde{u}^R(1,\cdot) - u_m)\|_{L^2(\mathbb{R}_+)} &\leq C \delta_R, \label{est:b1} \\
\|(\widetilde{v}^R(1,\cdot) - v_-, \widetilde{u}^R(1,\cdot) - u_-)\|_{L^2(\mathbb{R}_-)} &\leq C \delta_R, \label{est:b2} \\
\|(\widetilde{v}^R_x(1,\cdot), \widetilde{u}^R_x(1,\cdot))\|_{L^2(\mathbb{R})} &\leq C \delta_R. \label{est:b3}
\end{align}
A direct estimation using the triangle inequality and \eqref{est:b1}-\eqref{est:b3} yields
\begin{align*}
\|(\phi_0, \psi_0)\|_{H^1(\mathbb{R})}
&\leq \|(v_0(\cdot) - \widetilde{v}^S(\cdot+x_0), u_0(\cdot) - \widetilde{u}^S(\cdot+x_0))\|_{L^2(\mathbb{R}_+)}\\
&\quad + \|(v_0(\cdot) - (\widetilde{v}^S(\cdot+x_0) + v_- - v_m), u_0(\cdot) - (\widetilde{u}^S(\cdot+x_0) + u_- - u_m))\|_{L^2(\mathbb{R}_-)} \\
&\quad   + \|(v_{0x}(\cdot) - \widetilde{v}^S_x(\cdot+x_0), u_{0x}(\cdot) - \widetilde{u}^S_x(\cdot+x_0))\|_{L^2(\mathbb{R})} + C_1 \delta_R,
\end{align*}
for some constant $C_1 > 0$. Therefore, by choosing
\[
\varepsilon^* := \frac{\varepsilon_*}{2}, \qquad \delta^* := \min\left\{ \delta_*, \frac{\varepsilon_*}{2C_1} \right\},
\]
and requiring the initial data to satisfy the hypotheses of Theorem \ref{thm:main} with these parameters, we obtain $\|(\phi_0, \psi_0)\|_{H^1(\mathbb{R})} \le \varepsilon_*$.

\vskip0.3cm
\textbf{Time-Asymptotic Behavior.} We now prove \eqref{con} and \eqref{as}. From the global estimate \eqref{global_est}, we have
\[
\int_0^{\infty} \|(\phi_\xi, \psi_\xi)(t)\|^2 +\left|  \frac{d}{dt} \|(\phi_\xi, \psi_\xi)(t)\|^2\right|dt < \infty.
\]
Consequently,
\[
\lim_{t \to \infty} \|(\phi_\xi, \psi_\xi)(t)\| = 0.
\]
By the Sobolev inequality $\|f\|_{L^\infty(\mathbb{R})} \le \sqrt{2} \|f\|^{1/2} \|f_\xi\|^{1/2}$ and \eqref{global_est}, it follows that
\[
\lim_{t \to \infty} \|(\phi, \psi)(t)\|_{L^\infty(\mathbb{R})} \le \sqrt{2} \lim_{t \to \infty} \left( \|\phi(t)\|^{1/2} \|\phi_\xi(t)\|^{1/2} + \|\psi(t)\|^{1/2} \|\psi_\xi(t)\|^{1/2} \right) = 0.
\]
This proves \eqref{con}. Finally, from \eqref{global_Xdot_bound}, we obtain
\[
\lim_{t \to \infty} \left|\dot{\mb X}(t)\right| \le C \lim_{t \to \infty} \|(\phi, \psi)(t)\|_{L^\infty(\mathbb{R})} = 0,
\]
which is precisely \eqref{as}. This completes the proof of Theorem \ref{thm:main}.

\section{A Priori Estimates via the $a$-Contraction Method}
\label{sec-acontraction}
\setcounter{equation}{0}

This section is devoted to proving the uniform-in-time a priori estimates in Proposition \ref{prop:apriori} by using $a$-contraction method. We assume that the Cauchy problem \eqref{Per} admits a solution
\[
\phi \in C([0,T]; H^1(\mathbb{R})), \quad \psi \in C([0,T]; H^1(\mathbb{R})) \cap L^2(0,T; H^2(\mathbb{R}))
\]
for some $T > 0$. First, we prove the following uniform-in-time $L^2$-estimate.  %The proof hinges on the following key energy estimate.

\begin{proposition} \label{prop:key_energy}
Under the hypotheses of Proposition \ref{prop:apriori}, there exists a uniform-in-time constant $C > 0$ such that for all $t \in [0, T]$,
\begin{align}\label{eq:key_energy_estimate}
\begin{aligned}
\|(\phi,\psi)(t)\|^2
&+ \int_0^t \left|\dot{\mb X}(\tau)\right|^2 \, d\tau
+ {\delta_S}\int_0^t \int_{\mathbb{R}} \left|(\widetilde{v}^S)^{\mb X}_\xi\right|  (\phi^2 + \psi^2)\, d\xi d\tau \\
&+|v_-| \int_0^t \int_{\mathbb{R}} \left|\widetilde{u}^R_\xi\right| \phi^2 \, d\xi d\tau
+ \int_0^t \|\psi_\xi(\tau)\|^2 \, d\tau\leq C \left( \|(\phi_0, \psi_0)\|^2 + \delta_R^{1/3} \right).
\end{aligned}
\end{align}
\end{proposition}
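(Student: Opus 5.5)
The plan is to run the weighted relative entropy ($a$-contraction) argument of Kang--Vasseur--Wang on the shifted perturbation. For the system \eqref{VES} the relative entropy of $U=(v,u)$ with respect to $\wt U=(\wt v,\wt u)$ is
\[
\eta(U|\wt U)=\tfrac12|u-\wt u|^2+Q(v|\wt v),\qquad Q(v|\wt v):=\int_{\wt v}^{v}\big(\sigma(\wt v)-\sigma(y)\big)\,dy .
\]
Since $\sigma'(v)=1-3v^2\ge 1/2$ for $|v|$ small, we have $Q(v|\wt v)\sim \phi^2$. We therefore take the weight $\hat a(t,\xi):=1+a^{\mb X}(\xi)$, where $a$ is given by \eqref{w.deg} or \eqref{w.nondeg}; in both cases $a=O(\delta_S)$ and $a'>0$. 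We compute
\[
\frac{d}{dt}\int_{\mathbb R}\hat a\,\eta(U|\wt U)\,d\xi
\]
using \eqref{Per} and \eqref{ansatz_eq}. The result has the structure
\[
\dot{\mb X}(t)\,\mathbf Y(t)+\mathcal B(t)-\mathcal G(t),
\]
where
\begin{itemize}
\item $\mathbf Y$ is exactly the bracket in \eqref{ode.shift2}/\eqref{ode.shift1} plus quadratic terms $\int a_\xi\,\eta$;
\item $\mathcal G$ collects the good terms: the diffusion $\mu\int\hat a|\psi_\xi|^2$, the rarefaction term $\int \hat a\,\wt u^R_\xi\,(\ldots)\phi^2$, and the shock term $\int a'(\wt v^S)^{\mb X}_\xi\,(\ldots)$;
\item $\mathcal B$ collects the bad terms: the hyperbolic flux terms weighted by $a_\xi$, the interaction errors $F_1$, and the viscous error $F_2$.
\end{itemize}

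\textbf{Shift term.} With $\dot{\mb X}=-\frac{K}{\delta^2}\mathbf Y_{\rm lin}$, where $K=24$ (Case 1) or $16$ (Case 2) and $\delta=v_+-v_*$ or $\delta_S$, the shift term becomes
\[
-\frac{\delta^2}{K}|\dot{\mb X}|^2+\frac{2\delta^2}{K}|\dot{\mb X}|^2 \quad\text{(after splitting)},
\]
which yields a good term $-\frac{\delta^2}{2K}|\dot{\mb X}|^2$. It also yields a bad term $\frac{K}{\delta^2}|\mathbf Y_{\rm lin}|^2$. Since $\mathbf Y_{\rm lin}$ involves the profile only through $(\wt v^S)_\xi$, this is a square of a weighted average of $\phi,\psi$ against $(\wt v^S)^{\mb X}_\xi$.

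\textbf{Hyperbolic part.} Using $\wt u^S-u_m=-s(\wt v^S-v_m)$, the leading hyperbolic part reduces to a quadratic form in $(\phi,\psi)$ weighted by $a'(\wt v^S)_\xi$. We diagonalize it by writing $\psi\approx -s\phi$ plus a remainder; the remainder is absorbed by the $\psi$-diffusion via a Young/Poincaré argument. After the change of variables $y=\frac{\wt v^S-v_m}{v_+-v_m}$ (or $y$ ranging over $[v_*,v_+]$ in Case 1), this reduces to a one-dimensional inequality of the form
\[
-\frac{K}{\delta^2}\Big(\int \Psi\,dy\Big)^2+\int(\text{shock factor})\,\Psi^2\,dy \;\ge\;\text{weighted Poincaré term}\;-\;\text{diffusion},
\]
where $\Psi=a\psi$ and the shock factor involves $\sigma''(\wt v^S)$ and $a'$.

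This is where the two regimes diverge.
\begin{itemize}
\item In Case 2 I would use the weighted Poincaré inequality on the whole line, $\int_0^1|f-\bar f|^2\le\frac12\int_0^1 y(1-y)|f_y|^2$. The explicit constant $\alpha=\frac23+\frac1{6\theta_*+3}$ is chosen precisely so that the sign-changing term $\sigma''(\wt v^S)(\wt v^S+\dots)$ is dominated once $v_m\ge(-\frac12+\theta_*)v_+$.
\item In Case 1 the profile has an algebraic tail near $v_m$, so the Poincaré inequality is applied only on $[\xi_*,\infty)$, i.e. on $\wt v^S\in[v_*,v_+)$. On $(-\infty,\xi_*)$, where $\wt v^S<v_m/2<0$ and $\sigma''>0$, the convex-type weight $a=\frac{\wt v^S}{4}+\dots$ together with $a''\ge0$ makes the leftover terms nonnegative.
\end{itemize}
I expect this sharp-constant verification, especially in Case 1 near degeneracy, to be the main obstacle. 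I would first check it at the exactly degenerate point $v_m=-v_+/2$ and then argue by continuity in $\theta_*$.

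\textbf{Remaining bad terms.}
\begin{itemize}
\item \emph{Rarefaction term.} Since $\wt u^R_\xi<0$ and $v_-<0$, where $\sigma''>0$, the rarefaction produces the good term $|v_-|\int|\wt u^R_\xi|\phi^2$. Its cross term with the shock is controlled by the separation of wave speeds: $|\wt v^R_\xi||(\wt v^S)^{\mb X}_\xi|\le C\delta_R\delta_S e^{-c\delta_S t}$, which is integrable provided $|\dot{\mb X}|\le C\varepsilon_0<s/2$, a consequence of \eqref{bound_Xdot}.
\item \emph{Viscous error.} The term $F_2=-\mu\wt u^R_{\xi\xi}$ is bounded by $\|\wt u^R_{\xi\xi}\|_{L^1}\|\psi\|_{L^\infty}$, interpolated with Lemma~\ref{lemma1.2}. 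This gives a time-integrable contribution $C\delta_R^{1/3}+$ small multiples of the dissipation, which is the origin of $\delta_R^{1/3}$.
\item \emph{Cubic remainders.} These are $O(\varepsilon_0+\delta_S)$ times the good terms.
\end{itemize}
Integrating in time and using $\hat a\sim1$ then yields \eqref{eq:key_energy_estimate}.
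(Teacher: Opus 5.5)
\textbf{The main gap: the choice of weight.} The step that fails is taking $\hat a=1+a^{\mb X}$. The functions in \eqref{w.deg} and \eqref{w.nondeg} are not $O(\delta_S)$. They satisfy $c\,\delta_S^2\le a\le C\delta_S^2$ with $a'=O(\delta_S)$, so $1+a$ is constant up to a relative oscillation of order $\delta_S^2$ across the shock.

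With such a weight, consider the term produced by $\sigma(\wt v+\phi)-\sigma(\wt v)-\sigma'(\wt v)\phi=-3\wt v\phi^2-\phi^3$ against $\wt u^S_\xi=-s\wt v^S_\xi$, namely
\[
-3s\int\hat a\,(\wt v^S)^{\mb X}(\wt v^S)^{\mb X}_\xi\phi^2\,d\xi .
\]
It is of order $\delta_S\int(\wt v^S)^{\mb X}_\xi\phi^2$, and it has the bad sign wherever $\wt v^S>0$, i.e. on the part of the Liu shock where $\sigma''<0$. The only competing $\phi^2$ term is $\frac{s}{2}\int a_\xi\sigma'\phi^2$. The pointwise $\phi^2$-coefficient is therefore
\[
\frac{s}{2}\big[\hat a\,\sigma'(\wt v^S)\big]'=\frac{s}{2}\big(a'\sigma'(\wt v^S)-6(1+a)\wt v^S\big).
\]
In Case 2, $\frac{s}{2}a'=\frac{\alpha\delta_S}{2}<\frac34 v_+$; in Case 1, $\frac{s}{2}a'\le\frac{v_+-v_*}{2}\le\frac58 v_+$. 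So this coefficient is negative wherever $\wt v^S>v_+/(4s)$.

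Nothing can absorb this. There is no dissipation in $\phi$, the Poincar\'e inequality acts only on $\psi$, and the shift controls a single weighted average. Consequently the completion of the square in $\phi$ behind \eqref{4-03} and \eqref{4.21.} breaks down, and with it your reduction to a one-dimensional inequality for $\Psi=a\psi$.

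The fix is to use $a^{\mb X}$ itself as the weight, as the paper does. Then:
\begin{itemize}
\item $\delta_S a'/a=O(1)$, and the non-convex contribution $a\sigma''=O(\delta_S^3)$ is a lower-order correction to $a'\sigma'=O(\delta_S)$, so $[a\sigma'(\wt v^S)]'>0$;
\item the whole difficulty moves to the $O(\delta_S^3)$ residual coefficient of $\psi^2$ (the $H_i$, resp. $h$), which is where the Poincar\'e inequality and the precise shapes of $a$ enter;
\item equivalence with the $L^2$ norm follows from $a\ge c\delta_S^2$ by dividing the integrated identity by $\delta_S^2$, as in Lemma~\ref{L4.7}. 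No additive constant is needed.
\end{itemize}

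\textbf{Two sign slips.} For $u_t-\sigma(v)_x=\mu u_{xx}$ the relative potential is $\int_{\wt v}^{v}(\sigma(y)-\sigma(\wt v))\,dy$, not its negative. Also, the shift term is entirely favorable: $\dot{\mb X}\mathbf Y_1=\frac{\delta^2}{K}|\dot{\mb X}|^2=\frac{K}{\delta^2}|\mathbf Y_1|^2$. The second form is exactly what supplies the mean-square term the Poincar\'e step needs; it is not a bad term.

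\textbf{A second gap: interactions in the degenerate case.} When $v_m=-v_+/2$, the profile tends to $v_m$ only algebraically (Lemma~\ref{lemma:shock_properties}). Hence $\|(\wt v^R-v_m)(\wt v^S)^{\mb X}_\xi\|_{L^1}$ and $\|((\wt v^S)^{\mb X}-v_m)\wt v^R_\xi\|_{L^1}$ decay only like $\delta_R\delta_S(1+C\delta_S^2t)^{-1}$, which is not integrable in time. Your bound $C\delta_R\delta_S e^{-c\delta_S t}$ holds only for $v_m\in(-v_+/2,0)$.

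You must therefore avoid integrating these norms to the first power:
\begin{itemize}
\item For $\mathbf F$, use $\|\psi\|_{L^\infty}\le\sqrt2\|\psi\|^{1/2}\|\psi_\xi\|^{1/2}$ and Young's inequality. This leaves $\big(\int|F|\,d\xi\big)^{4/3}$, which is time-integrable.
\item Split $3\int a\wt u^R_\xi((\wt v^S)^{\mb X}-v_m)\phi^2$ into $\int a|\wt u^R_\xi|((\wt v^S)^{\mb X}-v_m)^2$ and $\int a|\wt u^R_\xi|\phi^4$. The $L^1$ norm of the first decays like $(1+C\delta_S^2t)^{-2}$; the second is absorbed by $\mathbf{G^R}$.
\item The remaining interaction terms are bounded by $C\delta_S^2\delta_R\int(\wt v^S)^{\mb X}_\xi(\phi^2+\psi^2)\,d\xi$ and absorbed using $\delta_R\ll\delta_S$.
\end{itemize}
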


The proof of Proposition \ref{prop:key_energy} is based on the following lemmas.

\begin{lemma}\label{lem:energy_identity}
Let $a = a(\widetilde{v}^S(\xi))$ be the weight function defined in  \eqref{w.deg} or \eqref{w.nondeg}. Then, for $t \in [0,T]$, the following identity holds:
\begin{equation}\label{eq:energy_identity}
\begin{aligned}
&\frac{d}{dt} \int_{\mathbb{R}}  \bigg[a^{\mb X}\Big( \int_{\widetilde{v}}^{\widetilde{v}+\phi} \sigma(z) \, dz \; - \sigma(\widetilde{v}) \phi \, \Big)   + \frac{1}{2} a^{\mb X}\psi^2 \bigg] d\xi \\[2mm]
& \quad + \dot{\mb X}(t) \, \mathbf{Y}(t) + \mathcal{J}^{\text{good}}(t) + \mathcal{J}^{\text{bad}}(t) = \mathbf{F}(t),
\end{aligned}
\end{equation}
where
\begin{equation} \label{eq:def_Y}
\begin{aligned}
\mathbf{Y}(t) &:=
\int_{\mathbb{R}} a^{\mb X} (\widetilde{u}^S)^{\mb X}_\xi \, \psi \,  d\xi
+ \int_{\mathbb{R}} a^{\mb X} (\widetilde{v}^S)^{\mb X}_\xi \sigma^\prime(\wt v)\phi \, d\xi \\
&\quad - \int_{\mathbb{R}} a^{\mb X}_\xi \Big[ \int_{\widetilde{v}}^{\widetilde{v}+\phi} \sigma(z) \, dz -\sigma(\widetilde{v}) \phi  \Big] \, d\xi - \frac{1}{2} \int_{\mathbb{R}} a^{\mb X}_\xi \psi^2 \, d\xi,
\end{aligned}
\end{equation}
\begin{equation} \label{eq:def_Jgood} 
\begin{aligned}
\mathcal{J}^{\text{good}}(t) &:= 
\int_{\mathbb{R}} \Big( \frac{s}{2} a^{\mb X}_\xi - \frac{\mu}{2} a^{\mb X}_{\xi\xi} \Big) \psi^2 \, d\xi
+ \int_{\mathbb{R}} \mu \, a^{\mb X} \, \psi^2_\xi \, d\xi \\
&\quad + 3 \int_{\mathbb{R}} a^{\mb X}\widetilde{u}^R_\xi \, \widetilde{v}^R \, \phi^2 \, d\xi
+ \frac{s}{2} \int_{\mathbb{R}}  a^{\mb X}_\xi\sigma'(\widetilde{v})  \, \phi^2 \, d\xi, 
\end{aligned}
\end{equation}
\begin{equation} \label{eq:def_Jbad} 
\begin{aligned}
\mathcal{J}^{\text{bad}}(t) &:= 
- 3s \int_{\mathbb{R}} a^{\mb X} (\widetilde{v}^S)^{\mb X} (\widetilde{v}^S)^{\mb X}_\xi \, \phi^2 \, d\xi
+ \int_{\mathbb{R}} a^{\mb X}_\xi \sigma'(\widetilde{v}) \, \phi \psi \, d\xi\, d\xi\\
&\quad + 3 \int_{\mathbb{R}} a^{\mb X} \widetilde{u}^R_\xi \left[ (\widetilde{v}^S)^{\mb X} - v_m \right] \, \phi^2 \, d\xi  - 3s \int_{\mathbb{R}} a^{\mb X}(\widetilde{v}^S)^{\mb X}_\xi (\widetilde{v}^R - v_m)  \, \phi^2 \, d\xi \\
&\quad+ \int_{\mathbb{R}} a^{\mb X}\left[-s( \widetilde{v}^S)^{\mb X}_\xi + \widetilde{u}^R_\xi\right] \, \phi^3   -s \int_{\mathbb{R}} a^{\mb X}_\xi  \left( \widetilde v \phi^3+\frac{\phi^4}{4} \right)  d\xi 
- \int_{\mathbb{R}} a^{\mb X}_\xi (3\widetilde{v}\phi^2+\phi^3) \, \psi \, d\xi, 
\end{aligned}
\end{equation}
and
\begin{equation} \label{eq:def_F}
\begin{aligned}
\mathbf{F}(t) &:= -\int_{\mathbb{R}} a^{\mb X} F \, \psi \, d\xi.
\end{aligned}
\end{equation}
\end{lemma}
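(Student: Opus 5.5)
The identity \eqref{eq:energy_identity} is a direct computation. We differentiate the weighted relative entropy
\[
\mathcal{E}(t):=\int_{\mathbb{R}} a^{\mb X}\Big(\int_{\wt v}^{\wt v+\phi}\sigma(z)\,dz-\sigma(\wt v)\phi+\tfrac12\psi^2\Big)d\xi
\]
in time, substitute the perturbation system \eqref{Per} and the ansatz equations \eqref{ansatz_eq}, integrate by parts in $\xi$, and sort the resulting terms into the four groups $\mathbf Y$, $\mathcal J^{good}$, $\mathcal J^{bad}$, $\mathbf F$.

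\textbf{Step 1: the time derivative.} Since $a^{\mb X}=a(\wt v^S(\xi+\mb X(t)))$, we have $\partial_t a^{\mb X}=\dot{\mb X}\,a^{\mb X}_\xi$. This produces the two terms $\dot{\mb X}\int a^{\mb X}_\xi[\cdots]$ and $\dot{\mb X}\int a^{\mb X}_\xi\psi^2/2$, which appear with a minus sign in $\mathbf Y$ after moving them to the left-hand side.

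The derivative of the integrand in $(\phi,\psi)$ gives $[\sigma(\wt v+\phi)-\sigma(\wt v)]\phi_t+\psi\psi_t$, and its derivative through $\wt v$ gives $[\sigma(\wt v+\phi)-\sigma(\wt v)-\sigma'(\wt v)\phi]\wt v_t$. We insert
\[
\phi_t=s\phi_\xi-\dot{\mb X}(\wt v^S)^{\mb X}_\xi+\psi_\xi,\qquad \psi_t=s\psi_\xi-\dot{\mb X}(\wt u^S)^{\mb X}_\xi+[\sigma(\wt v+\phi)-\sigma(\wt v)]_\xi+\mu\psi_{\xi\xi}-F,
\]
and, from \eqref{ansatz_eq}, $\wt v_t=s\wt v_\xi+\dot{\mb X}(\wt v^S)^{\mb X}_\xi+\wt u_\xi$.

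\textbf{Step 2: collecting terms.}
\begin{itemize}
\item The $\dot{\mb X}$-terms combine to give the first two integrals of $\mathbf Y$. The $\phi$-parts from $\phi_t$ and from $\wt v_t$ merge to $-\dot{\mb X}\int a^{\mb X}(\wt v^S)^{\mb X}_\xi\sigma'(\wt v)\phi$.
\item The flux terms $\int a^{\mb X}\{[\sigma(\wt v+\phi)-\sigma(\wt v)]\psi_\xi+\psi[\sigma(\wt v+\phi)-\sigma(\wt v)]_\xi\}$ integrate by parts to $-\int a^{\mb X}_\xi[\sigma(\wt v+\phi)-\sigma(\wt v)]\psi$. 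With $\sigma=v-v^3$ this equals
\[
-\int a^{\mb X}_\xi\big(\phi-3\wt v^2\phi-3\wt v\phi^2-\phi^3\big)\psi .
\]
This yields $\int a^{\mb X}_\xi\sigma'(\wt v)\phi\psi$ (with a sign fixed by the convention in $\mathcal J^{bad}$) and the cubic term $-\int a^{\mb X}_\xi(3\wt v\phi^2+\phi^3)\psi$.
\item The $s$-transport terms $s\int a^{\mb X}\{[\sigma(\wt v+\phi)-\sigma(\wt v)]\phi_\xi+[\,\cdot\,]\wt v_\xi+\psi\psi_\xi\}$ equal $s\int a^{\mb X}\partial_\xi(\text{integrand})$. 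Integrating by parts gives $-s\int a^{\mb X}_\xi(\text{integrand})$.
\item We expand the relative entropy explicitly for the cubic flux:
\[
\int_{\wt v}^{\wt v+\phi}\sigma-\sigma(\wt v)\phi=\tfrac12\sigma'(\wt v)\phi^2-\wt v\phi^3-\tfrac14\phi^4 .
\]
Using this, the transport terms produce $\frac s2\int a^{\mb X}_\xi\sigma'(\wt v)\phi^2$ and $\frac s2\int a^{\mb X}_\xi\psi^2$ in $\mathcal J^{good}$, together with the quartic and cubic remainders $-s\int a^{\mb X}_\xi(\wt v\phi^3+\phi^4/4)$ in $\mathcal J^{bad}$. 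The overall sign is fixed as in the statement.
\item The viscous term gives $-\mu\int a^{\mb X}\psi_\xi^2-\mu\int a^{\mb X}_\xi\psi\psi_\xi$, and $-\mu\int a^{\mb X}_\xi\psi\psi_\xi=\frac\mu2\int a^{\mb X}_{\xi\xi}\psi^2$.
\item The forcing $F$ gives $\mathbf F$ directly.
\end{itemize}

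\textbf{Step 3: the $\wt u_\xi$ remainder.} The remaining term is $\int a^{\mb X}[\sigma(\wt v+\phi)-\sigma(\wt v)-\sigma'(\wt v)\phi]\wt u_\xi=-\int a^{\mb X}(3\wt v\phi^2+\phi^3)\wt u_\xi$. Here we use $\wt u_\xi=-s(\wt v^S)^{\mb X}_\xi+\wt u^R_\xi$ (from \eqref{V-S}) and split
\[
\wt v=(\wt v^S)^{\mb X}+(\wt v^R-v_m)=\wt v^R+\big((\wt v^S)^{\mb X}-v_m\big).
\]
This yields four pieces:
\begin{itemize}
\item $-3s\int a(\wt v^S)^{\mb X}(\wt v^S)^{\mb X}_\xi\phi^2$, the shock part;
\item $3\int a\,\wt u^R_\xi\wt v^R\phi^2$, the good rarefaction term, which is nonnegative since $\wt u^R_\xi<0$ and $\wt v^R<0$;
\item the two cross interaction terms;
\item the $\phi^3$ term.
\end{itemize}
These match \eqref{eq:def_Jgood}--\eqref{eq:def_Jbad}.

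The only real obstacle is bookkeeping: tracking signs consistently, especially $\sigma'(\wt v)$ versus $\sigma'((\wt v^S)^{\mb X})$ in $\mathbf Y$ and the sign of the $a_\xi\sigma'\phi\psi$ term. We would verify these by rechecking the cubic Taylor expansion of $\sigma$ at each step. No estimates are needed.
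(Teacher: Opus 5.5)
Your proposal is correct and is essentially the paper's argument: differentiating the weighted relative entropy and substituting \eqref{Per} is the same as the paper's multiplication of the two equations of \eqref{Per} by $a^{\mb X}[\sigma(\wt v+\phi)-\sigma(\wt v)]$ and $a^{\mb X}\psi$. It is followed by the same two cubic expansions, $\sigma(\wt v+\phi)-\sigma(\wt v)-\sigma'(\wt v)\phi=-3\wt v\phi^2-\phi^3$ and $\int_{\wt v}^{\wt v+\phi}\sigma(z)\,dz-\sigma(\wt v)\phi=\frac12\sigma'(\wt v)\phi^2-\wt v\phi^3-\frac14\phi^4$. The signs you leave to ``convention'' are in fact forced, since each term simply changes sign when moved to the left-hand side, and carrying this out reproduces \eqref{eq:def_Y}--\eqref{eq:def_Jbad} exactly.
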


\begin{remark}\label{rem:term_classification}
Since $(a^{\mb X})' > 0$, $(\widetilde{v}^S)^{\mb X}_\xi > 0$, and $\widetilde{u}^R_\xi \, \widetilde{v}^R > 0$, the terms collected in $\mathcal{J}^{\text{good}}(t)$ are nonnegative. In contrast, $\mathcal{J}^{\text{bad}}(t)$ contains terms of negative and  indefinite sign as well as higher-order nonlinearities. The wave interaction term $\mathbf{F}(t)$ can be controlled using the wave interaction estimates from Lemma \ref{L4.8} (or Lemma \ref{L4.8.}) and the decay properties of the approximate rarefaction wave.
\end{remark}

\begin{proof}[Proof of Lemma \ref{lem:energy_identity}]
Multiplying the equation $\eqref{Per}_1$ by $a^{\mb X} [\sigma(\widetilde{v}+\phi) - \sigma(\widetilde{v})]$ and the equation $\eqref{Per}_2$ by $a^{\mb X} \psi $ with the weight function $a^{\mb X}= a(\widetilde{v}^S(\xi + \mb X(t)))$ defined in \eqref{w.deg} or \eqref{w.nondeg}, and then integrating the sum of the resulting equalities over $\mathbb{R}$ with respect to $\xi$, 
%and using the identity
%\begin{equation}\label{eq:product_rule}
%\begin{aligned}
%a^{\mb X} \big[ \sigma(\widetilde{v}+\phi) - \sigma(\widetilde{v}) \big] \phi_t = &
%\frac{d}{dt} \Big[ \int_{\widetilde{v}}^{\widetilde{v}+\phi} \sigma(s) \, ds \; a^{\mb X} - \sigma(\widetilde{v}) \phi \, a^{\mb X} \Big] \\
%&- \widetilde{v}_t \big[ \sigma(\widetilde{v}+\phi) - \sigma(\widetilde{v}) - \sigma'(\widetilde{v}) \phi \big] a^{\mb X} \\
%&- \dot{\mb X}(t) \, a^{\mb X}_\xi \Big[ \int_{\widetilde{v}}^{\widetilde{v}+\phi} \sigma(s) \, ds - \sigma(\widetilde{v}) \phi \Big],
%\end{aligned}
%\end{equation}
we obtain
\begin{equation}\label{eq:energy_intermediate}
\begin{aligned}
&\frac{d}{dt} \int_{\mathbb{R}}  \bigg[a^{\mb X}\Big( \int_{\widetilde{v}}^{\widetilde{v}+\phi} \sigma(z) \, dz \; - \sigma(\widetilde{v}) \phi \, \Big)   + \frac{1}{2} a^{\mb X}\psi^2 \bigg] d\xi \\
&\quad+ \dot{\mb X}(t) \bigg[ \int_{\mathbb{R}} a^{\mb X} (\widetilde{u}^S)^{\mb X}_\xi \, \psi \,  d\xi
+ \int_{\mathbb{R}} a^{\mb X} (\widetilde{v}^S)^{\mb X}_\xi \big[ \sigma(\widetilde{v}+\phi) - \sigma(\widetilde{v}) \big]\, d\xi \bigg] \\
&\quad- \dot{\mb X}(t)  \int_{\mathbb{R}} a^{\mb X}_\xi \bigg[\Big(\int_{\widetilde{v}}^{\widetilde{v}+\phi} \sigma(z) \, dz - \sigma(\widetilde{v}) \phi   \Big)
+\frac{1}{2}  \psi^2  \bigg] d\xi \\
&\quad- \int_{\mathbb{R}} a^{\mb X} \big[\widetilde{u}_\xi  +\dot{\mb X}(t) (\widetilde{v}^S)^{\mb X}_\xi\big]  \big[ \sigma(\widetilde{v}+\phi) - \sigma(\widetilde{v}) -\sigma^\prime(\widetilde{v})\phi\big] d\xi \\
&\quad+ \int_{\mathbb{R}} s a^{\mb X}_\xi \bigg[\Big( \int_{\widetilde{v}}^{\widetilde{v}+\phi} \sigma(z) \, dz  \; - \sigma(\widetilde{v}) \phi \, \Big)   + \frac{1}{2} \psi^2\bigg] d\xi \\
&\quad+ \int_{\mathbb{R}} \mu \, a^{\mb X} \, \psi^2_\xi \, d\xi
- \frac{\mu}{2} \int_{\mathbb{R}} a^{\mb X}_{\xi\xi} \psi^2 \, d\xi \\
&\quad+\; \int_{\mathbb{R}} a^{\mb X}_\xi \big[ \sigma(\widetilde{v}+\phi) - \sigma(\widetilde{v}) \big] \, \psi \, d\xi 
= -\int_{\mathbb{R}} a^{\mb X}  F \, \psi \, d\xi.
\end{aligned}
\end{equation}
Note that 
$$
\sigma(\widetilde{v}+\phi) - \sigma(\widetilde{v}) -\sigma^\prime(\widetilde{v})\phi=-3\widetilde{v}\phi^2-\phi^3,
$$
and
$$
 \int_{\widetilde{v}}^{\widetilde{v}+\phi} \sigma(z) \, dz  \; - \sigma(\widetilde{v}) \phi=\frac12(1-3\widetilde v^2)\phi^2-\widetilde v \phi^3-\frac{\phi^4}{4}.
$$
This completes the proof of Lemma \ref{lem:energy_identity}.
\end{proof}

For later analysis, we decompose $\mathbf{Y}(t)$ into three parts:
\begin{equation}\label{eq:Y_decomposition}
\begin{aligned}
\mathbf{Y}(t) = &\underbrace{\int_{\mathbb{R}} a^{\mb X} (\widetilde{u}^S)^{\mb X}_\xi \psi \,d\xi
+ \int_{\mathbb{R}} a^{\mb X} (\widetilde{v}^S)^{\mb X}_\xi \, \sigma'((\widetilde{v}^S)^{\mb X}) \, \phi \, d\xi}_{\mathbf{Y}_1(t)} \\
&+ \underbrace{\int_{\mathbb{R}} a^{\mb X} (\widetilde{v}^S)^{\mb X}_\xi \big[ \sigma'(\widetilde{v}) - \sigma'((\widetilde{v}^S)^{\mb X}) \big] \phi \, d\xi}_{\mathbf{Y}_2(t)} \\
%&+ \underbrace{\int_{\mathbb{R}} a^{\mb X} (\widetilde{v}^S)^{\mb X}_\xi \Big[ \sigma(\widetilde{v}+\phi) - \sigma(\widetilde{v}) - \sigma'(\widetilde{v}) \phi \Big]  d\xi
%- \frac{1}{2} \int_{\mathbb{R}} a^{\mb X}  (\widetilde{v}^S)^{\mb X}_\xi \, \sigma''(\widetilde{v}) \, \phi^2 \, d\xi}_{\mathbf{Y}_3(t)} \\
& \underbrace{-\int_{\mathbb{R}} a^{\mb X}_\xi \Big[\int_{\widetilde{v}}^{\widetilde{v}+\phi} \sigma(z) \, dz-\sigma(\widetilde{v}) \phi \Big] \, d\xi
- \frac{1}{2} \int_{\mathbb{R}} a^{\mb X}_\xi \psi^2 \, d\xi}_{\mathbf{Y}_3(t)}.
\end{aligned}
\end{equation}

   \begin{lemma}\label{L4.2}The following decomposition holds:
    \begin{equation}\label{4.13}
    \mathbf{\dot X}(t)\mathbf{Y}(t)+\mathcal{J}^{good}(t)+\mathcal{J}^{bad}(t)=\mathbf{G^{S,X}}(t)+\mathbf{G^R}(t)+\mathbf{N}(t)+\mathbf{J}(t),
   \end{equation}
   where
    \begin{equation}\label{4.14}
    \begin{aligned}
\mathbf{G^{S,X}}(t):=& \int_{\mathbb{R}}\left( \frac{s}{2} a^{\mb X}_\xi -\frac{\mu}{2} a^{\mb X}_{\xi\xi}\right) \psi^2 d\xi+  \int_{\mathbb{R}} \mu a^{\mb X}\psi_\xi^2 d\xi+\int_{\mathbb{R}} a^{\mb X}_\xi \sigma^\prime(( \wt v^S)^{\mb X}) \phi\psi d\xi\\
&+\frac{s}{2}\int_{\mathbb{R}}\left[a^{\mb X}\sigma^\prime((\wt v^S)^{\mb X})\right]_\xi \phi^2 d\xi
+ \mathbf{\dot X}(t)\mathbf{Y_1}(t),
 \end{aligned}
   \end{equation}
       \begin{equation}\label{4.15}
     \begin{aligned}
\mathbf{G^R}(t):=3 \int_{\mathbb{R}} a^{\mb X} \wt u^R_\xi \wt v^R  \phi^2 d\xi,
 \end{aligned}
   \end{equation}
      % \begin{equation}\label{4.2+}
     %\begin{aligned}
%\mathbf{P}(t):= \mathbf{\dot X}(t)\mathbf{Y_1}(t)-\frac{2}{(v_+-v_*)^2}\left[\int_{\xi_*}^{+\infty}\psi w^{\mb X}(\wt v^S)^{\mb X}_\xi d\xi\right]^2,
 %\end{aligned}
   %\end{equation}

   \begin{equation}\label{4.16}
     \begin{aligned}
\mathbf{N}(t):= & \mathbf{\dot X}(t)\left[\mathbf{Y_2}(t)+\mathbf{Y_3}(t)\right] + \int_{\mathbb{R}} a^{\mb X}\left[-s( \widetilde{v}^S)^{\mb X}_\xi + \widetilde{u}^R_\xi\right] \, \phi^3  \, d\xi\\
&\quad -s \int_{\mathbb{R}} a^{\mb X}_\xi  \left(\widetilde v \phi^3+\frac{\phi^4}{4} \right) d\xi - \int_{\mathbb{R}} a^{\mb X}_\xi (3\widetilde{v}\phi^2+\phi^3) \, \psi \, d\xi.\\
%  & +\frac{s}{6}\int_{\mathbb{R}}[a^{\mb X}\sigma^{\prime\prime}(\wt v+\phi)]_\xi \phi^3 d\xi
%+s \int_{\mathbb{R}}[\sigma(\wt v+\phi)-\sigma(\wt v)-\sigma^\prime(\wt v)\phi]a^{\mb X}_\xi \phi d\xi\\
%&-3s\int_{\mathbb{R}} a^{\mb X}( \wt v^S)^{\mb X}_\xi \left(\wt v^R-v_m \right) \phi^2 d\xi+\int_{\mathbb{R}} a^{\mb X}_\xi\left[\sigma^\prime(\wt v)-\sigma^\prime((\wt v^S)^{\mb X})\right] \phi \psi d\xi\\
%&+\frac{s}{2}\int_{\mathbb{R}}a^{\mb X}_\xi \left[\sigma^\prime(\wt v)-\sigma^\prime( (\wt v^S)^{\mb X})\right]\phi^2 d\xi,
 \end{aligned}
   \end{equation}
and
    \begin{equation}\label{4.17}
      \begin{aligned}
\mathbf{J}(t):=&3\int_{\mathbb{R}} a^{\mb X}\wt u^R_\xi\left[(\wt v^S)^{\mb X}-v_m\right]\phi^2 d\xi-3s\int_{\mathbb{R}} a^{\mb X}( \wt v^S)^{\mb X}_\xi \left(\wt v^R-v_m \right) \phi^2 d\xi\\
&+\int_{\mathbb{R}} a^{\mb X}_\xi\left[\sigma^\prime(\wt v)-\sigma^\prime((\wt v^S)^{\mb X})\right] \phi \psi d\xi+\frac{s}{2}\int_{\mathbb{R}}a^{\mb X}_\xi \left[\sigma^\prime(\wt v)-\sigma^\prime( (\wt v^S)^{\mb X})\right]\phi^2 d\xi.
 \end{aligned}
   \end{equation}
   \end{lemma}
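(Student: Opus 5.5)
The identity \eqref{4.13} is purely algebraic. I would prove it by expanding the left-hand side using the definitions \eqref{eq:def_Y}, \eqref{eq:def_Jgood}, \eqref{eq:def_Jbad} of Lemma \ref{lem:energy_identity}, and then redistributing every integral into exactly one of $\mathbf{G^{S,X}}$, $\mathbf{G^R}$, $\mathbf{N}$, $\mathbf{J}$. No estimates are needed; the only tools are adding and subtracting $\sigma'((\wt v^S)^{\mb X})$ and one derivative identity for $\sigma'$.

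\emph{Step 1 (the shift term).} In \eqref{eq:def_Y} I write $\sigma'(\wt v)=\sigma'((\wt v^S)^{\mb X})+[\sigma'(\wt v)-\sigma'((\wt v^S)^{\mb X})]$. This gives $\mathbf{Y}=\mathbf{Y}_1+\mathbf{Y}_2+\mathbf{Y}_3$ exactly as in \eqref{eq:Y_decomposition}. Then $\dot{\mb X}\mathbf{Y}_1$ goes into $\mathbf{G^{S,X}}$, and $\dot{\mb X}(\mathbf{Y}_2+\mathbf{Y}_3)$ goes into $\mathbf{N}$.

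\emph{Step 2 (the good terms).} In $\mathcal{J}^{good}$, the $\psi^2$ and $\mu a^{\mb X}\psi_\xi^2$ terms go directly into $\mathbf{G^{S,X}}$. The term $3\int a^{\mb X}\wt u^R_\xi\wt v^R\phi^2$ is exactly $\mathbf{G^R}$. For the last term, I split $\frac s2\int a^{\mb X}_\xi\sigma'(\wt v)\phi^2$ into two pieces:
\begin{itemize}
\item $\frac s2\int a^{\mb X}_\xi\sigma'((\wt v^S)^{\mb X})\phi^2$;
\item $\frac s2\int a^{\mb X}_\xi[\sigma'(\wt v)-\sigma'((\wt v^S)^{\mb X})]\phi^2$, which is the last term of $\mathbf{J}$.
\end{itemize}

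\emph{Step 3 (the bad terms).} The key observation is that $\sigma'(v)=1-3v^2$, so $[\sigma'((\wt v^S)^{\mb X})]_\xi=-6(\wt v^S)^{\mb X}(\wt v^S)^{\mb X}_\xi$. Hence
\[
-3s\int a^{\mb X}(\wt v^S)^{\mb X}(\wt v^S)^{\mb X}_\xi\phi^2\,d\xi=\frac s2\int a^{\mb X}\big[\sigma'((\wt v^S)^{\mb X})\big]_\xi\phi^2\,d\xi .
\]
Adding this to the first piece from Step 2 yields, by the product rule, $\frac s2\int[a^{\mb X}\sigma'((\wt v^S)^{\mb X})]_\xi\phi^2\,d\xi$, which is the fourth term of $\mathbf{G^{S,X}}$.

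The cross term $\int a^{\mb X}_\xi\sigma'(\wt v)\phi\psi$ is split the same way as in Step 2. The $\sigma'((\wt v^S)^{\mb X})$ part goes to $\mathbf{G^{S,X}}$, and the difference part is the third term of $\mathbf{J}$. (Here I read the doubled ``$d\xi\,d\xi$'' in \eqref{eq:def_Jbad} as a single $d\xi$.)

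The two interaction integrals, the one with $\wt u^R_\xi[(\wt v^S)^{\mb X}-v_m]$ and the one with $(\wt v^S)^{\mb X}_\xi(\wt v^R-v_m)$, are the first two terms of $\mathbf{J}$ verbatim. The remaining cubic and quartic terms of $\mathcal{J}^{bad}$ are exactly the non-shift part of $\mathbf{N}$ in \eqref{4.16}.

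\emph{Step 4 (bookkeeping).} Finally I check that every term of the left side has been used exactly once and that the right side contains nothing else. The only step needing care is Step 3, where the $-3s$ term from $\mathcal{J}^{bad}$ has to be recognized as part of a total $\xi$-derivative via $\sigma''=-6v$. I expect no genuine obstacle beyond sign bookkeeping.
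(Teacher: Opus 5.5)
Your proposal is correct and follows the paper's proof: the paper also regroups terms using $\mathbf{Y}=\mathbf{Y}_1+\mathbf{Y}_2+\mathbf{Y}_3$ together with the two identities you use. The first is the $\phi^2$ identity (4.18), which merges $\frac s2\int a^{\mb X}_\xi\sigma'(\wt v)\phi^2\,d\xi$ and $-3s\int a^{\mb X}(\wt v^S)^{\mb X}(\wt v^S)^{\mb X}_\xi\phi^2\,d\xi$ via $[\sigma'((\wt v^S)^{\mb X})]_\xi=-6(\wt v^S)^{\mb X}(\wt v^S)^{\mb X}_\xi$; the second is the add-and-subtract splitting (4.19) of the $\phi\psi$ cross term, and you are right to read the doubled $d\xi$ as a typo.
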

   
   \begin{proof}The decomposition follows directly from the definitions of $\mathcal{J}^{\mathrm{good}}(t)$ and $\mathcal{J}^{\mathrm{bad}}(t)$ in Lemma \ref{lem:energy_identity} and the expression for $\mathbf{Y}(t)$ in \eqref{eq:Y_decomposition}, by regrouping terms and applying the following two identities:
     \begin{equation}\label{4.18}
      \begin{aligned}
&\frac{s}{2}\int_{\mathbb{R}} a^{\mb X}_\xi \sigma^\prime(\wt v) \phi^2 d\xi-3s\int_{\mathbb{R}}a^{\mb X}(\wt v^S)^{\mb X}_\xi (\wt v^S)^{\mb X} \phi^2 d\xi\\
%=&\frac{s}{2}\int_{\mathbb{R}}[\sigma^\prime(\wt v)-\sigma^\prime( (\wt v^S)^{\mb X})]a^{\mb X}_\xi \phi^2 d\xi
%+\frac{s}{2}\int_{\mathbb{R}}\sigma^\prime( (\wt v^S)^{\mb X})a^{\mb X}_\xi \phi^2 d\xi\\
%&+\frac{s}{2}\int_{\mathbb{R}}[\sigma^\prime((\wt v^S)^{\mb X})]_\xi a^{\mb X} \phi^2 d\xi\\
&=\frac{s}{2}\int_{\mathbb{R}}a^{\mb X}_\xi\left[\sigma^\prime(\wt v)-\sigma^\prime( (\wt v^S)^{\mb X})\right] \phi^2 d\xi+\frac{s}{2}\int_{\mathbb{R}}\left[a^{\mb X}\sigma^\prime((\wt v^S)^{\mb X})\right]_\xi \phi^2 d\xi,
 \end{aligned}
   \end{equation}
   and 
  \begin{equation}\label{4.19}
      \begin{aligned}
\int_{\mathbb{R}} a^{\mb X}_\xi\sigma^\prime(\wt v) \phi\psi d\xi=\int_{\mathbb{R}} a^{\mb X}_\xi\sigma^\prime((\wt v^S)^{\mb X}) \phi\psi d\xi+\int_{\mathbb{R}} a^{\mb X}_\xi[\sigma^\prime(\wt v)-\sigma^\prime( (\wt v^S)^{\mb X})] \phi \psi d\xi,
 \end{aligned}
   \end{equation}     
   \end{proof}   
   
   \begin{remark}
\label{rem:term_classification2}
The term $\mathbf{G}^{\mb R}(t)$ is clearly a good  term. Moreover, with the specific choice of the weight function $a$ in \eqref{w.nondeg} (or \eqref{w.deg}) and the shift $\mb X(t)$ defined by \eqref{ode.shift1} (or \eqref{ode.shift2}), the term $\mathbf{G}^{\mb S,\mb X}(t)$ also constitutes a good term. The term $\mathbf{N}(t)$ consists of bad (higher-order or indefinite) terms, which can be controlled by Lemma \ref{L4.6} under the assumption of small perturbations and rarefaction strength. Meanwhile, $\mathbf{F}(t)$ and $\mathbf{J}(t)$ will be handled via the wave interaction estimates from Lemma \ref{L4.8} (or Lemma \ref{L4.8.}) and the decay properties of the approximate rarefaction wave.
\end{remark}
To prove Proposition \ref{prop:key_energy}, we first need to estimate $\mathbf{G}^{\mb S,\mb X}(t)$. This analysis is performed separately for the two regimes defined earlier:
\begin{itemize}
\item \textbf{Case 1 (Nearly degenerate shock):} $v_m \in [-v_+/2, (-1/2 + \theta_*) v_+)$.
\item \textbf{Case 2 (Fully non-degenerate shock):} $v_m \in [(-1/2 + \theta_*) v_+, 0)$.
\end{itemize}
A key tool in the estimate of $\mathbf{G}^{\mb S,\mb X}(t)$ is the following weighted Poincar\'e-type inequality (see \cite[Lemma 2.9]{VKM}).

\begin{lemma}\label{P}
For any function $f:[0,1]\to\mathbb{R}$ satisfying $\int_0^1 y(1-y)|f'(y)|^2\, dy<\infty$, the following inequality holds:
\begin{equation}\label{poincare}
\int_0^1 \Bigl| f(y) - \int_0^1 f(z)\, dz \Bigr|^2 dy \le \frac{1}{2} \int_0^1 y(1-y) |f'(y)|^2 \, dy.
\end{equation}
\end{lemma}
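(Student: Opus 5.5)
The plan is to identify the sharp constant $\frac12$ as the reciprocal of the first nonzero eigenvalue of the Legendre-type operator $Lf := -\bigl(y(1-y)f'\bigr)'$ on $[0,1]$, and then to prove the inequality by expanding in its eigenfunctions. These eigenfunctions are the shifted Legendre polynomials $p_n(y) := P_n(2y-1)$, $n\ge 0$. Substituting $x = 2y-1$ in Legendre's equation $\bigl((1-x^2)P_n'\bigr)' + n(n+1)P_n = 0$ gives $-\bigl(y(1-y)p_n'\bigr)' = n(n+1)\,p_n$, so the eigenvalues are $0, 2, 6, \dots$.

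\textbf{Step 1: orthogonality and the Dirichlet form.} The $\{p_n\}$ form a complete orthogonal system in $L^2(0,1)$, and $p_0 \equiv 1$. For polynomials $f,g$, integration by parts gives
\[
\int_0^1 y(1-y) f' g'\,dy = \int_0^1 (Lf)\, g\,dy ,
\]
with no boundary terms because $y(1-y)$ vanishes at $y=0$ and $y=1$. Hence
\[
\int_0^1 y(1-y)\, p_n' p_m'\,dy = n(n+1)\,\|p_n\|^2\,\delta_{nm}.
\]

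\textbf{Step 2: the inequality for polynomials.} Write a polynomial as $f = \sum_n c_n p_n$; the sum is finite. The mean of $f$ is $c_0$, so
\[
\int_0^1 \Bigl|f-\int_0^1 f\Bigr|^2 dy = \sum_{n\ge1} c_n^2\|p_n\|^2 \le \frac12\sum_{n\ge1} n(n+1)\,c_n^2\|p_n\|^2 = \frac12\int_0^1 y(1-y)|f'|^2\,dy ,
\]
using $n(n+1)\ge 2$ for $n\ge1$.

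\textbf{Step 3: extension to general $f$ (the main obstacle).} The hypothesis only controls $\int_0^1 y(1-y)|f'|^2\,dy$, so I must pass from polynomials to $f$ without any a priori $L^2$ or boundary information. I would avoid a density argument in the weighted space and work directly on shrunken intervals.

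\begin{enumerate}
\item For $0<\varepsilon<\frac12$, let $f_\varepsilon$ be the affine rescaling of $f|_{[\varepsilon,1-\varepsilon]}$ to $[0,1]$. Since $f$ is locally absolutely continuous on $(0,1)$, the function $f_\varepsilon$ is in $H^1(0,1)$.
\item For $H^1$ functions on the closed interval, polynomials are dense in $H^1(0,1)$ (Weierstrass applied to $f'$, then integrate). Both sides of the inequality are continuous in the $H^1$ norm, since the weight $y(1-y)$ is bounded. So Step 2 gives the inequality for $f_\varepsilon$.
\item Changing variables back, with $\ell = 1-2\varepsilon$ and $y = \varepsilon + \ell z$, one gets $\ell^2\, z(1-z) = (y-\varepsilon)(1-\varepsilon-y) \le y(1-y)$ on $[\varepsilon,1-\varepsilon]$. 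This yields
\[
\int_\varepsilon^{1-\varepsilon}\bigl|f-m_\varepsilon\bigr|^2 dy \le \frac12\int_\varepsilon^{1-\varepsilon} y(1-y)|f'|^2\,dy \le \frac12\int_0^1 y(1-y)|f'|^2\,dy ,
\]
where $m_\varepsilon$ is the average of $f$ over $[\varepsilon,1-\varepsilon]$.
\end{enumerate}

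\textbf{Step 4: letting $\varepsilon \to 0$.} The bound in Step 3 is uniform in $\varepsilon$, which shows $f\in L^2(0,1)$. Indeed, the bound pins $f$ near $m_\varepsilon$ in $L^2$ on growing intervals, and comparing two values of $\varepsilon$ shows the constants $m_\varepsilon$ stay bounded. Then dominated convergence gives $m_\varepsilon\to\int_0^1 f$. Finally, monotone convergence (or Fatou) on the left-hand side yields the inequality on $[0,1]$ with the stated constant $\frac12$.
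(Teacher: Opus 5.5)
Your argument is correct. The shifted Legendre polynomials $p_n(y)=P_n(2y-1)$ diagonalize both quadratic forms, with eigenvalues $n(n+1)\ge 2$ for $n\ge1$. Your Steps 3--4 then legitimately extend the polynomial case to a general $f$. This rests on three points: the rescaling to $[\varepsilon,1-\varepsilon]$ together with $(y-\varepsilon)(1-\varepsilon-y)=y(1-y)-\varepsilon(1-\varepsilon)\le y(1-y)$; the uniform bound on $m_\varepsilon$; and the limit $\varepsilon\to0$, which should be taken by Fatou's lemma, since monotone convergence does not apply when $m_\varepsilon$ varies.

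This extension requires reading the hypothesis as saying that $f$ is locally absolutely continuous with $f'$ its a.e.\ derivative. Combined with $y(1-y)\ge\varepsilon(1-\varepsilon)$ on $[\varepsilon,1-\varepsilon]$, that is what gives $f_\varepsilon\in H^1(0,1)$.

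The paper gives no proof of its own and simply imports the lemma from \cite[Lemma 2.9]{VKM}. Your Legendre eigenfunction expansion is the standard proof of this sharp inequality, so there is no substantive difference in approach.
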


\vskip0.3cm
\textbf{\underline{Estimation of $\mathbf{G}^{\mb S,\mb X}(t)$ for Case 1}:}

   We consider the regime $v_m \in [-v_+/2, (-1/2 + \theta_*) v_+)$. Performing the change of variable $\xi \to \xi - \mb X(t)$ and using the definition of the weight function $a$ from \eqref{w.deg}, we decompose $\mathbf{G}^{\mb S,\mb X}(t)$ as
\begin{equation}\label{eq:G_decomp}
    \mathbf{G}^{\mb S,\mb X}(t) = \mathbf{G}^{\mb S}(t) + \mathbf{G}^{\mb X}(t),
\end{equation}
where
\begin{align}
    \mathbf{G}^{\mb S}(t) &:=
    \int_{\mathbb{R}} \left( \frac{s}{2} a_\xi - \frac{\mu}{2} a_{\xi\xi} \right) (\psi^{-\mb X})^2 d\xi
    + \int_{\mathbb{R}} \mu a (\psi^{-\mb X}_\xi)^2  d\xi \notag \\
    &\quad + \int_{\mathbb{R}} a_\xi \sigma'(\widetilde{v}^S)   \phi^{-\mb X} \psi^{-\mb X} d\xi
    + \frac{s}{2} \int_{\mathbb{R}} \left[ a \sigma'(\widetilde{v}^S)  \right]_\xi (\phi^{-\mb X})^2 d\xi \notag \\
    &\quad + \frac{2}{(v_+ - v_*)^2} \left( \int_{\xi_*}^{+\infty} a \widetilde{v}^S_\xi \,\psi^{-\mb X}  d\xi \right)^2, \label{eq:def_GS} \\[2mm]
    \mathbf{G}^{\mb X}(t) &:= \dot{\mb X}(t) \, \mathbf{Y}_1(t)
    - \frac{2}{(v_+ - v_*)^2} \left( \int_{\xi_*}^{+\infty} a  \widetilde{v}^S_\xi   \psi^{-\mb X} d\xi \right)^2. \label{eq:def_GX}
\end{align}

Using the piecewise definition of $a$ in \eqref{w.deg}, we further split $\mathbf{G}^{\mb S}(t)$ into two parts:
\begin{equation}\label{eq:GS_split}
    \mathbf{G}^{\mb S}(t) = \mathbf{G}^{\mb S}_1(t) + \mathbf{G}^{\mb S}_2(t),
\end{equation}
with
\begin{align}
    \mathbf{G}^{\mb S}_1(t) &:=
    \int_{\xi_*}^{+\infty} \left( \frac{s}{2} a_\xi - \frac{\mu}{2} a_{\xi\xi} \right) (\psi^{-\mb X})^2  d\xi
    + \int_{\xi_*}^{+\infty} \mu a  (\psi^{-\mb X}_\xi)^2 \, d\xi \notag \\
    &\quad + \int_{\xi_*}^{+\infty}  a_\xi \sigma'(\widetilde{v}^S)\phi^{-\mb X} \psi^{-\mb X}  d\xi
    + \frac{s}{2} \int_{\xi_*}^{+\infty} \left[a \sigma'(\widetilde{v}^S)  \right]_\xi (\phi^{-\mb X})^2  d\xi \notag \\
    &\quad + \frac{2}{(v_+ - v_*)^2} \left( \int_{\xi_*}^{+\infty}a  \widetilde{v}^S_\xi  \psi^{-\mb X}  d\xi \right)^2, \label{eq:def_GS1} \\[2mm]
    \mathbf{G}^{\mb S}_2(t) &:=
    \int_{-\infty}^{\xi_*} \left( \frac{s}{2} a_\xi - \frac{\mu}{2} a_{\xi\xi} \right) (\psi^{-\mb X})^2 \, d\xi
    + \int_{-\infty}^{\xi_*} \mu a \, (\psi^{-\mb X}_\xi)^2 \, d\xi \notag \\
    &\quad + \int_{-\infty}^{\xi_*}a_\xi  \sigma'(\widetilde{v}^\mb S) \phi^{-\mb X} \psi^{-\mb X} \, d\xi
    + \frac{s}{2} \int_{-\infty}^{\xi_*} \left[a \sigma'(\widetilde{v}^S)  \right]_\xi (\phi^{-\mb X})^2 \, d\xi. \label{eq:def_GS2}
\end{align}

\begin{lemma}\label{lem:GS1_lower_bound}
If $v_+$ satisfies $0<v_+\ll 1$, then the term $\mathbf{G}^{\mb S}_1(t)$ defined in \eqref{eq:def_GS1} satisfies the following lower bound:
\begin{equation}\label{4-03}
\begin{aligned}
\mathbf{G}^{\mb S}_1(t) \geq &
\frac{s}{2} \bigintsss_{\xi_*}^{+\infty} \big[ a \sigma'(\widetilde{v}^S) \big]_\xi
\Bigg[ \phi^{-\mb X} + \frac{a_\xi \, \sigma'(\widetilde{v}^S)}{s \big( a \sigma'(\widetilde{v}^S) \big)_\xi}  \psi^{-\mb X} \Bigg]^2 d\xi \\
&+ \frac{1}{8} \int_{\xi_*}^{+\infty} \mu a  (\psi^{-\mb X}_\xi)^2  d\xi + \frac{3 v_m^2}{8 s^2} (v_+ - v_*) \int_{\xi_*}^{+\infty}  \widetilde{v}^S_\xi (\psi^{-\mb X})^2   d\xi.
\end{aligned}
\end{equation}
\end{lemma}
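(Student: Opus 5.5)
We first complete the square in the $\phi$-terms of $\mathbf G^{\mb S}_1$. Write $b:=\big[a\sigma'(\wt v^S)\big]_\xi$ and assume for the moment that $b>0$ on $(\xi_*,\infty)$. Then
\[
a_\xi\sigma'\phi\psi+\frac s2 b\phi^2=\frac s2 b\Big[\phi+\frac{a_\xi\sigma'}{s b}\psi\Big]^2-\frac{(a_\xi\sigma')^2}{2sb}\psi^2 .
\]
This yields the first term on the right of \eqref{4-03}. What remains is the $\psi$-part
\[
\int_{\xi_*}^\infty\Big(\frac s2a_\xi-\frac\mu2a_{\xi\xi}-\frac{(a_\xi\sigma')^2}{2sb}\Big)\psi^2+\mu a\psi_\xi^2 \;+\;\frac{2}{(v_+-v_*)^2}\Big(\int a\wt v^S_\xi\psi\Big)^2 .
\]
To check $b>0$ and size it, write $b=(a'\sigma'+a\sigma'')\wt v^S_\xi$. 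Since $\sigma'=1-3(\wt v^S)^2=1+O(v_+^2)$, we have $b\approx a'\wt v^S_\xi$ up to $a\sigma''\wt v^S_\xi=-6a\wt v^S\wt v^S_\xi$. This last term is $O(v_+)a\,\wt v^S_\xi$ and $a\sim(v_+-v_*)v_+/s$, so it is $O(v_+^2)\,a'\wt v^S_\xi$ (recall $a'\ge (v_+-v_*)/(4s)$ by \eqref{w.deriv2}). Hence $(a_\xi\sigma')^2/(2sb)=\frac{a'}{2s}\wt v^S_\xi(1+O(v_+^2))$, and the coefficient of $\psi^2$ becomes $\frac{s^2-1}{2s}a'\wt v^S_\xi-\frac\mu2a_{\xi\xi}+O(v_+^2)a'\wt v^S_\xi$. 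Note $s^2-1=-(v_m^2+v_mv_++v_+^2)=O(v_+^2)$ by \eqref{speed}, which is negative but small.

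Next we change variables $y=(\wt v^S-v_*)/(v_+-v_*)\in(0,1)$ on $(\xi_*,\infty)$, with $dy=\wt v^S_\xi d\xi/(v_+-v_*)$. Using \eqref{VS_scalar}, the diffusion becomes
\[
\mu a\psi_\xi^2\,d\xi=\mu a\,\wt v^S_\xi\,(v_+-v_*)^{-1}|\partial_y\psi|^2dy ,
\]
with $\mu\wt v^S_\xi=s^{-1}(\wt v^S-v_m)(v_+-\wt v^S)(\wt v^S+v_++v_m)$. On $[v_*,v_+]$ we have $\wt v^S-v_m\ge \wt v^S-v_*$, and $\wt v^S+v_++v_m$ is bounded below by a multiple of $v_+$ (here $v_*=v_m/2$ is used). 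So this factor dominates $c(v_+-v_*)^3y(1-y)$, and we apply Lemma \ref{P} to $f=\psi$ in the $y$ variable. Similarly, $a_{\xi\xi}=a''(\wt v^S_\xi)^2+a'\wt v^S_{\xi\xi}$, and $\wt v^S_{\xi\xi}$ is computed from \eqref{VS_scalar} as $\wt v^S_\xi$ times the derivative of the cubic. The $-\frac\mu2 a'\wt v^S_{\xi\xi}$ piece integrates by parts against $\psi^2$, or is bounded directly since it is of order $v_+^2\,a'\wt v^S_\xi$ with an explicit sign structure. The $a''\ge0$ piece, supported on $(\xi_*,\xi_0]$, has coefficient $O(\delta_S^2)$ relative to $a'\wt v^S_\xi$.

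The resulting inequality in $y$ has the form: Poincaré gain $\gtrsim\frac{(v_+-v_*)^3}{\mu s}\int|\psi-\bar\psi|^2dy$; the mean term becomes $\frac{2}{(v_+-v_*)^2}\big(\int a\wt v^S_\xi\psi\big)^2\approx 2(\cdots)\bar\psi^2$; and the negative $\psi^2$ coefficients are of order $v_+^2$ times $a'\wt v^S_\xi$. We split $\int\psi^2dy=\int(\psi-\bar\psi)^2+\bar\psi^2$. After spending $7/8$ of the diffusion on Poincaré (keeping the $\frac18$ in \eqref{4-03}), we must show the gained quantity exceeds the bad one by $\frac{3v_m^2}{8s^2}(v_+-v_*)\int\wt v^S_\xi\psi^2$.

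The main obstacle is this explicit constant bookkeeping. The weight $a$ is not linear on $(\xi_*,\xi_0]$, so the mean $\int a\wt v^S_\xi\psi$ must be compared with $\bar\psi$ with the error absorbed. Moreover, all competing terms are of the same order $v_+^2(v_+-v_*)$, so the precise coefficients from \eqref{w.deg} (the $\frac{3}{v_m^2},\frac{3}{v_m^3}$ corrections) and the relation $v_*=v_m/2$, $v_m\approx-v_+/2$ must produce a strictly positive margin with room $\frac{3v_m^2}{8s^2}$. I would first do the computation exactly at $v_m=-v_+/2$ to leading order in $v_+$, verifying positivity of the resulting quadratic form in $(\bar\psi,\psi-\bar\psi)$. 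I would then extend it by continuity in $\theta_*$ and $v_+$ small.
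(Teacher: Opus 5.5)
Your completion of the square in $\phi$ is the same as the paper's, but the central step fails. Applying Lemma~\ref{P} to $f=\psi$ cannot give the lemma, and the obstruction is structural, not a matter of constant bookkeeping. Every term has the same size $(v_+-v_*)v_+^2\int\wt v^S_\xi\psi^2$. Measure coefficients of $\int\wt v^S_\xi(\psi^{-\mb X})^2d\xi$ in units of $(v_+-v_*)v_+^2/s^2$, and take $v_m=-v_+/2$, $v_+\to0$.

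On $\{\wt v^S>0\}$, the coefficient left after completing the square is the paper's $H_1+H_3+H_4$. To leading order it equals $-\frac{v_+-v_*}{s^2}\big[(v_m^2+v_mv_++v_+^2)+3(v_++v_m)\wt v^S\big]$. It runs from $-\frac34$ to $-\frac94$ as $\wt v^S$ goes from $0$ to $v_+$, while the target is $+\frac3{32}$.

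The diffusion gives much less. One has $\mu a\wt v^S_\xi/(v_+-v_*)=\kappa\,y(1-y)$ with $\min\kappa\approx0.44\,(v_+-v_*)^2v_+^2/s^2$, attained near $\wt v^S\approx-v_+/10$. So Lemma~\ref{P} applied to $\psi$ yields at most about $0.87$ times $\int\wt v^S_\xi(\psi-\bar\psi)^2$, with the same constant for every $y$.

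Now take $\psi$ supported in $\{\wt v^S>v_+/2\}$ with $\int_0^1\psi\,dy=\int_0^1a\psi\,dy=0$. Then $\bar\psi$ and the mean term both vanish, the bad coefficient is $\le-\frac32$ on the support, and your lower bound is negative. So the quadratic form in $(\bar\psi,\psi-\bar\psi)$ you plan to check is not positive. No continuity argument in $\theta_*$ or $v_+$ can repair this.

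Relatedly, replacing the mean term by $2(\cdots)\bar\psi^2$ is not a small-error step. At $v_m=-v_+/2$, the weight $a$ ranges on $[v_*,v_+]$ from $\frac{11}{32}$ to $\frac32$ in units of $(v_+-v_*)v_+/s$. Hence $\int_0^1a\psi\,dy-\bar a\bar\psi=\int_0^1(a-\bar a)(\psi-\bar\psi)\,dy$ is of full order. The difficulty is that $a$ is far from constant, not merely that it is nonlinear on $(\xi_*,\xi_0]$.

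The missing idea is to apply Lemma~\ref{P} to $\Psi=a\psi^{-\mb X}$, as in \eqref{quanity}, rather than to $\psi$. Then $2\big(\int_0^1\Psi\,dy\big)^2$ is \emph{exactly} the mean term in $\mathbf G^{\mb S}_1$. Lemma~\ref{P} converts it into $\frac{2}{v_+-v_*}\int_{\xi_*}^{\infty}a^2\wt v^S_\xi(\psi^{-\mb X})^2d\xi$ minus $\int_0^1y(1-y)\Psi_y^2dy=I_1+I_2+I_3$. The gain carries the pointwise weight $a^2$, which is largest exactly where the deficit is largest.

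The derivative cost $I_1$ is absorbed by $\frac78$ of the diffusion, and this is precisely what the design of $a$ ensures through \eqref{4.28.}. The term $I_3$ is integrated by parts against $I_2$. What remains is a pointwise coefficient $H_1+H_2+H_3+H_4$.

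On $\{\wt v^S>0\}$ one has $H_2=\frac{v_+-v_*}{s^2}(\wt v^S+v_++v_m)(3v_++\frac52v_m)$. In the units above it runs from $\frac78$ to $\frac{21}8$, which beats the deficit with margin at least $\frac18>\frac3{32}$. On $(v_*,0]$ the corresponding pointwise check is \eqref{4.34}--\eqref{449}. No analysis of a quadratic form in $(\bar\psi,\psi-\bar\psi)$ is needed.
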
   
   
 %\begin{lemma}\label{L4.3} $\mathbf{G}^S(t)$ defined in \eqref{4.14} satisfies that
%\begin{equation}\label{4.21}
   %  \begin{aligned}
%\mathbf{G}^S(t)\geq \int_{\mathbb{R}}(1-\alpha)w^{\mb{X}} \psi_\xi^2d\xi+\int_{\mathbb{R}} (\wt v^S)^{\mb{X}}_\xi h((\wt v^S)^{\mb{X}}) \psi^2 d\xi\\
 %+\frac{s}{2}\int_{\mathbb{R}}[w^{\mb{X}} \sigma^\prime((\wt v^S)^{\mb{X}})]_\xi \left[\phi+\frac{w^{\mb{X}}_\xi \sigma^\prime((\wt v^S)^{\mb{X}})}{s(w^{\mb{X}} \sigma^\prime((\wt v ^S)^{\mb{X}}))_\xi }\psi\right]^2 d\xi,
 %\end{aligned}
%\end{equation}

%where 
%\begin{equation}\label{4.22}
  %\begin{aligned}
%h((\wt v^S)^{\mb{X}}):=-\frac{[(w^\mb{X})^\prime\sigma^\prime((\wt v^S)^{\mb{X}})]^2}{2s [w^{\mb{X}} \sigma^\prime((\wt v^S)^{\mb{X}}) ]^\prime}+\frac{\alpha \delta_S}{2s^2}[1+3((\wt v ^S)^{\mb{X}}))^2-2v_+^2-2v_+v_m-2v_m^2]\\
%+\frac{\alpha^2 \delta_S}{s^2}[3(v_++v_m)(\wt v ^S)^{\mb{X}}+6v_+v_m+3v_+^2+3v_m^2]\geq \frac{\alpha \delta_S[3\theta^*+2(\theta^*)^2]v_+^2}{4s^2}.
 %\end{aligned}
%\end{equation}
%\end{lemma}

\begin{proof}   
Define the normalized variable 
\begin{equation}\label{transform}
y := \frac{v_+ - \widetilde{v}^S}{v_+ - v_*}.
\end{equation} 
Then $\xi \in (\xi_*, +\infty)$ is equivalent to $y \in [0, 1]$. Since $y_\xi = -\frac{\widetilde{v}^S_\xi}{v_+ - v_*} < 0$, for any fixed $t > 0$ we can set 
\begin{equation}\label{quanity}
\Psi(t, y) := a(\widetilde{v}^S(\xi))\psi^{-\mb X}(t, \xi).
\end{equation}
To apply the weighted Poincar\'e inequality (Lemma \ref{P}) to $\Psi$, we first have
\begin{equation}\label{4.23}
 \begin{aligned}
2\Big(\int_0^1\Psi(t,y) dy\Big)^2&%=2\Big(\int_{\xi_*}^{+\infty}  \frac{a\wt v^S_\xi}{v_+-v_*} \psi^{-\mb{X}}d\xi\Big)^2\\
&=\frac{2}{(v_+-v_*)^2}\left(\int_{\xi_*}^{+\infty}a\wt v^S_\xi  \psi^{-\mb{X}} d\xi\right)^2.
 \end{aligned}
\end{equation}
Furthermore, we have 
\begin{equation}\label{4.24}
 \begin{aligned}
2\int_0^1\Psi^2(t,y)dy=\frac{2}{v_+-v_*}\int_{\xi_*}^{+\infty} a^2\wt v^S_\xi ( \psi^{-\mb X})^2 d\xi,
%=\frac{2}{\delta_S}\int_{\mathbb{R}}  \psi^2 (w^{\mb X})^2(\wt v^S)^{\mb X}_\xi d\xi\\
%=\frac{2\alpha^2\delta_S}{s^2}\int_{\mathbb{R}}  \psi^2[((\wt v ^S)^{\mb{X}})^2+2(v_++v_m)(\wt v ^S)^{\mb{X}}+2v_+v_m+v_+^2+v_m^2] (\wt v^S)^{\mb X}_\xi d\xi.
 \end{aligned}
\end{equation}
and 
\begin{equation}\label{4.25}
 \begin{aligned}
\di \int_0^1 & |\Psi_y(t,y)|^2y(1-y)dy =\int^{\xi_*}_{+\infty}\left[(a\psi^{-\mb X})_\xi\right]^2\frac{y(1-y)}{y_\xi}d\xi\\
=&\frac{1}{v_+-v_*}\int_{\xi_*}^{+\infty}[(a\psi^{-\mb{X}})_\xi]^2\frac{(v_+-\wt v^S)(\wt v^S-v_*)}{\wt v^S_\xi}d\xi\\
=&\frac{1}{v_+-v_*}\int_{\xi_*}^{+\infty}[(a\psi^{-\mb{X}})_\xi]^2\frac{\mu s(\wt v^S-v_*)}{(\wt v^S-v_m)(\wt v^S+v_m+v_+)}d\xi\\
=&\frac{1}{v_+-v_*}\int_{\xi_*}^{+\infty}a^2\frac{\mu s(\wt v^S-v_*)}{(\wt v^S-v_m)(\wt v^S+v_m+v_+)}(\psi^{-\mb{X}}_\xi)^2d\xi\\
&+\frac{1}{v_+-v_*}\int_{\xi_*}^{+\infty}a_\xi^2\frac{\mu s(\wt v^S-v_*)}{(\wt v^S-v_m)(\wt v^S+v_m+v_+)}(\psi^{-\mb{X}})^2d\xi\\
&+\frac{1}{v_+-v_*}\int_{\xi_*}^{+\infty}2aa_\xi\frac{\mu s(\wt v^S-v_*)}{(\wt v^S-v_m)(\wt v^S+v_m+v_+)}\psi^{-\mb{X}}\psi^{-\mb{X}}_\xi d\xi\\
:=&I_1+I_2+I_3.
 \end{aligned}
\end{equation}
It shows that
\begin{equation}\label{4.26}
 \begin{aligned}
I_3=&\frac{1}{v_+-v_*}\int_{\xi_*}^{+\infty}2 aa_\xi\frac{\mu s(\wt v^S-v_*)}{(\wt v^S-v_m)(\wt v^S+v_m+v_+)}\psi^{-\mb{X}}\psi^{-\mb{X}}_\xi d\xi\\
=&\frac{1}{v_+-v_*}\int_{\xi_*}^{+\infty} aa_\xi\frac{\mu s(\wt v^S-v_*)}{(\wt v^S-v_m)(\wt v^S+v_m+v_+)} [(\psi^{-\mb{X}})^2]_\xi d\xi\\
=&-\frac{1}{v_+-v_*}\int_{\xi_*}^{+\infty}\left[aa_\xi\frac{\mu s(\wt v^S-v_*)}{(\wt v^S-v_m)(\wt v^S+v_m+v_+)}\right]_\xi (\psi^{-\mb{X}})^2d\xi \\
%=&-I_2-\int_{\xi_*}^{+\infty}(\psi^{-\mb{X}})^2\left[ww_{\xi\xi}\frac{\mu s(\wt v^S-v_*)}{(\wt v^S-v_m)(\wt v^S+v_m+v_+)}+ww_\xi\left(\frac{\mu s(\wt v^S-v_*)}{(\wt v^S-v_m)(\wt v^S+v_m+v_+)}\right)_\xi\right]d\xi \frac{1}{v_+-v_*}\\
=&-\frac{1}{v_+-v_*}\int_{\xi_*}^{+\infty}a\big[ {a^{\prime\prime}}(\wt v^S-v_*)(v_+-\wt v^S)\\
&\qquad\qquad\qquad\qquad\qquad\qquad-a^\prime(2\wt v^S-v_*-v_+)\big]\wt v^S_\xi (\psi^{-\mb{X}})^2d\xi-I_2,
 \end{aligned}
\end{equation}
where we have used the fact 
\begin{equation*}
\begin{aligned}
\mu sa_{\xi\xi}=&\mu sa^{\prime\prime}(\wt v^S_\xi)^2+\mu sa^\prime \wt v^S_{\xi\xi}\\
=&\big[ a^{\prime\prime}(v_+-\wt v^S)(\wt v^S-v_m)(\wt v^S+v_m+v_+) +a^\prime\left(v_m^2+v_mv_++v_+^2-3(\wt v^S)^2\right)\big]\wt v^S_\xi.
 \end{aligned}
\end{equation*}

Applying the weighted Poincar\'e inequality \eqref{poincare} to the function $\Psi$ defined in \eqref{quanity}, and using the identities \eqref{4.23}-\eqref{4.26} together with the definition of $\mathbf{G}^{\mb S}_1(t)$ in \eqref{eq:def_GS1}, we obtain the following lower bound:
\begin{equation}\label{4.27}
 \begin{aligned}
\mathbf{G^S_1}(t)\geq& \bigintsss_{\xi_*}^{+\infty}\mu a \left[1-\frac{sa(\wt v^S-v_*)}{(\wt v^S-v_m)(\wt v^S+v_m+v_+)(v_+-v_*)}\right]  (\psi_\xi^{-\mb{X}})^2d\xi\\
&+ \frac{s}{2}\bigintsss_{\xi_*}^{+\infty}\left[a \sigma^\prime(\wt v^S)\right]_\xi\left[  \phi^{-\mb{X}} +\frac{a_\xi\sigma^\prime(\wt v^S)}{s\left(a \sigma^\prime(\wt v^S)\right)_\xi} \psi^{-\mb{X}}\right]^2 d\xi\\
&+\int_{\xi_*}^{+\infty}\left(H_1+H_2+H_3+H_4\right) \wt v^S_\xi  (\psi^{-\mb{X}})^2d\xi,
%&+\int_{\xi_*}^{+\infty}(\psi^{-\mb{X}})^2\wt v^S_\xi \underbrace{\left[-\frac{\left(w^\prime\sigma^\prime(\wt v^S)\right)^2}{2s\left(w \sigma^\prime(\wt v^S)\right)^\prime}\right]}_{H_1}d\xi\\
%&+\int_{\xi_*}^{+\infty}(\psi^{-\mb{X}})^2\wt v^S_\xi\\
%& \times\underbrace{w\left[ {w^{\prime\prime}}(\wt v^S-v_*)(v_+-\wt v^S)-w^\prime(2\wt v^S-v_*-v_+)+2w \right]\frac{1}{v_+-v_*}}_{H_2}d\xi\\
%&+\int_{\xi_*}^{+\infty}(\psi^{-\mb{X}})^2\wt v^S_\xi \underbrace{\frac{w^\prime}{2s}\left[1-2v_m^2-2v_+^2-2v_mv_++3(\wt v^S)^2\right]}_{H_3}d\xi\\
%&+\int_{\xi_*}^{+\infty}(\psi^{-\mb{X}})^2\wt v^S_\xi \underbrace{\left[-\frac{w^{\prime\prime}}{2s}(v_+-\wt v^S)(\wt v^S-v_m)(\wt v^S+v_m+v_+)\right]}_{H_4}d\xi,
\end{aligned}
\end{equation}
where $H_1$ to $H_4$ are given as
\begin{equation*}
 \begin{aligned}
 H_1&=-\frac{\left(a^\prime\sigma^\prime(\wt v^S)\right)^2}{2s\left(a \sigma^\prime(\wt v^S)\right)^\prime},\\
 H_2&=\frac{a}{v_+-v_*}\left[ {a^{\prime\prime}}(\wt v^S-v_*)(v_+-\wt v^S)-a^\prime(2\wt v^S-v_*-v_+)+2a\right] ,\\
 H_3&=\frac{a^\prime}{2s}\left[1-2v_m^2-2v_+^2-2v_mv_++3(\wt v^S)^2\right],\\
 H_4&=-\frac{a^{\prime\prime}}{2s}(v_+-\wt v^S)(\wt v^S-v_m)(\wt v^S+v_m+v_+).
 \end{aligned}
\end{equation*}
 
The construction of the weight function $a$ in \eqref{w.deg} yields the following crucial estimates:
\begin{equation}\label{4.28.}
 \begin{array}{ll}
\di 1-\frac{sa(\wt v^S-v_*)}{(v_+-v_*)(\wt v^S-v_m)(\wt v^S+v_m+v_+)}\\[5mm]
\di =\begin {cases}
\di \frac{v_*-v_m}{\wt v^S-v_m}=\frac{-\frac12 v_m}{\wt v^S-v_m}>%\frac{\frac12(v_+-v_m)-\frac12 v_+}{v_+-v_m}=
\frac{1}{2}-\frac{v_+}{2(v_+-v_m)}>\frac{1}{8},  &\quad \wt v^S\in(0,v_+),\\[4mm]
\di \frac{v_*-v_m}{\wt v^S-v_m}-\frac{3(\wt v^S)^3}{v_m^3}\frac{\wt v^S-v_*}{\wt v^S+v_++v_m}> \frac{1}{8},&\quad\wt v^S\in(v_*,0].
 \end {cases}
\end{array}
\end{equation}
In fact, since $v_m \in [-\frac{v_+}{2}, (-\frac12 + \theta_*) v_+)$ in Case 1, one has
$$
\frac{v_+}{2(v_+-v_m)}<\frac{v_+}{2\big(v_++(\frac12 - \theta_*) v_+\big)}=\frac{1}{3-2\theta_*}.
$$
Then the first inequality in \eqref{4.28.} holds provided that the uniform constant $\theta_*$ is chosen suitably small.  When $\wt v^S\in(v_*,0]$, we  have
\begin{equation*}\label{4.28}
 \begin{aligned}
& \frac{v_*-v_m}{\wt v^S-v_m}-\frac{3(\wt v^S)^3}{v_m^3}\frac{\wt v^S-v_*}{\wt v^S+v_++v_m}\\[1mm]
&=\frac{\frac12|v_m|}{\wt v^S-v_m}-\frac{3(\wt v^S)^3}{v_m^3}\frac{\wt v^S-v_*}{\wt v^S+v_++v_m}\\[1mm]
&> \frac{\frac12|v_m|}{\wt v^S-v_m}\Big|_{\wt v^S=0}-\frac{3}{v_m^3}\frac{1}{\wt v^S+v_++v_m}\Big|_{\wt v^S=v_*}{(\wt v^S-v_*)}(\wt v^S)^3\\[2mm]
&=\frac{1}{2}+\frac{3}{|v_m|^3}\frac{1}{v_++v_*+v_m}{(\wt v^S-v_*)}(\wt v^S)^3\\[1mm]
&\geq \frac{1}{2}+\frac{3}{|v_m|^3}\frac{1}{v_++v_*+v_m}{(\wt v^S-v_*)}(\wt v^S)^3\Big|_{\wt v^S=\frac{3v_*}{4}}\\[1mm]
&=\frac{1}{2}+ \left(\frac38\right)^4\frac{v_m}{v_++\frac{3}{2}v_m}>\frac{1}{2}-2 \left(\frac38\right)^4>\frac{1}{8}. %\ \ \ for \ \ 0< \theta_*\ll1.
\end{aligned}
\end{equation*}
Therefore, \eqref{4.28.} is proved. 

Next we  calculate the sum $H_1 + H_2 + H_3 + H_4$ in \eqref{4.27}. % separately on the two intervals corresponding to the sign of $\widetilde{v}^S$.
When $\wt v^S\in(0,v_+)$, by \eqref{w.deg}, we have
\begin{equation}\label{4.29}
\begin{aligned}
&H_1+H_2+H_3+H_4 \\
&= \frac{v_+-v_*}{2s^2\bigl[1-9(\wt v^S)^2-6(v_m+v_+)\wt v^S\bigr]}  \cdot \Bigl[
    \bigl(1+4{v_+^2}+9v_mv_++3v_m^2+6v_+\wt v^S+5v_m\wt v^S+3(\wt v^S)^2\bigr) \\
&\qquad\qquad\qquad \qquad\qquad \qquad\qquad \qquad \  \cdot \bigl(1-9(\wt v^S)^2-6(v_m+v_+)\wt v^S\bigr)  -1+6(\wt v^S)^2-9(\wt v^S)^4 \Bigr] \\[2mm]
&= \frac{v_+-v_*}{2s^2\Big[1-9(\wt v^S)^2-6(v_m+v_+)\wt v^S\Big]}
   \Big[3v_m^2+4v_+^2+9v_mv_+-v_m\wt v^S+O(v_+^4)\Big] \\[2mm]
&\geq \frac{v_+-v_*}{2s^2\Big[1-9(\wt v^S)^2-6(v_m+v_+)\wt v^S\Big]}
       \Big[3v_m^2+4v_+^2-9|v_m|v_++O(v_+^4)\Big] \\[2mm]
&\geq \frac{v_+-v_*}{2s^2\Big[1-9(\wt v^S)^2-6(v_m+v_+)\wt v^S\Big]}
       \Big[3v_m^2+16v_m^2-\frac{9}{\frac12-\theta_*}v_m^2+O(v_+^4)\Big] \\[2mm]
%&\geq \frac{v_+-v_*}{2s^2}
 %      \bigl(3v_m^2+4v_+^2-9|v_m|v_+\bigr) > \frac{v_+-%v_*}{2s^2}
%       \Big(3v_m^2+16v_m^2-\frac{9}{\frac12-%\theta_*}v_m^2\Big)\\[2mm]
 &     > \frac{3(v_+-v_*)}{8s^2} v_m^2,
\end{aligned}
\end{equation}
provided that both $v_+>0$ and $\theta_*>0$ are uniformly suitably small.

When $\wt v^S\in(v_*,0]$, as shown in the appendix, we have
\begin{equation}\label{4.33}
\begin{aligned}
H_1 + H_2 + H_3 + H_4
&= \frac{v_+-v_*}{2s^2(K_1+K_2)} \Bigl[ K_1\bigl(6(\wt v^S)^2 K_1+K_2+2K_3+K_4\bigr) \\
&\qquad\qquad\qquad\qquad-9(\wt v^S)^4 K_1^2+2K_2K_3+K_2K_4 \Bigr] \\
&= \frac{v_+-v_*}{2s^2(K_1+K_2)} \Bigl[ K_1\bigl(6(\wt v^S)^2 K_1+K_2+2K_3+K_4\bigr) + O(v_+^4) \Bigr],
\end{aligned}
\end{equation}
where 
\begin{equation}\label{K}
\begin{aligned}
K_1 &:= 1-\frac{9}{v_m^2}(\wt v^S)^2+\frac{12}{v_m^3}(\wt v^S)^3 = O(1), \\
K_2 &:= -3(\wt v^S)^2K_1-6\wt v^S\bigg[v_++v_m+\wt v^S-\frac{3}{v_m^2}(\wt v^S)^3+\frac{3}{v_m^3}(\wt v^S)^4 \bigg] = O(v_+^2), \\
K_3 &:= (v_++v_m)\Big(3v_++\frac{5}{2}v_m\Big)+\Big(\frac{9v_+^2}{v_m}+12v_++\frac{5v_m}{2}\Big)\wt v^S \\
    &\quad -\Big(\frac{45v_+^2}{v_m^2}+\frac{99v_+}{2v_m}+\frac{27}{2}\Big)(\wt v^S)^2 \\
    &\quad +\Big(\frac{48v_+^2}{v_m^3}+\frac{48v_+}{v_m^2}+\frac{33}{v_m}\Big)(\wt v^S)^3+\Big(\frac{15}{2v_m^2}-\frac{24v_+}{v_m^3}\Big)(\wt v^S)^4 \\
    &\quad +\Big(\frac{162v_+}{v_m^4}-\frac{27}{2v_m^3}\Big)(\wt v^S)^5-\Big(\frac{405}{2v_m^4}+\frac{279v_+}{v_m^5}\Big)(\wt v^S)^6 \\
    &\quad +\Big(\frac{324}{v_m^5}+\frac{144v_+}{v_m^6}\Big)(\wt v^S)^7-\frac{162}{v_m^6}(\wt v^S)^8 = O(v_+^2), \\
    \end{aligned}
\end{equation}
and
\begin{equation}\label{K4}
\begin{aligned}
K_4 &:= -2v_m^2-2v_+^2-2v_mv_+-\Big(18v_++\frac{18v_+^2}{v_m}\Big)\wt v^S \\
    &\quad +\Big(39+\frac{72v_+}{v_m}+\frac{72v_+^2}{v_m^2}\Big)(\wt v^S)^2-\Big(\frac{60}{v_m}+\frac{60v_+}{v_m^2}+\frac{60v_+^2}{v_m^3} \Big)(\wt v^S)^3 \\
    &\quad -\frac{45}{v_m^2}(\wt v^S)^4+\frac{72}{v_m^3}(\wt v^S)^5 = O(v_+^2),
\end{aligned}
\end{equation}
provided that $v_+>0$ is uniformly suitably small.
Consequently, for uniformly suitably small $v_+ > 0$ and $\wt v^S\in(v_*,0]$, we have the following uniform bounds
\[
|K_1| \leq C, \quad |K_2| \leq C v_+^2, \quad |K_3| \leq C v_+^2, \quad \text{and} \quad |K_4| \leq C v_+^2,
\]
where $C$ is a positive constant independent of $v_+$.
%\begin{equation}\label{4.30}
% \begin{aligned}
%H_1=&\frac{1}{2s^2}\Big[\frac{35}{16}v_+^3-\frac{145}{8}v_+^2\wt v^S-\frac{945v_+}{4}(\wt v^S)^2-{645}(\wt v^S)^3+\frac{555}{v_+}(\wt v^S)^4\\
%&+\frac{6750}{v_+^2}(\wt v^S)^5+\frac{14220}{v_+^3}(\wt v^S)^6-\frac{2880}{v_+^4}(\wt v^S)^7-\frac{25920}{v_+^5}(\wt v^S) ^8\Big]:=\frac{K_1}{2s^2},
%\end{aligned}
%\end{equation}
%\begin{equation}\label{4.31}
 %\begin{aligned}
%H_2=&\frac{1}{2s^2}\Big[\underbrace{ \left(\frac{5v_+}{4}-\frac{45}{v_+}(\wt v^S)^2-\frac{120}{v_+^2}(\wt v^S)^3\right)}_{K_2}\\
%&+\underbrace{\left(-\frac{15}{8}v_+^3+\frac{45}{2}v_+^2\wt v^S+\frac{915}{4}v_+(\wt v^S)^2+450(\wt v^S)^3-\frac{225}{v_+}(\wt v^S)^4-\frac{720}{v_+^2}(\wt v^S)^5\right)}_{K_3}\Big]\\
%&:=\frac{K_2+K_3}{2s^2},
%\end{aligned}
%\end{equation}
%and 
%\begin{equation}\label{4.32}
 %\begin{aligned}
%H_3=&-\frac{1}{2s^2}\frac{K_2^2\left[1-6(\wt v^S)^2+9(\wt v^S)^4\right]}{K_2\underbrace{-3(\wt v^S)^2K_2-\frac{15}{4}v_+^2\wt v^S-\frac{15}{2}v_+(\wt v^S)^2+\frac{90}{v_+}(\wt v^S)^4+\frac{180}{v_+^2}(\wt v^S)^5}_{K_4}}\\
%:=&-\frac{1}{2s^2}\frac{K_2^2\left[1-6(\wt v^S)^2+9(\wt v^S)^4\right]}{K_2+K_4}.
%\end{aligned}
%\end{equation}
%It is obvious that $K_1=O(v_+^3), K_2=O(v_+), K_3=O(v_+^3)$ and $K_4=O(v_+^3)$. Thus, we have 

We now estimate $6(\widetilde{v}^S)^2K_1+K_2+2K_3+K_4$ in  \eqref{4.33}. Direct computations yield that
\begin{equation}\label{4.34}
 \begin{aligned}
6&(\wt v^S)^2K_1+K_2+2K_3+K_4\\
=&v_m^2\bigg[\Big(3+\frac{9v_+}{v_m}+\frac{4v_+^2}{v_m^2}\Big)-\frac{\wt v^S}{v_m}+\Big({9}-\frac{27v_+}{v_m}-\frac{18v_+^2}{v_m^2}\Big)\Big(\frac{\wt v^S}{v_m}\Big)^2\\
&\qquad+\Big({6}+\frac{36v_+}{v_m}+\frac{36v_+^2}{v_m^2} \Big)\Big(\frac{\wt v^S}{v_m}\Big)^3-\Big({39}+\frac{48v_+}{v_m}\Big)\Big(\frac{\wt v^S}{v_m}\Big)^4\\
&\qquad+\Big({63}+\frac{324v_+}{v_m}\Big)\Big(\frac{\wt v^S}{v_m}\Big)^5-\Big({405}+\frac{558v_+}{v_m}\Big)\Big(\frac{\wt v^S}{v_m}\Big)^6\\
&\qquad+\Big({648}+\frac{288v_+}{v_m}\Big)\Big(\frac{\wt v^S}{v_m}\Big)^7-{324}\Big(\frac{\wt v^S}{v_m}\Big)^8\bigg].\\
\end{aligned}
\end{equation}
%Since $v_m\in[-\frac{v_+}{2},(-\frac{1}{2}+\theta_*)v_+)$, we have $\left(\frac{v_+}{v_m}+2\right)\in\left(\frac{-4\theta_*}{1-2\theta_*},0\right]$. Substituting $\frac{v_+}{v_m}=\left(\frac{v_+}{v_m}+2\right)-2$ into \eqref{4.34} and choosing $\theta_*$ sufficiently small, we obtain
Since $v_m\in[-\frac{v_+}{2},-(\frac{1}{2}-\theta_*)v_+)$, we have $\frac{v_+}{v_m}\in\left(-\frac{2}{1-2\theta_*},-2\right]$. 
Then the follow seven inequalities hold true.
$$
3+\frac{9v_+}{v_m}+\frac{4v_+^2}{v_m^2}>3-9\cdot\frac{2}{1-2\theta_*}+4\cdot 4=19-\frac{18}{1-2\theta_*}=1-\frac{36\theta_*}{1-2\theta_*}.
$$
$$
{9}-\frac{27v_+}{v_m}-\frac{18v_+^2}{v_m^2}>9+27\cdot2-18\cdot \frac{4}{(1-2\theta_*)^2}=-9-\frac{288\theta_*(1-\theta_*)}{(1-2\theta_*)^2}.
$$
$$
{6}+\frac{36v_+}{v_m}+\frac{36v_+^2}{v_m^2} >6-\frac{72}{1-2\theta_*}+36\cdot 4=78-\frac{144\theta_*}{1-2\theta_*}.
$$
$$
-\Big({39}+\frac{48v_+}{v_m}\Big)\geq -39+48\cdot 2=57.
$$
$$
{63}+\frac{324v_+}{v_m}>63-324\cdot \frac{2}{1-2\theta_*}=-585-\frac{1296\theta_*}{1-2\theta_*}
$$
$$
-\Big({405}+\frac{558v_+}{v_m}\Big)\geq -405+558\cdot 2=711.
$$
$$
{648}+\frac{288v_+}{v_m}>648-288\cdot \frac{2}{1-2\theta_*}=72-\frac{1152\theta_*}{1-2\theta_*}.
$$
Substituting the above inequalities into \eqref{4.34} and choosing $\theta_*$ suitably small, we can obtain
\begin{equation}\label{4.34-1}
 \begin{aligned}
6(\wt v^S)^2K_1+K_2+2K_3+K_4>v_m^2\bigg[1-\frac{\wt v^S}{v_m}-9\Big(\frac{\wt v^S}{v_m}\Big)^2+78\Big(\frac{\wt v^S}{v_m}\Big)^3
+57\Big(\frac{\wt v^S}{v_m}\Big)^4\\
-585\Big(\frac{\wt v^S}{v_m}\Big)^5
+711\Big(\frac{\wt v^S}{v_m}\Big)^6+72\Big(\frac{\wt v^S}{v_m}\Big)^7-324\Big(\frac{\wt v^S}{v_m}\Big)^8+O(\theta_*)\bigg]. %\geq 0.76 v_m^2.
\end{aligned}
\end{equation}
For $\wt v^S \in [v_*,0)$,  we have
$$
0<\frac{\wt v^S}{v_m}\leq \frac 12.
$$
Then we have 
$$
\begin{array}{ll}
l_1(\wt v^S)&\di :=\bigg[-324\Big(\frac{\wt v^S}{v_m}\Big)^2+72\Big(\frac{\wt v^S}{v_m}\Big)+711\bigg]\Big(\frac{\wt v^S}{v_m}\Big)^6\\[3mm]
&\di =\bigg[-324\bigg(\frac{\wt v^S}{v_m}-\frac 19\Big)^2+715\bigg]\Big(\frac{\wt v^S}{v_m}\Big)^6\geq666\Big(\frac{\wt v^S}{v_m}\Big)^6.
\end{array}
$$
Similarly, 
$$
l_2(\wt v^S):=\bigg[666\Big(\frac{\wt v^S}{v_m}\Big)^2-585\Big(\frac{\wt v^S}{v_m}\Big)+57\bigg]\Big(\frac{\wt v^S}{v_m}\Big)^4\\[3mm]\geq -\frac{21153}{296}\Big(\frac{\wt v^S}{v_m}\Big)^4> -72\Big(\frac{\wt v^S}{v_m}\Big)^4,
$$ 
and
 $$
 l_3(\wt v^S):=1-\frac{\wt v^S}{v_m}-9\Big(\frac{\wt v^S}{v_m}\Big)^2+42\Big(\frac{\wt v^S}{v_m}\Big)^3\geq l_3\Big(\frac{3+\sqrt {23}}{42}v_m\Big)>0.77.
 $$
 Using the above three inequalities,  one has
\begin{equation}\label{4.34-2}
 \begin{aligned}
1&-\frac{\wt v^S}{v_m}-9\Big(\frac{\wt v^S}{v_m}\Big)^2+78\Big(\frac{\wt v^S}{v_m}\Big)^3+57\Big(\frac{\wt v^S}{v_m}\Big)^4-585\Big(\frac{\wt v^S}{v_m}\Big)^5\\
&\underbrace{+711\Big(\frac{\wt v^S}{v_m}\Big)^6+72\Big(\frac{\wt v^S}{v_m}\Big)^7-324\Big(\frac{\wt v^S}{v_m}\Big)^8}_{l_1(\wt v^S)}\\
&\geq1-\frac{\wt v^S}{v_m}-9\Big(\frac{\wt v^S}{v_m}\Big)^2+78\Big(\frac{\wt v^S}{v_m}\Big)^3
\underbrace{+57\Big(\frac{\wt v^S}{v_m}\Big)^4-585\Big(\frac{\wt v^S}{v_m}\Big)^5+666\Big(\frac{\wt v^S}{v_m}\Big)^6}_{l_2(\wt v^S)}\\
&> 1-\frac{\wt v^S}{v_m}-9\Big(\frac{\wt v^S}{v_m}\Big)^2+78\Big(\frac{\wt v^S}{v_m}\Big)^3-72\Big(\frac{\wt v^S}{v_m}\Big)^4\\
&\geq \underbrace{1-\frac{\wt v^S}{v_m}-9\Big(\frac{\wt v^S}{v_m}\Big)^2+42\Big(\frac{\wt v^S}{v_m}\Big)^3}_{l_3(\wt v^S)}> 0.77,
\end{aligned}
\end{equation}
 Substituting \eqref{4.34-2} into  \eqref{4.34-1} and choosing $\theta_*$ suitably small imply that
\begin{equation}\label{449}
 6(\wt v^S)^2K_1+K_2+2K_3+K_4>0.76 v_m^2,\ \  {\rm for}\ \ \wt v^S\in(v_*,0].
\end{equation}
 Combining the estimates in \eqref{4.29}-\eqref{4.33} and using \eqref{K}-\eqref{449}, we conclude that for $0<v_+\ll 1$,
\begin{equation}\label{n4.35}
 \begin{aligned}
H_1+H_2+H_3+H_4\geq\frac{3v_m^2}{8s^2}(v_+-v_*).
\end{aligned}
\end{equation}
Substituting \eqref{4.28.} and \eqref{n4.35} into \eqref{4.27}, we proved \eqref{4-03}.
\end{proof}
 \begin{lemma}\label{L4.4} If $v_+$ satisfies $0<v_+\ll 1$, then $\mathbf{G^S_2}(t)$ defined in \eqref{eq:def_GS2} satisfies that
\begin{equation}\label{4.37}
     \begin{aligned}
\mathbf{G^S_2}(t)\geq&\frac{s}{2}  \bigintsss_{-\infty}^{\xi_*}\left[a\sigma^\prime(\wt v^S)\right]_\xi\left[\phi^{-\mb X}+\frac{a_\xi\sigma^\prime(\wt v^S)}{s\left(a\sigma^\prime(\wt v^S)\right)_\xi}\psi^{-\mb X}\right]^2d\xi\\
&+\frac{7v_+^2}{64s^2}(v_+-v_*) \int^{\xi_*}_{-\infty} \wt v^S_\xi  (\psi^{-\mb X})^2d\xi+ \int^{\xi_*}_{-\infty}\mu a( \psi^{-\mb X}_\xi)^2 d\xi.
 \end{aligned}
\end{equation}
\end{lemma}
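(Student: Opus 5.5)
On $(-\infty,\xi_*)$, i.e.\ $\wt v^S\in(v_m,v_*]$, the weight \eqref{w.deg} is affine in $\wt v^S$. This means $a''=0$ and $a'=\frac{v_+-v_*}{4s}$ there, so I would not use the weighted Poincar\'e inequality (Lemma \ref{P}) at all. Instead I would complete the square pointwise in the integrand of $\mathbf{G}^{\mb S}_2(t)$ in \eqref{eq:def_GS2}. The term $\int_{-\infty}^{\xi_*}\mu a(\psi^{-\mb X}_\xi)^2d\xi$ is kept untouched, so it appears on the right of \eqref{4.37} with full coefficient.

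\textbf{Completing the square.} For $\wt v^S\le v_*<0$ we have
\[
(a\sigma'(\wt v^S))'=a'\sigma'(\wt v^S)-6\wt v^S a=a'\sigma'+6|\wt v^S|a>0 .
\]
Hence the $\phi\psi$ and $\phi^2$ terms combine into the square displayed in \eqref{4.37}. The remaining coefficient of $(\psi^{-\mb X})^2$ is
\[
\frac s2 a_\xi-\frac\mu2 a_{\xi\xi}-\frac{(a_\xi\sigma')^2}{2s(a\sigma')_\xi}.
\]
Since $a''=0$, we have $a_\xi=a'\wt v^S_\xi$ and, by the identity for $\mu s a_{\xi\xi}$ used after \eqref{4.26}, $\mu s\,a_{\xi\xi}=a'\big(v_m^2+v_mv_++v_+^2-3(\wt v^S)^2\big)\wt v^S_\xi$. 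I would substitute $s^2=1-(v_m^2+v_mv_++v_+^2)$ from \eqref{speed} and $\sigma'=1-3(\wt v^S)^2$, and write
\[
\frac{a'\sigma'}{a'\sigma'+6|\wt v^S|a}=1-\frac{6|\wt v^S|a}{a'\sigma'+6|\wt v^S|a}.
\]
Then the coefficient becomes $\frac{a'}{2s}\wt v^S_\xi\,B(\wt v^S)$ with
\[
B=-2(v_m^2+v_mv_++v_+^2)+6(\wt v^S)^2+\frac{6|\wt v^S|\,a\,\sigma'}{a'\sigma'+6|\wt v^S|a}.
\]

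\textbf{Lower bound for $B$.} From \eqref{w.deg}, $a/a'=\wt v^S+4v_++\tfrac{19}{4}v_m=O(v_+)$ and $\sigma'=1+O(v_+^2)$. Therefore the last term of $B$ equals $6|\wt v^S|\big(\wt v^S+4v_++\tfrac{19}{4}v_m\big)(1+O(v_+^2))$. Because $\wt v^S<0$, the contribution $6|\wt v^S|\wt v^S$ cancels $6(\wt v^S)^2$, leaving
\[
B=-2(v_m^2+v_mv_++v_+^2)+6|\wt v^S|\big(4v_++\tfrac{19}{4}v_m\big)+O(v_+^4).
\]
In Case 1, $v_m=-\tfrac{v_+}{2}+O(\theta_*v_+)$, so $4v_++\tfrac{19}4v_m=\tfrac{13}{8}v_+ +O(\theta_*v_+)>0$. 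Also $|\wt v^S|\ge|v_*|=\tfrac{|v_m|}{2}\approx\tfrac{v_+}{4}$. This gives
\[
B\ge-\tfrac32v_+^2+\tfrac{39}{16}v_+^2-O(\theta_*)v_+^2-O(v_+^4)\ge\tfrac78 v_+^2
\]
for $\theta_*$ and $v_+$ small. Multiplying by $\frac{a'}{2s}=\frac{v_+-v_*}{8s^2}$ yields the coefficient $\frac{7v_+^2}{64s^2}(v_+-v_*)\wt v^S_\xi$ claimed in \eqref{4.37}.

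\textbf{Main obstacle.} The delicate step is this sign computation of $B$, which is where the specific slope $\tfrac14$ and intercept $v_++\tfrac{19}{16}v_m$ of the weight enter. The leading negative term $-2(v_m^2+v_mv_++v_+^2)$ must be beaten by the $\sigma''$-generated gain $6|\wt v^S|a/a'$. This works only because $|\wt v^S|$ stays away from $0$ on $(v_m,v_*]$, which is exactly why $\xi_*$ was introduced. The $O(\theta_*)$ and $O(v_+^4)$ errors would be tracked explicitly to confirm the margin $\tfrac{15}{16}-\tfrac78$ survives.
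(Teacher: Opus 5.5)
Your proposal is correct and follows essentially the paper's proof. Since $a''=0$ on $(-\infty,\xi_*]$, the paper likewise completes the square pointwise with no Poincar\'e inequality; it writes the $\psi^2$-coefficient as $\frac{1}{2s}\frac{(a')^2}{a'\sigma'+a\sigma''}L(\wt v^S)$, where $L=6|\wt v^S|\big(4v_++\tfrac{19}{4}v_m\big)-2(v_m^2+v_mv_++v_+^2)+O(v_+^4)$ is your $B$ up to the factor $a'/(a'\sigma'+a\sigma'')=1+O(v_+^2)$, and then uses $|\wt v^S|\ge|v_*|$ to reach the same $\tfrac{15}{16}v_+^2$ margin. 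The differences are cosmetic: the paper keeps that factor separate (bounding it below by $\tfrac{28}{29}$ against $L\ge\tfrac{29}{32}v_+^2$) and replaces $24v_+$ by the cruder $48|v_m|$, whereas you absorb the factor into the $O(v_+^4)$ error and keep the exact coefficient $4v_++\tfrac{19}{4}v_m$.
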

\begin{proof}
Since $a^{\prime\prime}=0$ for all $\xi\in(-\infty,\xi_*]$, we have
\begin{equation}\label{4.38}
     \begin{aligned}
\mathbf{G^S_2}(t)=&\frac{s}{2}  \bigintsss_{-\infty}^{\xi_*}\left[a\sigma^\prime(\wt v^S)\right]_\xi\left[\phi^{-\mb X}+\frac{a_\xi\sigma^\prime(\wt v^S)}{s\left(a\sigma^\prime(\wt v^S)\right)_\xi}\psi^{-\mb X}\right]^2d\xi+ \int^{\xi_*}_{-\infty}\mu a( \psi^{-\mb X}_\xi)^2 d\xi\\
&+ \bigintsss_{-\infty}^{\xi_*} {\left[-\frac{\left(a^\prime\sigma^\prime(\wt v^S)\right)^2}{2s\left(a \sigma^\prime(\wt v^S)\right)^\prime}\right]}\wt v^S_\xi(\psi^{-\mb{X}})^2d\xi+ \int^{\xi_*}_{-\infty}\left( \frac{s}{2} a_\xi -\frac{\mu}{2} a_{\xi\xi}\right) (\psi^{-\mb X})^2 d\xi\\
=&\frac{s}{2} \bigintsss_{-\infty}^{\xi_*}\left[a\sigma^\prime(\wt v^S)\right]_\xi\left[\phi^{-\mb X}+\frac{a_\xi\sigma^\prime(\wt v^S)}{s\left(a\sigma^\prime(\wt v^S)\right)_\xi}\psi^{-\mb X}\right]^2d\xi+ \int^{\xi_*}_{-\infty}\mu a( \psi^{-\mb X}_\xi)^2 d\xi\\
&-\frac{1}{2s}\bigintsss_{-\infty}^{\xi_*} \frac{\left[1-3(\wt v^S)^2\right]^2(a^\prime)^2}{a^\prime\sigma^\prime(\wt v^S)+a\sigma^{\prime\prime}(\wt v^S)} \wt v^S_\xi (\psi^{-\mb{X}})^2d\xi\\
&+\frac{1}{2s}\int^{\xi_*}_{-\infty} a^\prime \left[1+3(\wt v^S)^2-2\left(v_m^2+v_+^2+v_mv_+\right) \right]\wt v^S_\xi (\psi^{-\mb{X}})^2d\xi\\
=&\frac{s}{2}  \bigintsss_{-\infty}^{\xi_*}\left[a\sigma^\prime(\wt v^S)\right]_\xi\left[\phi^{-\mb X}+\frac{a_\xi\sigma^\prime(\wt v^S)}{s\left(a\sigma^\prime(\wt v^S)\right)_\xi}\psi^{-\mb X}\right]^2d\xi+ \int^{\xi_*}_{-\infty}\mu a( \psi^{-\mb X}_\xi)^2 d\xi\\
&+\frac{1}{2s}\int^{\xi_*}_{-\infty} \frac{(a^\prime)^2}{a^\prime\sigma^\prime(\wt v^S)+a\sigma^{\prime\prime}(\wt v^S)} L(\wt v^S) \wt v^S_\xi (\psi^{-\mb{X}})^2d\xi,\\
 \end{aligned}
\end{equation}
where % $L(\wt v^S)$ is defined by 
$$
\begin{array}{ll}
\di L(\wt v^S):=-1+6(\wt v^S)^2-9(\wt v^S)^4+\big[1+3(\wt v^S)^2-2(v_m^2+v_+^2+v_mv_+)\big]\big[\sigma^\prime(\wt v^S)+\frac{a}{a^\prime}\sigma^{\prime\prime}(\wt v^S)\big]\\
\di \qquad\quad  =\big(\frac{57}{2}|v_m|-24v_+\big)\wt v^S-2(v_m^2+v_+^2+v_mv_+)+O(v_+^4).
\end{array}
$$
For $\xi\in(-\infty,\xi_*]$, we have $\wt v^S\in(v_m,v_*]$ with $v_m\in[-{v_+}/{2},(-{1}/{2}+\theta_*)v_+)$. 
%Consequently, $\left({v_+}/{2}+v_m\right)\in[0,\theta_*v_+)$. 
Choosing $\theta_*$ suitably small, we can estimate
\begin{equation*}
     \begin{aligned}
L(\wt v^S)\geq& \big(\frac{57}{2}|v_m|-48|v_m|\big)\wt v^S-2(v_m^2+v_+^2+v_mv_+)+O(v_+^4)\\
=&-\frac{39}{2}|v_m|\wt v^S-2(v_m^2+v_+^2+v_mv_+)+O(v_+^4)\\
\geq &-\frac{39}{2}|v_m| v_*-2v_m^2+2|v_m|v_+-2v_+^2+O(v_+^4)\\
=&\frac{31}{4}v_m^2+2|v_m|v_+-2v_+^2+O(v_+^4)\\
>&\frac{31(1-2\theta_*)^2}{16}v_+^2+(1-2\theta_*)v_+^2-2v_+^2+O(v_+^4)\\
=&v_+^2\left[\frac{15}{16}+O(\theta_*)\right]+O(v_+^4)\geq \frac{29}{32}v_+^2, 
  \end{aligned}
  \end{equation*}
  provided that both $v_+>0$ and $\theta_*>0$ are uniformly suitably small, which together with  
 $$
 \frac{a^\prime }{a^\prime\sigma^\prime(\wt v^S)+a\sigma^{\prime\prime}(\wt v^S)}=\frac{1}{1-9(\wt v^S)^2-24v_+\wt v^S-\frac{57}{2}v_m \wt v^S}\geq \frac{28}{29}, \ \ {\rm for} \ \ 0<v_+\ll 1,
 $$
implies the following lower bound:
\begin{equation}\label{4.39}
\frac{1}{2s}\frac{(a^\prime)^2 }{a^\prime\sigma^\prime(\wt v^S)+a\sigma^{\prime\prime}(\wt v^S)}L(\wt v^S)\geq \frac{28}{29}\cdot\frac{a^\prime}{2s} L(\wt v^S)\geq\frac{7v_+^2}{64s^2}(v_+-v_*), \ \ {\rm for} \ \  0<v_+\ll 1.
  \end{equation}
Finally, substituting \eqref{4.39} into \eqref{4.38} yields the desired inequality \eqref{4.37}, which completes the proof of Lemma \ref{L4.4}.
\end{proof}

From Lemma \ref{lem:GS1_lower_bound} and Lemma \ref{L4.4}, we have the following lemma.
 \begin{lemma}\label{L4.5} If $v_+$ satisfies $0<v_+\ll 1$, then the following estimation of $\mathbf{G^S}(t)$ holds true.
\begin{equation}\label{4.40}
     \begin{aligned}
\mathbf{G^S}(t)\geq& \frac{1}{8}\int_{\mathbb R}\mu a( \psi^{-\mb X}_\xi)^2 d\xi+\frac{s}{2}  \bigintsss_{\mathbb R}\left[a\sigma^\prime(\wt v^S)\right]_\xi\left[\phi^{-\mb X}+\frac{a_\xi\sigma^\prime(\wt v^S)}{s\left(a\sigma^\prime(\wt v^S)\right)_\xi}\psi^{-\mb X}\right]^2d\xi\\
&+\frac{3v_m^2}{8s^2} (v_+-v_*)\int_{\xi_*}^{+\infty}\wt v^S_\xi(\psi^{-\mb X})^2 d\xi+\frac{7v_+^2}{64s^2}(v_+-v_*) \int^{\xi_*}_{-\infty}\wt v^S_\xi(\psi^{-\mb X})^2 d\xi.
  \end{aligned}
  \end{equation}
 \end{lemma}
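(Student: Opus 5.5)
The estimate \eqref{4.40} follows by adding the lower bounds of Lemma \ref{lem:GS1_lower_bound} and Lemma \ref{L4.4}. The decomposition \eqref{eq:GS_split} gives $\mathbf{G}^{\mb S}(t) = \mathbf{G}^{\mb S}_1(t) + \mathbf{G}^{\mb S}_2(t)$. Here $\mathbf{G}^{\mb S}_1$ collects all integrals over $(\xi_*,+\infty)$ together with the nonnegative square term $\frac{2}{(v_+-v_*)^2}\big(\int_{\xi_*}^{+\infty} a\wt v^S_\xi\psi^{-\mb X}d\xi\big)^2$. The term $\mathbf{G}^{\mb S}_2$ collects the integrals over $(-\infty,\xi_*)$. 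Since $a\in C^2(\mathbb R)$ by construction, no boundary terms appear at $\xi_*$ when the integrals are split. Hence summing \eqref{4-03} and \eqref{4.37} is legitimate, and both lemmas hold under the same smallness assumption $0<v_+\ll1$, with $\theta_*$ fixed as in Case 1.

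I then combine the terms one by one.

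\textbf{Perfect-square terms.} The two terms with factor $\frac{s}{2}\big[a\sigma'(\wt v^S)\big]_\xi\big[\cdots\big]^2$ have identical integrands on both pieces. They therefore merge into the single integral over $\mathbb R$ appearing in \eqref{4.40}. This requires $\big(a\sigma'(\wt v^S)\big)_\xi>0$, so that the completed square makes sense. That positivity was already used implicitly in both lemmas, through the denominator $a'\sigma'+a\sigma''>0$ established in each.

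\textbf{Dissipation terms.} On $(\xi_*,+\infty)$ we have the coefficient $\frac18\mu a(\psi^{-\mb X}_\xi)^2$. On $(-\infty,\xi_*)$ we have the full coefficient $\mu a(\psi^{-\mb X}_\xi)^2$. Since $a>0$ by \eqref{w.bounds2}, this second term is bounded below by $\frac18\mu a(\psi^{-\mb X}_\xi)^2$. Together the two give $\frac18\int_{\mathbb R}\mu a(\psi^{-\mb X}_\xi)^2d\xi$.

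\textbf{Zeroth-order terms in $\psi$.} The terms $\frac{3v_m^2}{8s^2}(v_+-v_*)\int_{\xi_*}^{+\infty}\wt v^S_\xi(\psi^{-\mb X})^2$ and $\frac{7v_+^2}{64s^2}(v_+-v_*)\int_{-\infty}^{\xi_*}\wt v^S_\xi(\psi^{-\mb X})^2$ are carried over unchanged, exactly as stated.

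No step here is a real obstacle, since all the analysis lives in the two preceding lemmas. The one point to check is that the region-dependent quantities match across $\xi_*$. In particular, the coefficient of the completed square, $\frac{a_\xi\sigma'}{s(a\sigma')_\xi}$, is the same expression on both sides, so the two square integrals genuinely combine into one integral over $\mathbb R$.
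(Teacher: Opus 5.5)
Your proposal is correct and is exactly the paper's argument: \eqref{4.40} follows by adding \eqref{4-03} and \eqref{4.37} via the splitting \eqref{eq:GS_split}, merging the two completed-square integrals into one over $\mathbb R$, and using $a>0$ to bound $\int_{-\infty}^{\xi_*}\mu a(\psi^{-\mb X}_\xi)^2\,d\xi$ from below by $\frac18$ of itself. Your remark that $a\in C^2$ prevents boundary terms at $\xi_*$ is unnecessary, since splitting an integral at a point never produces boundary terms, but it does no harm.
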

%\noindent {\bf {Estimation of  the $\mathbf{G^S}(t)+ \mathbf{P}(t)$}}. 

\begin{lemma}\label{L4.6} If $v_+$ satisfies $0<v_+\ll 1$, then there exist two constants $\theta_1,\theta_2>0$(independent of $v_+$) such that
\begin{equation}\label{4.41}
     \begin{aligned}
  \mathbf{G^{S,X}}(t)\geq&  \frac{1}{8}\int_{\mathbb R}\mu a^{\mb X}\psi_\xi^2 d\xi+\frac{(v_+-v_*)^2}{48}\left|\mathbf{\dot X}(t)\right|^2\\
  &+\theta_1v_+^3\int_{\mathbb{R}} (\wt v^S)^{\mb X}_\xi  \psi^2 d\xi+\theta_2v_+^3\int_{\mathbb{R}} (\wt v^S)^{\mb X}_\xi  \phi^2 d\xi.
 \end{aligned}
 \end{equation}
    \end{lemma}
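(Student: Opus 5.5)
We combine Lemma \ref{L4.5} with the decomposition $\mathbf{G^{S,X}}(t)=\mathbf{G^S}(t)+\mathbf{G^X}(t)$ from \eqref{eq:G_decomp}, so two things remain: bound $\mathbf{G^X}(t)$ from below by a positive multiple of $|\dot{\mb X}|^2$, and turn the $\psi$-weighted terms and the completed square in \eqref{4.40} into $v_+^3\int(\wt v^S)^{\mb X}_\xi(\phi^2+\psi^2)\,d\xi$.

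\textbf{The shift term.} By \eqref{ode.shift2}, $\mathbf{Y}_1(t)=\frac{(v_+-v_*)^2}{24}\dot{\mb X}(t)$, so $\dot{\mb X}\mathbf{Y}_1=\frac{(v_+-v_*)^2}{24}|\dot{\mb X}|^2$. Write $\mathbf{Y}_1=\int a\wt u^S_\xi\psi^{-\mb X}+\int a\sigma'(\wt v^S)\wt v^S_\xi\phi^{-\mb X}$ and use $\wt u^S_\xi=-s\wt v^S_\xi$. Then
\[
\frac{(v_+-v_*)^2}{24}|\dot{\mb X}| \le s\Big|\int_{\xi_*}^{\infty}a\wt v^S_\xi\psi^{-\mb X}\Big| + s\Big|\int_{-\infty}^{\xi_*}a\wt v^S_\xi\psi^{-\mb X}\Big| + \Big|\int_{\mathbb R}a\sigma'\wt v^S_\xi\phi^{-\mb X}\Big|.
\]
Squaring and using $(x+y+z)^2\le(1+\eta)x^2+C_\eta(y^2+z^2)$ with $s\approx 1$ gives
\[
\frac{(v_+-v_*)^4}{576}|\dot{\mb X}|^2\le (1+\eta)\Big(\int_{\xi_*}^\infty a\wt v^S_\xi\psi^{-\mb X}\Big)^2+C\,\|a\|_\infty^2 (v_+-v_*)\Big[\int_{-\infty}^{\xi_*}\wt v^S_\xi(\psi^{-\mb X})^2+\int_{\mathbb R}\wt v^S_\xi(\phi^{-\mb X})^2\Big],
\]
where Cauchy--Schwarz with $\int\wt v^S_\xi\le Cv_+$ produces the factor $v_+-v_*$. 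Dividing by $(v_+-v_*)^2/48$ yields
\[
\mathbf{G^X}\ge \frac{(v_+-v_*)^2}{48}|\dot{\mb X}|^2 - C\frac{\|a\|^2_\infty}{v_+-v_*}\Big[\int_{-\infty}^{\xi_*}\wt v^S_\xi(\psi^{-\mb X})^2+\int\wt v^S_\xi(\phi^{-\mb X})^2\Big].
\]
Since $\|a\|_\infty\sim v_+^2$ by \eqref{w.bounds2}, the error carries a coefficient $Cv_+^3$. This has the same order as the good terms $\frac{7v_+^2}{64s^2}(v_+-v_*)\int_{-\infty}^{\xi_*}\wt v^S_\xi\psi^2$, so the constants must be compared. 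We would therefore choose $\eta$ and the splitting of the square more carefully, keeping the sharp factor $2/(v_+-v_*)^2$ only on the $(\xi_*,\infty)$ piece, and check that the resulting explicit constant is below $7/64$. This constant bookkeeping is the main obstacle. If the crude bound fails, the fallback is to estimate the $(-\infty,\xi_*)$ part more precisely, using that $\int_{-\infty}^{\xi_*}a\wt v^S_\xi\,d\xi\le \|a\|_\infty|v_*-v_m|$ with $|v_*-v_m|=|v_m|/2$, which gains a factor relative to $v_+-v_*$.

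\textbf{Recovering $\phi$.} From the completed square in \eqref{4.40}, write $\phi=(\phi+k\psi)-k\psi$ with $k=\frac{a_\xi\sigma'}{s(a\sigma')_\xi}=O(1)$. Then
\[
\big(a\sigma'(\wt v^S)\big)_\xi\,\phi^2\le 2\big(a\sigma'(\wt v^S)\big)_\xi(\phi+k\psi)^2+2k^2\big(a\sigma'(\wt v^S)\big)_\xi\psi^2 .
\]
We have $(a\sigma')'=a'\sigma'+a\sigma''\sim v_+$ (it is positive, as already used in \eqref{4.39}), so a small multiple $\epsilon$ of the square term controls $\epsilon v_+\int\wt v^S_\xi\phi^2$ at the cost of $\epsilon Cv_+\int\wt v^S_\xi\psi^2$. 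That cost is absorbed by the $\psi$-good terms of order $v_+^3$, provided $\epsilon\sim v_+^2$; this gives $\theta_2v_+^3\int\wt v^S_\xi\phi^2$, which must also dominate the $Cv_+^3\int\wt v^S_\xi\phi^2$ error from $\mathbf{G^X}$. Because these orders coincide, $\epsilon$ must be $\theta v_+^2$ with $\theta$ chosen relative to the constants above. Finally we use $v_m^2\ge v_+^2/4$ (Case 1) to get a uniform $\theta_1 v_+^3$ in front of $\int\wt v^S_\xi\psi^2$, undo the shift, and keep $\frac18\int\mu a\psi_\xi^2$ unchanged.
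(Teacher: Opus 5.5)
There is a genuine gap, and it is not the $(-\infty,\xi_*)$ bookkeeping you flag. Set $I_+:=\int_{\xi_*}^{\infty}a\wt v^S_\xi\psi^{-\mb X}d\xi$, $I_-:=\int_{-\infty}^{\xi_*}a\wt v^S_\xi\psi^{-\mb X}d\xi$ and $\Phi:=\int_{\mathbb R}a\sigma'(\wt v^S)\wt v^S_\xi\phi^{-\mb X}d\xi$.

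First, a slip. Your bound $\frac{(v_+-v_*)^2}{24}|\dot{\mb X}|\le s|I_+|+s|I_-|+|\Phi|$ gives an upper bound on $|\dot{\mb X}|^2$. That cannot yield a lower bound for $\mathbf{G^X}=\frac{(v_+-v_*)^2}{24}|\dot{\mb X}|^2-\frac{2}{(v_+-v_*)^2}I_+^2$. You must isolate $sI_+=-\mathbf{Y}_1-sI_-+\Phi$ instead. That does keep $\frac{(v_+-v_*)^2}{48}|\dot{\mb X}|^2$, at the price of an error $-\frac{2C}{(v_+-v_*)^2}\big(I_-^2+s^{-2}\Phi^2\big)$ with $C>1$.

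The fatal term is the $\Phi^2$ error. By Cauchy--Schwarz, which is sharp for $\phi^{-\mb X}\propto a\sigma'(\wt v^S)$, it equals $-c_0v_+^3\int_{\mathbb R}\wt v^S_\xi(\phi^{-\mb X})^2d\xi$ with $c_0>2$. In the degenerate case, for instance, $\int_0^{v_+}a^2\,dv\approx\frac{325}{192}v_+^5$ while $(v_+-v_*)^2=\frac{25}{16}v_+^2$.

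Your only control of $\phi$ is the square $\frac{s}{2}\int(a\sigma'(\wt v^S))_\xi(\phi^{-\mb X}+k\psi^{-\mb X})^2d\xi$, and turning it into $\phi^2$ costs $\psi^2$. Pointwise, $w(\phi+k\psi)^2+B\psi^2\ge c\,\phi^2$ holds only for $c\le\frac{wB}{wk^2+B}<B/k^2$, where $k\to1$ as $v_+\to0$. The $\psi$-coefficients available in \eqref{4.40} are $\frac{3v_m^2}{8s^2}(v_+-v_*)$ and $\frac{7v_+^2}{64s^2}(v_+-v_*)$, both below $0.15\,v_+^3$. So no choice $\epsilon=\theta v_+^2$ can produce a $\phi$-coefficient near $c_0v_+^3$.

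Concretely, take $\psi^{-\mb X}=-\phi^{-\mb X}/k$ with $\phi^{-\mb X}=a\sigma'(\wt v^S)$. Then the quadratic form left after your estimates is negative. Treating $\Phi$ as an error independent of the completed square throws away the structure that makes $\mathbf{G^{S,X}}$ positive.

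The missing idea, and what the paper does, is to split $\mathbf{Y}_1$ along the completed square before estimating. Write $\phi=(\phi+k\psi)-k\psi$ with $k=\frac{a'\sigma'(\wt v^S)}{s(a\sigma'(\wt v^S))'}$. This gives
\[
\mathbf{Y}_1=\int_{\mathbb R}a\sigma'(\wt v^S)\wt v^S_\xi\big(\phi^{-\mb X}+k\psi^{-\mb X}\big)d\xi-\int_{\mathbb R}a\,m(\wt v^S)\wt v^S_\xi\psi^{-\mb X}d\xi,\qquad m:=s+\frac{a'(\sigma')^2}{s(a\sigma')'}\to2 .
\]
Then use $\frac{(v_+-v_*)^2}{24}|\dot{\mb X}|^2=\frac{(v_+-v_*)^2}{48}|\dot{\mb X}|^2+\frac{12}{(v_+-v_*)^2}\mathbf{Y}_1^2$ together with splittings such as $(c+d)^2\ge-c^2+\frac{d^2}{2}$.

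The first piece, squared and divided by $(v_+-v_*)^2$, is only an $O(v_+^2)$ fraction of $\int(a\sigma')_\xi(\phi+k\psi)^2$, because $\int\frac{a^2(\sigma')^2}{(a\sigma')'}\wt v^S_\xi\,d\xi=O(v_+^4)$. It is therefore absorbed by the full square at no $\psi$-cost.

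On $(\xi_*,\infty)$ the second piece carries $m\approx2$. This is exactly what cancels the Poincar\'e mean $-\frac{2}{(v_+-v_*)^2}I_+^2$, via $d^2-\frac{c^2}{2}\ge-(d-c)^2$, leaving only $\int a(m-2)\cdots$.

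Only the $(-\infty,\xi_*)$ piece needs a real constant comparison with $\frac{7}{64}$. The paper does this with the explicit bound $\int_{-\infty}^{\xi_*}a^2\wt v^S_\xi\,d\xi\le\frac{(v_+-v_*)^2}{s^2}\frac{151}{6144}v_+^3$.

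What remains is a positive form in $(\phi+k\psi,\psi)$ with coefficients of order $v_+$ and $v_+^3$ and no negative $\phi^2$ term. Your last step, trading a $v_+^2$-multiple of the square for $\phi^2$, then works.
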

    
\begin{proof} 
From the construction of the shift function $\mathbf{X}(t)$ in \eqref{ode.shift2}, we have
\begin{equation*}\label{4.42}
     \begin{aligned}
 \mathbf{\dot X}(t)&  \mathbf{Y_1}(t)= \mathbf{\dot X}(t)\left[\int_{\mathbb{R}}  a\wt u^S_\xi  \psi^{-\mb X}d\xi+\int_{\mathbb{R}} a  \sigma^\prime(\wt v^S) \wt v^S_\xi \phi ^{-\mb X} d\xi\right]\\
 =&\frac{(v_+-v_*)^2}{48}\left|\mathbf{\dot X}(t)\right|^2+\frac{12}{(v_+-v_*)^2}\bigg|\int_{\mathbb{R}}a\wt u^S_\xi \psi^{-\mb X} d\xi+\int_{\mathbb{R}} a \sigma^\prime(\wt v^S)\wt v^S_\xi\phi ^{-\mb X} d\xi\bigg|^2\\
 \geq&\frac{(v_+-v_*)^2}{48}\left|\mathbf{\dot X}(t)\right|^2-\frac{12}{(v_+-v_*)^2}\bigg| \bigintsss_{\mathbb R}a\sigma^\prime(\wt v^S)\wt v^S_\xi\left(\phi^{-\mb X}+\frac{a_\xi\sigma^\prime(\wt v^S)}{s\left(a\sigma^\prime(\wt v^S)\right)_\xi}\psi^{-\mb X}\right)d\xi\bigg|^2\\
 &+\frac{6}{(v_+-v_*)^2}\bigg|\bigintsss_{\mathbb R} \left(a\wt u^S_\xi-\frac{aa_\xi(\sigma^\prime(\wt v^S))^2\wt v^S_\xi}{s(\sigma^\prime(\wt v^S)a)_\xi} \right)\psi^{-\mb X}d\xi\bigg|^2\\
 =&\frac{(v_+-v_*)^2}{48}\left|\mathbf{\dot X}(t)\right|^2-\frac{12}{(v_+-v_*)^2}\bigg|\bigintsss_{\mathbb R}a\sigma^\prime(\wt v^S)\wt v^S_\xi\left(\phi^{-\mb X}+\frac{a_\xi\sigma^\prime(\wt v^S)}{s\left(a\sigma^\prime(\wt v^S)\right)_\xi}\psi^{-\mb X}\right)d\xi\bigg|^2\\
 &+\frac{6}{(v_+-v_*)^2}\bigg|\bigintsss_{\mathbb R}a\underbrace{\left(s+\frac{a^\prime(\sigma^\prime(\wt v^S))^2}{s(\sigma^\prime(\wt v^S)a)^\prime}\right)}_{m(\wt v^S)}\wt v^S_\xi \psi^{-\mb X} d\xi\bigg|^2\\
 \geq&\frac{(v_+-v_*)^2}{48}\left|\mathbf{\dot X}(t)\right|^2-\frac{12}{(v_+-v_*)^2}\bigg| \bigintsss_{\mathbb R}a\sigma^\prime(\wt v^S)\wt v^S_\xi\left(\phi^{-\mb X}+\frac{a_\xi\sigma^\prime(\wt v^S)}{s\left(a\sigma^\prime(\wt v^S)\right)_\xi}\psi^{-\mb X}\right)d\xi\bigg|^2\\
 &-\frac{6}{5(v_+-v_*)^2}\Big|\int_{-\infty}^{\xi_*}am(\wt v^S)\wt v^S_\xi \psi^{-\mb X} d\xi\Big|^2+\frac{1}{(v_+-v_*)^2}\Big|\int^{+\infty}_{\xi_*}a m(\wt v^S) \wt v^S_\xi \psi^{-\mb X} d\xi\Big|^2,
 \end{aligned}
 \end{equation*}
where we have used the inequalities $(c+d)^2\geq -c^2+\frac{d^2}{2}$ and $(c+d)^2\geq -\frac{c^2}{5}+\frac{d^2}{6}$.
Consequently, we obtain from \eqref{4.14}
\begin{equation}\label{4.43}
     \begin{aligned} 
      \mathbf{G^{S,X}}(t)\geq&\frac{(v_+-v_*)^2}{48}\left|\mathbf{\dot X}(t)\right|^2+\frac{1}{8}\int_{\mathbb R}\mu a( \psi^{-\mb X}_\xi)^2 d\xi\\
      &-\frac{6}{5(v_+-v_*)^2}\bigg|\int_{-\infty}^{\xi_*}a  m(\wt v^S) \wt v^S_\xi \psi^{-\mb X}d\xi\bigg|^2+\frac{7v_+^2(v_+-v_*)}{64s^2} \int^{\xi_*}_{-\infty}\wt v^S_\xi (\psi^{-\mb X})^2 d\xi\\
      &+\frac{1}{(v_+-v_*)^2}\bigg|\int^{+\infty}_{\xi_*}a m(\wt v^S) \wt v^S_\xi \psi^{-\mb X} d\xi\bigg|^2\\
      &-\frac{2}{(v_+-v_*)^2}\bigg|\int_{\xi_*}^{+\infty}a\wt v^S_\xi \psi^{-\mb X}  d\xi\bigg|^2+\frac{3v_m^2(v_+-v_*)}{8s^2} \int_{\xi_*}^{+\infty}\wt v^S_\xi  (\psi^{-\mb X})^2d\xi \\
      &+\frac{s}{2}  \bigintsss_{\mathbb R}\left[a\sigma^\prime(\wt v^S)\right]_\xi\bigg|\phi^{-\mb X}+\frac{a_\xi\sigma^\prime(\wt v^S)}{s\left(a\sigma^\prime(\wt v^S)\right)_\xi}\psi^{-\mb X}\bigg|^2d\xi\\
      &-\frac{12}{(v_+-v_*)^2}\left|\bigintsss_{\mathbb R}\left(a\sigma^\prime(\wt v^S)\right)_\xi\Big(\phi^{-\mb X}+\frac{a_\xi\sigma^\prime(\wt v^S)}{s\left(a\sigma^\prime(\wt v^S)\right)_\xi}\psi^{-\mb X}\Big)d\xi\right|^2\\
      =:&\frac{(v_+-v_*)^2}{48}\left|\mathbf{\dot X}(t)\right|^2+\frac{1}{8}\int_{\mathbb R}\mu a( \psi^{-\mb X}_\xi)^2 d\xi+O_1(t)+O_2(t)+O_3(t),
 \end{aligned}
 \end{equation}
with the terms $O_1(t)$, $O_2(t)$, and $O_3(t)$ defined respectively by
\begin{equation*}\label{4.+44}
     \begin{aligned} 
    O_1(t):=&   -\frac{6}{5(v_+-v_*)^2}\left|\int_{-\infty}^{\xi_*}a m(\wt v^S) \wt v^S_\xi \psi^{-\mb X} d\xi\right|^2+\frac{7v_+^2}{64s^2}(v_+-v_*) \int^{\xi_*}_{-\infty}\wt v^S_\xi (\psi^{-\mb X})^2 d\xi,
 \end{aligned}
 \end{equation*}
\begin{equation*}\label{4.+45}
     \begin{aligned} 
    O_2(t):=&\frac{1}{(v_+-v_*)^2}\left|\int^{+\infty}_{\xi_*}a m(\wt v^S) \wt v^S_\xi \psi^{-\mb X} d\xi\right|^2
    -\frac{2}{(v_+-v_*)^2}\left|\int_{\xi_*}^{+\infty}  a\wt v^S_\xi \psi^{-\mb X}d\xi\right|^2\\
      &+\frac{3v_m^2(v_+-v_*)}{8s^2}  \int_{\xi_*}^{+\infty}\wt v^S_\xi (\psi^{-\mb X})^2d\xi,
 \end{aligned}
 \end{equation*}
and 
\begin{equation*}\label{4.+46}
     \begin{aligned} 
    O_3(t):=&\frac{s}{2}  \bigintsss_{\mathbb R}\left[a\sigma^\prime(\wt v^S)\right]_\xi\left|\phi^{-\mb X}+\frac{a_\xi\sigma^\prime(\wt v^S)}{s\left(a\sigma^\prime(\wt v^S)\right)_\xi}\psi^{-\mb X}\right|^2d\xi\\
      &-\frac{12}{(v_+-v_*)^2}\left| \bigintsss_{\mathbb R}a\sigma^\prime(\wt v^S)\wt v^S_\xi\Big(\phi^{-\mb X}+\frac{a_\xi\sigma^\prime(\wt v^S)}{s\left(a\sigma^\prime(\wt v^S)\right)_\xi}\psi^{-\mb X}\Big)d\xi\right|^2.
 \end{aligned}
 \end{equation*}

If $\theta_*$ is suitably small, then H\"older's inequality yields
\begin{equation}\label{4.47}
     \begin{aligned} 
    O_1(t)\geq &\int_{-\infty}^{\xi_*}\left[\frac{7v_+^2}{64s^2}(v_+-v_*) -\frac{6}{5(v_+-v_*)^2}\int_{-\infty}^{\xi_*}a^2\wt v^S_\xi  m^2(\wt v^S) d\xi  \right] \wt v^S_\xi(\psi^{-\mb X})^2d\xi\\
    \geq &\int_{-\infty}^{\xi_*} \left[\frac{7v_+^2}{64s^2}(v_++\frac12|v_m|) -\frac{6\|m\|^2_{L^\infty}}{5(v_+-v_*)^2}\int_{-\infty}^{\xi_*}a^2\wt v^S_\xi  d\xi \right] \wt v^S_\xi(\psi^{-\mb X})^2d\xi\\
     \geq &\int_{-\infty}^{\xi_*} \left[\frac{7v_+^2}{64s^2}\big(v_++\frac12(\frac12-\theta_*)v_+\big) -\frac{151}{1280s^2}(1+\theta_*)v_+^3 \right] \wt v^S_\xi(\psi^{-\mb X})^2d\xi\\
  =&\frac{v_+^3}{s^2}\Big(\frac{3}{160}-\frac{221}{1280}\theta_*\Big)\int_{-\infty}^{\xi_*} \wt v^S_\xi  (\psi^{-\mb X})^2d\xi\\
 \geq &\frac{v_+^3}{80s^2}\int_{-\infty}^{\xi_*} \wt v^S_\xi  (\psi^{-\mb X})^2d\xi, \qquad \text{for } 0<v_+, \theta_*\ll 1.
 \end{aligned}
 \end{equation}
Here we used that 
$$m(\wt v^S)=s+\frac{a^\prime(\sigma^\prime(\wt v^S))^2}{s(a\sigma^\prime(\wt v^S))^\prime}=s+\frac{1-6v^2+9v^4}{s\big[1-(24 v_++\frac{57}{2}v_m)v-9v^2\big]}\to 2,\qquad \text{as } v_+\to 0+,
$$ and
\begin{equation}\label{4.48}
     \begin{aligned} 
    \int_{-\infty}^{\xi_*}a^2\wt v^S_\xi d\xi = &   \int_{v_m}^{v_*}\left[\frac{v_+-v_*}{4s}\wt v^S+\frac{v_+-v_*}{s}\left(v_++\frac{19}{16}v_m\right)  \right]^2d\wt v^S\\
    =&\frac{(v_+-v_*)^2}{s^2}\left(-\frac{1453}{1536}v_m^3-\frac{11}{8}v_m^2v_+-\frac{v_m}{2}v_+^2\right)\\
    \leq&\frac{(v_+-v_*)^2}{s^2}\frac{151}{6144}v_+^3, \qquad \text{for } 0<\theta_*\ll 1.
 \end{aligned}
 \end{equation}

Moreover, we estimate $O_2(t)$ as follows:
\begin{equation}\label{4.49}
     \begin{aligned} 
O_2(t)=&\bigg|\frac{1}{v_+-v_*}\int^{+\infty}_{\xi_*}a m(\wt v^S) \wt v^S_\xi \psi^{-\mb X} d\xi\bigg|^2
    -\frac12\bigg|\frac{2}{v_+-v_*}\int_{\xi_*}^{+\infty}  a\wt v^S_\xi \psi^{-\mb X}d\xi\bigg|^2\\
 \geq&-\left|\frac{1}{v_+-v_*}\int^{+\infty}_{\xi_*}am(\wt v^S)\wt v^S_\xi \psi^{-\mb X} d\xi-\frac{2}{v_+-v_*}\int_{\xi_*}^{+\infty}a\wt v^S_\xi  \psi^{-\mb X} d\xi\right|^2\\
    &+\frac{3v_m^2(v_+-v_*)}{8s^2} \int_{\xi_*}^{+\infty}\wt v^S_\xi  (\psi^{-\mb X})^2d\xi\\
    =&-\bigg| \int^{+\infty}_{\xi_*}a\frac{m(\wt v^S)-2}{v_+-v_*}\wt v^S_\xi \psi^{-\mb X}d\xi\bigg|^2+\frac{3v_m^2(v_+-v_*)}{8s^2} \int_{\xi_*}^{+\infty}\wt v^S_\xi(\psi^{-\mb X})^2 d\xi\\
    \geq& \int^{+\infty}_{\xi_*}\bigg[\frac{3v_m^2(v_+-v_*)}{8s^2}-\frac{1}{(v_+-v_*)^2}  \int_{\xi_*}^{+\infty}a^2\wt v^S_\xi  \left({m(\wt v^S)-2}\right)^2  d\xi \bigg] \wt v^S_\xi (\psi^{-\mb X})^2 d\xi\\
     \geq& \int^{+\infty}_{\xi_*}\bigg[\frac{3v_m^2(v_+-v_*)}{8s^2}-\frac{ \|{m(\wt v^S)-2}\|_{L^\infty}^2}{(v_+-v_*)^2}  \int_{\xi_*}^{+\infty}a^2\wt v^S_\xi d\xi   \bigg] \wt v^S_\xi (\psi^{-\mb X})^2d\xi\\
      \geq&\frac{3v_m^2(v_+-v_*)}{16s^2} \int^{+\infty}_{\xi_*}\wt v^S_\xi (\psi^{-\mb X})^2 d\xi, \quad \text{for } 0<v_+\ll 1,
 \end{aligned}
 \end{equation}
where we have used the inequality $d^2-\frac{c^2}{2}\geq -(d-c)^2$, H\"older's inequality and the fact that $m(\wt v^S)\to 2$ as $v_+\to 0+$.
Next, by using H\"older's inequality again, we have
\begin{equation}\label{4.50}
     \begin{aligned} 
O_3(t) \geq&\left[\frac{s}{2}-\frac{12}{(v_+-v_*)^2} \displaystyle \bigintsss_{\mathbb R}\frac{a^2\Big(\sigma^\prime(\wt v^S)\Big)^2}{\Big|\Big(a\sigma^\prime(\wt v^S)\Big)^\prime\Big|}\wt v^S_\xi d\xi   \right]\\
&\cdot\bigintsss_{\mathbb R}\big|\left(a\sigma^\prime(\wt v^S)\right)_\xi\big|\bigg|\phi^{-\mb X}+\frac{a_\xi\sigma^\prime(\wt v^S)}{s\big(a\sigma^\prime(\wt v^S)\big)_\xi}\psi^{-\mb X}\bigg|^2d\xi\\
\geq&\frac{1}{3}\bigintsss_{\mathbb R}\big|\left(a\sigma^\prime(\wt v^S)\right)_\xi\big|\bigg|\phi^{-\mb X}+\frac{a_\xi\sigma^\prime(\wt v^S)}{s\left(a\sigma^\prime(\wt v^S)\right)_\xi}\psi^{-\mb X}\bigg|^2d\xi, \qquad \text{for } 0<v_+\ll 1.
 \end{aligned}
 \end{equation}
The last inequality holds because $s\to 1$ as $v_+\to 0+$ and $\displaystyle\int_{\mathbb R}\frac{a^2\left(\sigma^\prime(\wt v^S)\right)^2}{\left(a\sigma^\prime(\wt v^S)\right)^\prime}\wt v^S_\xi d\xi=O(v_+^4)$.

Combining the estimates \eqref{4.43}-\eqref{4.50} and using the fact
$$
v_m^2(v_+-v_*)\geq \big(\frac12-\theta_*\big)^2\big(\frac 54-\frac 12\theta_*\big)v_+^3>\frac 3{10} v_+^3, \quad \text{for } 0<\theta_*\ll 1,
$$
%and
%$$
%v_m^2(v_+-v_*)\geq \big(\frac12-\theta_*\big)^2\big(\frac 54-\frac 12\theta_*\big)v_+^3>\frac 3{10} v_+^3, \quad \text{for } 0<\theta_*\ll 1,
%$$
we deduce
\begin{equation}\label{4.51}
     \begin{aligned} 
   \mathbf{G^{S,X}}(t)\geq&\frac{(v_+-v_*)^2}{48}\left|\mathbf{\dot X}(t)\right|^2+\frac{1}{8}\int_{\mathbb R}\mu a( \psi^{-\mb X}_\xi)^2 d\xi+\frac{v_+^3}{80s^2}\int_{\mathbb R}\wt v^S_\xi (\psi^{-\mb X})^2 d\xi\\
   &+\frac{1}{3}\bigintsss_{\mathbb R}\big|\left(a\sigma^\prime(\wt v^S)\right)_\xi\big|\left|\phi^{-\mb X}+\frac{a_\xi\sigma^\prime(\wt v^S)}{s\left(a\sigma^\prime(\wt v^S)\right)_\xi}\psi^{-\mb X}\right|^2d\xi.
 \end{aligned}
 \end{equation}
Note  that by \eqref{w.bounds2} and \eqref{w.deriv2}
$$
\frac{(a\sigma^\prime(\wt v^S))^\prime}{\frac{v_+^3}{160}+\frac{1}{3}\frac{(a^\prime\sigma^\prime(\wt v^S))^2}{(a\sigma^\prime(\wt v^S))^\prime}}=\frac{\big(a(\wt v^S)(1-3(\wt v^S)^2)\big)^\prime}{\frac{v_+^3}{160}+\frac{1}{3}\frac{\big(a^\prime(\wt v^S)(1-3(\wt v^S)^2)\big)^2}{\big(a(\wt v^S)(1-3(\wt v^S)^2)\big)^\prime}}= O(1), \ \ {\rm as}\ \  v_+\rightarrow 0+,
$$
and
\begin{equation*}
     \begin{aligned} 
   &\frac{v_+^3}{80s^2}\int_{\mathbb R}\wt v^S_\xi (\psi^{-\mb X})^2 d\xi+\frac{1}{3}\bigintsss_{\mathbb R}\big|\left(a\sigma^\prime(\wt v^S)\right)_\xi\big|\left|\phi^{-\mb X}+\frac{a_\xi\sigma^\prime(\wt v^S)}{s\left(a\sigma^\prime(\wt v^S)\right)_\xi}\psi^{-\mb X}\right|^2d\xi\\
   &=\frac{v_+^3}{160s^2}\int_{\mathbb R}\wt v^S_\xi (\psi^{-\mb X})^2 d\xi+\bigintsss_{\mathbb R}\wt v^S_\xi\left[\frac{v_+^3}{160s^2}+\frac{1}{3s^2}\frac{(a^\prime\sigma^\prime(\wt v^S))^2}{(a\sigma^\prime(\wt v^S))^\prime}\right]\\
   &\qquad\qquad\qquad\qquad\qquad\qquad\cdot\left[\psi^{-\mb{X}}+\frac{\frac{a^\prime\sigma^\prime(\wt v^S)}{3s}}{\frac{v_+^3}{160s^2}+\frac{1}{3s^2}\frac{(a^\prime\sigma^\prime(\wt v^S))^2}{(a\sigma^\prime(\wt v^S))^\prime}}\phi^{-\mb{X}} \right]^2 d\xi\\
   &\qquad+\frac{1}{3}\bigintss_{\mathbb R}\wt v^S_\xi\left[ (a\sigma^\prime(\wt v^S))^\prime-\frac{\frac{(a^\prime\sigma^\prime(\wt v^S))^2}{3s^2}}{\frac{v_+^3}{160s^2}+\frac{1}{3s^2}\frac{(a^\prime\sigma^\prime(\wt v^S))^2}{(a\sigma^\prime(\wt v^S))^\prime}}\right] (\phi^{-\mb{X}})^2 d\xi\\
   &=\frac{v_+^3}{160s^2}\int_{\mathbb R}\wt v^S_\xi (\psi^{-\mb X})^2 d\xi+\int_{\mathbb R}\wt v^S_\xi\left[\frac{v_+^3}{160s^2}+\frac{1}{3s^2}\frac{(a^\prime\sigma^\prime(\wt v^S))^2}{(a\sigma^\prime(\wt v^S))^\prime}\right]\\
   &\qquad\qquad\qquad\qquad\qquad\qquad\cdot  \left[\psi^{-\mb{X}}+\frac{\frac{a^\prime\sigma^\prime(\wt v^S)}{3s}}{\frac{v_+^3}{160s^2}+\frac{1}{3s^2}\frac{(a^\prime\sigma^\prime(\wt v^S))^2}{(a\sigma^\prime(\wt v^S))^\prime}}\phi^{-\mb{X}} \right]^2 d\xi\\
  &\qquad+\frac{v_+^3}{480}\bigintss_{\mathbb R}\wt v^S_\xi \frac{(a\sigma^\prime(\wt v^S))^\prime}{\frac{v_+^3}{160}+\frac{1}{3}\frac{(a^\prime\sigma^\prime(\wt v^S))^2}{(a\sigma^\prime(\wt v^S))^\prime}}(\phi^{-\mb{X}})^2 d\xi\\
   &\geq\theta_1v_+^3\int_{\mathbb{R}} \wt v^S_\xi  (\psi^{-\mb{X}})^2 d\xi+\theta_2v_+^3\int_{\mathbb{R}} \wt v^S_\xi  (\phi^{-\mb{X}})^2 d\xi,
 \end{aligned}
 \end{equation*}
 for some positive constants $\theta_1$ and $\theta_2$ independent of $v_+\ll1$. Therefore, from \eqref{4.51} we have
 \begin{equation}\label{4.51+}
     \begin{aligned} 
   \mathbf{G^{S,X}}(t)
  \geq&  \frac{1}{8}\int_{\mathbb R}\mu a( \psi^{-\mb X}_\xi)^2 d\xi+\frac{(v_+-v_*)^2}{48}\big|\mathbf{\dot X}(t)\big|^2\\
   &+\theta_1v_+^3\int_{\mathbb{R}} \wt v^S_\xi  (\psi^{-\mb{X}})^2 d\xi+\theta_2v_+^3\int_{\mathbb{R}} \wt v^S_\xi  (\phi^{-\mb{X}})^2 d\xi,
 \end{aligned}
 \end{equation}
Finally, changing variables via $\xi\mapsto \xi+\mathbf{X}(t)$, we can prove \eqref{4.41} and complete the proof of Lemma \ref{L4.6}.
\end{proof}

\vskip0.3cm
\textbf{\underline{Estimation of $\mathbf{G}^{\mb S,\mb X}(t)$ for Case 2}: $v_m \in [-(\frac12 - \theta_*) v_+,0)$}

   We consider the case $v_m \in [(-1/2 + \theta_*) v_+,0)$.  By the construction of weight function in \eqref{w.nondeg}, we can split 
$\mathbf{G^{S,X}}(t)$ as follows:
\begin{equation}\label{g}
     \begin{aligned}
\mathbf{G^{S,X}}(t)=\mathbf{G^S}(t)+\mathbf{G^X}(t)%=\mathbf{G}^S_1(t)+\mathbf{G}^S_2(t)+\mathbf{G^X}(t),
 \end{aligned}
\end{equation}
where 
   \begin{equation}\label{4.14.}
    \begin{aligned}
\mathbf{G^S}(t):=& \int_{\mathbb{R}}\left( \frac{s}{2} a^{\mb X}_\xi -\frac{\mu}{2} a^{\mb X}_{\xi\xi}\right) \psi^2 d\xi+  \int_{\mathbb{R}} \mu a^{\mb X}\psi_\xi^2 d\xi+\int_{\mathbb{R}} a^{\mb X}_\xi\sigma^\prime(( \wt v^S)^{\mb X}) \phi\psi d\xi\\
&+\frac{s}{2}\int_{\mathbb{R}}\left[a^{\mb X}\sigma^\prime((\wt v^S)^{\mb X})\right]_\xi \phi^2 d\xi
+\frac{2}{\delta_S^2}\left[\int_{\mathbb{R}}a^{\mb X}(\wt v^S)^{\mb X}_\xi \psi d\xi\right]^2,
 \end{aligned}
   \end{equation}
and
      \begin{equation}\label{4.2+.}
     \begin{aligned}
\mathbf{G^X}(t):= \mathbf{\dot X}(t)\mathbf{Y_1}(t)-\frac{2}{\delta_S^2}\left[\int_{\mathbb{R}}a^{\mb X}(\wt v^S)^{\mb X}_\xi\psi  d\xi\right]^2.
 \end{aligned}
   \end{equation}
%\noindent {\bf Estimation of  the $\mathbf{G}^S(t)$}.  

\begin{lemma}\label{L4.3.}
The term $\mathbf{G^S}(t)$ defined in \eqref{4.14.} satisfies the following lower bound:
\begin{equation}\label{4.21.}
     \begin{aligned}
\mathbf{G^S}(t)\geq&{\mu} \int_{\mathbb{R}}(1-\alpha)a^{\mb{X}} \psi_\xi^2d\xi+\int_{\mathbb{R}}  \left|h((\wt v^S)^{\mb{X}})\right| (\wt v^S)^{\mb{X}}_\xi\psi^2 d\xi\\
& +\frac{s}{2}\bigintsss_{\mathbb{R}}\left[a^{\mb{X}} \sigma^\prime((\wt v^S)^{\mb{X}})\right]_\xi \left[\phi+\frac{a^{\mb{X}}_\xi \sigma^\prime((\wt v^S)^{\mb{X}})}{s(a^{\mb{X}} \sigma^\prime((\wt v ^S)^{\mb{X}}))_\xi }\psi\right]^2 d\xi,
 \end{aligned}
\end{equation}
where the coefficient function $h((\wt v^S)^{\mb{X}})$ is given by
\begin{equation}\label{4.22.}
  \begin{aligned}
h((\wt v^S)^{\mb{X}}):=&-\frac{\left[(a^\mb{X})^\prime\sigma^\prime((\wt v^S)^{\mb{X}})\right]^2}{2s \left[a^{\mb{X}} \sigma^\prime((\wt v^S)^{\mb{X}})\right]^\prime}+\frac{\alpha \delta_S}{2s^2}\left[1+3((\wt v ^S)^{\mb{X}}))^2-2v_+^2-2v_+v_m-2v_m^2\right]\\
&+\frac{\alpha^2 \delta_S}{s^2}\left[3(v_++v_m)(\wt v ^S)^{\mb{X}}+3(v_++v_m)^2\right]\\
\geq& \frac{\alpha \delta_S(3+2\theta_*)\theta_*v_+^2}{4s^2}>0, \qquad  \text{\rm for }\  0<v_+\ll 1.
 \end{aligned}
\end{equation}
\end{lemma}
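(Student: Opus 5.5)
Plan: I would imitate the Case 1 argument (Lemmas \ref{lem:GS1_lower_bound} and \ref{L4.4}), but now on the whole line, since the weight \eqref{w.nondeg} is affine with $a'\equiv \alpha\delta_S/s$ and $a''\equiv0$. After the change of variables $\xi\mapsto\xi-\mb X(t)$, I would first complete the square in $\phi$. The two terms containing $\phi$ in \eqref{4.14.} are
\[
\int a_\xi\sigma'(\wt v^S)\phi\psi\,d\xi \quad\text{and}\quad \frac s2\int [a\sigma'(\wt v^S)]_\xi\phi^2\,d\xi .
\]
Since $(a\sigma')'=a'\sigma'+a\sigma''>0$ for small $v_+$, their sum equals
\[
\frac s2\int (a\sigma')_\xi\Big[\phi+\frac{a_\xi\sigma'}{s(a\sigma')_\xi}\psi\Big]^2\,d\xi
\;-\;\int \frac{(a'\sigma')^2}{2s(a\sigma')'}\,\wt v^S_\xi\,\psi^2\,d\xi .
\]
The subtracted term is exactly the first piece of $h$ in \eqref{4.22.}.

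Next I would apply the weighted Poincar\'e inequality of Lemma \ref{P} with $y=(v_+-\wt v^S)/\delta_S\in[0,1]$ and $\Psi=a\psi$. The bonus term $\frac{2}{\delta_S^2}(\int a\wt v^S_\xi\psi)^2=2(\int_0^1\Psi\,dy)^2$ then gives
\[
\frac{2}{\delta_S}\int a^2\wt v^S_\xi\psi^2\,d\xi-\int_0^1 y(1-y)\Psi_y^2\,dy \;\le\; \frac{2}{\delta_S^2}\Big(\int a\wt v^S_\xi\psi\,d\xi\Big)^2 .
\]
Using \eqref{VS_scalar}, $y(1-y)/|y_\xi|=\mu s/\big[\delta_S(\wt v^S+v_++v_m)\big]$. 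Expanding $[(a\psi)_\xi]^2$ gives three pieces, exactly as in \eqref{4.25}--\eqref{4.26}. In the first piece, since $a=\frac{\alpha\delta_S}{s}(\wt v^S+v_++v_m)$, the coefficient of $\psi_\xi^2$ becomes $\mu s a^2/[\delta_S(\wt v^S+v_++v_m)]=\mu\alpha a$. This is absorbed by $\int\mu a\psi_\xi^2$ and leaves $\mu(1-\alpha)a\psi_\xi^2$; note $\alpha<1$ whenever $\theta_*>0$. The cross term is integrated by parts, which cancels the $a_\xi^2$ piece and leaves $-\frac{\mu s}{\delta_S}\int[a a'/(\wt v^S+v_++v_m)]_\xi\psi^2$. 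Because $a'$ is constant and $a/(\wt v^S+v_++v_m)$ is constant, this term vanishes. Hence the only surviving $\psi^2$ coefficients are:
\begin{itemize}
\item $2a^2/\delta_S$ from the Poincar\'e step;
\item $\frac s2a'-\frac\mu2a'\wt v^S_{\xi\xi}/\wt v^S_\xi$ from the first integral in \eqref{4.14.}, computed through $\mu s\wt v^S_{\xi\xi}=(v_m^2+v_mv_++v_+^2-3(\wt v^S)^2)\wt v^S_\xi$;
\item the negative completing-the-square term.
\end{itemize}
Written in terms of $\wt v^S$ (using $s^2=1-(v_m^2+v_mv_++v_+^2)$), these should add up exactly to the expression $h$ in \eqref{4.22.}. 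Checking this is bookkeeping.

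The main obstacle is the positivity bound $h\ge \frac{\alpha\delta_S(3+2\theta_*)\theta_*v_+^2}{4s^2}$. I would expand the negative term, $-\frac{(a'\sigma')^2}{2s(a\sigma')'}$, to second order in $v_+$. Since
\[
(a\sigma')'=a'\big[1-9(\wt v^S)^2-6(v_++v_m)\wt v^S+O(v_+^2)\big],
\]
this term is $-\frac{a'}{2s}\big[1+6(v_++v_m)\wt v^S+3(\wt v^S)^2+O(v_+^3)\big]$. Its leading part $-\frac{a'}{2s}$ cancels against $\frac{\alpha\delta_S}{2s^2}\cdot 1$ up to $O(v_+^2)$ corrections, because $s=1+O(v_+^2)$. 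What remains is $\frac{\alpha\delta_S}{s^2}$ times a quadratic polynomial in $\wt v^S$ with coefficients of order $v_+^2$, plus $O(v_+^4)$. I then have to show this quadratic is positive on $[v_m,v_+]$ with the precise choice $\alpha=\frac23+\frac1{6\theta_*+3}$.

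I would check it at the endpoints and the vertex, writing $v_m=-(\frac12-\theta)v_+$ with $\theta\ge\theta_*$. The expected sticking point is the left endpoint $\wt v^S=v_m$, where the lower bound degenerates like $\theta_*$ because the Liu shock becomes degenerate as $\theta\to0$. That is exactly why the stated bound is proportional to $\theta_*(3+2\theta_*)$, which should come from a factor like $(v_++2v_m)(\dots)$. After that, taking $v_+\ll1$ absorbs the $O(v_+^4)$ error and gives \eqref{4.21.}.
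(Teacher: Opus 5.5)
Your overall plan is the paper's: complete the square in $\phi$, apply the weighted Poincar\'e inequality to $\Psi=a\psi$ with $y=(v_+-\wt v^S)/\delta_S$, and use $I_1=\mu\alpha\int a\psi_\xi^2$. The gap is in the step where you make $I_2+I_3$ disappear, which is wrong. After integrating the cross term by parts one has
\[
I_2+I_3=-\frac{\mu s}{\delta_S}\int_{\mathbb R} a\Big[\frac{a_\xi}{\wt v^S+v_++v_m}\Big]_\xi\psi^2\,d\xi .
\]
Here $a_\xi=a'\wt v^S_\xi$ is the $\xi$-derivative, not the constant $a'=\alpha\delta_S/s$. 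Since $a=a'(\wt v^S+v_++v_m)$, the profile equation \eqref{VS_scalar} together with $\mu s\wt v^S_{\xi\xi}=(v_m^2+v_mv_++v_+^2-3(\wt v^S)^2)\wt v^S_\xi$ gives
\[
I_2+I_3=-\frac{\mu\alpha^2\delta_S}{s}\int_{\mathbb R}\Big[\wt v^S_{\xi\xi}-\frac{(\wt v^S_\xi)^2}{\wt v^S+v_++v_m}\Big]\psi^2\,d\xi
=\frac{\alpha^2\delta_S}{s^2}\int_{\mathbb R}\big[2(\wt v^S)^2+(v_++v_m)\wt v^S-(v_++v_m)^2\big]\wt v^S_\xi\psi^2\,d\xi .
\]
This term is not negligible. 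It is of order $\alpha^2\delta_S v_+^2$, a factor $\theta_*^{-1}$ larger than the margin $\alpha\delta_S\theta_*(3+2\theta_*)v_+^2/(4s^2)$ you are proving. Its bracket equals $2v_+^2-v_+v_m-v_m^2>0$ at $\wt v^S=v_+$, so it cannot be discarded from a lower bound.

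If you omit it, the Poincar\'e step contributes $\frac{2}{\delta_S}a^2=\frac{2\alpha^2\delta_S}{s^2}(\wt v^S+v_++v_m)^2$ to the $\psi^2$ coefficient. Then your three items do not add up to $h$, and the bound you would write down is not valid. Subtracting the term above is exactly what turns $\frac{2\alpha^2\delta_S}{s^2}(\wt v^S+v_++v_m)^2$ into $\frac{\alpha^2\delta_S}{s^2}[3(v_++v_m)\wt v^S+3(v_++v_m)^2]$, the third part of $h$ in \eqref{4.22.} (compare \eqref{4.25+}).

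A secondary point concerns the positivity step. Once the coefficient is correct, the $(\wt v^S)^2$ terms cancel, and
\[
\frac{2s^2}{\alpha\delta_S}\big[1-9(\wt v^S)^2-6(v_++v_m)\wt v^S\big]\,h = 6(\alpha-1)(v_++v_m)\wt v^S+(12\alpha-2)v_+v_m+(6\alpha-2)(v_+^2+v_m^2)+O(v_+^4).
\]
This is affine and \emph{decreasing} in $\wt v^S$, because $\alpha<1$. So the binding point is the right endpoint $\wt v^S=v_+$, not $v_m$ as you expect. There the value is $v_+^2[(12\alpha-8)+(18\alpha-8)r+(6\alpha-2)r^2]$ with $r=v_m/v_+$. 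This is increasing for $r\ge-\frac12$, and for the given $\alpha$ it equals exactly $\theta_*(3+2\theta_*)v_+^2$ at $r=-\frac12+\theta_*$. At $r=-\frac12$ it would be $\frac92(\alpha-1)v_+^2<0$, which is why $\alpha$ must depend on $\theta_*$. The $(v_++2v_m)$ factor you anticipate does appear at $\wt v^S=v_m$, but that endpoint gives roughly $6\alpha\theta_*v_+^2$ and does not determine the constant.
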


\begin{proof}
Change the variable 
$$
y:=\frac{v_+-(\wt v^S)^{\mb{X}}}{v_+-v_m},
$$ 
which maps $\xi\in(-\infty,+\infty)$ to $y\in[0,1]$. Since $y_\xi=-\frac{(\wt v^S)^{\mb{X}}_\xi}{v_+-v_m}<0$, the mapping $\xi\rightarrow y$ is always invertible.
For any fixed $t>0$, we define the function 
\begin{equation}
\label{wp}
\Psi(t,y):=a^{\mb{X}}(\wt v^S(\xi))\psi(t,\xi).
\end{equation}

To apply the weighted Poincar\'e inequality to $\Psi$, we first compute the following two quantities:
\begin{equation}\label{4.23.}
 \begin{aligned}
2\Big(\int_0^1\Psi(t,y) dy\Big)^2%=2\Big(\int_{\mathbb{R}} \psi(t,\xi) a^{\mb{X}}\frac{(\wt v^S)^{\mb{X}}_\xi}{v_+-v_m}d\xi\Big)^2
=\frac{2}{\delta_S^2}\left[\int_{\mathbb{R}}a^{\mb X}(\wt v^S)^{\mb X}_\xi\psi  d\xi\right]^2,
 \end{aligned}
\end{equation}
and
\begin{equation}\label{4.24.}
 \begin{aligned}
2\int_0^1\Psi^2(t,y)dy&=2\int_{\mathbb{R}}   (a^{\mb X})^2\psi^2\frac{(\wt v^S)^{\mb X}_\xi}{v_+-v_m} d\xi
=\frac{2}{\delta_S}\int_{\mathbb{R}}  (a^{\mb X})^2(\wt v^S)^{\mb X}_\xi \psi^2  d\xi\\[1mm]
&=\frac{2\alpha^2\delta_S}{s^2}\int_{\mathbb{R}}    \left[((\wt v ^S)^{\mb{X}})^2+2(v_++v_m)(\wt v ^S)^{\mb{X}}+(v_++v_m)^2\right] (\wt v^S)^{\mb X}_\xi \psi^2 d\xi.
 \end{aligned}
\end{equation}
And the weighted derivitive term becomes
\begin{equation}\label{4.25.}
 \begin{aligned}
&\int_0^1  |\Psi_y(t,y)|^2y(1-y)dy\\
 &=-\int_{\mathbb{R}}\Big|\big[a^{\mb{X}}\psi(t,\xi) \big]_\xi\Big|^2\frac{1}{y_\xi}y(1-y)d\xi \\
&=\frac{1}{\delta_S}\bigintsss_{\mathbb{R}}\Big|\big[a^{\mb{X}}\psi(t,\xi) \big]_\xi\Big|^2\frac{[v_+-(\wt v^S)^{\mb X}][(\wt v^S)^{\mb X}-v_m]}{(\wt v^S)^{\mb X}_\xi}d\xi\\
&=\frac{\mu s}{\delta_S}\bigintsss_{\mathbb{R}}\Big|\big[a^{\mb{X}}\psi(t,\xi) \big]_\xi\Big|^2\frac{1}{(\wt v^S)^{\mb X}+v_m +v_+}d\xi   \\[4mm]
&=\frac{\mu s}{\delta_S}\bigintsss_{\mathbb{R}}\frac{( a^{\mb{X}})^2}{(\wt v^S)^{\mb X}+v_m+v_+ }\psi_\xi^2d\xi+\frac{\mu s}{\delta_S}\bigintsss_{\mathbb{R}}\frac{( a^{\mb{X}}_\xi)^2}{(\wt v^S)^{\mb X}+v_m+v_+}\psi^2d\xi\\[1mm]
&\quad +\frac{2\mu s}{\delta_S}\bigintsss_{\mathbb{R}} \frac{a^{\mb{X}}_\xi a^{\mb{X}}}{(\wt v^S)^{\mb X}+v_m+v_+ }\psi\psi_\xi d\xi\\
&:=I_1+I_2+I_3.
 \end{aligned}
\end{equation}
Note that
\begin{equation}\label{4.26.}
 \begin{aligned}
 I_1=\frac{\mu s}{\delta_S}\bigintsss_{\mathbb{R}}\frac{( a^{\mb{X}})^2}{(\wt v^S)^{\mb X}+v_m+v_+ } \psi_\xi^2d\xi =\mu\alpha\int_{\mathbb{R}} a^{\mb{X}}\psi_\xi^2d\xi,
 \end{aligned}
\end{equation}
and
\begin{equation}\label{4.27.}
 \begin{aligned}
I_3&=\frac{\mu s}{\delta_S}\bigintsss_{\mathbb{R}} \frac{a^{\mb{X}}_\xi a^{\mb{X}}}{(\wt v^S)^{\mb X}+v_m+v_+ }(\psi^2)_\xi d\xi \\
&=-\frac{\mu s}{\delta_S}\bigintsss_{\mathbb{R}}\left[\frac{a^{\mb{X}}_\xi a^{\mb{X}}}{(\wt v^S)^{\mb X}+v_m+v_+ }\right]_\xi \psi^2d\xi\\
&=-I_2+\frac{\mu s}{\delta_S}\bigintsss_{\mathbb{R}}a^{\mb{X}}\left[ \frac{a^{\mb{X}}_{\xi}(\wt v^S)^{\mb X}_\xi}{(\wt v^S)^{\mb X}+v_m+v_+ }-a^{\mb{X}}_{\xi\xi}\right]\frac{\psi^2}{(\wt v^S)^{\mb X}+v_m+v_+ }d\xi\\
&=-I_2+\frac{\alpha^2\delta_S}{s^2} \int_{\mathbb{R}}  \big[2((\wt v ^S)^{\mb{X}})^2+(v_++v_m)(\wt v ^S)^{\mb{X}}-(v_++v_m)^2\big](\wt v^S)^{\mb X}_\xi \psi^2d\xi.
\end{aligned}
\end{equation}
Then
\begin{equation}\label{4.25+}
 \begin{aligned}
&\int_0^1  |\Psi_y(t,y)|^2y(1-y)dy=I_1+I_2+I_3\\
&=\mu\alpha\int_{\mathbb{R}} a^{\mb{X}}\psi_\xi^2d\xi
+ \frac{\alpha^2\delta_S}{s^2} \int_{\mathbb{R}}  \big[2((\wt v ^S)^{\mb{X}})^2+(v_++v_m)(\wt v ^S)^{\mb{X}}-(v_++v_m)^2\big](\wt v^S)^{\mb X}_\xi \psi^2d\xi.
 \end{aligned}
\end{equation}
Now, applying the weighted Poincar\'e inequality \eqref{poincare} to $\Psi$ and using the expressions \eqref{4.23.}, \eqref{4.24.} and \eqref{4.25+}, we obtain
\begin{equation}\label{4.28}
 \begin{aligned}
\frac{2}{\delta_S^2}\left[\int_{\mathbb{R}}a^{\mb X}(\wt v^S)^{\mb X}_\xi \psi d\xi\right]^2& \geq
  -\mu\alpha\int_{\mathbb{R}} a^{\mb{X}} \psi_\xi^2 d\xi\\
  &\quad+ \frac{\alpha^2\delta_S}{s^2}
\int_{\mathbb{R}}\big[3(v_++v_m)(\wt v ^S)^{\mb{X}}
+3(v_++v_m)^2\big](\wt v^S)^{\mb X}_\xi \psi^2 d\xi.
\end{aligned}
\end{equation}
Now we rewrite the other terms in $\mathbf{G^S}(t)$. First,
\begin{equation}\label{4.29-.}
 \begin{aligned}
& \int_{\mathbb{R}}\left( \frac{s}{2} a^{\mb X}_\xi -\frac{\mu}{2} a^{\mb X}_{\xi\xi}\right) \psi^2 d\xi\\
 &= \int_{\mathbb{R}} \left[\frac{s}{2}(a^{\mb X})^\prime(\wt v^S)^{\mb X}_\xi-\frac{\mu}{2}(a^{\mb X})^\prime(\wt v^S)^{\mb X}_{\xi\xi}\right]\psi^2 d\xi\\
 &=\frac{\alpha \delta_S}{2s^2}   \int_{\mathbb{R}}\left[1-2v_+^2-2v_+v_m-2v_m^2+3((\wt v^S)^{\mb X})^2\right](\wt v^S)^{\mb X}_\xi \psi^2 d\xi.
\end{aligned}
\end{equation}
Then
\begin{equation}\label{4.29.}
 \begin{aligned}
&\int_{\mathbb{R}}a^{\mb X}_\xi \sigma^\prime((\wt v^S)^{\mb X}) \phi\psi d\xi+\frac{s}{2}\int_{\mathbb{R}}\left[a^{\mb X}\sigma^\prime((\wt v^S)^{\mb X})\right]_\xi \phi^2 d\xi\\
&=\frac{s}{2}\bigintsss_{\mathbb{R}}[a^{\mb{X}} \sigma^\prime((\wt v^S)^{\mb{X}})]_\xi \left[\phi+\frac{a^{\mb{X}}_\xi\sigma^\prime((\wt v^S)^{\mb{X}})}{s(a^{\mb{X}} \sigma^\prime((\wt v ^S)^{\mb{X}}))_\xi }\psi\right]^2 d\xi\\
&\quad-\bigintsss_{\mathbb{R}}  \frac{[(a^\mb{X})^\prime\sigma^\prime((\wt v^S)^{\mb{X}})]^2}{2s [a^{\mb{X}} \sigma^\prime((\wt v^S)^{\mb{X}}) ]^\prime} (\wt v^S)^{\mb X}_\xi \psi^2d\xi.
\end{aligned}
\end{equation}

Combining the identities \eqref{4.23.}-\eqref{4.29.} with the definition of $\mathbf{G^S}(t)$ in \eqref{4.14.} yields the desired lower bound \eqref{4.21.}.

It remains to verify \eqref{4.22.} for $0<v_+\ll1$. First, we have
\begin{equation*}\label{4.30.}
 \begin{aligned}
h((\wt v^S)^{\mb{X}})
=&\frac{\alpha \delta_S}{2s^2[1-9((\wt v^S)^{\mb{X}})^2-6(v_++v_m)(\wt v^S)^{\mb{X}}]}\Big[-\left(1-3((\wt v^S)^{\mb{X}})^2\right)^2\\[1mm]
&+\big(1+3((\wt v^S)^{\mb{X}})^2 +6\alpha(v_++v_m)(\wt v^S)^{\mb{X}}
+(12\alpha-2)v_+v_m+(6\alpha-2)(v_+^2+v_m^2)\big)\\[1mm]
&\cdot \left(1-9((\wt v^S)^{\mb{X}})^2-6(v_++v_m)(\wt v^S)^{\mb{X}}\right)\Big]\\
=&\frac{\alpha \delta_S}{2s^2[1-9((\wt v^S)^{\mb{X}})^2-6(v_++v_m)(\wt v^S)^{\mb{X}}]}\\
&\cdot\underbrace{[(6\alpha-6)(v_++v_m)(\wt v^S)^{\mb{X}}+(12\alpha-2)v_+v_m+(6\alpha-2)(v_+^2+v_m^2)+O(v_+^4)  ]}_{\bar h ((\wt v^S)^{\mb{X}})}.
\end{aligned}
\end{equation*}
Since  $v_m \in [(-1/2 + \theta_*) v_+,0)$ and $(\wt v^S)^{\mb{X}}\in(v_m,v_+)$, we have
\begin{equation*}\label{4.31.}
 \begin{aligned}
\bar h ((\wt v^S)^{\mb{X}})&\geq (6\alpha-6)(v_++v_m)v_++(12\alpha-2)v_+v_m+(6\alpha-2)(v_+^2+v_m^2)+O(v_+^4)  \\
&=(12\alpha-8)v_+^2+(18\alpha-8)v_mv_++(6\alpha-2)v_m^2+O(v_+^4) \\
&\geq (12\alpha-8)v_+^2+(18\alpha-8)\Big(-\frac{1}{2}+\theta_*\Big)v_+^2+(6\alpha-2)\Big(-\frac{1}{2}+\theta_*\Big)^2v_+^2+O(v_+^4) \\
&>(3\theta_*+2\theta_*^2)v_+^2+O(v_+^4)\geq \frac{(3+2\theta_*)\theta_*}{2} v_+^2, \qquad  \text{for } 0<v_+\ll 1.
\end{aligned}
\end{equation*}
%$$
%\textcolor{red}{condition: Cv_+^2\leq \frac{(3+2\theta_*)}2 \theta_*. \ which \ means \ that\  v_+\  depends\  on\  \theta_*? }  
%$$
%\textcolor{blue}{Yes, since the choice of $\theta_*$ is given by the proof of case 1 and is independent of $v_+$, here we can choose $v_+$ smaller than before. and I alwasys denote as $0<v_+\ll1$.}\\
which  verifies \eqref{4.22.}. % for $0<v_+\ll1$.
On the other hand, since $\frac{2}{3}<\alpha<1$ and $|v_m|\leq\frac{1}{2}v_+$,
 it holds that
\begin{equation*}\label{4.30.}
 \begin{aligned}
\left|\bar h ((\wt v^S)^{\mb{X}})\right|%&=\left|[(6\alpha-6)(v_++v_m)(\wt v^S)^{\mb{X}}+(12\alpha-2)v_+v_m+(6\alpha-2)(v_+^2+v_m^2)+O(v_+^4)  ]\right|\\
&\leq\left|   6-6\alpha\right|\left|v_++v_m\right|\left|v_+\right|+(12\alpha-2)\left|v_+\right|\left|v_m\right|+(6\alpha-2)(v_+^2+v_m^2)+C v_+^4\\
&\leq2v_+^2+10|v_+||v_m|+4(v_+^2+v_m^2)+Cv_+^4\\
&=12v_+^2+Cv_+^4\leq{13}v_+^2,  \qquad  \text{for } 0<v_+\ll 1,
\end{aligned}
\end{equation*}
which implies that
\begin{equation}\label{4.66.-}
h((\wt v^S)^{\mb{X}})\leq \frac{\alpha \delta_S}{2s^2[1-9((\wt v^S)^{\mb{X}})^2-6(v_++v_m)(\wt v^S)^{\mb{X}}]}13v_+^2\leq \frac{{13}\alpha \delta_S}{s^2}v_+^2.
\end{equation}

\end{proof}

%\noindent {\bf Estimation of  the $\mathbf{P}(t)$}. 
 \begin{lemma}\label{L4.4.} $\mathbf{G^X}(t)$ defined in \eqref{4.2+.} satisfies that
\begin{equation}\label{4.32.}
     \begin{aligned}
     \mathbf{G^X}(t)\geq \frac{\delta_S^2}{32}\left|\mathbf{\dot X}(t)\right|^2&-\frac{8}{\delta_S^2}\left[\bigintss_{\mathbb{R}}a^{\mb{X}} \sigma^\prime((\wt v^S)^{\mb{X}})(\wt v ^S)^{\mb{X}}_\xi \left(\phi+\frac{a^{\mb{X}}_\xi \sigma^\prime((\wt v^S)^{\mb{X}}) }{s(\sigma^\prime((\wt v^S)^{\mb{X}}) a^{\mb{X}})_\xi }\psi \right)d\xi\right]^2\\
     &+  \frac{4}{\delta_S^2}\left(\int_{\mathbb{R}} a^{\mb{X}}g((\wt v^S)^{\mb{X}}) (\wt v^S)^{\mb{X}}_\xi \psi d\xi\right)^2-\frac{2}{\delta_S^2}\left(\int_{\mathbb{R}}a^{\mb X}(\wt v^S)^{\mb X}_\xi \psi d\xi\right)^2,
 \end{aligned}
 \end{equation}
 where 
\begin{equation}\label{4.33.}
g((\wt v^S)^{\mb{X}}) :=s+\frac{(a^{\mb{X}})^\prime [\sigma^\prime((\wt v^S)^{\mb{X}})]^2}{s( a^{\mb{X}}\sigma^\prime((\wt v^S)^{\mb{X}}))^\prime }.
\end{equation}
  \end{lemma}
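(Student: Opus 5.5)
The plan is to insert the definition \eqref{ode.shift1} of the shift directly into $\mathbf{\dot X}(t)\mathbf{Y_1}(t)$ and split the result into half a square of $\mathbf{\dot X}$ plus half a square of $\mathbf{Y_1}$. The square of $\mathbf{Y_1}$ is then rewritten in terms of the combination $\phi+\frac{a^{\mb X}_\xi\sigma'}{s(a^{\mb X}\sigma')_\xi}\psi$, which is exactly the one appearing in the good term of Lemma \ref{L4.3.}. Only algebra and an elementary quadratic inequality are needed; no Poincar\'e-type estimate enters here.

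\emph{Step 1 (splitting $\mathbf{\dot X}\mathbf{Y_1}$).} By \eqref{ode.shift1}, $\mathbf{\dot X}(t)=\frac{16}{\delta_S^2}\mathbf{Y_1}(t)$, since the bracket in \eqref{ode.shift1} is precisely $\mathbf{Y_1}$ from \eqref{eq:Y_decomposition}. Hence
\[
\mathbf{\dot X}\mathbf{Y_1}=\frac{\delta_S^2}{16}|\mathbf{\dot X}|^2=\frac{\delta_S^2}{32}|\mathbf{\dot X}|^2+\frac{8}{\delta_S^2}\mathbf{Y_1}^2 .
\]
This already produces the first term of \eqref{4.32.}.

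\emph{Step 2 (rewriting $\mathbf{Y_1}$).} From the profile identity $\wt u^S-u_+=-s(\wt v^S-v_+)$ we get $(\wt u^S)^{\mb X}_\xi=-s(\wt v^S)^{\mb X}_\xi$. Set
\[
c:=\frac{a^{\mb X}_\xi\sigma'((\wt v^S)^{\mb X})}{s\bigl(a^{\mb X}\sigma'((\wt v^S)^{\mb X})\bigr)_\xi}=\frac{(a^{\mb X})'\sigma'}{s(a^{\mb X}\sigma')'},
\]
where the second form holds because the $\xi$-derivatives carry the same factor $(\wt v^S)^{\mb X}_\xi$. Add and subtract $\int a^{\mb X}\sigma'(\wt v^S)^{\mb X}_\xi\,c\,\psi\,d\xi$. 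This gives $\mathbf{Y_1}=A-B$, with
\[
A:=\int_{\mathbb R} a^{\mb X}\sigma'(\wt v^S)^{\mb X}_\xi(\phi+c\psi)\,d\xi,\qquad B:=\int_{\mathbb R} a^{\mb X}\Bigl(s+\frac{(a^{\mb X})'(\sigma')^2}{s(a^{\mb X}\sigma')'}\Bigr)(\wt v^S)^{\mb X}_\xi\psi\,d\xi .
\]
The coefficient in $B$ is exactly $g$ from \eqref{4.33.}. Note that $(a^{\mb X}\sigma')'>0$ for small $v_+$ by \eqref{w.deriv1}, so $c$ is well defined.

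\emph{Step 3 (elementary inequality).} Apply $(A-B)^2\ge -A^2+\tfrac12B^2$, which is equivalent to $(2A-B)^2\ge0$ after expanding. This yields $\frac{8}{\delta_S^2}\mathbf{Y_1}^2\ge-\frac{8}{\delta_S^2}A^2+\frac{4}{\delta_S^2}B^2$. Subtracting the term $\frac{2}{\delta_S^2}\bigl(\int a^{\mb X}(\wt v^S)^{\mb X}_\xi\psi\,d\xi\bigr)^2$, which is part of the definition \eqref{4.2+.} of $\mathbf{G^X}$, gives \eqref{4.32.} exactly.

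The only point needing care is the sign bookkeeping in Step 2, namely the relation $\wt u^S_\xi=-s\wt v^S_\xi$ and the identification of the coefficient with $g$. There is no genuine obstacle in this lemma. The real work is deferred to combining \eqref{4.32.} with Lemma \ref{L4.3.}:
\begin{itemize}
\item the $-\frac{8}{\delta_S^2}A^2$ term is to be absorbed via H\"older into the good square term of \eqref{4.21.};
\item the last two terms are to be compared using $g\approx 2$ as $v_+\to0$.
\end{itemize}
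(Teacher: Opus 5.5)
Your proposal is correct and follows essentially the same route as the paper's proof. Both substitute \eqref{ode.shift1} to write $\mathbf{\dot X}\mathbf{Y_1}=\frac{\delta_S^2}{32}|\mathbf{\dot X}|^2+\frac{8}{\delta_S^2}\mathbf{Y_1}^2$, use $(\wt u^S)^{\mb X}_\xi=-s(\wt v^S)^{\mb X}_\xi$ to split $\mathbf{Y_1}$ into the $\phi+c\psi$ integral minus the $g$-weighted $\psi$ integral, and then apply the elementary bound $(A-B)^2\ge -A^2+\tfrac12 B^2$.
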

\begin{proof}  From \eqref{ode.shift1} and  \eqref{4.2+.}, we have
\begin{equation}\label{4.34.}
     \begin{aligned}
         \mathbf{G^X}(t)=&\frac{\delta_S^2}{16}\left|\mathbf{\dot X}(t)\right|^2-\frac{2}{\delta_S^2}\left(\int_{\mathbb{R}} a^{\mb X}(\wt v^S)^{\mb X}_\xi \psi d\xi\right)^2\\
         =&\frac{\delta_S^2}{32}\left|\mathbf{\dot X}(t)\right|^2+\frac{8}{\delta_S^2}|\mb{Y}_1(t)|^2-\frac{2}{\delta_S^2}\left(\int_{\mathbb{R}} a^{\mb X}(\wt v^S)^{\mb X}_\xi \psi d\xi\right)^2\\
    % &+\frac{8}{\delta_S^2}\left[ \int_{\mathbb{R}}a^{\mb X} \sigma^\prime((\wt v^S)^{\mb X})(\wt v^S)^{\mb X}_\xi \phi d\xi+\int_{\mathbb{R}}a^{\mb X} (\wt u^S)^{\mb X}_\xi \psi d\xi \right]^2\\
     =&\frac{\delta_S^2}{32}|\mathbf{\dot X}(t)|^2-\frac{2}{\delta_S^2}\left(\int_{\mathbb{R}}a^{\mb X}(\wt v^S)^{\mb X}_\xi  \psi d\xi\right)^2\\
     &+\frac{8}{\delta_S^2}\bigg|\bigintsss_{\mathbb{R}}a^{\mb{X}} \sigma^\prime((\wt v^S)^{\mb{X}})(\wt v ^S)^{\mb{X}}_\xi \Big(\phi+\frac{a^{\mb{X}}_\xi \sigma^\prime((\wt v^S)^{\mb{X}}) }{s(\sigma^\prime((\wt v^S)^{\mb{X}}) a^{\mb{X}})_\xi }\psi \Big)d\xi\\
     &\qquad\quad +\bigintsss_{\mathbb{R}}a^{\mb{X}}\Big((\wt u^S)^{\mb X}_\xi-\frac{a^{\mb{X}}_\xi (\sigma^\prime((\wt v^S)^{\mb{X}}))^2}{s(a^{\mb{X}}\sigma^\prime((\wt v^S)^{\mb{X}}) )^\prime }\Big)\psi d\xi\bigg|^2\\
     \geq&\frac{\delta_S^2}{32}\left|\mathbf{\dot X}(t)\right|^2\underbrace{-\frac{2}{\delta_S^2}\Big(\int_{\mathbb{R}} a^{\mb X}(\wt v^S)^{\mb X}_\xi  \psi d\xi\Big)^2+  \frac{4}{\delta_S^2}\left(\int_{\mathbb{R}} a^{\mb{X}}g((\wt v^S)^{\mb{X}}) (\wt v^S)^{\mb{X}}_\xi \psi d\xi\right)^2}_{   {P_1}(t)}\\
     &\underbrace{-\frac{8}{\delta_S^2}\left[  \bigintsss_{\mathbb{R}}a^{\mb{X}} \sigma^\prime((\wt v^S)^{\mb{X}})(\wt v ^S)^{\mb{X}}_\xi \bigg(\phi+\frac{a^{\mb{X}}_\xi \sigma^\prime((\wt v^S)^{\mb{X}}) }{s(\sigma^\prime((\wt v^S)^{\mb{X}}) a^{\mb{X}})_\xi }\psi \bigg)d\xi   \right]^2}_{   {P_2}(t)},
 \end{aligned}
 \end{equation}
   where in the last inequality we have used $(c+d)^2\geq \frac{d^2}{2}-c^2$ and $(\wt u^S)^{\mb X}_\xi=-s(\wt v^S)^{\mb X}_\xi$. The proof of Lemma \ref{L4.4.} is completed.
\end{proof}

%\textcolor{red}{HERE}\noindent {\bf {Estimation of  the $\mathbf{G^S}(t)+ \mathbf{P}(t)$}}. 
 \begin{lemma}\label{L4.5.} If $v_+$ satisfies $0<v_+\ll 1$, then it holds that
\begin{equation}\label{4.35.}
     \begin{aligned}
  \mathbf{G^{S,X}}(t)\geq&{\mu} \int_{\mathbb{R}}(1-\alpha)a^{\mb{X}} \psi_\xi^2d\xi+\frac{1}{4}\int_{\mathbb{R}} \left|h((\wt v^S)^{\mb{X}})\right| (\wt v^S)^{\mb{X}}_\xi  \psi^2 d\xi\\
  &+\frac{1}{4}\int_{\mathbb{R}} \left|l((\wt v^S)^{\mb{X}})\right|  (\wt v^S)^{\mb{X}}_\xi \phi^2 d\xi+\frac{\delta_S^2}{32}\left|\mathbf{\dot X}(t)\right|^2,
 \end{aligned}
 \end{equation}
 where $l((\wt v^S)^{\mb{X}}) $ is given by
 \begin{equation}\label{4.36.}
  \begin{aligned}
l((\wt v^S)^{\mb{X}}) :=&\frac{s^2h((\wt v^S)^{\mb{X}})[(a^{\mb{X}} \sigma^\prime((\wt v^S)^{\mb{X}}))^\prime]^2}{ [(a^{\mb{X}})^\prime \sigma^\prime((\wt v^S)^{\mb{X}})]^2+s^2[ a^{\mb{X}}\sigma^\prime((\wt v^S)^{\mb{X}})]^\prime h((\wt v^S)^{\mb{X}}) }\\[1mm]
\geq& \frac{\alpha \delta_S(3+2\theta_*)\theta_*v_+^2}{16(1+{13}sv_+^2)}>0, \qquad  for \ \ 0<v_+\ll 1,
 \end{aligned}
\end{equation}
with $h((\wt v^S)^{\mb{X}})$ being defined in \eqref{4.22.}.
\end{lemma}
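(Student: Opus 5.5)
We add the lower bounds from Lemma \ref{L4.3.} and Lemma \ref{L4.4.}, using \eqref{g}: $\mathbf{G^{S,X}}=\mathbf{G^S}+\mathbf{G^X}$. The sum contains the good terms $\mu(1-\alpha)\int a^{\mb X}\psi_\xi^2$, $\int|h|(\wt v^S)^{\mb X}_\xi\psi^2$, $\frac{\delta_S^2}{32}|\dot{\mb X}|^2$ and the completed square
\[
Q:=\frac{s}{2}\int_{\mathbb R}[a^{\mb X}\sigma'((\wt v^S)^{\mb X})]_\xi\,\Phi^2\,d\xi,
\qquad
\Phi:=\phi+\frac{a^{\mb X}_\xi\sigma'}{s(a^{\mb X}\sigma')_\xi}\psi .
\]
It also contains three bad terms. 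The first is $P_2=-\frac{8}{\delta_S^2}\big(\int a^{\mb X}\sigma'(\wt v^S)^{\mb X}_\xi\Phi\big)^2$. The second is $P_1$, the difference between $\frac{4}{\delta_S^2}(\int a^{\mb X}g(\wt v^S)^{\mb X}_\xi\psi)^2$ and $\frac{2}{\delta_S^2}(\int a^{\mb X}(\wt v^S)^{\mb X}_\xi\psi)^2$.

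\textbf{Step 1: $P_1\ge0$.} Since $g\to 2$ as $v_+\to0$ (here $s\to1$, $a'=\alpha\delta_S/s$, and $(a\sigma')'=a'\sigma'+a\sigma''=a'+O(v_+^2)$), write $g=2+(g-2)$ with $\|g-2\|_{L^\infty}\le Cv_+$ or better. Set $A=\int a(\wt v^S)_\xi\psi$ and $B=\int a(g-2)(\wt v^S)_\xi\psi$. Then
\[
P_1=\frac{4}{\delta_S^2}(2A+B)^2-\frac{2}{\delta_S^2}A^2\ \ge\ \frac{2}{\delta_S^2}A^2-\frac{C}{\delta_S^2}B^2 .
\]
By H\"older, $B^2\le \|g-2\|_\infty^2\int a^2(\wt v^S)_\xi\int(\wt v^S)_\xi\psi^2\le Cv_+^2\cdot\delta_S^3\,O(v_+^2)\int(\wt v^S)_\xi\psi^2$. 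So $\frac{C}{\delta_S^2}B^2\le Cv_+^5\int(\wt v^S)_\xi\psi^2$. This is absorbed by $\frac14\int|h|(\wt v^S)_\xi\psi^2$, since $|h|\gtrsim\theta_*v_+^3$ by \eqref{4.22.}, once $v_+\ll\theta_*$.

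\textbf{Step 2: absorb $P_2$.} By Cauchy--Schwarz,
\[
|P_2|\le\frac{8}{\delta_S^2}\int\frac{a^2\sigma'^2(\wt v^S)_\xi}{(a\sigma')'}\,d\xi\cdot\int(a\sigma')_\xi\Phi^2 .
\]
Since $(a\sigma')'\approx\alpha\delta_S/s$, the first factor is $O(\delta_S^{3})\cdot\delta_S^{-1}\cdot\delta_S$, i.e. $O(\delta_S^3)$. Hence $|P_2|\le C\delta_S\int(a\sigma')_\xi\Phi^2\le\frac12 Q$ for small $v_+$.

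\textbf{Step 3: recover $\phi^2$.} Split $\frac12\int|h|(\wt v^S)_\xi\psi^2$ as $\frac14+\frac14$ and combine one quarter-part with $\frac12 Q$. Pointwise in $\xi$ we have a quadratic form in $(\phi,\psi)$:
\[
\frac14|h|\psi^2+\frac{s}{4}(a\sigma')'(\phi+\kappa\psi)^2,\qquad \kappa=\frac{a'\sigma'}{s(a\sigma')'} .
\]
Minimizing in $\psi$ gives the bound $\frac14\frac{|h|\,\frac{s}{}(a\sigma')'}{|h|+s(a\sigma')'\kappa^2}\phi^2$, i.e. a multiple of $\frac{s^2h(a\sigma')'^2}{(a'\sigma')^2+s^2(a\sigma')'h}\phi^2$, which is exactly $l$ in \eqref{4.36.}, up to checking constants. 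Together with the remaining $\frac14\int|h|\psi^2$, this yields \eqref{4.35.}.

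\textbf{Step 4: the bound on $l$.} Use $h\ge\frac{\alpha\delta_S(3+2\theta_*)\theta_*v_+^2}{4s^2}$ and $h\le\frac{13\alpha\delta_S}{s^2}v_+^2$ from \eqref{4.66.-}, and $(a\sigma')'\approx a'\sigma'\approx\alpha\delta_S/s$. Then
\[
l\ge\frac{h\,(a\sigma')'^2 s^2}{(a'\sigma')^2\,(1+s^2h/(a'))}\ \ge\ \frac{h}{1+13s v_+^2}\cdot\big(1+O(v_+^2)\big),
\]
which gives \eqref{4.36.} after a factor-of-$4$ slack.

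The main obstacle is the constant bookkeeping in Steps 1 and 3. The gain $|h|\sim\theta_*v_+^3$ is small, so every error, including $P_2$ and $B$, must be shown to carry a higher power of $v_+$ than $\theta_*v_+^3$. This requires fixing $\theta_*$ first and then taking $v_+$ small.
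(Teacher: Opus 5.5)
Your proposal is correct and follows the paper's proof essentially step by step. You combine Lemma~\ref{L4.3.} with Lemma~\ref{L4.4.}, absorb $P_1$ into part of the $h$-term using $g=2+o(1)$ and H\"older, absorb $P_2$ into the completed square by Cauchy--Schwarz, then minimize pointwise in $\psi$ to produce exactly $l$, and finally use the two-sided bounds on $h$ to bound $l$ from below. Two points are only cosmetic: your Step~1 actually shows $P_1\ge -\frac14\int|h|(\wt v^S)^{\mb X}_\xi\psi^2\,d\xi$ rather than $P_1\ge 0$ (the paper gets $-\frac12\int h\,(\wt v^S)^{\mb X}_\xi\psi^2\,d\xi$), and keeping $\frac14\int(a^{\mb X}\sigma')_\xi\Phi^2\,d\xi$ rather than $\frac{s}{4}\int(a^{\mb X}\sigma')_\xi\Phi^2\,d\xi$ after absorbing $P_2$ makes your minimization give precisely $\frac14 l$.
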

\begin{proof} To control ${P_1}(t)$ and ${P_2}(t)$ defined in \eqref{4.34.}, using the inequality $d^2-\frac{c^2}{2}\geq-(d-c)^2$ with 
\begin{equation*}
\displaystyle c:=\frac{2}{\delta_S} \int_{\mathbb{R}}a^{\mb{X}}(\wt v^S)^{\mb{X}}_\xi  \psi  d\xi,\qquad d:=\displaystyle\frac{1}{\delta_S}  \int_{\mathbb{R}}  a^{\mb{X}}(\wt v^S)^{\mb{X}}_\xi {g((\wt v^S)^{\mb{X}})}\psi d\xi,
\end{equation*}
we can obtain 
\begin{equation}\label{4.37.}
     \begin{aligned}
{P_1}(t)&\geq \frac{1}{\delta_S^2}  \left(\int_{\mathbb{R}} a^{\mb{X}}{g((\wt v^S)^{\mb{X}})} (\wt v^S)^{\mb{X}}_\xi   \psi d\xi\right)^2-\frac{2}{\delta_S^2} \left( \int_{\mathbb{R}} a^{\mb{X}}(\wt v^S)^{\mb{X}}_\xi\psi  d\xi \right)^2\\
&\geq-\frac{4}{\delta_S^2}  \bigg(\bigintsss_{\mathbb{R}}  a^{\mb{X}}\bigg[ \frac{g((\wt v^S)^{\mb{X}})}{2}-1\bigg](\wt v^S)^{\mb{X}}_\xi \psi d\xi\bigg)^2\\
&\geq -\frac{4}{\delta_S^2} \int_{\mathbb{R}} h((\wt v^S)^{\mb{X}}) (\wt v^S)^{\mb{X}}_\xi   \psi^2 d\xi\bigintsss_{\mathbb{R}}\frac{ (a^{\mb{X}})^2(\wt v^S)^{\mb{X}}_\xi}{h((\wt v^S)^{\mb{X}})}\left( \frac{g((\wt v^S)^{\mb{X}})}{2}-1\right)^2d\xi\\
&\geq -\frac{4}{\delta_S} \bigintsss_{\mathbb{R}} h((\wt v^S)^{\mb{X}})  (\wt v^S)^{\mb{X}}_\xi \psi^2d\xi\left\|a\right\|_{L^\infty(\mathbb{R})}^2\left\|\frac{1}{ h((\wt v^S)^{\mb{X}})}\right\|_{L^\infty(\mathbb{R})}\left\| \frac{g((\wt v^S)^{\mb{X}})}{2}-1\right\|^2_{L^\infty(\mathbb{R})}\\
&\geq -\frac{1}{2} \int_{\mathbb{R}}  h((\wt v^S)^{\mb{X}})  (\wt v^S)^{\mb{X}}_\xi  \psi^2 d\xi, \qquad  {\rm for} \ \ 0<v_+\ll 1,
 \end{aligned}
 \end{equation}
where H$\ddot{\rm o}$lder inequality is used in the third inequality, and the facts that as $v_+\rightarrow 0+$,
$$\left\|a\right\|_{L^\infty(\mathbb{R})}^2\left\|\frac{1}{ h((\wt v^S)^{\mb{X}})}\right\|_{L^\infty(\mathbb{R})}=O(\delta_S),\quad \displaystyle \left\| \frac{g((\wt v^S)^{\mb{X}})}{2}-1\right\|_{L^\infty(\mathbb{R})}=o(1).$$
are used in the last inequality.
Since as $v_+\rightarrow 0+$
$$\displaystyle\left|-  \frac{8}{\delta_S^2}\displaystyle \bigintsss_{\mathbb{R} }
\frac{\left[a^{\mb{X}}\sigma^\prime((\wt v^S)^{\mb{X}})(\wt v ^S)^{\mb{X}}_\xi\right]^2 }{{[a^{\mb{X}}\sigma^\prime((\wt v^S)^{\mb{X}})]^\prime(\wt v ^S)^{\mb{X}}_\xi }} d \xi\right|=O(v_+^2),\qquad\displaystyle\frac{s}{2}-\frac{1}{2}=o(1),$$
we can estimate
\begin{equation}\label{4.38.}
     \begin{aligned}
{P_2}(t)+&  \frac{s}{2}\bigintsss_{\mathbb{R}}[a^{\mb{X}} \sigma^\prime((\wt v^S)^{\mb{X}})]_\xi \left[\phi+\frac{a^{\mb{X}}_\xi \sigma^\prime((\wt v^S)^{\mb{X}})}{s(a^{\mb{X}} \sigma^\prime((\wt v ^S)^{\mb{X}}))_\xi }\psi\right]^2 d\xi\\
%=&-\frac{8}{\delta_S^2}\left(  \int_{\mathbb{R}} \sqrt{[w^{\mb{X}}\sigma^\prime((\wt v^S)^{\mb{X}})]^\prime(\wt v ^S)^{\mb{X}}_\xi }
%\frac{w^{\mb{X}}\sigma^\prime((\wt v^S)^{\mb{X}})(\wt v ^S)^{\mb{X}}_\xi }{\sqrt{[w^{\mb{X}}\sigma^\prime((\wt v^S)^{\mb{X}})]^\prime(\wt v ^S)^{\mb{X}}_\xi }}\left[\phi+\frac{w^{\mb{X}}_\xi \sigma^\prime((\wt v^S)^{\mb{X}}) }{s(\sigma^\prime((\wt v^S)^{\mb{X}}) w^{\mb{X}})_\xi }\psi \right]d\xi   \right)^2\\
%&+  \frac{s}{2}\int_{\mathbb{R}}[w^{\mb{X}} \sigma^\prime((\wt v^S)^{\mb{X}})]^\prime(\wt v^S)^{\mb{X}}_\xi  \left [\phi+\frac{w^{\mb{X}}_\xi \sigma^\prime((\wt v^S)^{\mb{X}})}{s(w^{\mb{X}} \sigma^\prime((\wt v ^S)^{\mb{X}}))_\xi }\psi\right]^2 d\xi\\
\geq& \displaystyle\left[\frac{s}{2}-  \frac{8}{\delta_S^2}\displaystyle \bigintsss_{\mathbb{R}}
\frac{\left(a^{\mb{X}}\sigma^\prime((\wt v^S)^{\mb{X}})(\wt v ^S)^{\mb{X}}_\xi\right)^2 }{{[a^{\mb{X}}\sigma^\prime((\wt v^S)^{\mb{X}})]^\prime(\wt v ^S)^{\mb{X}}_\xi }} d \xi \right]  \\
&\cdot\bigintsss_{\mathbb{R}}[a^{\mb{X}} \sigma^\prime((\wt v^S)^{\mb{X}})]^\prime(\wt v^S)^{\mb{X}}_\xi\left [\phi+\frac{a^{\mb{X}}_\xi \sigma^\prime((\wt v^S)^{\mb{X}}) }{s(\sigma^\prime((\wt v^S)^{\mb{X}}) a^{\mb{X}})_\xi }\psi \right]^2d\xi\\
\geq& \frac{1}{4} \bigintsss_{\mathbb{R}}[a^{\mb{X}} \sigma^\prime((\wt v^S)^{\mb{X}})]^\prime(\wt v^S)^{\mb{X}}_\xi\left [\phi+\frac{a^{\mb{X}}_\xi \sigma^\prime((\wt v^S)^{\mb{X}}) }{s(\sigma^\prime((\wt v^S)^{\mb{X}}) a^{\mb{X}})_\xi }\psi \right]^2d\xi, \quad  {\rm for} \ \ 0<v_+\ll 1,
 \end{aligned}
 \end{equation}
where in the first inequality we have used the H$\ddot{\rm o}$lder inequality.

Hence, we conclude from \eqref{4.21.}, \eqref{4.32.}, \eqref{4.37.} and \eqref{4.38.} that
 \begin{equation}\label{4.39.}
     \begin{aligned}
  \mathbf{G^{S,X}}(t)\geq&{\mu} \int_{\mathbb{R}}(1-\alpha)a^{\mb{X}} \psi_\xi^2d\xi+\frac{1}{2}\int_{\mathbb{R}} h((\wt v^S)^{\mb{X}}) (\wt v^S)^{\mb{X}}_\xi  \psi^2 d\xi+\frac{\delta_S^2}{32}\left|\mathbf{\dot X}(t)\right|^2\\
  &+\frac{1}{4} \bigintsss_{\mathbb{R}}[a^{\mb{X}} \sigma^\prime((\wt v^S)^{\mb{X}})]^\prime(\wt v^S)^{\mb{X}}_\xi\left [\phi+\frac{a^{\mb{X}}_\xi \sigma^\prime((\wt v^S)^{\mb{X}}) }{s(\sigma^\prime((\wt v^S)^{\mb{X}}) a^{\mb{X}})_\xi }\psi \right]^2d\xi.
 \end{aligned}
 \end{equation}
We can compute that
 \begin{equation}\label{4.40.}
     \begin{aligned}
    \frac{1}{2}\int_{\mathbb{R}}h((\wt v^S)^{\mb{X}})& (\wt v^S)^{\mb{X}}_\xi  \psi^2 d\xi+\frac{1}{4} \bigintsss_{\mathbb{R}}[a^{\mb{X}} \sigma^\prime((\wt v^S)^{\mb{X}})]^\prime(\wt v^S)^{\mb{X}}_\xi\left [\phi+\frac{a^{\mb{X}}_\xi \sigma^\prime((\wt v^S)^{\mb{X}}) }{s(\sigma^\prime((\wt v^S)^{\mb{X}}) a^{\mb{X}})_\xi }\psi \right]^2d\xi\\
  =& \frac{1}{2}\int_{\mathbb{R}} h((\wt v^S)^{\mb{X}}) (\wt v^S)^{\mb{X}}_\xi  \psi^2 d\xi+\frac{1}{4} \int_{\mathbb{R}}[a^{\mb{X}} \sigma^\prime((\wt v^S)^{\mb{X}})]^\prime(\wt v^S)^{\mb{X}}_\xi \phi^2 d\xi\\
  &+\frac{1}{4}\bigintsss_{\mathbb{R}}\frac{[a^{\mb{X}}_\xi  \sigma^\prime((\wt v^S)^{\mb{X}}) ]^2}{s^2[a^{\mb{X}}\sigma^\prime((\wt v^S)^{\mb{X}})]_\xi}\psi^2 d\xi+\frac{1}{2s}\int_{\mathbb{R}} a^{\mb{X}}_\xi  \sigma^\prime((\wt v^S)^{\mb{X}})\phi\psi d\xi  \\
     \geq& \frac{1}{4}\int_{\mathbb{R}}h((\wt v^S)^{\mb{X}}) (\wt v^S)^{\mb{X}}_\xi  \psi^2 d\xi+\frac{1}{4}\int_{\mathbb{R}}  l((\wt v^S)^{\mb{X}}) (\wt v^S)^{\mb{X}}_\xi \phi^2 d\xi,
     \end {aligned}
      \end{equation}
      where $l((\wt v^S)^{\mb{X}})$ is defined in \eqref{4.36.} and in the last inequality we have used the $\epsilon$-Cauchy inequality $|cd| \leq \epsilon c^2+\frac{1}{4\epsilon} d^2$:
      \begin{equation}\label{4.41.}
     \begin{aligned}
  \frac{1}{2s}&\int_{\mathbb{R}} a^{\mb{X}}_\xi  \sigma^\prime((\wt v^S)^{\mb{X}})\phi\psi d\xi  \\
     \geq&- \frac{1}{4}\bigintsss_{\mathbb{R}} \frac{a^\prime((\wt v^S)^{\mb{X}}) \sigma^\prime((\wt v^S)^{\mb{X}})[a^{\mb{X}}\sigma^\prime((\wt v^S)^{\mb{X}}) ]^\prime} {[a^\prime((\wt v^S)^{\mb{X}}) \sigma^\prime((\wt v^S)^{\mb{X}})]^2+s^2h((\wt v^S)^{\mb{X}})[a^{\mb{X}}\sigma^\prime((\wt v^S)^{\mb{X}}) ]^\prime } a^{\mb{X}}_\xi  \sigma^\prime((\wt v^S)^{\mb{X}}) \phi^2 d\xi\\[3mm]
    &-\frac{1}{4s^2} \bigintsss_{\mathbb{R}}  \frac {[a^\prime((\wt v^S)^{\mb{X}}) \sigma^\prime((\wt v^S)^{\mb{X}})]^2+s^2h((\wt v^S)^{\mb{X}})[a^{\mb{X}}\sigma^\prime((\wt v^S)^{\mb{X}}) ]^\prime } {a^\prime((\wt v^S)^{\mb{X}})  \sigma^\prime((\wt v^S)^{\mb{X}})[a^{\mb{X}}\sigma^\prime((\wt v^S)^{\mb{X}}) ]^\prime}  a^{\mb{X}}_\xi  \sigma^\prime((\wt v^S)^{\mb{X}})\psi^2 d\xi,
     \end {aligned}
      \end{equation}
      with
        \begin{equation*}
        \begin{array}{ll}
        \displaystyle \epsilon&\di :=\frac{a^{\mb{X}}_\xi  \sigma^\prime((\wt v^S)^{\mb{X}})[a^{\mb{X}}\sigma^\prime((\wt v^S)^{\mb{X}}) ]_\xi} {[a^{\mb{X}}_\xi  \sigma^\prime((\wt v^S)^{\mb{X}})]^2+s^2h((\wt v^S)^{\mb{X}})[a^{\mb{X}}\sigma^\prime((\wt v^S)^{\mb{X}}) ]_\xi (\wt v^S)^{\mb{X}}_\xi }\\[5mm]
        &\di {=\frac{a^\prime((\wt v^S)^{\mb{X}})\sigma^\prime((\wt v^S)^{\mb{X}})[a^{\mb{X}}\sigma^\prime((\wt v^S)^{\mb{X}}) ]^\prime}{[a^\prime((\wt v^S)^{\mb{X}}) \sigma^\prime((\wt v^S)^{\mb{X}})]^2+s^2h((\wt v^S)^{\mb{X}})[a^{\mb{X}}\sigma^\prime((\wt v^S)^{\mb{X}}) ]^\prime }=1+O(v_+^2)>0}, \ \  {\rm for} \ \ 0<v_+\ll 1.
        \end{array}
 \end{equation*}
By \eqref{4.22.} and \eqref{4.66.-}, one has
 $$\displaystyle \frac{\alpha \delta_S(3+2\theta_*)\theta_*v_+^2}{4s^2} \leq h((\wt v^S)^{\mb{X}}) \leq  \frac{{13}\delta_S\alpha v_+^2}{s^2},$$
 and by \eqref{w.bounds1}  and \eqref{w.deriv1} 
 $$\displaystyle \frac{\alpha\delta_S}{2s}\leq[a^{\mb{X}}\sigma^\prime((\wt v^S)^{\mb{X}}) ]^\prime\leq \frac{\alpha\delta_S}{s},\qquad \displaystyle (a^{\mb{X}})^\prime \sigma^\prime((\wt v^S)^{\mb{X}})\leq  \frac{\alpha\delta_S}{s}.$$ 
 Therefore, we have
 \begin{equation}\label{4.42.}
    \begin{aligned}
l((\wt v^S)^{\mb{X}}) =&\frac{s^2h((\wt v^S)^{\mb{X}})[(a^{\mb{X}} \sigma^\prime((\wt v^S)^{\mb{X}}))^\prime]^2}{ [(a^{\mb{X}})^\prime \sigma^\prime((\wt v^S)^{\mb{X}})]^2+s^2[\sigma^\prime((\wt v^S)^{\mb{X}}) a^{\mb{X}}]^\prime h((\wt v^S)^{\mb{X}}) }\\[3mm]
\geq& \frac{\alpha \delta_S(3+2\theta_*)\theta_*v_+^2}{16(1+{13}sv_+^2)}>0,\qquad  {\rm for}  \ \ 0<v_+\ll 1.
 \end {aligned}
\end{equation}
The proof of Lemma \ref{L4.5.} is completed.
\end{proof}

\noindent {\bf \underline{Estimation of  the $\mathbf{N}(t)$ in \eqref{4.16}}}. For any $v_m\in [-{v_+}/{2},0)$, we have the following lemma.
 \begin{lemma}\label{L4.6} For any given positive constant $\epsilon$, there exists a positive constant $\eps_1$ such that if $\sup_{0 \le t \le T} \|(\phi,\psi)(t)\|_{H^1(\mathbb{R})}\le \eps_1$, then for all $t\le T$, it holds that
\begin{equation}\label{4.52}
     \begin{aligned}
\left|\mathbf{N}(t)\right|\leq& C{{\delta_S^2}}\eps_1 \int_{\mathbb{R}} (\wt v^S)^{\mb{X}}_\xi \psi^2 d\xi+C_\epsilon\left({{\eps_1\delta_S^2+{{\delta_S^5}\deltar^2}+\eps_1^2\delta_S}}\right)\int_{\mathbb{R}}  (\wt v^S)^{\mb{X}}_\xi  \phi^2 d\xi\\
&+C\eps_1{{\delta_S^2}}\int_{\mathbb{R}} \left|\wt u^R_\xi\right|   \phi^2 d\xi+ \epsilon{{\delta_S^2}}\left| \mathbf{\dot X}(t)\right|^2.
\end{aligned}
 \end{equation}
 \end{lemma}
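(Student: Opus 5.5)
We estimate $\mathbf N(t)$ term by term from its definition \eqref{4.16}, using the smallness $\|(\phi,\psi)\|_{L^\infty}\le \sqrt2\|(\phi,\psi)\|_{H^1}\le C\eps_1$. We also use the bounds on the weight: $|a|\le C\delta_S^2$ and $|a'|\le C\delta_S$, from \eqref{w.bounds2}--\eqref{w.deriv2} or \eqref{w.bounds1}--\eqref{w.deriv1}, since $\frac{v_+-v_*}{s}\sim\delta_S$, $s\sim1$ and $v_+\sim\delta_S$. Finally, $a^{\mb X}_\xi=a'(\wt v^S)^{\mb X}_\xi$ with $a'>0$, so every term carrying $a^{\mb X}_\xi$ is automatically localized by the shock derivative $(\wt v^S)^{\mb X}_\xi$.

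\textbf{Cubic and quartic terms without the shift.} We have $|\wt v|\le C\delta_S$ (as $|v_\pm|,|v_m|\lesssim\delta_S$). This gives
\[
\Big|s\int a^{\mb X}_\xi(\wt v\phi^3+\tfrac14\phi^4)\,d\xi\Big|\le C\delta_S(\delta_S\eps_1+\eps_1^2)\int(\wt v^S)^{\mb X}_\xi\phi^2\,d\xi .
\]
The term $\int a^{\mb X}_\xi(3\wt v\phi^2+\phi^3)\psi\,d\xi$ is bounded by $C\delta_S(\delta_S+\eps_1)\eps_1\int(\wt v^S)^{\mb X}_\xi(\phi^2+\psi^2)\,d\xi$. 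The term $\int a^{\mb X}[-s(\wt v^S)^{\mb X}_\xi+\wt u^R_\xi]\phi^3\,d\xi$ is at most
\[
C\delta_S^2\eps_1\int(\wt v^S)^{\mb X}_\xi\phi^2\,d\xi+C\delta_S^2\eps_1\int|\wt u^R_\xi|\phi^2\,d\xi .
\]
All of these fit the right-hand side of \eqref{4.52}, at least once one checks that $\delta_S\eps_1^2\le C\eps_1\delta_S^2+\eps_1^2\delta_S$, which is trivially true.

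\textbf{Terms carrying $\dot{\mb X}$.} For $\mathbf Y_3$, the expansion $\int_{\wt v}^{\wt v+\phi}\sigma-\sigma(\wt v)\phi=\frac12(1-3\wt v^2)\phi^2+O(|\phi|^3)$ gives
\[
|\mathbf Y_3|\le C\delta_S\int(\wt v^S)^{\mb X}_\xi(\phi^2+\psi^2)\,d\xi .
\]
Since $\int(\wt v^S)^{\mb X}_\xi\,d\xi=\delta_S$, we may use one factor of $\|(\phi,\psi)\|_{L^\infty}$ and write $|\dot{\mb X}\mathbf Y_3|\le \epsilon\delta_S^2|\dot{\mb X}|^2+C_\epsilon\delta_S^{-2}|\mathbf Y_3|^2$. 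Then
\[
|\mathbf Y_3|^2\le C\delta_S^2\eps_1^2\Big(\int(\wt v^S)^{\mb X}_\xi(\phi^2+\psi^2)\Big)^{1/2}\cdot\delta_S^{1/2}\cdots ,
\]
and more simply $|\mathbf Y_3|^2\le C\delta_S^2\cdot\delta_S\eps_1^2\int(\wt v^S)^{\mb X}_\xi(\phi^2+\psi^2)$, using $|\mathbf Y_3|\le C\delta_S\eps_1\,(\delta_S\int(\wt v^S)^{\mb X}_\xi(\phi^2+\psi^2))^{1/2}$ by Cauchy--Schwarz. This yields the $C_\epsilon\eps_1^2\delta_S$ coefficient.

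\textbf{The term $\mathbf Y_2$ and the main obstacle.} We split $\sigma'(\wt v)-\sigma'((\wt v^S)^{\mb X})=-3(\wt v^R-v_m)(\wt v^R-v_m+2(\wt v^S)^{\mb X})$, which gives $|\cdot|\le C\delta_S|\wt v^R-v_m|$. Hence
\[
|\mathbf Y_2|\le C\delta_S^3\int(\wt v^S)^{\mb X}_\xi|\wt v^R-v_m||\phi|\,d\xi\le C\delta_S^3\Big(\int(\wt v^S)^{\mb X}_\xi|\wt v^R-v_m|^2\Big)^{1/2}\Big(\int(\wt v^S)^{\mb X}_\xi\phi^2\Big)^{1/2}.
\]
Bounding crudely $|\wt v^R-v_m|\le C\delta_R$ gives $|\mathbf Y_2|^2\le C\delta_S^6\delta_R^2\,\delta_S\int(\wt v^S)^{\mb X}_\xi\phi^2$. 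Young's inequality then gives $|\dot{\mb X}\mathbf Y_2|\le\epsilon\delta_S^2|\dot{\mb X}|^2+C_\epsilon\delta_S^5\delta_R^2\int(\wt v^S)^{\mb X}_\xi\phi^2$, which is exactly the $\delta_S^5\delta_R^2$ coefficient in \eqref{4.52}.

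The main obstacle is the bookkeeping of powers of $\delta_S$ in the $\dot{\mb X}$ terms. They must be balanced against $\epsilon\delta_S^2|\dot{\mb X}|^2$, which is smaller than the good term $\frac{\delta_S^2}{32}|\dot{\mb X}|^2$ (Case 2) or $\frac{(v_+-v_*)^2}{48}|\dot{\mb X}|^2$ (Case 1). At the same time, every factor of $a$ has to be tracked correctly as $O(\delta_S^2)$ and every factor of $a'$ as $O(\delta_S)$, uniformly in both weight constructions.
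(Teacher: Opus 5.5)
Your decomposition matches the paper's, and so do your treatments of $\dot{\mathbf X}\mathbf Y_2$ and of the cubic and quartic terms. The paper obtains the $\delta_S^5\delta_R^2$ coefficient the same way you do:
\begin{itemize}
\item the factorization $\sigma'(\widetilde v)-\sigma'((\widetilde v^S)^{\mathbf X})=-3(\widetilde v^R-v_m)(\widetilde v^R-v_m+2(\widetilde v^S)^{\mathbf X})$;
\item Young's inequality against $\epsilon\delta_S^2|\dot{\mathbf X}|^2$;
\item Cauchy--Schwarz with $\int(\widetilde v^S)^{\mathbf X}_\xi|\widetilde v^R-v_m|^2\,d\xi\le\delta_S\delta_R^2$.
\end{itemize}

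The one step where you differ is $\dot{\mathbf X}\mathbf Y_3$. The paper does not use Young's inequality there. It uses the pointwise bound $|\dot{\mathbf X}(t)|\le C\delta_S\eps_1$, read off from the shift ODE via \eqref{bound.M}: the weight is $O(\delta_S^2)$, the prefactor is $O(\delta_S^{-2})$, and $\int|(\widetilde v^S)_\xi|\,d\xi=O(\delta_S)$. This gives $|\dot{\mathbf X}\mathbf Y_3|\le C\eps_1\delta_S^2\int(\widetilde v^S)^{\mathbf X}_\xi(\phi^2+\psi^2)\,d\xi$ with a constant independent of $\epsilon$. That is exactly where the term $C\delta_S^2\eps_1\int(\widetilde v^S)^{\mathbf X}_\xi\psi^2$ in the statement comes from.

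Your version uses only the $\epsilon\delta_S^2|\dot{\mathbf X}|^2$ slot and not the structure of the shift. It is valid, but it produces $C_\epsilon\eps_1^2\delta_S\int(\widetilde v^S)^{\mathbf X}_\xi\psi^2\,d\xi$. Contrary to your closing remark in that paragraph, the coefficient $C_\epsilon\eps_1^2\delta_S$ in the statement multiplies only the $\phi^2$ integral. The same issue arises when you spread $\int a^{\mathbf X}_\xi(3\widetilde v\phi^2+\phi^3)\psi\,d\xi$ onto $\phi^2+\psi^2$, which creates $C\delta_S\eps_1^2\int(\widetilde v^S)^{\mathbf X}_\xi\psi^2$. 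The paper instead pulls out $\|\psi\|_{L^\infty}$ and keeps that term entirely on $\phi^2$.

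So as written, your bound does not literally have the stated form. The fix is short, and you have two options.
\begin{itemize}
\item Adopt the paper's two choices above.
\item Additionally impose $\eps_1\le\delta_S/C_\epsilon$, under which $C_\epsilon\delta_S\eps_1^2\le\delta_S^2\eps_1$. This is admissible because $\eps_1$ may depend on $\epsilon$ and on the shock. The smallness $\eps_1\ll\delta_S$ is needed anyway when these terms are absorbed into the $O(\delta_S^3)$ good terms to obtain Lemma~\ref{L4.7}.
\end{itemize}

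Also delete the garbled first estimate of $|\mathbf Y_3|^2$. The Cauchy--Schwarz bound $|\mathbf Y_3|\le C\delta_S\eps_1\big(\delta_S\int(\widetilde v^S)^{\mathbf X}_\xi(\phi^2+\psi^2)\,d\xi\big)^{1/2}$ is the one that works.
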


\begin{proof} 
Rewrite  $\mathbf{N}(t)$ in \eqref{4.16} as 
$$\mathbf{N}(t):=\mathbf{N_1}(t)+\mathbf{N_2}(t),$$
where
  \begin{equation*}\label{4.53}
     \begin{aligned}
\mathbf{N_1}(t):=   \mathbf{\dot X}(t)[\mathbf{Y_2}(t)+\mathbf{Y_3}(t)] ,
 \end{aligned}
   \end{equation*}
   and 
  \begin{equation*}\label{4.54}
     \begin{aligned}
\mathbf{N_2}(t):=&  \int_{\mathbb{R}} a^{\mb X}\left[-s( \widetilde{v}^S)^{\mb X}_\xi + \widetilde{u}^R_\xi\right] \, \phi^3  \, d\xi -s \int_{\mathbb{R}} a^{\mb X}_\xi  \left( \widetilde v \phi^3+\frac{\phi^4}{4} \right)  d\xi\\
&- \int_{\mathbb{R}} a^{\mb X}_\xi (3\widetilde{v}\phi^2+\phi^3) \, \psi \, d\xi,
 \end{aligned}
   \end{equation*}
%   and 
   %  \begin{equation*}\label{4.55}
     %\begin{aligned}
%\mathbf{N_3}(t):=& -3s\int_{\mathbb{R}}a^{\mb X} ( \wt v^S)^{\mb X}_\xi \left(\wt v^R-v_m \right)\phi^2 d\xi\\
%&+\int_{\mathbb{R}} a^{\mb X}_\xi\left[\sigma^\prime(\wt v)-\sigma^\prime((\wt v^S)^{\mb X})\right] \phi \psi d\xi\\
%&+\frac{s}{2}\int_{\mathbb{R}}a^{\mb X}_\xi\left[\sigma^\prime(\wt v)-\sigma^\prime( (\wt v^S)^{\mb X})\right] \phi^2 d\xi.
 %\end{aligned}
   %\end{equation*}
   
For $\mathbf{N_1}(t)$, by $\epsilon$-Cauchy inequality and H${ \ddot{\rm o}}$lder inequality, we have 
 \begin{equation*}\label{4.56}
     \begin{aligned}
\left| \mathbf{\dot X}(t)\mathbf{Y_2}(t)\right|=&\left| \mathbf{\dot X}(t)\int_{\mathbb{R}}a^{\mb X}(\wt v^S)^{\mb X}_\xi \left[\sigma^\prime(\wt v)-\sigma^\prime((\wt v^S)^{\mb X})\right] \phi  d\xi\right|\\
&\leq \epsilon{{\delta_S^2}}\left| \mathbf{\dot X}(t)\right|^2+{\frac{C_{\epsilon}}{\delta_S^2}}\left|\int_{\mathbb{R}}a^{\mb X}(\wt v^S)^{\mb X}_\xi \left[\sigma^\prime(\wt v)-\sigma^\prime((\wt v^S)^{\mb X})\right] \phi  d\xi\right|^2\\
&\leq \epsilon{{\delta_S^2}}\left| \mathbf{\dot X}(t)\right|^2+{C_{\epsilon}{{{\delta_S^4}}}}\int_{\mathbb{R}}(\wt v^S)^{\mb X}_\xi  \phi^2d\xi\int_{\mathbb{R}}\left|\wt v^R-v_m\right|^2(\wt v^S)^{\mb X}_\xi d\xi\\
&\leq \epsilon{{\delta_S^2}}\left| \mathbf{\dot X}(t)\right|^2+{C_{\epsilon}{{{\delta_S^5}}}}\delta_R^2\int_{\mathbb{R}}(\wt v^S)^{\mb X}_\xi  \phi^2d\xi,
 \end{aligned}
   \end{equation*}
where $\epsilon>0$ is a small positive constant and $C_\epsilon$ depends on $\epsilon$, and we have used the facts that $a^{\mb X}\sim (\delta^S)^2$ and 
%\textcolor{blue}{$\wt v \leq C\delta_S$ (since $-\frac{v_+}{2} \leq v_m < v_- < 0 < v_+$)}, and 
$$ \left|\sigma^\prime(\wt v)-\sigma^\prime((\wt v^S)^{\mb X})\right|=3|\wt v^2-(\wt v^S)^2|=3|\wt v^R-v_m||\wt v+\wt v^S|\leq C {\delta_S}|\wt v^R-v_m|\leq C{ \delta_S}\delta_R. $$
Here, the inequality $\wt v \leq C\delta_S$ follows from $\delta_R < \delta_S$ and $\wt v^S < \delta_S$ by \eqref{s}.
By \eqref{bound.M} and \eqref{bound.dM}, we know that $ \left|\mathbf{\dot X}(t)\right|\leq C\eps_1\delta_S$. From \eqref{w.deriv2}, \eqref{w.deriv1} and \eqref{eq:Y_decomposition}, we have
 \begin{equation*}\label{4.57}
|\mathbf{Y_3}(t)|\leq C{{\delta_S}} \int_{\mathbb{R}} (\wt v^S)^{\mb{X}}_\xi \psi^2 d\xi+C{{\delta_S}} \int_{\mathbb{R}}  (\wt v^S)^{\mb{X}}_\xi  \phi^2 d\xi.
   \end{equation*}
Hence, we have 
 \begin{equation*}\label{4.58}
      \begin{aligned}
\left|\mathbf{N_1}(t)\right|\leq&  \epsilon{{\delta_S^2}}\left| \mathbf{\dot X}(t)\right|^2+C_{\epsilon}{{{\delta_S^5}}}\delta_R^2\int_{\mathbb{R}}(\wt v^S)^{\mb X}_\xi  \phi^2d\xi+C\eps_1 {{\delta_S^2}}\int_{\mathbb{R}} (\wt v^S)^{\mb{X}}_\xi \left(\psi^2+\phi^2\right) d\xi.
\end{aligned}
   \end{equation*}
For $\mathbf{N_2}(t)$, Sobolev  inequality  yields
 \begin{equation*}\label{4.59}
|\mathbf{N_2}(t)| \leq  C\eps_1{{\delta_S}}\left( {\delta_S}+ \eps_1     \right)\int_{\mathbb{R}} (\wt v^S)^{\mb{X}}_\xi \phi^2 d\xi+ C\eps_1{{\delta_S^2}}\int_{\mathbb{R}} \left|\wt u^R_\xi\right|   \phi^2 d\xi,.
   \end{equation*}
   
%   For $\mathbf{N_3}(t)$, by Talor expansion yields
 %\begin{equation*}\label{4.60}
%|\mathbf{N_3}(t)| \leq  C\textcolor{red}{{\delta_S^2}}\delta_R \int_{\mathbb{R}} (\wt v^S)^{\mb{X}}_\xi \psi^2 d\xi+C\textcolor{red}{{\delta_S^2}}\delta_R \int_{\mathbb{R}}  (\wt v^S)^{\mb{X}}_\xi  \phi^2 d\xi.
  % \end{equation*}
Hence, we complete the proof of Lemma \ref{L4.6}.
\end{proof}

From \eqref{s}, we have \(v_+ \sim \delta_S\). Moreover, we assume $\delta_R$ and $ \di \sup_{t\in[0,T]} \|(\phi,\psi)(t)\|_{H^1(\mathbb{R})}$ sufficiently small.
Observing that
$$
\mathbf{G^R}(t)=3 \int_{\mathbb{R}} a^{\mb X} \wt u^R_\xi \wt v^R  \phi^2 d\xi=3 \int_{\mathbb{R}} a^{\mb X} |\wt u^R_\xi| |\wt v^R|  \phi^2 d\xi\geq C|v_-|\delta_S^2 \int_{\mathbb{R}}\left| \wt u^R_\xi\right|   \phi^2 d\xi,
$$
and
\[
\int_{\widetilde{v}}^{\widetilde{v} + \phi} \sigma(s) \, ds - \sigma(\widetilde{v}) \phi = \frac{\sigma'(v^*)}{2} \phi^2, \qquad{v^* \in [\widetilde{v}, \widetilde{v} + \phi]},
\]
then integrating \eqref{lem:energy_identity} with respect to \(t\), by applying Lemmas \ref{L4.2}-\ref{L4.6}, we obtain the following result for any \(v_m \in [-v_+/2, 0)\).
\begin{lemma}\label{L4.7} For a given state $(v_+, u_+) \in \mathbb{R}_+ \times \mathbb{R}$, there exists a constant $\delta_1 > 0$ such that if $|v_+| \leq \delta_1$, then the following holds true. For any $(v_m, u_m) \in S_2(v_+, u_+)$ and $(v_-, u_-) \in R_1(v_m, u_m)$ satisfying \eqref{s}, there exist positive constants $\delta_2$ and $\varepsilon_2$ such that if $\delta_R \leq \delta_2$ and $\di \sup_{t\in[0,T]} \|(\phi,\psi)(t)\|_{H^1(\mathbb{R})} \leq\varepsilon_2$, then for all $t \in [0, T]$, it holds that
%For a given state $(v_+, u_+) \in \mathbb{R}_+ \times \mathbb{R}$,  there exists a constant $\delta_1 > 0$ such that if $|v_+| < \delta_1$, the the following holds true. For any $(v_m, u_m) \in S_2(v_+, u_+)$ and $(v_-, u_-) \in R_1(v_m, u_m)$ satisfying $-\frac{v_+}{2} \leq v_m < v_- < 0 < v_+$, there exist  positive constants $\delta_2$  and $\eps_2$ such that if $\delta_R<\delta_2$ and $ \mathcal{E}(T):= \displaystyle \sup_{0\leq t \leq T} \|(\phi,\psi)(t)\|_{H^1(\bbr)}<\eps_2$. Then,  $\forall t\in[0,T]$, it holds that
\begin{equation}\label{4.61}
     \begin{aligned}
\|(\phi,\psi)(t)\|^2&+ \int_0^t\left|\dot{\mb{X}}(\tau)\right|^2 d\tau+  {{\delta_S}}\int_0^t\int_\bbr\left|(\wt v^S)^{\mb X}_\xi\right|  (\phi^2+\psi^2)d\xi d\tau\\
&+{{|v_-|}}\int_0^t \int_\bbr \left|\wt u^R_\xi\right|  \phi^2d\xi d\tau+\int_0^t \|\psi_\xi(\tau)\|^2d\tau \\
\leq& \bar C\left[ \|\phi_0,\psi_0)\|^2+ \frac{1}{{\delta_S^2}} \left(\int_0^t \mathbf{F}(\tau)d \tau- \int_0^t \mathbf{J}(\tau)d \tau\right)\right].
\end{aligned}
 \end{equation}
 \end{lemma}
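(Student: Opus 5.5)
The plan is to integrate the weighted energy identity of Lemma \ref{lem:energy_identity} over $[0,t]$, substitute the decomposition of Lemma \ref{L4.2}, and then bound each piece with the lemmas already proved. The weighted energy is
\[
\mathcal E(t):=\int_{\mathbb R}a^{\mb X}\Big(\int_{\wt v}^{\wt v+\phi}\sigma(z)\,dz-\sigma(\wt v)\phi\Big)+\tfrac12 a^{\mb X}\psi^2\,d\xi .
\]
Integration then gives
\[
\mathcal E(t)+\int_0^t\big(\mathbf G^{S,X}+\mathbf G^R\big)\,d\tau=\mathcal E(0)+\int_0^t(\mathbf F-\mathbf J)\,d\tau-\int_0^t\mathbf N\,d\tau .
\]

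First, I show that $\mathcal E$ is equivalent to $\delta_S^2\|(\phi,\psi)\|^2$. The Taylor form stated just before the lemma gives $\frac{\sigma'(v^*)}{2}\phi^2$ with $\sigma'(v^*)=1-3(v^*)^2\ge\frac12$, since $|\wt v|\le C\delta_S$ and $\|\phi\|_{L^\infty}\le C\varepsilon_2$. By \eqref{w.bounds2} and \eqref{w.bounds1}, $a\sim\delta_S^2$ in both cases. Note that $v_+-v_*\sim\delta_S$ in Case 1, and $v_++2v_m\ge 2\theta_*v_+$ in Case 2; the latter gives only $a\gtrsim\theta_*\delta_S^2$, which is acceptable because $\theta_*$ is fixed. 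Hence $c\,\delta_S^2\|(\phi,\psi)\|^2\le\mathcal E\le C\delta_S^2\|(\phi,\psi)\|^2$.

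Next come the lower bounds. In Case 1, Lemma \ref{L4.6} (the $\mathbf G^{S,X}$ version) gives
\[
\mathbf G^{S,X}\ge c\,\delta_S^2\|\psi_\xi\|^2+c\,\delta_S^2|\dot{\mb X}|^2+c\,\delta_S^3\!\int(\wt v^S)^{\mb X}_\xi(\phi^2+\psi^2).
\]
Here I use $\mu a\gtrsim\delta_S^2$ and $(v_+-v_*)^2\sim\delta_S^2$. In Case 2, Lemma \ref{L4.5.} gives the same structure. There $1-\alpha>0$ because $\alpha<1$ for $\theta_*>0$; in addition $|h|\gtrsim\delta_S^3$ by \eqref{4.22.} and $|l|\gtrsim\delta_S^3$ by \eqref{4.36.}, with constants depending on $\theta_*$. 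For the rarefaction part, $\mathbf G^R\ge C|v_-|\delta_S^2\int|\wt u^R_\xi|\phi^2$, as observed above.

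Then I absorb $\mathbf N$ using Lemma \ref{L4.6} (the estimate \eqref{4.52}). I choose $\epsilon$ small relative to the $|\dot{\mb X}|^2$ coefficient. The coefficients of $\int(\wt v^S)^{\mb X}_\xi\phi^2$ and $\int(\wt v^S)^{\mb X}_\xi\psi^2$, namely $C\varepsilon_1\delta_S^2+C\delta_S^5\delta_R^2+C\varepsilon_1^2\delta_S$, must be small compared with $\delta_S^3$. This holds once $\varepsilon_1\ll\delta_S$ and $\delta_R$ is small, which is permissible because $\varepsilon_2$ and $\delta_2$ may depend on $v_+$, as the lemma's quantifier order allows.

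The step I expect to be the main obstacle is the rarefaction term: the $\mathbf N$ bound contains $C\varepsilon_1\delta_S^2\int|\wt u^R_\xi|\phi^2$, but the good term only provides $|v_-|\delta_S^2\int|\wt u^R_\xi|\phi^2$. I therefore need $\varepsilon_1\ll|v_-|$. This is fine since $|v_-|>0$ is fixed once the states are given, and $|v_-|\ge|v_m|-\delta_R\gtrsim\delta_S$; I would state explicitly that $\varepsilon_2$ depends on $v_-$.

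After the absorption, I divide the whole inequality by $\delta_S^2$. The factor $\frac1{\delta_S^2}$ multiplies $\int(\mathbf F-\mathbf J)$, and the remaining dissipation terms become $|\dot{\mb X}|^2$, $\delta_S\int(\wt v^S)^{\mb X}_\xi(\phi^2+\psi^2)$, $|v_-|\int|\wt u^R_\xi|\phi^2$ and $\|\psi_\xi\|^2$, with a constant $\bar C$. This yields \eqref{4.61}.
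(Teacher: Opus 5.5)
Your proposal is correct and follows the paper's route. You integrate the identity of Lemma \ref{lem:energy_identity} in time and regroup via Lemma \ref{L4.2}, use the Case~1/Case~2 lower bounds for $\mathbf G^{S,X}$ (the Case~2 one being Lemma \ref{L4.5.}) together with the bound on $\mathbf G^R$, absorb $\mathbf N$ using its estimate with $\varepsilon_2\ll\delta_S$, and divide by $\delta_S^2$.

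One small slip: in Case~2, $v_m$ may be arbitrarily close to $0$, so the claim $|v_-|\gtrsim\delta_S$ fails there. However, your other justification (letting $\varepsilon_2$ depend on $v_-$, which the lemma's quantifiers permit) is enough to absorb $C\varepsilon_1\delta_S^2\int|\wt u^R_\xi|\phi^2$ into $\mathbf G^R$.
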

 It remains to estimate the wave interaction terms and error terms due to inviscid rarefaction wave profile, i.e., $\displaystyle \int_0^t  \mathbf{F}(\tau)  d\tau$ defined in \eqref{eq:def_F} and $\displaystyle \int_0^t  \mathbf{J}(\tau) \, d\tau$ defined in \eqref{4.17}.  For the wave interaction estimates, since Liu shock has polynomial decay in the degenerate case $v_m=-v_+/2$ and exponential decay in the non-degenerate case $v_m\in(-{v_+}/{2},0)$, we will estimate them separately.
% we are considering composite waves consisting of a 1-rarefaction and a 2-Liu shock, the viscous shock and the rarefaction wave are strictly separated. The required wave interaction estimates were inspired by the work of Kang, Vasseur, and Wang in \cite{M-V-W}. For a {2-non-degenerate Liu shock} (i.e., $v_m\in(-{v_+}/{2},0)$), these estimates can be directly obtained from \cite{M-V-W}. However, for the degenerate case $v_m=-{v_+}/{2}$, more detailed information is necessary. Therefore, we  estimate separately for $\displaystyle \int_0^t  \mathbf{F}(\tau) \, d\tau$ and $\displaystyle \int_0^t  \mathbf{J}(\tau) \, d\tau$ for degenerate case(i.e., $v_m=-v_+/2$) and non-degenerate case(i.e., $v_m \in (v_+/2, 0)$).
\vskip0.3cm
\textbf{\underline{Degenerate case: $v_m=-v_+/2$}}.% For $v_m=-v_+/2$, we derive the following estimates for $\displaystyle \int_0^t  \mathbf{F}(\tau) \, d\tau$ and $\displaystyle \int_0^t  \mathbf{J}(\tau) \, d\tau$.
\begin{lemma}\label{L4.8}\noindent {\bf(Wave interaction estimates)}
Let $\mb X(t)$ be the shift defined by \eqref{ode.shift2}. Under the same hypotheses as in Proposition \ref{prop:apriori}, the following holds: $\forall t\le T$,
 \begin{align*}
& \left\| \left(\wt v^R-v_m\right)(\wt v^S)^{\mb{X}}_\xi\right\|_{L^1(\mathbb{R})}\leq C\delta_R{{\delta_S}}\left[ \left(1+C{{\delta_S^2}}t\right)^{-1}+e^{-C t}\right],\\
& \left\| \left((\wt v^S)^{\mb{X}}-v_m\right)\wt v^R_\xi\right\|_{L^1(\mathbb{R})}\leq C\delta_R{{\delta_S}}\left[ \left(1+C{{\delta_S^2}}t\right)^{-1}+e^{-C t}\right],\\
& \left\| \left((\wt v^S)^{\mb{X}}-v_m\right)^2\wt v^R_\xi\right\|_{L^1(\mathbb{R})}\leq C\delta_R{\delta_S^2}\left[ \left(1+C{{\delta_S^2}}t\right)^{-2}+e^{-C t}\right],\\
& \left\| \left((\wt v^S)^{\mb{X}}-v_m\right)\wt v^R_\xi\right\|^2_{L^2(\mathbb{R})}\leq  C\delta_R^2{\delta_S^2}\left[ \left(1+C{{\delta_S^2}}t\right)^{-2}+e^{-C t}\right],\\
& \left\| \left(\wt v^R-v_m\right)(\wt v^S)^{\mb{X}}_\xi\right\|^2_{L^2(\mathbb{R})}\leq  C\delta_R^2\delta_S^3\left[(1+C\delta_S^2t)^{-3}+ e^{-C t}\right].
  \end{align*}
\end{lemma}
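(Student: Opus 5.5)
The plan is the standard separation argument of Kang--Vasseur--Wang, adapted to the algebraic decay of the degenerate profile. In the original frame the 1-rarefaction is centered near $x=\lambda_-(v_m)(1+t)\approx -t$ and moves left; the 2-shock sits at $x=st-\mb X(t)$ with $s\approx 1$. By \eqref{growth.shift}, and by the smallness $|\dot{\mb X}|\le C\eps_1\delta_S$ coming from \eqref{bound.M}, the shock stays near $x\approx st$. In the moving variable $\xi$ the rarefaction is centered at $\xi\approx(\lambda_-(v_m)-s)t\approx -2t$ and the shock at $\xi\approx -\mb X(t)$, which is $o(t)$ in the a priori regime. The two waves are therefore separated by a distance $\ge c t$ with $c$ uniform, say $c=1/2$, once $\delta_S$ and $\eps_1$ are small. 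Fix the midpoint $\xi_m(t):=\frac12\big[(\lambda_-(v_m)-s)t-\mb X(t)\big]$ and split $\mathbb R=(-\infty,\xi_m]\cup(\xi_m,\infty)$.

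For the first bound, on $(\xi_m,\infty)$ we are to the right of the rarefaction. Lemma \ref{lemma1.2}(3) gives $|\wt v^R-v_m|\le C\delta_R e^{-2|x-\lambda_-(v_m)(1+t)|}\le C\delta_R e^{-Ct}$ there, while $\|(\wt v^S)^{\mb X}_\xi\|_{L^1}=\delta_S$; together these give $C\delta_R\delta_S e^{-Ct}$. On $(-\infty,\xi_m]$ we use $|\wt v^R-v_m|\le C\delta_R$. Since $\xi+\mb X\le -ct$ there, we have $\int_{-\infty}^{\xi_m}(\wt v^S)^{\mb X}_\xi\,d\xi=\wt v^S(\xi_m+\mb X)-v_m$, and the degenerate decay in Lemma \ref{lemma:shock_properties} bounds this by $C\delta_S(1+C\delta_S^2 t)^{-1}$. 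The second and third bounds follow by the mirror argument with the roles reversed. On $(-\infty,\xi_m]$ we use $|(\wt v^S)^{\mb X}-v_m|\le C\delta_S(1+C\delta_S^2t)^{-1}$ (squared for the third bound) together with $\|\wt v^R_\xi\|_{L^1}\le C\delta_R$. On $(\xi_m,\infty)$ we use $|(\wt v^S)^{\mb X}-v_m|\le C\delta_S$ and $\int_{\xi_m}^\infty|\wt v^R_\xi|\,d\xi\le C\delta_R e^{-Ct}$; the latter is obtained from Lemma \ref{lemma1.2}(3) by integrating $|\wt v^R_\xi|$ past the rarefaction's right edge, i.e.\ writing it as $|\wt v^R(\xi_m)-v_m|$ using monotonicity of $\wt v^R$.

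For the two $L^2$ bounds the same split works. In the fourth bound, on the left piece we use $|(\wt v^S)^{\mb X}-v_m|^2\le C\delta_S^2(1+C\delta_S^2t)^{-2}$ together with $\|\wt v^R_\xi\|_{L^2}^2\le\|\wt v^R_\xi\|_{L^\infty}\|\wt v^R_\xi\|_{L^1}\le C\delta_R^2$ from Lemma \ref{lemma1.2}(2). On the right piece, $|\wt v^R_\xi|$ is exponentially small by the pointwise version of (3), which follows from the Burgers characteristics or from $|w_x|\le C\bar w e^{-2|x-w_mt|}$. In the fifth bound, on the left piece we use $|\wt v^R-v_m|^2\le C\delta_R^2$ and
\[
\int_{-\infty}^{-ct}|\wt v^S_\xi|^2\,d\xi\le \int_{-\infty}^{-ct}\frac{C\delta_S^6}{(1+C\delta_S^2|\xi|)^4}\,d\xi\le C\delta_S^4(1+C\delta_S^2t)^{-3}\le C\delta_S^3(1+C\delta_S^2t)^{-3};
\]
on the right piece we use the exponential smallness of $\wt v^R-v_m$ and $\|\wt v^S_\xi\|_{L^2}^2\le C\delta_S^3$.

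The main obstacle is justifying the uniform separation $\xi_m+\mb X(t)\le -ct$ without yet having the a priori estimate on $\mb X$. I would first try the bound $|\dot{\mb X}|\le C\delta_S\|(\phi,\psi)\|_{L^\infty}\le C\delta_S\eps_1$ from \eqref{bound.M} under the hypothesis of Proposition \ref{prop:apriori}, which gives $|\mb X(t)-x_0|\le t/10$. The fixed initial offset $x_0$ is absorbed by adjusting the constant $C$; for small $t$, the bounded factors $\min\{1,\cdot\}$ cover the non-separated regime, so the sum of the two decay terms is $\ge c>0$ there. A secondary check is that the degenerate algebraic tail only enters on the side facing the rarefaction, which is exactly the $\xi<0$ side in Lemma \ref{lemma:shock_properties}.
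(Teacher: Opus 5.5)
Your proposal is correct and follows essentially the paper's proof. Both arguments use:
\begin{enumerate}
\item a spatial cut separating the two waves, valid once $|\mb X(t)-x_0|$ is controlled through $|\dot{\mb X}|\le C\eps_0$;
\item monotonicity together with the algebraic left tail of the degenerate profile on the shock side;
\item exponential decay of $\wt v^R-v_m$ beyond the rarefaction's right edge on the other side;
\item a small-time case split that absorbs the offset $x_0$ into the constant.
\end{enumerate}

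The differences are cosmetic. The paper cuts at the fixed point $\xi=-st/2$, so the rarefaction side needs no information on $\mb X$. It also gets the $L^2$ bounds by $L^\infty\times L^1$ interpolation of the $L^1$ pieces. That route spares you the pointwise decay of $\wt v^R_\xi$ you invoke for the fourth bound, which Lemma~\ref{lemma1.2} does not state.
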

\begin{proof}
First, by \eqref{bound.M}, it holds that
$$ |\mathbf{\dot X}(t)|\leq C\eps_0$$
which together with $\mb X(0)=x_0$ yields that
$$
|\mb{X}(t)-x_0|\leq C\eps_0 t,\qquad 0\le t\le T.
$$
Take $\eps_0$ suitably small such that the above bound is less than $ \frac{s t}{4}$, i.e.,
\[
C\eps_0 t < \frac{s t}{4}.
\]
Then, by Lemma \ref{lemma:shock_properties} and Lemma \ref{lemma1.2}, we have 
   \begin{align*}
\displaystyle \int_{-\infty}^{-\frac{st}{2}}&\left|\wt v^R-v_m\right|\left|(\wt v^S)^{\mb{X}}_\xi\right|d\xi\leq \delta_R\left|\wt v^S\left(\mb{X}(t)-\frac{st}{2}\right)-v_m   \right|\\
&\leq\delta_R\left|\wt v^S\left(-\frac{st}{4}+x_0\right)-v_m   \right|\leq \begin{cases}
       C\delta_R{{\delta_S}}  \left(1+C{{\delta_S^2}}t\right)^{-1},  \quad  t\geq\frac {8x_0}{s}, \\[4mm]
        \delta_R{{\delta_S}} , \quad  t<\frac {8x_0}{s}, 
         \end{cases}
    \end{align*}
              \begin{align*}
&\displaystyle \int^{+\infty}_{-\frac{st}{2}}\left|\wt v^R-v_m\right|\left|(\wt v^S)^{\mb{X}}_\xi\right|d\xi\leq C\delta_S\delta_Re^{-2\left|\frac{st}{2}-\lambda_-(v_m)t\right|}\leq  C\delta_R{{\delta_S}}e^{-C t},
%&\displaystyle \int_{-\infty}^{-\frac{st}{2}}\left|\wt v^R-v_m\right|^2\left|(\wt v^S)^{\mb{X}}_\xi\right|d\xi\leq \delta_R^2\left|\wt v^S\left(\mb{X}(t)-\frac{st}{2}\right)-v_m   \right|\leq C\delta_R^2{{\delta_S}} \left(1+C{{\delta_S^2}}t\right)^{-1},\\
%&\displaystyle \int^{+\infty}_{-\frac{st}{2}}\left|\wt v^R-v_m\right|^2\left|(\wt v^S)^{\mb{X}}_\xi\right|d\xi\leq C\delta_S\delta_R^2e^{-2\left|\frac{st}{2}-\lambda_-(v_m)t\right|}\leq  C\delta_R^2{{\delta_S}}e^{-C t},\\
  \end{align*}
              \begin{align*}
 \displaystyle \int_{-\infty}^{-\frac{st}{2}}\left|(\wt v^S)^{\mb{X}}-v_m\right|\left|\wt v^R_\xi\right|d\xi &\leq\delta_R\left|\wt v^S\left(\mb{X}(t)-\frac{st}{2}\right)-v_m   \right|\\
 & \leq \begin{cases}
       C\delta_R{{\delta_S}}  \left(1+C{{\delta_S^2}}t\right)^{-1}, \quad t\geq\frac {8x_0}{s}, \\[4mm]
        \delta_R{{\delta_S}} , \quad t<\frac {8x_0}{s}, 
         \end{cases}
         \end{align*}
              \begin{align*}
  & \displaystyle \int^{+\infty}_{-\frac{st}{2}}\left|(\wt v^S)^{\mb{X}}-v_m\right|\left|\wt v^R_\xi\right|d\xi\leq \delta_S\left|v_m-\wt v^R\left(1+t,\frac{st}{2}\right) \right|   \leq  C\delta_R{{\delta_S}}e^{-C t},
  \end{align*}
              \begin{align*}
\displaystyle \int_{-\infty}^{-\frac{st}{2}}\left|(\wt v^S)^{\mb{X}}-v_m\right|^2\left|\wt v^R_\xi\right|d\xi&\leq \delta_R\left|\wt v^S\left(\mb{X}(t)-\frac{st}{2}\right)-v_m   \right|^2\\
&\leq 
 \begin{cases}
 C\delta_R{{\delta_S^2}} \left(1+C{{\delta_S^2}}t\right)^{-2}, \quad t\geq\frac {8x_0}{s} \\[4mm]
        \delta_R{{\delta_S^2}} , \quad t<\frac {8x_0}{s}, 
 \end{cases}
   \end{align*}
              \begin{align*}
& \displaystyle \int^{+\infty}_{-\frac{st}{2}}\left|(\wt v^S)^{\mb{X}}-v_m\right|^2\left|\wt v^R_\xi\right|d\xi \leq  \delta_S^2 \left|v_m-\wt v^R\left(1+t,\frac{st}{2}\right) \right|    \leq  C\delta_R{{\delta_S^2}}e^{-C t}.
  \end{align*}
  From above inequalities, we have
   \begin{align*}
& \left\| \left(\wt v^R-v_m\right)(\wt v^S)^{\mb{X}}_\xi\right\|_{L^1(\mathbb{R})}\leq C\delta_R{{\delta_S}}\left[ \left(1+C{{\delta_S^2}}t\right)^{-1}+e^{-C t}\right],\\
& \left\| \left((\wt v^S)^{\mb{X}}-v_m\right)\wt v^R_\xi\right\|_{L^1(\mathbb{R})}\leq C\delta_R{{\delta_S}}\left[ \left(1+C{{\delta_S^2}}t\right)^{-1}+e^{-C t}\right],\\
& \left\| \left((\wt v^S)^{\mb{X}}-v_m\right)^2\wt v^R_\xi\right\|_{L^1(\mathbb{R})}\leq C\delta_R{\delta_S^2}\left[ \left(1+C{{\delta_S^2}}t\right)^{-2}+e^{-C t}\right].\\
  \end{align*}
Furthermore, it holds that
 \begin{equation*}
  \begin{aligned}
&\int_{\mathbb{R}}\left| ((\wt v^S)^{\mb{X}}-v_m) (\wt v^R)_\xi\right|^2d\xi\leq C\|(\wt v^R)_\xi\|_{L^\infty(\mathbb R)}\int_{\mathbb{R}}\left|( (\wt v^S)^{\mb{X}}-v_m)^2 \wt v^R_\xi\right|d\xi\\
&\leq C\delta_R^2{\delta_S^2}\left[ \left(1+C{{\delta_S^2}}t\right)^{-2}+e^{-C t}\right].
\end{aligned}
 \end{equation*}
  We have
  \begin{equation*}
  \begin{aligned}
  \int^{+\infty}_{-\frac{st}{2}} \left|(\wt v^R-v_m)(\wt v^S)^{\mb{X}}_{\xi} \right|^2d\xi&\leq  \delta_R\left\|(\wt v^S)^{\mb{X}}_{\xi}\right\|_{L^\infty(\mathbb R)} \int^{+\infty}_{-\frac{st}{2}}\left|\wt v^R-v_m\right|\left|(\wt v^S)^{\mb{X}}_\xi\right|d\xi\\&\leq C\left\|(\wt v^S)^{\mb{X}}_{\xi}\right\|_{L^\infty(\mathbb R)}\delta_S\delta_R^2e^{-C t}\leq C\delta_S^3\delta_R^2e^{-C t},
    \end{aligned}
    \end{equation*}
 and
 \begin{equation*}
  \begin{aligned}
  \int_{-\infty}^{-\frac{st}{2}} \left|(\wt v^R-v_m)(\wt v^S)^{\mb{X}}_{\xi} \right|^2d\xi&\leq  \delta_R\left\|(\wt v^S)^{\mb{X}}_{\xi}\right\|_{L^\infty(-\infty,-\frac{st}{2})} \int_{-\infty}^{-\frac{st}{2}}\left|\wt v^R-v_m\right|\left|(\wt v^S)^{\mb{X}}_\xi\right|d\xi\\&\leq  \begin{cases}
   C\delta_S^4\delta_R^2(1+C\delta_S^2t)^{-3}, \quad t\geq\frac {8x_0}{s}, \\[4mm]
   C \delta_S^3\delta_R^2, \quad t<\frac {8x_0}{s}, 
   \end{cases}
    \end{aligned}
    \end{equation*}  
where we have used the fact that
 \begin{equation*}
  \begin{aligned}
\left\|(\wt v^S)^{\mb{X}}_{\xi}\right\|_{L^\infty(-\infty,-\frac{st}{2})}&\leq \left\| \wt v^S_{\xi}\right\|_{L^\infty(-\infty,-\frac{st}{4}+x_0)}\\ &\leq \begin{cases}
   C\delta_S^3(1+C\delta_S^2t)^{-2}, \quad t\geq\frac {8x_0}{s}, \\[4mm]
    C\delta_S^2, \quad t<\frac {8x_0}{s}.
   \end{cases}
       \end{aligned}
    \end{equation*}  
Therefore, it holds that 
  \begin{equation*}
  \begin{aligned}
  \int_{\mathbb {R}} \left|(\wt v^R-v_m)(\wt v^S)^{\mb{X}}_{\xi} \right|^2d\xi&\leq C\delta_S^3\delta_R^2\left[(1+C\delta_S^2t)^{-3}+ e^{-C t}\right].
    \end{aligned}
    \end{equation*}
 %  Hence, we have 
   %\begin{equation*}
  %\begin{aligned}
  %\int_{-\infty}^{-\frac{st}{2}} \left|(\wt v^R-v_m)(\wt v^S)^{\mb{X}}_{\xi} \right|^2d\xi\leq C \delta_S^3\delta_R^2 (1+C\delta_S^2t)^{-3}.
    %\end{aligned}
    %\end{equation*} 

\end{proof}
%\noindent {\bf \underline{Estimation of  the $\displaystyle\int_0^t  \mathbf{J}(\tau)d\tau$.}} 
 \begin{lemma}\label{L4.9}There exists a positive constant $\eps_3$ such that if $\di \sup_{t\in[0,T]} \|(\phi,\psi)(t)\|_{H^1(\mathbb{R})}\le \eps_3$, then for all $t\le T$, it holds that
\begin{equation*}\label{4.47}
     \begin{aligned}
\left|\int_0^t  \mathbf{J}(\tau)d\tau\right|\leq C \deltar{{\delta_S^2}}+C\eps_3^2\delta_S^2\int_0^t\int_{\mathbb{R}}\left| \wt u^R_\xi\right|\phi^2 d\xi d\tau+C\delta_S^2 \delta_R\int_{\mathbb{R}}( \wt v^S)^{\mb X}_\xi (\phi^2+\psi^2) d\xi.
\end{aligned}
 \end{equation*}
 \end{lemma}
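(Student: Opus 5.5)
Split $\mathbf{J}(t)$ from \eqref{4.17} into its four integrals $\mathbf{J}_1,\dots,\mathbf{J}_4$ and bound each by the wave interaction estimates of Lemma \ref{L4.8}. Two facts are used throughout: the weight satisfies $a^{\mb X}\le C\delta_S^2$ and $|a^{\mb X}_\xi|\le C\delta_S(\wt v^S)^{\mb X}_\xi$ by \eqref{w.bounds2}--\eqref{w.deriv1}, and
\[
|\sigma'(\wt v)-\sigma'((\wt v^S)^{\mb X})|=3|\wt v^R-v_m|\,|\wt v+(\wt v^S)^{\mb X}|\le C\delta_S|\wt v^R-v_m|.
\]
The time integrability comes from the factors $(1+C\delta_S^2t)^{-k}$ with $k\ge 2$ (after using the $L^\infty$ bound), together with $e^{-Ct}$. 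Each of these has time integral $O(\delta_S^{-2})$ or $O(1)$, and the prefactors in $\mathbf{J}$ compensate for this.

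\textbf{Term $\mathbf{J}_1=3\int a^{\mb X}\wt u^R_\xi((\wt v^S)^{\mb X}-v_m)\phi^2$.} Split at $\xi=-st/2$. On the left, apply Cauchy--Schwarz in the form $|\wt u^R_\xi|\,|(\wt v^S)^{\mb X}-v_m|\phi^2\le \eta|\wt u^R_\xi|\phi^2+C_\eta|\wt u^R_\xi|\,((\wt v^S)^{\mb X}-v_m)^2\phi^2$. The first piece is absorbed into $C\eps_3^2\delta_S^2\int|\wt u^R_\xi|\phi^2$; alternatively, bound $\phi^2\le\|\phi\|_{L^\infty}^2\le C\eps_3^2$ and use the third estimate of Lemma \ref{L4.8}. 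The second piece is bounded by $C\delta_S^2\eps_3^2\delta_R\delta_S^2[(1+C\delta_S^2t)^{-2}+e^{-Ct}]$, whose time integral is $O(\delta_R\delta_S^2)$. The simplest route is $\phi^2\le C\eps_3^2$ together with the second estimate of Lemma \ref{L4.8}. However, $(1+\delta_S^2t)^{-1}$ is not time integrable, so I would instead write $|(\wt v^S)^{\mb X}-v_m|\phi^2\le |\phi|\cdot|(\wt v^S)^{\mb X}-v_m|\,|\phi|$ and use Young's inequality:
\[
a^{\mb X}|\wt u^R_\xi|\,|(\wt v^S)^{\mb X}-v_m|\phi^2\le C\delta_S^2\eps_3^2|\wt u^R_\xi|\phi^2+C\delta_S^2|\wt u^R_\xi|((\wt v^S)^{\mb X}-v_m)^2\phi^2.
\]
Bounding the last $\phi^2$ by $\eps_3^2$ and applying the third estimate of Lemma \ref{L4.8}, the time integral is $\le C\delta_S^2\delta_R\delta_S^2\cdot\delta_S^{-2}=C\delta_R\delta_S^2$.

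\textbf{Term $\mathbf{J}_2=-3s\int a^{\mb X}(\wt v^S)^{\mb X}_\xi(\wt v^R-v_m)\phi^2$.} Use $|\wt v^R-v_m|\le\delta_R$. This gives $|\mathbf{J}_2|\le C\delta_S^2\delta_R\int(\wt v^S)^{\mb X}_\xi\phi^2$, which is the last term of the claimed bound (integrated in time; the statement is read with the $\xi$-integral also integrated over $[0,t]$).

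\textbf{Terms $\mathbf{J}_3$, $\mathbf{J}_4$.} Their integrands are $a^{\mb X}_\xi[\sigma'(\wt v)-\sigma'((\wt v^S)^{\mb X})](\phi\psi,\phi^2)$, which are bounded by $C\delta_S^2(\wt v^S)^{\mb X}_\xi|\wt v^R-v_m|(\phi^2+\psi^2)\le C\delta_S^2\delta_R(\wt v^S)^{\mb X}_\xi(\phi^2+\psi^2)$. These again fall into the last term.

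The main obstacle is $\mathbf{J}_1$ in the degenerate case: the shock tail decays only algebraically, so a naive $L^1$ bound gives a nonintegrable $(1+\delta_S^2t)^{-1}$. The Young splitting above, which routes through the squared interaction estimate with decay $(1+\delta_S^2t)^{-2}$, is what resolves it. Before relying on the third estimate of Lemma \ref{L4.8}, I would double-check its small-time branch ($t<8x_0/s$), which contributes only $O(\delta_R\delta_S^2)$ uniformly.
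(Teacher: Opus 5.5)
Your proposal is correct and follows the paper's proof essentially verbatim. It uses the same split into $\mathbf{J}_1,\dots,\mathbf{J}_4$, bounds $\mathbf{J}_2,\mathbf{J}_3,\mathbf{J}_4$ pointwise by $C\delta_S^2\delta_R(\wt v^S)^{\mb X}_\xi(\phi^2+\psi^2)$, and routes $\mathbf{J}_1$ through the quadratic interaction estimate of Lemma \ref{L4.8}; you are also right that the last term of the statement should carry a time integral. The only blemish is bookkeeping in your displayed Young inequality, where the second constant must be of order $\eps_3^{-2}$ before $\phi^2\le C\eps_3^2$ cancels it; the paper's form $|(\wt v^S)^{\mb X}-v_m|\phi^2\le ((\wt v^S)^{\mb X}-v_m)^2+\tfrac14\phi^4$ with $\phi^4\le C\eps_3^2\phi^2$ makes this transparent.
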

\begin{proof} $\mathbf{J}(t)$ is defined in \eqref{4.17} and we rewrite  $\mathbf{J}(t)$ as 
$$\mathbf{J}(t)=\mathbf{J_1}(t)+\mathbf{J_2}(t)+\mathbf{J_3}(t)+\mathbf{J_4}(t),$$
 with
  \begin{equation*}
     \begin{aligned}
\mathbf{J_1}(t):=  3\int_{\mathbb{R}} a^{\mb X}\wt u^R_\xi\left[(\wt v^S)^{\mb X}-v_m\right]\phi^2 d\xi,
 \end{aligned}
   \end{equation*}
  \begin{equation*}
     \begin{aligned}
\mathbf{J_2}(t):=-3s\int_{\mathbb{R}} a^{\mb X}( \wt v^S)^{\mb X}_\xi \left(\wt v^R-v_m \right) \phi^2 d\xi,
 \end{aligned}
   \end{equation*}
     \begin{equation*}
      \begin{aligned}
\mathbf{J_3}(t):=\int_{\mathbb{R}} a^{\mb X}_\xi\left[\sigma^\prime(\wt v)-\sigma^\prime((\wt v^S)^{\mb X})\right] \phi \psi d\xi,
 \end{aligned}
   \end{equation*}
and 
\begin{equation*}
      \begin{aligned}
\mathbf{J_4}(t):=\frac{s}{2}\int_{\mathbb{R}}a^{\mb X}_\xi \left[\sigma^\prime(\wt v)-\sigma^\prime( (\wt v^S)^{\mb X})\right]\phi^2 d\xi.
 \end{aligned}
   \end{equation*}

By $\epsilon$-Cauchy inequality, we have

   \begin{equation*}
      \begin{aligned}
|\mathbf{J_1}(t)|&=3\left|\int_{\mathbb{R}} a^{\mb X}\wt u^R_\xi\left[(\wt v^S)^{\mb X}-v_m\right]\phi^2 d\xi\right|\\
&\leq\int_{\mathbb{R}}  a^{\mb X}\left|\wt u^R_\xi\right|\left[(\wt v^S)^{\mb X}-v_m\right]^2d\xi+C\int_{\mathbb{R}}a^{\mb X} \left| \wt u^R_\xi\right|\phi^4 d\xi\\
%&\leq C{{\delta_S^4}}\delta_R \left(1+C{{\delta_S^2}}t\right)^{-2}+ C\delta_R{{\delta_S^4}}e^{-C t}+C\eps_3^2\delta_S^2\int_{\mathbb{R}} \left| \wt u^R_\xi\right|\phi^2 d\xi,
&\leq C{{\delta_S^2}}\int_{\mathbb{R}} \left|\wt u^R_\xi\right|\left[(\wt v^S)^{\mb X}-v_m\right]^2d\xi+ C\eps_3^2\delta_S^2\int_{\mathbb{R}} \left| \wt u^R_\xi\right|\phi^2 d\xi,
 \end{aligned}
   \end{equation*}
   which implies that 
 $$  
   \left|\int_0^t \mathbf{J_1}(\tau)d\tau \right|\leq C \deltar{{\delta_S^2}}+C\eps_3^2\delta_S^2\int_0^t\int_{\mathbb{R}}\left| \wt u^R_\xi\right|\phi^2 d\xi d\tau,
   $$
 by Lemma \ref{L4.8}.  
 
   We have
   \begin{equation*}
      \begin{aligned}
|\mathbf{J_2}(t)|&=3s\left|\int_{\mathbb{R}} a^{\mb X}( \wt v^S)^{\mb X}_\xi \left(\wt v^R-v_m \right) \phi^2 d\xi\right|\leq C\delta_S^2 \delta_R\int_{\mathbb{R}}( \wt v^S)^{\mb X}_\xi \phi^2 d\xi,
%&\leq\int_{\mathbb{R}}  a^{\mb X}\left|( \wt v^S)^{\mb X}_\xi \right|\left(\wt v^R-v_m\right)^2d\xi+C\int_{\mathbb{R}}a^{\mb X} \left| ( \wt v^S)^{\mb X}_\xi \right|\phi^4 d\xi\\
%&\leq C{{\delta_S^3}}\delta_R^2\left[ \left(1+C{{\delta_S^2}}t\right)^{-2}+ e^{-C t}\right]+C\eps_3^2\delta_S^2\int_{\mathbb{R}} \left|  ( \wt v^S)^{\mb X}_\xi\right|\phi^2 d\xi.
 \end{aligned}
   \end{equation*}
   Similarly, it is easy to get
  $$
  (|\mathbf{J_3}(t)|,|\mathbf{J_4}(t)|)\leq C\delta_S^2 \delta_R\int_{\mathbb{R}}( \wt v^S)^{\mb X}_\xi (\phi^2+\psi^2) d\xi.
  %\leq C{{\delta_S^3}}\delta_R^2\left[ \left(1+C{{\delta_S^2}}t\right)^{-2}+ e^{-C t}\right]+C\eps_3^2\delta_S^2\int_{\mathbb{R}} \left| ( \wt v^S)^{\mb X}_\xi\right|\phi^2 d\xi.
  $$

Integrating with respect to $t$, we complete the proof.
\end{proof}

%\noindent {\bf \underline{Estimation of  the $\displaystyle \int_0^t\mathbf{F}(\tau)d\tau$}. } 
\begin{lemma}\label{L4.10} There exists a positive constant $\eps_4$ such that if $\di \sup_{t\in[0, T]} \|(\phi,\psi)(t)\|_{H^1(\mathbb{R})}\le \eps_4$, then for all $t\le T$, it holds that
\begin{equation*}\label{4.63}
     \begin{aligned}
\left|\int_0^t  \mathbf{F}(\tau)d\tau\right|\leq C\eps_4^{\frac{2}{3}} {{\delta_S^2}}\left( \deltar^{\frac{1}{3}}+ \deltar^\frac{4}{3} \delta_S^{\frac{1}{3}}\right)+\frac{{{\delta_S^2}}}{4 \bar C} \int_0^t\|\psi_\xi(\tau)\|^2 d\tau .
\end{aligned}
 \end{equation*}
 \end{lemma}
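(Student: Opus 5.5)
We need to bound $\int_0^t \mathbf F\,d\tau$ with $\mathbf F=-\int a^{\mb X}F\psi\,d\xi$ and $F=F_1+F_2$. The plan is to treat the interaction part $F_1$ and the inviscid-error part $F_2=-\mu\wt u^R_{\xi\xi}$ separately, using $a^{\mb X}\le C\delta_S^2$ from \eqref{w.bounds2} or \eqref{w.bounds1}. Throughout, $\|\psi\|\le\eps_4$ and the Gagliardo--Nirenberg bound $\|\psi\|_{L^\infty}\le\sqrt2\|\psi\|^{1/2}\|\psi_\xi\|^{1/2}$ will be used.

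\textbf{The term $F_1$.} First write
\[
|F_1|\le C\delta_S|\wt v^R-v_m|\,|(\wt v^S)^{\mb X}_\xi|+C\delta_S|(\wt v^S)^{\mb X}-v_m|\,|\wt v^R_\xi|.
\]
This uses $\sigma'(\wt v)-\sigma'((\wt v^S)^{\mb X})=-3(\wt v^R-v_m)(\wt v+(\wt v^S)^{\mb X})$ and the analogous identity for $\sigma'(\wt v)-\sigma'(\wt v^R)$, with $|\wt v|\le C\delta_S$. Next let $f(t)$ denote the $L^1$ norm of these interaction products. Lemma \ref{L4.8} (or Lemma \ref{L4.8.} in the non-degenerate case) gives $f(t)\le C\delta_R\delta_S[(1+C\delta_S^2t)^{-1}+e^{-Ct}]$.

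Estimating by $\|\psi\|_{L^\infty}\|F_1\|_{L^1}$ alone does not work, since $\int_0^t(1+\delta_S^2\tau)^{-1}d\tau$ diverges logarithmically. Instead, Young's inequality on
\[
\delta_S^2\,\|\psi\|^{1/2}\|\psi_\xi\|^{1/2}\,\delta_S f(t)
\]
will give
\[
\frac{\delta_S^2}{8\bar C}\|\psi_\xi\|^2+C\delta_S^2\,\|\psi\|^{2/3}\,\bigl(\delta_S f(t)\bigr)^{4/3}.
\]
The power $4/3$ makes the $t$-integral converge:
\[
\int_0^\infty(\delta_R\delta_S^2)^{4/3}(1+\delta_S^2t)^{-4/3}\,dt\le C\delta_R^{4/3}\delta_S^{8/3-2}=C\delta_R^{4/3}\delta_S^{2/3}.
\]
Combined with the $\delta_S^2$ prefactor and $\|\psi\|^{2/3}\le\eps_4^{2/3}$, this yields a contribution $C\eps_4^{2/3}\delta_S^2\delta_R^{4/3}\delta_S^{1/3}$, after absorbing harmless powers of $\delta_S\le1$. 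The exponential parts contribute at most $C\eps_4^{2/3}\delta_S^2\delta_R^{4/3}$, which is dominated by the next term.

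\textbf{The term $F_2$.} Here
\[
\Bigl|\int a^{\mb X}\mu\wt u^R_{\xi\xi}\psi\Bigr|\le C\delta_S^2\|\wt u^R_{\xi\xi}\|_{L^1}\|\psi\|^{1/2}\|\psi_\xi\|^{1/2}.
\]
Young's inequality then gives
\[
\frac{\delta_S^2}{8\bar C}\|\psi_\xi\|^2+C\delta_S^2\,\eps_4^{2/3}\,\|\wt u^R_{\xi\xi}\|_{L^1}^{4/3}.
\]
By Lemma \ref{lemma1.2}, evaluated at time $1+t$, we have $\|\wt u^R_{\xi\xi}\|_{L^1}\le C\min\{\delta_R,(1+t)^{-1}\}$. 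Interpolating,
\[
\min\{\delta_R,(1+t)^{-1}\}^{4/3}\le\delta_R^{1/3}(1+t)^{-1},
\]
but this is not integrable, so I instead use
\[
\min\{\delta_R,(1+t)^{-1}\}^{4/3}\le\delta_R^{1/3}\,(1+t)^{-1}\min\{\delta_R,(1+t)^{-1}\}^{0}
\]
more carefully. The exponent balance must be chosen so that $\int_0^\infty\min\{\delta_R,(1+t)^{-1}\}^{4/3}dt\le C\delta_R^{1/3}$. Splitting at $t=\delta_R^{-1}$ gives
\[
\delta_R^{4/3}\cdot\delta_R^{-1}+\int_{\delta_R^{-1}}^\infty t^{-4/3}\,dt=C\delta_R^{1/3},
\]
so the bound holds directly. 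This produces $C\eps_4^{2/3}\delta_S^2\delta_R^{1/3}$.

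\textbf{Conclusion and main obstacle.} Summing the two $\|\psi_\xi\|^2$ pieces gives $\frac{\delta_S^2}{4\bar C}\int_0^t\|\psi_\xi\|^2$, and the claimed bound follows. The main obstacle is that the degenerate shock decays only algebraically. This is precisely why the $L^\infty$–$L^1$ pairing has to be upgraded through the $\|\psi\|^{1/2}\|\psi_\xi\|^{1/2}$ interpolation and Young's inequality with exponent $4/3$, so that the time integrals converge while the $\|\psi_\xi\|^2$ piece keeps the small coefficient needed for absorption into Lemma \ref{L4.7}.
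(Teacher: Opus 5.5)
Your proof is correct and follows the paper's argument. Like the paper, you bound $|\int a^{\mb X}F\psi\,d\xi|$ by $C\delta_S^2\|F\|_{L^1}\|\psi\|^{1/2}\|\psi_\xi\|^{1/2}$, apply Young's inequality with exponents $4$ and $4/3$, and integrate $\|F\|_{L^1}^{4/3}$ in time using Lemma \ref{L4.8} for the interaction part and a split at $t=\delta_R^{-1}$ for $\wt u^R_{\xi\xi}$. Treating $F_1$ and $F_2$ separately is only cosmetic, your $\delta_S^{2/3}$ is slightly better than the stated $\delta_S^{1/3}$, and the displayed line with $\min\{\cdot\}^{0}$ is vacuous and can be deleted, since the split alone gives the $\delta_R^{1/3}$ bound.
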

\begin{proof} Since $F$ is defined in \eqref{F}, we have 
\begin{equation*}\label{4.64}
     \begin{aligned}
\left|\int_{\mathbb{R}}  a^{\mb X} F\psi d\xi\right|& \leq C{{\delta_S^2}}\int_{\mathbb{R}} |F||\psi| d\xi\leq C{{\delta_S^2}}\int_{\mathbb{R}} |F|d\xi \|\psi\|_{L^\infty}\\
&\leq C{{\delta_S^2}}\int_{\mathbb{R}} |F|d\xi \|\psi\|^{\frac{1}{2}}\|\psi_\xi\|^{\frac{1}{2}}\leq \frac{{{\delta_S^2}}}{4 \bar C} \|\psi_\xi\|^2+C{{\delta_S^2}}\|\psi\|^{\frac{2}{3}}\left(\int_{\mathbb{R}} |F|d\xi\right)^{\frac{4}{3}},
\end{aligned}
 \end{equation*}
where we have used Sobolev inequality $\|\psi\|_{L^\infty(\bbr)}\leq \sqrt 2  \|\psi_\xi\|^{\frac{1}{2}} \|\psi\|^{\frac{1}{2}}$ and $\epsilon$-Cauchy inequality.
It holds that 
\begin{equation*}\label{4.65}
\int_{\mathbb{R}} |F|d\xi\leq C{{\delta_S}}\int_{\mathbb{R}}\left|\wt v^R-v_m\right|\left|(\wt v^S)^{\mb{X}}_\xi\right|d\xi+   C{{\delta_S}}\int_{\mathbb{R}}\left|(\wt v^S)^{\mb{X}}-v_m\right|\left|\wt v^R_\xi\right|d\xi+  \int_\mathbb{R} |\wt u^R_{\xi\xi}|d\xi.
 \end{equation*}
By Lemma \ref{L4.8}, we have 
\begin{equation*}\label{4.66}
 \int_0^{t} \Big(\int_{\mathbb{R}}\left|\wt v^R-v_m\right|\left|(\wt v^S)^{\mb{X}}_\xi\right|d\xi\Big)^\frac{4}{3}d\tau \leq C\delta_R^\frac{4}{3}{{\delta_S^{-\frac{2}{3}}}},
  \end{equation*}
  and 
\begin{equation*}\label{4.67}
 \int_0^{t} \Big(\int_{\mathbb{R}}\left|(\wt v^S)^{\mb{X}}-v_m\right|\left|\wt v^R_\xi\right|d\xi\Big)^\frac{4}{3}d\tau \leq C\delta_R^\frac{4}{3}{{\delta_S^{-\frac{2}{3}}}}.
  \end{equation*}
By Lemma \ref{lemma1.2}, we have
\begin{equation*}\label{4.52}
    \int_\mathbb{R} |\wt u^R_{\xi\xi}|d\xi\leq
\left\{\begin{array}{ll}
\delta_R, &\di  1+t\leq \delta_R^{-1},\\[2mm]
(1+t)^{-1}, &\di 1+t\geq \delta_R^{-1},
  \end{array}\right.
    \end{equation*}
  which implies 
  \begin{equation*}\label{4.53}
 \int_0^{t} \Big(\int_\mathbb{R} |\wt u^R_{\xi\xi}|d\xi \Big)^\frac{4}{3}d\tau \leq C\delta_R^\frac{1}{3}.
  \end{equation*}
Thus, we complete the proof of Lemma \ref{L4.10}.  
\end{proof}
\vskip0.3cm
\textbf{\underline{Non-degenerate case: $v_m\in(-v_+/2,0)$}}. % For $v_m\in(-v_+/2,0)$, we derive the following estimates for $\displaystyle \int_0^t  \mathbf{F}(\tau) \, d\tau$ and $\displaystyle \int_0^t  \mathbf{J}(\tau) \, d\tau$.
\begin{lemma}\label{L4.8.}\noindent {\bf(Wave interaction estimates)}
Let $\mb X(t)$ be the shift defined by \eqref{ode.shift2} or \eqref{ode.shift1}. Under the same hypotheses as in Proposition \ref{prop:apriori}, the following holds: $\forall t\le T$,
%\begin{align*}
%&\|(\wt v^S)^{\mb{X}}_{\xi} (\wt v^R-v_m) \|_{L^1(\bbr)} +\|(\wt v^R)_\xi ((\wt v^S)^{\mb X}-v_m)\|_{L^1(\bbr)} \le C\deltar \deltas e^{-C \deltas t},\\
%&\|(\wt v^S)^{\mb{X}}_{\xi} (\wt v^R-v_m) \|_{L^1(\bbr)} + \| (\wt v^R)_\xi (\wt v^S)^{\mb{X}}_\xi \|_{L^1(\bbr)} \le C\deltar \deltas e^{-C \deltas t},\\
%&\|(\wt v^S)^{\mb{X}}_{\xi} (\wt v^R-v_m) \|_{L^2(\bbr)} + \| (\wt v^R)_\xi (\wt v^S)^{\mb{X}}_\xi \|_{L^2(\bbr)} \le C\deltar \deltas^{3/2} e^{-C \deltas t},\\
%&\|(\wt v^R)_\xi ((\wt v^S)^{\mb X}-v_m)\|_{L^2(\bbr)} \le C\deltar \deltas e^{-C \deltas t} ,\\
%&\|(\wt v^R)_\xi ((\wt v^S)^{\mb X}-v_m)\|_{L^1(\bbr)} \le C\deltar \deltas e^{-C \deltas t}.
%\end{align*}
 \begin{align*}
& \left\| \left(\wt v^R-v_m\right)(\wt v^S)^{\mb{X}}_\xi\right\|_{L^1(\mathbb{R})}\leq C\delta_R{\delta_S}e^{-C \delta_S t},\\
& \left\| \left((\wt v^S)^{\mb{X}}-v_m\right)\wt v^R_\xi\right\|_{L^1(\mathbb{R})}\leq C\delta_R{\delta_S}e^{-C \delta_S t},\\
& \left\| \left((\wt v^S)^{\mb{X}}-v_m\right)\wt v^R_\xi\right\|^2_{L^2(\mathbb{R})}\leq  C\delta_R^2{\delta_S^2}e^{-C \delta_S t},\\
& \left\| \left(\wt v^R-v_m\right)(\wt v^S)^{\mb{X}}_\xi\right\|^2_{L^2(\mathbb{R})}\leq  C\delta_R^2{\delta_S^3}e^{-C \delta_S t}.
  \end{align*}
% \begin{align*}
    %&\displaystyle \int_{-\infty}^{-\frac{st}{2}}\left|\wt v^R-v_m\right|\left|(\wt v^S)^{\mb{X}}_\xi\right|d\xi\leq C\delta_R\deltas e^{-C \deltas t},\\
    %&\displaystyle \int^{+\infty}_{-\frac{st}{2}}\left|\wt v^R-v_m\right|\left|(\wt v^S)^{\mb{X}}_\xi\right|d\xi\leq C\delta_R{{\delta_S}}e^{-C t},\\
   %& \displaystyle \int_{-\infty}^{-\frac{st}{2}}\left|(\wt v^S)^{\mb{X}}-v_m\right|\left|\wt v^R_\xi\right|d\xi\leq C\delta_R\deltas e^{-C \deltas t}, \\
   %& \displaystyle \int^{+\infty}_{-\frac{st}{2}}\left|(\wt v^S)^{\mb{X}}-v_m\right|\left|\wt v^R_\xi\right|d\xi\leq  C\delta_R{{\delta_S}}e^{-C t},\\
   %   \end{align*}
\end{lemma}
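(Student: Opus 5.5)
We follow the proof of Lemma \ref{L4.8} step by step. The only change is that the algebraic tail of the degenerate profile is replaced by the exponential tail from Lemma \ref{lemma:shock_properties}, valid for $-\frac{v_+}{2}<v_m<0$.

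\emph{Step 1: localize the shift.} As before, \eqref{bound.M} gives $|\dot{\mb X}(t)|\le C\varepsilon_0$. Hence $|\mb X(t)-x_0|\le C\varepsilon_0 t\le \frac{st}{4}$ once $\varepsilon_0$ is small. We then split every integral at the point $\xi=-\frac{st}{2}$.

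\emph{Step 2: the region $\xi<-\frac{st}{2}$.} Here the shifted argument satisfies $\xi+\mb X(t)\le -\frac{st}{4}+x_0$. For the first two estimates we bound the rarefaction factor by $\delta_R$ and integrate the other factor against the monotone profile. This gives
\[
\int_{-\infty}^{-\frac{st}{2}}|\widetilde v^R-v_m||(\widetilde v^S)^{\mb X}_\xi|\,d\xi\le \delta_R\bigl|\widetilde v^S(-\tfrac{st}{4}+x_0)-v_m\bigr|\le C\delta_R\delta_S e^{-C\delta_S t}.
\]
We use the same bound with the roles of $\widetilde v^R_\xi$ and $(\widetilde v^S)^{\mb X}-v_m$ swapped, using $\int|\widetilde v^R_\xi|\le C\delta_R$. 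An initial layer $t<8x_0/s$ is absorbed into the constant, because $e^{-C\delta_S t}$ is bounded below there. The factor $e^{C\delta_S x_0}$ that appears is harmless, since $x_0$ is fixed.

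\emph{Step 3: the region $\xi>-\frac{st}{2}$.} Lemma \ref{lemma1.2}(3) applies because $\lambda_-(v_m)<0<s$, so the rarefaction is exponentially close to $v_m$ there. This gives $|\widetilde v^R-v_m|\le C\delta_R e^{-2|\frac{st}{2}-\lambda_-(v_m)t|}\le C\delta_R e^{-Ct}$. Likewise $\int_{-\frac{st}{2}}^\infty|\widetilde v^R_\xi|\,d\xi\le|v_m-\widetilde v^R(1+t,\frac{st}{2})|\le C\delta_R e^{-Ct}$. Combined with $\int|(\widetilde v^S)_\xi|\le\delta_S$ and $|(\widetilde v^S)^{\mb X}-v_m|\le\delta_S$, this yields $C\delta_R\delta_S e^{-Ct}\le C\delta_R\delta_S e^{-C\delta_S t}$, since $\delta_S\ll1$. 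This proves the two $L^1$ bounds.

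\emph{Step 4: the $L^2$ bounds.} These follow by interpolation against $L^\infty$. For the first, $\|((\widetilde v^S)^{\mb X}-v_m)\widetilde v^R_\xi\|^2_{L^2}\le \|\widetilde v^R_\xi\|_{L^\infty}\|((\widetilde v^S)^{\mb X}-v_m)^2\widetilde v^R_\xi\|_{L^1}$, where $\|\widetilde v^R_\xi\|_{L^\infty}\le C\delta_R$. The $L^1$ norm is at most $\delta_S$ times the previous $L^1$ estimate, which gives $C\delta_R^2\delta_S^2e^{-C\delta_S t}$. For the second, $\|(\widetilde v^R-v_m)(\widetilde v^S)^{\mb X}_\xi\|^2_{L^2}\le \delta_R\|(\widetilde v^S)_\xi\|_{L^\infty}\|(\widetilde v^R-v_m)(\widetilde v^S)^{\mb X}_\xi\|_{L^1}$, with $\|\widetilde v^S_\xi\|_{L^\infty}\le C\delta_S^2$, which gives $C\delta_R^2\delta_S^3e^{-C\delta_S t}$.

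The main obstacle is not the estimates themselves but the uniformity of the constant in the exponent. The profile's decay rate toward $v_m$ is $C\delta_S$ with $C$ independent of $v_m$ only when $v_m+\frac{v_+}{2}$ is bounded below by a multiple of $v_+$, i.e.\ in the regime of Case 2. In the range $-\frac{v_+}{2}<v_m<(-\frac12+\theta_*)v_+$, the rate degenerates like $(v_m+\frac{v_+}{2})\delta_S$. Two routes are possible. One is to restrict this lemma to Case 2. The other is to allow $C$ to depend on $v_m$ and, in Case 1, fall back on the polynomial bound of Lemma \ref{L4.8}, which holds uniformly. The time integrals used later in Lemmas \ref{L4.9}--\ref{L4.10} only need integrability, which either bound supplies.
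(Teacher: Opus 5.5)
Your proof is correct and follows the paper's argument essentially line by line: the same localization $|\mb X(t)-x_0|<\frac{st}{4}$, the same split at $\xi=-\frac{st}{2}$ (monotonicity of $\wt v^S$ on the left, Lemma \ref{lemma1.2}(3) on the right, initial layer $t<8x_0/s$ absorbed into the constant), and the same $L^\infty$--$L^1$ interpolation for the two $L^2$ bounds. Your closing caveat is sound and goes beyond the paper, which simply quotes Lemma \ref{lemma:shock_properties}: linearizing \eqref{VS_scalar} at $v_m$ gives the rate $(v_+-v_m)(2v_m+v_+)/(\mu s)$, which degenerates as $v_m\to-\frac{v_+}{2}$ and is actually of order $\delta_S^2$ rather than $\delta_S$ even in Case 2 (so the exponent should read $C\delta_S^2t$, which only reproduces the $\delta_S^{-2}$ time scale already handled in the degenerate case), and your fallback to the uniform algebraic bounds of Lemma \ref{L4.8} works provided you also use the degenerate-case treatment of $\mathbf J$ from Lemma \ref{L4.9}, since $(1+C\delta_S^2t)^{-1}$ by itself is not time-integrable.
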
      
\begin{proof}
Take $\eps_0$ suitably small such that
\[
|\mb{X}(t)-x_0|< \frac{s t}{4}.
\]
Then, by Lemma \ref{lemma:shock_properties} and Lemma \ref{lemma1.2}, we have 
   \begin{align*}
&\displaystyle \int_{-\infty}^{-\frac{st}{2}}\left|\wt v^R-v_m\right|\left|(\wt v^S)^{\mb{X}}_\xi\right|d\xi\leq \delta_R\left|\wt v^S\left(\mb{X}(t)-\frac{st}{2}\right)-v_m   \right|\leq
\begin{cases}
       C\delta_R{{\delta_S}} e^{-C \deltas t},  \forall t\geq\frac {8x_0}{s}, \\[4mm]
        \delta_R{{\delta_S}} , \forall t<\frac {8x_0}{s}, 
         \end{cases}\\
&\displaystyle \int^{+\infty}_{-\frac{st}{2}}\left|\wt v^R-v_m\right|\left|(\wt v^S)^{\mb{X}}_\xi\right|d\xi\leq C\delta_S\delta_Re^{-2\left|\frac{st}{2}-\lambda_-(v_m)t\right|}\leq  C\delta_R{{\delta_S}}e^{-C t},\\
 &\displaystyle \int_{-\infty}^{-\frac{st}{2}}\left|(\wt v^S)^{\mb{X}}-v_m\right|\left|\wt v^R_\xi\right|d\xi \leq\delta_R\left|\wt v^S\left(\mb{X}(t)-\frac{st}{2}\right)-v_m   \right|\leq
\begin{cases}
       C\delta_R{{\delta_S}} e^{-C \deltas t},  \forall t\geq\frac {8x_0}{s}, \\[4mm]
        \delta_R{{\delta_S}} , \forall t<\frac {8x_0}{s}, 
         \end{cases}\\
  & \displaystyle \int^{+\infty}_{-\frac{st}{2}}\left|(\wt v^S)^{\mb{X}}-v_m\right|\left|\wt v^R_\xi\right|d\xi\leq \delta_S\left|v_m-\wt v^R\left(1+t,\frac{st}{2}\right) \right|   \leq  C\delta_R{{\delta_S}}e^{-C t}.\\
  \end{align*}
    From above inequalities, we have
     \begin{align*}
& \left\| \left(\wt v^R-v_m\right)(\wt v^S)^{\mb{X}}_\xi\right\|_{L^1(\mathbb{R})}\leq C\delta_R{\delta_S}e^{-C \delta_S t},\\
& \left\| \left((\wt v^S)^{\mb{X}}-v_m\right)\wt v^R_\xi\right\|_{L^1(\mathbb{R})}\leq C\delta_R{\delta_S}e^{-C \delta_S t}.
  \end{align*}
Furthermore, it holds that
 \begin{equation*}
  \begin{aligned}
&\int_{\mathbb{R}}\left| ((\wt v^S)^{\mb{X}}-v_m) (\wt v^R)_\xi\right|^2d\xi\leq C\delta_S\|(\wt v^R)_\xi\|_{L^\infty(\mathbb R)}\int_{\mathbb{R}}\left| ((\wt v^S)^{\mb{X}}-v_m) \wt v^R_\xi\right|d\xi\leq C\delta_R^2{\delta_S^2}e^{-C \delta_S t}.
\end{aligned}
 \end{equation*}
 and 
  \begin{equation*}
  \begin{aligned}
&\int_{\mathbb{R}} \left|(\wt v^R-v_m)(\wt v^S)^{\mb{X}}_{\xi} \right|^2 d\xi\leq C\delta_R\|(\wt v^S)_\xi\|_{L^\infty(\mathbb R)}\int_{\mathbb{R}}\left|\wt v^R-v_m\right|\left|(\wt v^S)^{\mb{X}}_\xi \right|d\xi\leq C\delta_R^2{\delta_S^3}e^{-C \delta_S t}.
\end{aligned}
 \end{equation*}
\end{proof}

%\noindent {\bf \underline{Estimation of  the $\displaystyle \int_0^t  \mathbf{J}(\tau)d\tau$.}}  
\begin{lemma}\label{L4.9.}There exists a positive constant $\eps_5$ such that if $\di \sup_{t\in[0,T]} \|(\phi,\psi)(t)\|_{H^1(\mathbb{R})}\le \eps_5$, then for all $t\le T$, it holds that
\begin{equation*}\label{4.47.}
     \begin{aligned}
\left|\int_0^t  \mathbf{J}(\tau)d\tau \right|\leq C \deltar{\delta_S^2\eps_5^2}.%+\frac{1}{4 \bar C}\int_0^t|\dot{\mb{X}}|^2 d\tau .
\end{aligned}
 \end{equation*}
 \end{lemma}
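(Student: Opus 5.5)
I would follow the same decomposition as in Lemma~\ref{L4.9}, writing $\mathbf{J}(t)=\mathbf{J_1}(t)+\mathbf{J_2}(t)+\mathbf{J_3}(t)+\mathbf{J_4}(t)$ as there, but now exploit the exponential decay in Lemma~\ref{L4.8.} so that every term is integrable in time and no dissipative terms need to be borrowed on the right-hand side. The key point is that each $\mathbf{J_i}$ carries a product of a rarefaction-related factor and a shock-related factor whose overlap is exponentially small in $\delta_S t$. I would therefore put $\phi,\psi$ in $L^\infty$ rather than absorbing them into weighted $L^2$ integrals, using $\|(\phi,\psi)\|_{L^\infty}\le C\|(\phi,\psi)\|_{H^1}\le C\eps_5$.

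\textbf{Step 1 (term $\mathbf{J_1}$).} Using $a^{\mb X}\le C\delta_S^2$ (from \eqref{w.bounds1} or \eqref{w.bounds2}, since $s\to1$ and $v_+\sim\delta_S$) and $|\wt u^R_\xi|\le C|\wt v^R_\xi|$, I would bound
\[
|\mathbf{J_1}(t)|\le C\delta_S^2\|\phi\|_{L^\infty}^2\big\|((\wt v^S)^{\mb X}-v_m)\wt v^R_\xi\big\|_{L^1}\le C\eps_5^2\delta_S^3\delta_R e^{-C\delta_S t}.
\]
The last inequality is Lemma~\ref{L4.8.}.

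\textbf{Step 2 (terms $\mathbf{J_2},\mathbf{J_3},\mathbf{J_4}$).} For $\mathbf{J_2}$ I would bound
\[
|\mathbf{J_2}(t)|\le C\delta_S^2\eps_5^2\big\|(\wt v^R-v_m)(\wt v^S)^{\mb X}_\xi\big\|_{L^1}\le C\eps_5^2\delta_S^3\delta_R e^{-C\delta_S t}.
\]
For $\mathbf{J_3}$ and $\mathbf{J_4}$, I would use $a^{\mb X}_\xi=a'(\wt v^S)^{\mb X}_\xi$ with $a'\le C\delta_S$. I would also use the identity
\[
|\sigma'(\wt v)-\sigma'((\wt v^S)^{\mb X})|=3|\wt v^R-v_m|\,|\wt v+(\wt v^S)^{\mb X}|\le C\delta_S|\wt v^R-v_m|,
\]
which was already used in the proof of Lemma~\ref{L4.6}. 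Together these give $|\mathbf{J_3}|+|\mathbf{J_4}|\le C\delta_S^2\eps_5^2\|(\wt v^R-v_m)(\wt v^S)^{\mb X}_\xi\|_{L^1}$, which is again of order $\eps_5^2\delta_S^3\delta_R e^{-C\delta_S t}$.

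\textbf{Step 3 (time integration).} Integrating in time, $\int_0^t\delta_S^3 e^{-C\delta_S\tau}\,d\tau\le C\delta_S^2$, so $|\int_0^t\mathbf{J}\,d\tau|\le C\delta_R\delta_S^2\eps_5^2$, which is the claimed bound.

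\textbf{Expected obstacle.} The main point needing care is the exponential rate in Lemma~\ref{L4.8.}, which holds only for $t\ge 8x_0/s$. For $0\le t<8x_0/s$ the bounds are merely $C\delta_R\delta_S$, so this initial interval must be absorbed into the constant. A finite interval of length $8x_0/s$ contributes at most $C(x_0)\delta_R\delta_S^3\eps_5^2$, and I would absorb this into the stated bound, with $C$ allowed to depend on $x_0$. If dependence on $x_0$ is undesirable, the alternative is to normalize $x_0=0$ by translation invariance. A secondary check is that the $\delta_S$-powers combine correctly, namely that $a\sim\delta_S^2$ and $a'\sim\delta_S$ uniformly in both Case~1 and Case~2, which holds by \eqref{w.bounds2}, \eqref{w.deriv2}, \eqref{w.bounds1} and \eqref{w.deriv1}.
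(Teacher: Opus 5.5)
Your argument is correct and follows the paper's proof. The paper simply asserts $|\mathbf{J}(t)|\le C\eps_5^2\delta_S^3\delta_R e^{-C\delta_S t}$ via Lemma~\ref{L4.8.} and integrates in time; you spell out the same term-by-term bounds ($\phi,\psi$ in $L^\infty$, $a\lesssim\delta_S^2$, $a'\lesssim\delta_S$, and the $L^1$ interaction estimates).

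One correction to your aside: you cannot normalize $x_0=0$ by translation, because translating $x$ also moves the rarefaction center and leaves the relative offset unchanged. This fallback is unnecessary anyway, since Lemma~\ref{L4.8.} already absorbs the initial layer $t<8x_0/s$ into its $x_0$-dependent constant, exactly as in your main route.
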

\begin{proof} 
%We rewrite  $\mathbf{J}(t)$ as  $\mathbf{J}(t)= \mathbf{J_1}(t)+ \mathbf{J_2}(t)$, where
 %\begin{equation}\label{4.48.}
    %  \begin{aligned}
%\mathbf{J_1}(t):=3\int_{\mathbb{R}} \wt u^R_\xi[(\wt v^S)^{\mb X}-v_m]\phi^2 w^{\mb X}d\xi-3s\int_{\mathbb{R}} (\wt v^S)^{\mb X}_\xi (\wt v^R-v_m )w^{\mb X}\phi^2 d\xi\\
%+\int_{\mathbb{R}} w^{\mb X}_\xi[\sigma^\prime(\wt v)-\sigma^\prime( (\wt v^S)^{\mb X})] \phi \psi d\xi+\frac{s}{2}\int_{\mathbb{R}}[\sigma^\prime(\wt v)-\sigma^\prime( (\wt v^S)^{\mb X})]w^{\mb X}_\xi \phi^2 d\xi,
 %\end{aligned}
   %\end{equation}
   %and
   %\begin{equation}\label{4.49}
      %\begin{aligned}
%\mathbf{J_2}(t):=\mathbf{\dot X}(t)\mathbf{Y_2}(t).
 %\end{aligned}
   %\end{equation}
%Since $ \wt u^R_\xi=-\lambda_-( \wt v^R) \wt v^R_\xi$, by Lemma \ref{L4.8}, we have 
%$$
By Lemma \ref{L4.8.}, we have
$$
\left|\mathbf{J}(t)\right|\leq C\eps_5^2 \delta_S^3\delta_Re^{-C\delta_St}.
$$
%$$
%
%By $\eps$-Cauchy inequality and Lemma \ref{L4.8}, we have 
%$$
%\left|\mathbf{J_2}(t)\right|\leq C \eps_4^2 \delta_S^2\delta_R^2e^{-C\delta_St}+\frac{1}{4 \bar C} |\dot{\mb{X}}|^2.
%$$
Integrating with respect to $t$, we complete the proof.
\end{proof}

%\noindent {\bf \underline{Estimation of  the $\displaystyle \int_0^t  \mathbf{F}(\tau)d\tau$.} } 
\begin{lemma}\label{L4.10.} There exists a positive constant $\eps_6$ such that if $\di \sup_{t\in[0,T]} \|(\phi,\psi)(t)\|_{H^1(\mathbb{R})}\le \eps_6$, then for all $t\le T$, it holds that
\begin{equation*}\label{4.50.}
     \begin{aligned}
\left|\int_0^t  \mathbf{F}(\tau)d\tau \right|\leq C \eps_6^{\frac{2}{3}} {\delta_S^2} \deltar^{\frac{1}{3}}+\frac{ {\delta_S^2}}{4 \bar C} \int_0^t\|\psi_\xi(\tau)\|^2 d\tau .
\end{aligned}
 \end{equation*}
 \end{lemma}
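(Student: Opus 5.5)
The estimate follows the proof of Lemma \ref{L4.10} for the degenerate case, with Lemma \ref{L4.8.} in place of Lemma \ref{L4.8}. The first step is the pointwise bound. Since $a^{\mb X}\le C\delta_S^2$ by \eqref{w.bounds1} (or \eqref{w.bounds2}), we have $|\mathbf F(t)|\le C\delta_S^2\int_{\mathbb R}|F|\,d\xi\,\|\psi\|_{L^\infty}$. We then apply the Sobolev inequality $\|\psi\|_{L^\infty}\le\sqrt2\|\psi\|^{1/2}\|\psi_\xi\|^{1/2}$ and Young's inequality with exponents $4$ and $4/3$. This gives
\[
|\mathbf F(t)|\le \frac{\delta_S^2}{4\bar C}\|\psi_\xi\|^2+C\delta_S^2\|\psi\|^{2/3}\Big(\int_{\mathbb R}|F|\,d\xi\Big)^{4/3}.
\]
Under the smallness assumption, $\|\psi\|^{2/3}\le\eps_6^{2/3}$, which accounts for the factor $\eps_6^{2/3}$ in the statement.

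The second step is the bound on $\|F\|_{L^1}$. We split $F=F_1+F_2$ as in \eqref{F}. In $F_1$ we use $|\sigma'(\wt v)-\sigma'((\wt v^S)^{\mb X})|\le C\delta_S|\wt v^R-v_m|$ and $|\sigma'(\wt v)-\sigma'(\wt v^R)|\le C\delta_S|(\wt v^S)^{\mb X}-v_m|$. Both follow from $\sigma'(v)=1-3v^2$ together with $|\wt v|,|\wt v^S|,|\wt v^R|\le C\delta_S$, which holds by \eqref{s}. This yields
\[
\int_{\mathbb R}|F|\,d\xi\le C\delta_S\|(\wt v^R-v_m)(\wt v^S)^{\mb X}_\xi\|_{L^1}+C\delta_S\|((\wt v^S)^{\mb X}-v_m)\wt v^R_\xi\|_{L^1}+\|\wt u^R_{\xi\xi}\|_{L^1}.
\]
By Lemma \ref{L4.8.}, the first two terms are bounded by $C\delta_R\delta_S^2e^{-C\delta_St}$. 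Raising to the power $4/3$ and integrating in time gives $C\delta_R^{4/3}\delta_S^{8/3}\cdot\delta_S^{-1}=C\delta_R^{4/3}\delta_S^{5/3}$. After multiplying by $\delta_S^2$ this is a term of order $\delta_S^2\delta_R^{1/3}$ times a small factor, so it is absorbed into $C\eps_6^{2/3}\delta_S^2\delta_R^{1/3}$ because $\delta_R,\delta_S\le1$.

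For the rarefaction error $F_2$ we use Lemma \ref{lemma1.2}. It gives $\|\wt u^R_{\xi\xi}\|_{L^1}\le C\min\{\delta_R,(1+t)^{-1}\}$. We split the time integral at $1+t=\delta_R^{-1}$. On the first part the contribution is $\delta_R^{4/3}\cdot\delta_R^{-1}=\delta_R^{1/3}$; on the second it is $\int_{\delta_R^{-1}}^\infty(1+t)^{-4/3}\,dt\le C\delta_R^{1/3}$. Hence $\int_0^t\|\wt u^R_{\xi\xi}\|_{L^1}^{4/3}\,d\tau\le C\delta_R^{1/3}$. Summing the three contributions proves the lemma.

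The only point needing care is the time integration of the shock–rarefaction interaction terms. The exponential rate $e^{-C\delta_St}$ loses a factor $\delta_S^{-1}$ when integrated, so we must check that the powers of $\delta_S$ still close. They do, since $\delta_S\cdot\delta_R\delta_S$ raised to $4/3$ gives $\delta_S^{8/3}$, which more than compensates the loss. Everything else is routine.
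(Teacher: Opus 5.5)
Your proposal is correct and follows essentially the same route as the paper. Both bound $|\mathbf F(t)|\le C\delta_S^2\|F\|_{L^1}\|\psi\|_{L^\infty}$, split off $\frac{\delta_S^2}{4\bar C}\|\psi_\xi\|^2$ by Sobolev and Young, control $\|F\|_{L^1}$ by the two shock--rarefaction interaction terms plus $\|\widetilde u^R_{\xi\xi}\|_{L^1}$, and integrate in time via Lemma~\ref{L4.8.} and Lemma~\ref{lemma1.2}; your explicit tracking of the interaction contribution as $C\delta_R^{4/3}\delta_S^{5/3}$ is in fact a bit cleaner than the paper's displayed intermediate bound.
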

\begin{proof} By Sobolev inequality $\|\psi\|_{L^\infty(\bbr)}\leq \sqrt 2  \|\psi_\xi\|^{\frac{1}{2}} \|\psi\|^{\frac{1}{2}}$ and $\epsilon$-Cauchy inequality, we have 
\begin{equation*}\label{4.51.}
     \begin{aligned}
     \left|\int_{\mathbb{R}}  a^{\mb X} F\psi d\xi\right|& \leq C{{\delta_S^2}}\int_{\mathbb{R}} |F||\psi| d\xi\leq C{{\delta_S^2}}\int_{\mathbb{R}} |F|d\xi \|\psi\|_{L^\infty}\\
&\leq C{{\delta_S^2}}\int_{\mathbb{R}} |F|d\xi \|\psi\|^{\frac{1}{2}}\|\psi_\xi\|^{\frac{1}{2}}\\
&\leq \frac{{{\delta_S^2}}}{4 \bar C} \|\psi_\xi\|^2+C{{\delta_S^2}}\|\psi\|^{\frac{2}{3}}\left(\int_{\mathbb{R}} |F|d\xi\right)^{\frac{4}{3}}\\
&\leq \frac{{{\delta_S^2}}}{4 \bar C} \|\psi_\xi\|^2+C\delta_S^2\eps_6^{\frac{2}{3}}\left( \delta_R^{\frac{4}{3}}\delta_S^{\frac{4}{3}}+\delta_R^{\frac{1}{3}}   \right).
\end{aligned}
 \end{equation*}
 Since we have
\begin{equation*}\label{4.65}
\int_{\mathbb{R}} |F|d\xi\leq C{{\delta_S}}\int_{\mathbb{R}}\left|\wt v^R-v_m\right|\left|(\wt v^S)^{\mb{X}}_\xi\right|d\xi+   C{{\delta_S}}\int_{\mathbb{R}}\left|(\wt v^S)^{\mb{X}}-v_m\right|\left|\wt v^R_\xi\right|d\xi+  \int_\mathbb{R} |\wt u^R_{\xi\xi}|d\xi,
 \end{equation*}
 it follows by Lemma \ref{L4.8.} that 
\begin{equation*}\label{4.66}
 \int_0^{t} \Big(\int_{\mathbb{R}}\left|\wt v^R-v_m\right|\left|(\wt v^S)^{\mb{X}}_\xi\right|d\xi\Big)^\frac{4}{3}d\tau \leq C\delta_R^\frac{4}{3}{{\delta_S^{\frac{1}{3}}}},
  \end{equation*}
  and 
\begin{equation*}\label{4.67}
 \int_0^{t} \Big(\int_{\mathbb{R}}\left|(\wt v^S)^{\mb{X}}-v_m\right|\left|\wt v^R_\xi\right|d\xi\Big)^\frac{4}{3}d\tau \leq C\delta_R^\frac{4}{3}{{\delta_S^{\frac{1}{3}}}},
  \end{equation*}
and
  \begin{equation*}\label{4.53.}
 \int_0^{t} \Big(\int_\mathbb{R} \left|\wt u^R_{\xi\xi}\right|d\xi \Big)^\frac{4}{3}d\tau \leq C\delta_R^\frac{1}{3}.
  \end{equation*}
Thus, we complete the proof of Lemma \ref{L4.10.}.  
\end{proof}

By Lemmas \ref{L4.7}-\ref{L4.10.}, %under the conditions $v_+\ll1$ and  $\mathcal{E}(T)\ll1$, 
we can obtain Proposition \ref{prop:key_energy}. The following two Propositions are concerned with $L^2$-estimates for the first order derivatives.

\begin{proposition} \label{P4.2} Under the same hypotheses of Proposition \ref{prop:apriori}, there exists a positive constant $C$ such that  for all $t\le T$, it holds that

\begin{align}
\begin{aligned}\label{4.53}
\int_\mathbb{R} \phi_\xi^2 d\xi+ \int_0^t \|\phi_\xi(\tau)\|^2d\tau \leq C\left(\|\phi_0\|^2_{H^1(\bbr)}+\|\psi_0\|^2 +\delta_R^{\frac{1}{3}}\right).
\end{aligned}
\end{align}
 \end{proposition}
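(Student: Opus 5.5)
We follow the standard Kawashima--Matsumura device for the dissipation of $\phi_\xi$ in viscoelastic/Navier--Stokes type systems: differentiate the hyperbolic equation $\eqref{Per}_1$ in $\xi$ and couple it with the momentum equation $\eqref{Per}_2$. Concretely, we multiply $\partial_\xi\eqref{Per}_1$ by $\mu\phi_\xi$ and $\eqref{Per}_2$ by $-\phi_\xi$, add, and integrate over $\mathbb{R}$. Using $\eqref{Per}_1$, i.e. $\psi_\xi=\phi_t-s\phi_\xi+\dot{\mb X}(\wt v^S)^{\mb X}_\xi$, the mixed terms $-\psi_t\phi_\xi$ and $\mu\psi_{\xi\xi}\phi_\xi$ combine. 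After integration by parts we obtain an identity of the form
\begin{equation*}
\frac{d}{dt}\int_{\mathbb{R}}\Big(\frac{\mu}{2}\phi_\xi^2-\psi\phi_\xi\Big)d\xi+\int_{\mathbb{R}}\sigma'(\wt v+\phi)\phi_\xi^2\,d\xi=\int_{\mathbb{R}}\psi_\xi^2\,d\xi+\mathcal{R}(t).
\end{equation*}
Here the good term comes from $-[\sigma(\phi+\wt v)-\sigma(\wt v)]_\xi\phi_\xi\cdot(-1)$, and $\sigma'(\wt v+\phi)\ge 1/2$ because $|\wt v|,|\phi|$ are small. The term $\int\psi_\xi^2$ arises from $\int\psi_t\phi_\xi$ via $\int\psi\phi_{\xi t}=\int\psi(\psi_{\xi}+s\phi_\xi-\dot{\mb X}(\wt v^S)^{\mb X}_\xi)_\xi$. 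The remainder $\mathcal R$ collects the following terms: $[\sigma'(\wt v+\phi)-\sigma'(\wt v)]\wt v_\xi\phi_\xi$, $\dot{\mb X}$-terms like $\dot{\mb X}(\wt v^S)^{\mb X}_{\xi}\psi_\xi$ and $\dot{\mb X}(\wt u^S)^{\mb X}_\xi\phi_\xi$, $\dot{\mb X}\mu(\wt v^S)^{\mb X}_{\xi\xi}\phi_\xi$, and $F\phi_\xi$.

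We then integrate in time and estimate $\mathcal R$ term by term using Proposition \ref{prop:key_energy}. By Young's inequality, the remainder terms are handled as follows:
\begin{itemize}
\item The $\dot{\mb X}$-terms are bounded by $\frac18\|\phi_\xi\|^2+C|\dot{\mb X}|^2\|(\wt v^S_\xi,\wt v^S_{\xi\xi})\|^2+\|\psi_\xi\|^2$, and $\int_0^t|\dot{\mb X}|^2$ is already controlled.
\item The term $\sigma''\wt v_\xi\phi\phi_\xi$ is bounded by $\frac18\|\phi_\xi\|^2+C\int(|\wt v^S_\xi|+|\wt v^R_\xi|)|\wt v^S_\xi|\phi^2$. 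Since $|\wt u^R_\xi|\sim|\wt v^R_\xi|$, the rarefaction part needs the weight $|v_-|\int|\wt u^R_\xi|\phi^2$ from \eqref{eq:key_energy_estimate}. Because $|\wt v^R_\xi|\le C\delta_R$, we will instead bound it by $\|\wt v^R_\xi\|_{L^\infty}$ times that weighted quantity divided by $|v_-|$; alternatively we use $|\sigma''(\wt v)|\le C|\wt v|$ to absorb the factor.
\item For the error term, $|\int F\phi_\xi|\le\frac18\|\phi_\xi\|^2+C\|F\|^2$. The $L^2$ interaction bounds in Lemma \ref{L4.8} and Lemma \ref{L4.8.}, together with $\int_0^\infty\|\wt u^R_{\xi\xi}\|^2\le C\delta_R$ from Lemma \ref{lemma1.2}, give $\int_0^t\|F\|^2\le C\delta_R$.
\end{itemize}
The boundary term $\int\psi\phi_\xi$ is bounded by $\frac{\mu}{4}\|\phi_\xi\|^2+C\|\psi\|^2$. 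Combining everything with Proposition \ref{prop:key_energy} (which controls $\|\psi(t)\|^2$ and $\int_0^t\|\psi_\xi\|^2$ by $C(\|(\phi_0,\psi_0)\|^2+\delta_R^{1/3})$) yields the claimed bound, with initial contribution $\|\phi_{0\xi}\|^2+\|\psi_0\|^2$.

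The main obstacle we expect is the $\phi^2$-weighted term against $\wt v^R_\xi$. Proposition \ref{prop:key_energy} only provides $|v_-|\int|\wt u^R_\xi|\phi^2$, and $|v_-|$ can be small (of order $\delta_S$), so a crude bound loses a factor $1/|v_-|$. We will try first to exploit $|\sigma''(\wt v)|=6|\wt v|\le C|\wt v^R|+C|\wt v^S-v_m|$ near the rarefaction region. This gives a factor comparable to $|v_-|$ there, plus interaction terms handled via Lemma \ref{L4.8}. If that is insufficient, we fall back on $\|\phi\|_{L^\infty}^2\|\wt v^R_\xi\|_{L^1}$-type Gagliardo–Nirenberg bounds with the time decay of $\|\wt v^R_\xi\|_{L^2}$.
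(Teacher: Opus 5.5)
Your proposal is correct and is essentially the paper's proof. It uses the same identity, obtained by multiplying $\eqref{Per}_2$ by $-\phi_\xi$ and rewriting $\psi_t\phi_\xi$ and $\mu\psi_{\xi\xi}\phi_\xi$ through $\eqref{Per}_1$, followed by Young's inequality on each remainder and an appeal to Proposition~\ref{prop:key_energy} and Lemma~\ref{L4.8} (or Lemma~\ref{L4.8.}).

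The paper passes over the one delicate term, $\int_0^t\int_{\mathbb{R}}|\wt v^R_\xi|^2\phi^2\,d\xi d\tau$, and your Gagliardo--Nirenberg fallback is the clean way to close it: $\int_{\mathbb{R}}|\wt v^R_\xi|^2\phi^2\,d\xi\le 2\|\phi\|\|\phi_\xi\|\|\wt v^R_\xi\|^2$ together with $\int_0^\infty\|\wt v^R_\xi\|^4\,d\tau\le C\delta_R^3$. Your first idea, using the $|\wt v|$ factor, does not suffice alone because it misses the cubic piece $-3\phi^2\wt v_\xi\phi_\xi$. The $|v_-|$-weighted route also works once $\delta_R\le|v_m|/2$, which is admissible since $\delta_R$ may be taken small depending on $v_m$.
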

\begin{proof}   
Multiplying  $\eqref{Per}_2$  by $-\phi_\xi$, we have
 $$
-\psi_t\phi_\xi=-(\psi \phi_\xi)_t+\psi\phi_{t\xi}=-(\psi \phi_\xi)_t+\psi\left[s\phi_\xi+\psi_\xi- \dot{\mb{X}}(t)(\wt v^S)^\mb{X}_\xi \right]_\xi.
  $$
  and 
  $$
\left[s\phi_\xi+\psi_\xi-\dot{\mb{X}}(t) (\wt v^S)^\mb{X}_\xi\right]_\xi \psi=s(\psi \phi_\xi)_\xi-s\phi_\xi\psi_\xi+(\psi\psi_\xi)_\xi-\psi_\xi^2-\dot{\mb{X}}(t) (\wt v^S)^\mb{X}_{\xi\xi}\psi. 
$$
  Furthermore, we have
 $$
\phi_\xi [\sigma(\phi+\wt v)-\sigma(\wt v)]_\xi=\sigma^\prime(\wt v+\phi)\phi_\xi^2+\phi_\xi[\sigma^\prime(\wt v+\phi)-\sigma^\prime(\wt v) ]\wt v_\xi ,
$$
   and 
  $$
     \mu\psi_{\xi\xi}\phi_\xi=\mu \phi_\xi[\phi_t-s\phi_\xi+\dot{\mb{X}}(t)(\wt v^S)^\mb{X}_\xi ]_\xi=\frac{\mu}{2}(\phi_\xi^2)_t-\frac{\mu s}{2}(\phi_\xi^2)_\xi+\mu  \dot{\mb{X}}(t)(\wt v^S)^\mb{X}_{\xi\xi}   \phi_\xi, 
$$
Then it holds that
 \begin{equation}\label{4.54}
     \begin{aligned}  
    &\left (\frac{\mu}{2}\phi_\xi^2-\psi \phi_\xi\right)_t+{s(\psi\phi)_\xi+\frac{1}{2}(\psi^2)_{\xi\xi}-\psi_\xi^2-\dot{\mb{X}}(t) (\wt v^S)^\mb{X}_{\xi\xi }\psi}+\sigma^\prime(\wt v+\phi)\phi_\xi^2\\
    & +[\sigma^\prime(\wt v+\phi)-\sigma^\prime(\wt v)]\wt v_\xi \phi_\xi-\dot{\mb{X}}(t)(\wt u^S)^\mb{X}_\xi\phi_\xi-\frac{\mu s}{2}(\phi_\xi^2)_\xi+\mu\dot{\mb{X}}(t)(\wt v^S)^\mb{X}_{\xi\xi}\phi_\xi=F\phi_\xi.
        \end{aligned}  
   \end{equation} 
By $\epsilon$-Cauchy inequality, we have
$$
\left|\dot{\mb{X}}(t) (\wt v^S)^\mb{X}_{\xi\xi}\psi\right|\leq \epsilon\left|\dot{\mb{X}}(t)\right|^2+C_\epsilon \left|(\wt v^S)^\mb{X}_{\xi\xi}\right|^2\psi^2 \leq \epsilon\left|\dot{\mb{X}}(t)\right|^2+C_\epsilon {\delta_S^2} \left|(\wt v^S)^\mb{X}_{\xi}\right|\psi^2 ,
$$
$$
\left|[\sigma^\prime(\wt v+\phi)-\sigma^\prime(\wt v)]\wt v_\xi \phi_\xi\right|\leq \epsilon\phi_\xi^2+C_\epsilon{\delta_S^2}  \left|(\wt v^S)^\mb{X}_{\xi}\right|^2 \phi^2+C_\epsilon{\delta_S^2} \left|\wt v^R_{\xi}\right|^2 \phi^2,
$$
$$
\left| \dot{\mb{X}}(t)  (\wt u^S)^\mb{X}_{\xi} \phi_\xi \right|\leq \epsilon\phi^2_\xi+C_\epsilon|\dot{\mb{X}}(t)|^2,~\left| \mu\dot{\mb{X}}(t)(\wt v^S)^\mb{X}_{\xi\xi}\phi_\xi\right |\leq \epsilon\phi^2_\xi+C_\epsilon\left|\dot{\mb{X}}(t)\right|^2,
$$
$$
|F\phi_\xi|\leq \epsilon\phi_\xi^2+C_\epsilon|F|^2,~|F|^2\leq C(\wt u^R_{\xi\xi})^2+C{\delta_S^2}|(\wt v^S)^{\mb{X}}_{\xi} (\wt v^R-v_m) |^2+C{\delta_S^2}|(\wt v^R)_\xi ((\wt v^S)^{\mb X}-v_m)|^2.
$$
Observe that
\begin{equation*}
     \begin{aligned}
 \int_{\mathbb{R}}\left| \wt u^R_{\xi\xi}\right|^2d\xi\leq  \left\| \wt u^R_{\xi\xi}\right\|_{L^\infty(\mathbb R)} (1+t)^{-1}\leq C\delta_R^\frac{2}{3}(1+t)^{-\frac{4}{3}},
\end{aligned}
 \end{equation*}
 which implies 
 \begin{equation*}
  \begin{aligned}
\int_0^t \int_{\mathbb{R}}\left| \wt u^R_{\xi\xi}\right|^2d\xi d\tau\leq C\delta_R^\frac{2}{3}.
\end{aligned}
 \end{equation*}
Furthermore, by Lemma \ref{L4.8}(or Lemma \ref{L4.8.}), we have
 \begin{equation*}
  \begin{aligned}
\int_0^t \int_{\mathbb{R}}\left| ((\wt v^S)^{\mb{X}}-v_m) (\wt v^R)_\xi\right|^2d\xi d\tau\leq \begin{cases}
        C\delta_S\delta_R^{2}, & \text{if }\ \ -\dfrac{v_+}{2} < v_m < 0, \\[4mm]
        C\delta_R^{2}, & \text{if }\ \  v_m = -\dfrac{v_+}{2}.
         \end{cases}
\end{aligned}
 \end{equation*}
 and 
  \begin{equation*}
  \begin{aligned}
\int_0^t \int_{\mathbb{R}}\left| ((\wt v^R)-v_m)(\wt v^S)^{\mb{X}}_\xi\right|^2d\xi d\tau\leq \begin{cases}
        C\delta_S^2\delta_R^{2}, & \text{if }\ \ -\dfrac{v_+}{2} < v_m < 0, \\[4mm]
        C\delta_S\delta_R^{2}, &  \text{if }\ \  v_m = -\dfrac{v_+}{2}.
         \end{cases}
\end{aligned}
 \end{equation*}

Thus, integrating \eqref{4.54} with respect to $t$ and $\xi$, and using Proposition  \ref{prop:key_energy}, we complete the proof.
\end{proof}   

\begin{proposition} \label{P4.3} Under the same hypotheses of Proposition \ref{prop:apriori}, there exists a positive constant $C$ such that  for all $t\le T$, it holds that

\begin{align}
\begin{aligned}\label{4.55}
\int_\mathbb{R} \psi_\xi^2 d\xi+ \int_0^t \|\psi_{\xi\xi}(\tau)\|^2d\tau \leq C\left(\|\psi_{0\xi}\|^2+\delta_R^{\frac{1}{3}}\right).
\end{aligned}
\end{align}
 \end{proposition}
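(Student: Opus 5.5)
The plan is the standard first-order energy estimate for the parabolic component $\psi$. I would test $\eqref{Per}_2$ against $-\psi_{\xi\xi}$, integrate over $\mathbb{R}$ in $\xi$, and then integrate in time. This produces the left-hand side of \eqref{4.55}, namely $\frac12\|\psi_\xi(t)\|^2+\mu\int_0^t\|\psi_{\xi\xi}\|^2d\tau$. The convection term drops out, since $s\int\psi_\xi\psi_{\xi\xi}\,d\xi=0$. Every remaining term is then treated with Young's inequality, with weight $\mu/8$ on $\psi_{\xi\xi}^2$, so that $\frac{\mu}{2}\int_0^t\|\psi_{\xi\xi}\|^2$ is kept on the left.

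I would handle the terms one at a time.

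1. \emph{Shift term.} $\dot{\mb X}(\wt u^S)^{\mb X}_\xi\psi_{\xi\xi}$ is bounded by $\epsilon\psi_{\xi\xi}^2+C_\epsilon|\dot{\mb X}|^2|(\wt u^S)^{\mb X}_\xi|^2$. Since $\|(\wt u^S)^{\mb X}_\xi\|^2\le C\delta_S^3$, its time integral is controlled by $\int_0^t|\dot{\mb X}|^2$.

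2. \emph{Nonlinear flux term.} I would expand
\[
[\sigma(\wt v+\phi)-\sigma(\wt v)]_\xi=\sigma'(\wt v+\phi)\phi_\xi+[\sigma'(\wt v+\phi)-\sigma'(\wt v)]\wt v_\xi .
\]
This gives contributions bounded by $\epsilon\psi_{\xi\xi}^2+C\phi_\xi^2+C\delta_S^2\big(|(\wt v^S)^{\mb X}_\xi|+|\wt v^R_\xi|\big)\phi^2$. Here I use $|\sigma'|\le C$, $|\wt v_\xi|\le C\big(|(\wt v^S)^{\mb X}_\xi|+|\wt v^R_\xi|\big)$ and the smallness $\|\phi\|_{L^\infty}\le C\varepsilon_0$.

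3. \emph{Source term.} $F\psi_{\xi\xi}$ is bounded by $\epsilon\psi_{\xi\xi}^2+C|F|^2$. The time integral $\int_0^t\|F\|^2$ was already shown in the proof of Proposition~\ref{P4.2} to be at most $C\delta_R^{2/3}\le C\delta_R^{1/3}$. That argument used $\int_0^t\|\wt u^R_{\xi\xi}\|^2\le C\delta_R^{2/3}$ together with the $L^2$ wave interaction bounds of Lemma~\ref{L4.8} or Lemma~\ref{L4.8.}.

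After integrating in time, the remaining right-hand side terms are
\[
\int_0^t|\dot{\mb X}|^2,\qquad \int_0^t\|\phi_\xi\|^2,\qquad \delta_S\int_0^t\!\!\int|(\wt v^S)^{\mb X}_\xi|\phi^2,\qquad \int_0^t\!\!\int|\wt u^R_\xi|\phi^2 .
\]
For the last one, $|\wt v^R_\xi|\sim|\wt u^R_\xi|$ and the coefficient $\delta_S^2\le C|v_-|$ is absorbed. I would bound these four quantities by Proposition~\ref{prop:key_energy} and Proposition~\ref{P4.2}.

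The main obstacle is matching the exact form of the right-hand side of \eqref{4.55}, which contains only $\|\psi_{0\xi}\|^2+\delta_R^{1/3}$. Invoking the earlier propositions brings in lower-order initial data $\|(\phi_0,\psi_0)\|^2_{H^1}$.

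My first attempt to avoid this would be to \emph{not} invoke those propositions inside this step. Instead I would keep the four lower-order dissipation integrals as they are and view \eqref{4.55} as the $\psi_\xi$-block of the coupled system. When \eqref{4.55} is added to \eqref{eq:key_energy_estimate} and \eqref{4.53} in the proof of Proposition~\ref{prop:apriori}, these integrals are already dominated by the left-hand sides of the lower-order estimates, with small multiplicative constants. Concretely, the coefficients in front of $\int_0^t|\dot{\mb X}|^2$, $\int_0^t\|\phi_\xi\|^2$ and the weighted $\phi^2$ terms will be $C_\epsilon\,\delta_S^3$, $C$ and $C\delta_S^2$ respectively. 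They will be absorbed after multiplying \eqref{4.55} by a small constant before the three estimates are summed.

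If that reading is not acceptable, the fallback is to state \eqref{4.55} with these lower-order quantities bounded by Propositions~\ref{prop:key_energy} and \ref{P4.2}. The total is then still $\le C_0(\|(\phi_0,\psi_0)\|^2_{H^1}+\delta_R^{1/3})$, which is exactly what is needed for \eqref{priori_main}.
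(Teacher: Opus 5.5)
Your argument is essentially the paper's: you test $\eqref{Per}_2$ with $-\psi_{\xi\xi}$, split the flux term as $\sigma'(\wt v+\phi)\phi_\xi+[\sigma'(\wt v+\phi)-\sigma'(\wt v)]\wt v_\xi$, apply Young's inequality term by term, and close with Lemma~\ref{L4.8} (or Lemma~\ref{L4.8.}) and Propositions~\ref{prop:key_energy}--\ref{P4.2}. The only difference is in the shift term, and it is cosmetic. The paper integrates that term by parts to $\dot{\mb X}(\wt u^S)^{\mb X}_{\xi\xi}\psi_\xi$ and bounds it pointwise by $\epsilon|\dot{\mb X}|^2+C_\epsilon\psi_\xi^2$. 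That bound tacitly needs the weight $|(\wt u^S)^{\mb X}_{\xi\xi}|$ on $|\dot{\mb X}|^2$ to be integrable in $\xi$. Pairing the term with $\psi_{\xi\xi}$ and using $\|(\wt u^S)_\xi\|^2\le C\delta_S^3$, as you do, makes this explicit.

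Your concern about the right-hand side is well founded, and your fallback is the correct resolution.

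\textbf{What the paper proves.} The paper's own proof also invokes Propositions~\ref{prop:key_energy} and~\ref{P4.2}. So what it actually establishes is $\|\psi_\xi(t)\|^2+\int_0^t\|\psi_{\xi\xi}\|^2d\tau\le C(\|(\phi_0,\psi_0)\|_{H^1}^2+\delta_R^{1/3})$, which is exactly what \eqref{priori_main} needs.

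\textbf{The printed form fails.} \eqref{4.55} as printed cannot hold with a constant independent of $\delta_R$. Take the single-shock case $\delta_R=0$, which the paper says is covered, where $F\equiv0$. With $\psi_0\equiv0$ the printed right side vanishes. However, $\psi_t(0)=[\sigma(\phi_0+\wt v)-\sigma(\wt v)]_\xi-\dot{\mb X}(0)(\wt u^S)^{\mb X}_\xi$. Integrating from $+\infty$ and using $u_m\neq u_+$ together with the monotonicity of $\sigma$ near $0$, this is nonzero for every small $\phi_0\not\equiv0$. Hence $\psi$ cannot stay identically zero, and the left side becomes positive.

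\textbf{Drop option (a).} It does not yield the printed inequality either. It is also unnecessary: Propositions~\ref{prop:key_energy} and~\ref{P4.2} are already closed estimates, so you can simply add them, with no small multiplier.

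\textbf{Minor point.} $\delta_S^2\le C|v_-|$ holds only with $C$ depending on $v_-$, since $|v_-|<|v_m|$ may be much smaller than $v_+^2$. This matches the implicit state-dependence of the paper's constants. If you want uniformity, bound the $\wt v^R$ contribution instead by $C\int_0^t\|\wt v^R_\xi\|^2\|\phi\|\,\|\phi_\xi\|\,d\tau$ and use $\int_0^\infty\|\wt v^R_\xi\|^4d\tau\le C\delta_R^3$.
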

\begin{proof}   
 We multiply  $\eqref{Per}_2$  by $-\psi_{\xi\xi}$ and obtain
\begin{align}
\begin{aligned}\label{4.56}
\left(\frac{\psi_\xi^2}{2}\right)_t&+[\sigma^\prime(\phi+\wt v)-\sigma^\prime(\wt v) ]\wt v_\xi \psi_{\xi\xi}+\sigma^\prime(\phi+\wt v)\phi_\xi\psi_{\xi\xi}+\dot{\mb{X}}(t)(\wt u^S)^\mb{X}_{\xi\xi}\psi_{\xi}+\mu\psi_{\xi\xi}^2\\
&=F\psi_{\xi\xi}+\left(\psi_\xi\psi_t-\frac{s}{2}\psi_\xi^2\right)_\xi+\dot{\mb{X}}(t)\left[(\wt u^S)_\xi^\mb{X}\psi_\xi\right]_\xi.
\end{aligned}
\end{align}
By $\epsilon$-Cauchy inequality, we have
$$
\left|\left[\sigma^\prime(\phi+\wt v)-\sigma^\prime(\wt v)\right ]\wt v_\xi \psi_{\xi\xi}\right|\leq \epsilon\psi_{\xi\xi}^2+C_\epsilon{\delta_S^2}\left[\left|\wt v^R_\xi\right|^2+\left|(\wt v^S)^\mb{X}_\xi\right|^2\right]\phi^2,~|F\psi_{\xi\xi}|\leq \epsilon\psi_{\xi\xi}^2+C_\epsilon F^2,
$$
and
$$
\left|\sigma^\prime(\phi+\wt v)\phi_\xi\psi_{\xi\xi}\right|\leq \epsilon\psi^2_{\xi\xi}+C_\epsilon\phi^2_\xi,~\left| \dot{\mb{X}}(t)(\wt u^S)^\mb{X}_{\xi\xi}\psi_{\xi}\right| \leq \epsilon|\dot{\mb{X}}(t)|^2+C_\epsilon\psi^2_\xi.
$$
Hence,  integrating \eqref{4.56} with respect to $t$ and $\xi$, by Lemma \ref{L4.8}(or Lemma \ref{L4.8.}) and  Propostions \ref{prop:key_energy}-\ref{P4.2}, we obtain Propostion \ref{P4.3}.
\end{proof}

Therefore, combining Propositions \ref{prop:key_energy}-\ref{P4.3}, we obtain the uniform-in-time a priori estimates in Proposition \ref{prop:apriori}.

 \section{Appendix}
{ \bf  A.1 The proof of \eqref{4.33}.} When $\wt v^S\in (v_*,0]$, it holds that $$a(\wt v^S)=\frac{v_+-v_*}{s}\left[v_++v_m+\wt v^S-\frac{3}{v_m^2}(\wt v^S)^3+\frac{3}{v_m^3}(\wt v^S)^4\right],$$
$$a^\prime(\wt v^S)=\frac{v_+-v_*}{s}\left[1-\frac{9}{v_m^2}(\wt v^S)^2+\frac{12}{v_m^3}(\wt v^S)^3\right],$$
and
$$ a^{\prime\prime}(\wt v^S)=\frac{v_+-v_*}{s}\left[-\frac{18}{v_m^2}\wt v^S+\frac{36}{v_m^3}(\wt v^S)^2\right].$$ 
Hence, we have 
\begin{equation*}
\begin{aligned}
H_1 =& -\frac{v_+-v_*}{2s^2}
\left[
\left(1-\frac{9}{v_m^2}(\wt v^S)^2+\frac{12}{v_m^3}(\wt v^S)^3\right)^2
\left(1-6(\wt v^S)^2+9(\wt v^S)^4\right)
\right]\\
&\cdot
\Bigg[
\left(1-\frac{9}{v_m^2}(\wt v^S)^2+\frac{12}{v_m^3}(\wt v^S)^3\right)
\left(1-3(\wt v^S)^2\right)\\
&\qquad\qquad\qquad-6\wt v^S\left(v_++v_m+\wt v^S-\frac{3}{v_m^2}(\wt v^S)^3+\frac{3}{v_m^3}(\wt v^S)^4\right)
\Bigg]^{-1}\\
:=&-\frac{v_+-v_*}{2s^2}\frac{K_1^2\left[1-6(\wt v^S)^2+9(\wt v^S)^4\right]}{K_1+K_2}
\end{aligned}
\end{equation*}
\begin{equation*}
\begin{aligned}
%H_1=&-\frac{v_+-v_*}{2s^2}\frac{\left[1-\frac{9}{v_m^2}(\wt v^S)^2+\frac{12}{v_m^3}(\wt v^S)^3\right]^2\left[1-6(\wt v^S)^2+9(\wt v^S)^4\right]}{\left[1-\frac{9}{v_m^2}(\wt v^S)^2+\frac{12}{v_m^3}(\wt v^S)^3\right]\left[1-3(\wt v^S)^2\right]-6\wt v^S\left[v_++v_m+\wt v^S-\frac{3}{v_m^2}(\wt v^S)^3+\frac{3}{v_m^3}(\wt v^S)^4\right]}\\
%=&-\frac{v_+-v_*}{2s^2}\frac{K_1^2\left[1-6(\wt v^S)^2+9(\wt v^S)^4\right]}{K_1+K_2},\\
H_2=&\frac{v_+-v_*}{s^2}\Bigg[(v_++v_m)\left(3v_++\frac{5v_m}{2}\right)+\left(\frac{9v_+^2}{v_m}+12v_++\frac{5v_m}{2}\right)\wt v^S\\
&\qquad\qquad-\left(\frac{45v_+^2}{v_m^2}+\frac{99v_+}{2v_m}+\frac{27}{2}\right)(\wt v^S)^2+ \left(\frac{48v_+^2}{v_m^3}+\frac{48v_+}{v_m^2}+\frac{33}{v_m} \right)(\wt v^S)^3\\
&\qquad\qquad+\left(\frac{15}{2v_m^2}-\frac{24 v_+}{v_m^3}\right)(\wt v^S)^4+\left(\frac{162v_+}{v_m^4}-\frac{27}{2v_m^3}\right)(\wt v^S)^5\\
&\qquad\qquad-\left(\frac{405}{2v_m^4}+\frac{279v_+}{v_m^5}\right)(\wt v^S)^6
+\left(\frac{324}{v_m^5}+\frac{144v_+}{v_m^6}\right)(\wt v^S)^7-\frac{162}{v_m^6}(\wt v^S)^8\Bigg]\\
:=&\frac{v_+-v_*}{s^2}K_3,\\
H_3=&\frac{v_+-v_*}{2s^2}\Bigg[1-2v_m^2-2v_+^2-2v_mv_+-\frac{9}{v_m^2}(\wt v^S)^2+\frac{12}{v_m^3}(\wt v^S)^3\\
&\qquad\qquad+\left(21+\frac{18 v_+}{v_m}+\frac{18v_+^2}{v_m^2}\right)(\wt v^S)^2-\left(\frac{24}{v_m}+\frac{24v_+}{v_m^2}+\frac{24v_+^2}{v_m^3}\right)(\wt v^S)^3\\
&\qquad\qquad-\frac{27}{v_m^2}(\wt v^S)^4+\frac{36}{v_m^3}(\wt v^S)^5 \Bigg],\\
H_4=&-\frac{v_+-v_*}{2s^2}\Bigg[\left(18v_++\frac{18v_+^2}{v_m}\right)\wt v^S -\left(18+\frac{54v_+}{v_m}+\frac{54v_+^2}{v_m^2}\right)(\wt v^S)^2\\
&\qquad\qquad\qquad+\left(\frac{36}{v_m}+\frac{36v_+}{v_m^2}+\frac{36v_+^2}{v_m^3}\right)(\wt v^S)^3+\frac{18}{v_m^2}(\wt v^S)^4-\frac{36}{v_m^3}(\wt v^S)^5 \Bigg].
\end{aligned}
\end{equation*}
From the expressions of $H_3$ and $H_4$, a direct calculation gives
\begin{equation*}
\begin{aligned}
H_3+H_4=&\frac{v_+-v_*}{2s^2}\Bigg[1-\frac{9}{v_m^2}(\wt v^S)^2+\frac{12}{v_m^3}(\wt v^S)^3-2v_m^2-2v_+^2-2v_mv_+\\
&\qquad\qquad-\left(18v_++\frac{18v_+^2}{v_m}\right)\wt v^S+\left(39+\frac{72v_+}{v_m}+\frac{72v_+^2}{v_m^2}\right)(\wt v^S)^2\\
&\qquad\qquad-\left(\frac{60}{v_m}+\frac{60v_+}{v_m^2}+\frac{60v_+^2}{v_m^3}\right)(\wt v^S)^3-\frac{45}{v_m^2}(\wt v^S)^4+\frac{72}{v_m^3}(\wt v^S)^5\Bigg]\\
:=&\frac{v_+-v_*}{2s^2}(K_1+K_4).
\end{aligned}
\end{equation*}
Here $K_1, K_2, K_3, K_4$ are defined in \eqref{K}. Thus, we finally obtain \eqref{4.33}.

\vskip0.3cm
\noindent{\bf Conflict of Interest:}  The authors declare that they have no conflicts of interest with respect to this work.

\end{document}